\documentclass{amsart}


\usepackage[numbers,sort&compress]{natbib}

\usepackage{amssymb}
\usepackage{amsmath,amsfonts,amssymb,amsthm}
\usepackage{amsaddr}
\usepackage{bbm}
\usepackage{mathtools}
\usepackage{commath}
\usepackage{enumitem}


\usepackage{color}
\usepackage{setspace}
\allowdisplaybreaks[1]

\usepackage[all,cmtip]{xy}
\usepackage{hyperref}


\setlength{\textwidth}{\paperwidth}
\addtolength{\textwidth}{-2in}
\calclayout




\numberwithin{equation}{section}



\theoremstyle{plain} 
\newtheorem{thm}{Theorem}[section]
\newtheorem{cor}[thm]{Corollary}
\newtheorem{lem}[thm]{Lemma}

\newtheorem{defn}[thm]{Definition}
\newtheorem{rem}[thm]{Remark}
\newtheorem{ex}[thm]{Example}

\begin{document}
\title[Measure solutions to perturbed structured population models]{Measure solutions to perturbed structured population models -- differentiability with respect to perturbation parameter}

\author{Jakub Skrzeczkowski}

\address{Faculty of Mathematics, Informatics and Mechanics, University of Warsaw \\ Banacha 2, 02--097 Warsaw}


\email{jakub.skrzeczkowski@student.uw.edu.pl}


\date{}

\begin{abstract}
This paper is devoted to study measure solutions $\mu_t^h$ to perturbed nonlinear structured population models where $t$ denotes time and $h$ controlls the size of perturbation. We address differentiability of the map $h \mapsto \mu_t^h$. After showing that this type of results cannot be expected in the space of bounded Radon measures $\mathcal{M}(\mathbb{R}^+)$ equipped with the flat metric, we move to the slightly bigger spaces $Z = \overline{\mathcal{M}(\mathbb{R}^+)}^{(C^{1+\alpha})^*}$. We prove that when $\alpha > \frac{1}{2}$, the map $h \mapsto \mu_t^h$ is differentiable in $Z$. The proof exploits approximation scheme of a nonlinear problem from previous studies and is based on the iteration of an implicit integral equations obtained from study of the linear equation. The result shows that space $Z$ is a promising setting for optimal control of phenomena governed by such type of models.
\end{abstract}

\maketitle

\tableofcontents





\onehalfspacing

\section{Introduction}
Structured population models are transport--type equations used to describe dynamics of population with respect to a specific structural variable which can be selected quite arbitrarily. Previous research efforts provide models with variables ranging from age through size to cell maturity and phenotypic trait \cite{ulikowska2013structured}. This generality results in their wide applicability in demography \cite{Pe2007}, cell biology \cite{Fo1959}, immunology \cite{McK1926} or ecology \cite{Pe2007}.  Classically, analysis of structured population models was carried out in $L^1$ setting \cite{Webb, Th2003}. This approach was appropriate for considering densities of populations. However, as already suggested in \cite{MeDi1986}, it did not allow to work with less regular distributions used in applications like the Dirac mass. For conservative problems, one can try to consider solutions in the space of measures by exploiting the Wasserstein metrics like 
$$
W_1(\mu, \nu) = \sup{\int_{\mathbb{R}^+} f d(\mu - \nu)}
$$
where the supremum is taken over all Lipchitz functions with Lipschitz constant at most 1. Unfortunately, when $\mu(\mathbb{R}^+) \neq \nu(\mathbb{R}^+)$, we have $W_1(\mu, \nu) = \infty$ so it cannot be used for analysis of non-conservative problems. This issue was finally addressed in the series of papers \cite{GwiazdaThomasEtAl, GwMa2010, CarrilloGwiazda_11}, providing the complete framework for measure solutions, being elements of space of bounded, non-negative Radon measures $\mathcal{M}^+(\mathbb{R}^+)$ endowed with the so-called flat metric
\begin{equation}\label{flat_metric}
p_F (\mu, \nu) = \sup_{f \in C^1 \cap W^{1,\infty},~\norm{f}_{W^{1,\infty}} \leq 1} \int_{\mathbb{R}^+} f d(\mu - \nu),
\end{equation}
as well as well-posedness results in this setting. Note that for every $\mu, \nu \in \mathcal{M}(\mathbb{R}^+)$ we have $p_F(\mu, \nu) < \infty$, where $\mathcal{M}(\mathbb{R}^+)$ stands for the set of bounded Radon measures on $\mathbb{R}^+$.

In this paper, we consider measure solutions to the general class of nonlinear structured population models (see \cite{GwiazdaThomasEtAl} for details):
\begin{equation}\label{spm_non}
\left\{ \begin{array}{lll}
\partial_t \mu_t + \partial_x(b(x, \mu_t) \mu_t) & = c(x, \mu_t)\mu_t & \mathbb{R}^{+} \times [0,T],\\
b(0, \mu_t) D_{\lambda}\mu_t(0) &= \int_{\mathbb{R}^+} a(x,\mu_t) d\mu_t(x) & [0,T], \\
\mu_0 &= \nu & \mathbb{R}^+.
\end{array} \right.
\end{equation}
where $\nu$ is a bounded Radon measure and $D_{\lambda}\mu_t(0)$ is Radon-Nikodym derivative of $\mu_t$ with respect to the Lebesgue measure $\lambda$ on $\mathbb{R}^+$. We focus our attention on solutions $\mu_t^h$ to \eqref{spm_non} with perturbed model functions $a(x, \mu_t) := a^h(x, \mu_t^h) = a^0(x, \mu_t^h) + h a_p(x, \mu_t^h)$ and so on for functions $b$ and $c$. Then, we want to establish differentiability of the map $h \mapsto \mu_t^h$ in appropriate Fr\'echet sense. Unfortunately, the setting of flat metric cannot be used here as Example \ref{flat_metric_is_not_enough} in Section \ref{sect3} shows. Therefore, instead of working in $\mathcal{M}(\mathbb{R}^+)$, we move to the bigger space $Z = \overline{\mathcal{M}(\mathbb{R}^+)}^{(C^{1+\alpha}(\mathbb{R}^+))^*}$ that has been already successfully exploited in similar problems for transport equation \cite{1806.00357}. Under some technical assumptions to be described later, we show that map $h \mapsto \mu_t^h$ is Fr\'echet differentiable in $Z$.

This question, as already suggested in \cite{1806.00357}, is motivated by real-world applications to optimal control in phenomena described by \eqref{spm_non}. In the simplest case, one is interested in choosing value of $h$ minimizing a given functional $\mathcal{J}(t, \mu_t^h, h)$ representing some physical quantities like energy, waste production or poverty. Currently, the strategy for optimal control of structured population models concentrate on the very recent application of Excalator Boxcar Train (EBT) algorithm \cite{CGR18}. However, appropriate characterization of the derivative mentioned above, would allow to exploit gradient type algorithms like {\it steepest descent} providing conceptually easier and more efficient methods. 

The structure of the paper is as follows. In Section \ref{review_sect}, we review the theory of measure solutions to structured population models as well as recent research progress for analysis of perturbations in case of a transport equation. Then, in Section \ref{sect3}, we state and prove the main result (Theorems \ref{lin_per} and \ref{lin_per_1}) for linear equations. We also develop many estimates that are crucial for treatment of a general, nonlinear case. Finally, in Section \ref{sect4}, we state and prove main result for nonlinear model (Theorem \ref{nonlin_per}) while in Section \ref{sect5} we discuss our work as well as future perspectives.
\section{Review of useful results}\label{review_sect}
\subsection{Measure solutions to structured population model}\label{lspm_sect}
Our work is based on the concept of measure solutions and well-posedness in $(\mathcal{M}^+(\mathbb{R}^+), p_F)$ for structured population models \eqref{spm_non} developed in \cite{GwiazdaThomasEtAl}. First, for linear models:
\begin{equation}\label{spm}
\left\{ \begin{array}{lll}
\partial_t \mu_t + \partial_x(b(x) \mu_t) & = c(x)\mu_t & \mathbb{R}^{+} \times [0,T],\\
b(0) D_{\lambda}\mu_t(0) &= \int_{\mathbb{R}^+} a(x) d\mu_t(x) & [0,T], \\
\mu_0 &= \nu & \mathbb{R}^+.
\end{array} \right.
\end{equation}
we have the following concept of measure solutions:
\begin{defn}
We say that $\mu_t: [0,T] \to \mathcal{M}^+(\mathbb{R}^+)$ is a measure solution to \eqref{spm} if $\mu_t$ is narrowly continuous in time and it satisfies \eqref{spm} in the sense of distributions.
\end{defn}
We recall that narrow continuity in time means that map $[0,T] \ni t \mapsto \mu_t$ is continuous with respect to narrow convergence (i.e. in duality with bounded and continuous functions). We have the following result characterizing measure solutions to \eqref{spm}:
\begin{thm}
Suppose that  $a, b, c \in W^{1,\infty}(\mathbb{R}^+)$ where $a \geq 0$, $b(0) > 0$ and $b \in C^1(\mathbb{R}^+)$. Then, the unique measure solution $\mu_t$ to \eqref{spm} is given by identity
\begin{equation}\label{semigroup}
\int_{\mathbb{R}^+} \xi(x) d\mu_t (x)= \int_{\mathbb{R}^+} \varphi_{\xi,t}(0,x) d\nu(x) ~~~~ \text{for all } \xi \in W^{1,\infty}\cap C^1(\mathbb{R}^+),
\end{equation}
where function $\varphi_{\xi,t}(s,x)$ satisfies:
\begin{equation}\label{sol_dual}
\varphi_{\xi,t}(s,x) = \xi(X_b(t-s,x))e^{\int_0^{t-s }c(X_b(u,x)) du} + \int_0^{t-s}a(X_b(u,x))\varphi_{\xi,t}(u+s,0)e^{\int_0^u c(X_b(v,x)) dv} du
\end{equation}
and $X_b(s,x)$ is the curve solving ODE:
\begin{equation}\label{ode}
\partial_s X_b(s,x) = b(X_b(s,x)), \:\:\:\:\:\:\:\: X_b(0) = x.
\end{equation}
\end{thm}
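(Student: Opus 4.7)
The strategy is a duality argument. Formally, if $\mu_s$ is a measure solution and $\varphi(s,x)$ is smooth, integration by parts combined with the boundary condition $b(0)D_\lambda\mu_s(0)=\int a\,d\mu_s$ gives
\[
\frac{d}{ds}\int \varphi(s,\cdot)\,d\mu_s = \int\bigl[\partial_s\varphi + b\,\partial_x\varphi + c\,\varphi + a(x)\,\varphi(s,0)\bigr]d\mu_s.
\]
One therefore wants $\varphi=\varphi_{\xi,t}$ to solve the adjoint problem $\partial_s\varphi + b\,\partial_x\varphi + c\,\varphi + a(x)\,\varphi(s,0)=0$ with terminal condition $\varphi(t,\cdot)=\xi$. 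If such $\varphi$ exists, then $s\mapsto\int\varphi_{\xi,t}(s,\cdot)\,d\mu_s$ is constant on $[0,t]$, and comparing $s=0$ with $s=t$ yields exactly \eqref{semigroup}. The representation \eqref{sol_dual} is simply the Duhamel formula for this adjoint equation along the characteristics \eqref{ode}: differentiating $u\mapsto\varphi_{\xi,t}(s+u,X_b(u,x))$ and integrating with the integrating factor $\exp(\int_0^u c(X_b(v,x))\,dv)$ produces \eqref{sol_dual} at once.

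With this picture in mind, I would first build $\varphi_{\xi,t}$ and establish its regularity. Setting $x=0$ in \eqref{sol_dual} produces a scalar Volterra equation for $w(s):=\varphi_{\xi,t}(s,0)$ whose kernel is bounded by $\|a\|_\infty e^{T\|c\|_\infty}$, so the Banach fixed point theorem furnishes a unique continuous $w$ on a short interval, and iteration extends it to $[0,t]$. With $w$ known, the right-hand side of \eqref{sol_dual} then defines $\varphi_{\xi,t}(s,x)$ explicitly for every $x$; the regularity of $X_b$, $a$, $b$, $c$ and $\xi$ then promotes $\varphi_{\xi,t}$ to $C^1\cap W^{1,\infty}$ in $(s,x)$, and direct differentiation of \eqref{sol_dual} confirms that it solves the adjoint PDE with terminal value $\xi$.

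For existence, I would next define $\mu_t$ by \eqref{semigroup} and verify that it is a measure solution. Linearity in $\xi$ is immediate. Positivity follows from Picard iterating \eqref{sol_dual} starting from zero: since $a\geq 0$ and $b(0)>0$ keeps the characteristics in $\mathbb{R}^+$, each iterate is non-negative when $\xi\geq 0$, and the limit inherits this. A Gronwall estimate on \eqref{sol_dual} bounds $\|\varphi_{\xi,t}(0,\cdot)\|_\infty$ in terms of $\|\xi\|_\infty$, so the Riesz representation theorem yields a bounded non-negative Radon measure $\mu_t$. Narrow continuity in $t$ reduces to continuity of $t\mapsto\varphi_{\xi,t}(0,x)$, which follows again from \eqref{sol_dual}. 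Finally, differentiating \eqref{semigroup} in $t$ and invoking the adjoint PDE satisfied by $\varphi_{\xi,t}$ recovers the weak form of \eqref{spm}, so $\mu_t$ is a measure solution.

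Uniqueness rests on the same duality: given another measure solution $\tilde\mu_t$, the displayed calculation above shows that $s\mapsto\int\varphi_{\xi,t}(s,\cdot)\,d\tilde\mu_s$ is constant on $[0,t]$, so $\int\xi\,d\tilde\mu_t=\int\varphi_{\xi,t}(0,\cdot)\,d\nu$ for every admissible $\xi$, which pins $\tilde\mu_t$ down. I expect the main technical obstacle to be precisely the rigorous justification of this calculation when $\tilde\mu$ is only a measure: since $s\mapsto\tilde\mu_s$ is only narrowly continuous, the time derivative must be interpreted weakly, and one must approximate $\varphi_{\xi,t}$ by smooth compactly supported space-time test functions (or, equivalently, mollify $\tilde\mu_s$ in $s$) and pass to the limit using the $W^{1,\infty}$ bounds on $\varphi_{\xi,t}$ together with the trace condition $b(0)D_\lambda\tilde\mu_s(0)=\int a\,d\tilde\mu_s$ to handle the boundary contribution. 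The construction of $\varphi_{\xi,t}$ itself, by contrast, is essentially routine once the Volterra structure is identified.
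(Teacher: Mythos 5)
The paper does not actually prove this theorem: it is recalled from \cite{GwiazdaThomasEtAl} (see also \cite{GwMa2010}), so your proposal is being compared to the literature rather than to an argument in the present text. Your reconstruction correctly identifies the mechanism. Equation \eqref{sol_dual} is the Duhamel representation, along the characteristics \eqref{ode}, of the backward adjoint problem $\partial_s\varphi + b\,\partial_x\varphi + c\,\varphi + a\,\varphi(\cdot,0)=0$ with $\varphi(t,\cdot)=\xi$, and \eqref{semigroup} is the duality identity obtained by observing that $s\mapsto\int\varphi_{\xi,t}(s,\cdot)\,d\mu_s$ is constant and comparing $s=0$ with $s=t$. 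Your Volterra/Banach fixed-point construction of $\varphi_{\xi,t}(\cdot,0)$ is precisely the device the present paper develops later for equations of type \eqref{general_implicit_eqn} (Lemma~\ref{lem_imp_exi}, via the Bielecki norm), of which \eqref{sol_dual} at $x=0$ is a special case, so your approach is fully aligned with the paper's toolkit; the positivity argument by monotone Picard iteration (using $a\geq 0$ and $b(0)>0$ to keep the characteristics in $\mathbb{R}^+$) is also the standard one. You correctly single out the genuinely delicate point: the uniqueness step cannot differentiate $\int\varphi_{\xi,t}(s,\cdot)\,d\tilde\mu_s$ pointwise in $s$ when $\tilde\mu_s$ is merely narrowly continuous. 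One instead inserts $\varphi_{\xi,t}$ as a space-time test function into the distributional formulation of \eqref{spm}, after verifying that \eqref{sol_dual} yields $W^{1,\infty}\cap C^1$ regularity of $\varphi_{\xi,t}$ in $(s,x)$ once $\varphi_{\xi,t}(\cdot,0)$ is known to be Lipschitz, and handles the boundary flux contribution $b(0)\,D_\lambda\tilde\mu_s(0)=\int a\,d\tilde\mu_s$ paired with $\varphi_{\xi,t}(s,0)$. Overall this is a sound reconstruction of the standard proof.
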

If more model functions are considered, we will write $\varphi_{\xi,t}^{a,b,c}(s,x)$ to make our presentation clear. Formula \eqref{sol_dual} is extremely convenient for analysis of solutions to \eqref{spm}. In particular, it is easy to prove that they are Lipschitz continuous with respect to model functions $a$, $b$ and $c$ as well as the initial datum. More precisely, given two triples of functions $(a,b,c)$ and $(\overline{a\vphantom{b}}, \overline{b}, \overline{c\vphantom{b}})$ satisfying assumptions above while $\mu_t$ and $\nu_t$ are corresponding solutions of \eqref{spm} with the same initial condition $\mu_0$, then
\begin{equation}\label{cont_model}
p_F(\mu_t, \nu_t) \leq t e^{Ct} \norm[1]{\mu_0}_{TV}~\Big( \norm[1]{a - \overline{\vphantom{b}a}}_{\infty} + \norm[1]{b - \overline{b}}_{\infty}  + \norm[1]{c - \overline{\vphantom{b}c}}_{\infty} \Big)
\end{equation}
where $p_F$ is the flat metric defined in \eqref{flat_metric}, $C = C(\norm[1]{a}_{W^{1,\infty}}, \norm[1]{\overline{a\vphantom{b}}}_{W^{1,\infty}}, \norm[1]{b}_{W^{1,\infty}}, \norm[1]{\overline{b}}_{W^{1,\infty}}, \norm[1]{c}_{W^{1,\infty}}, \norm[1]{\overline{\vphantom{b}c}}_{W^{1,\infty}})$ and $\norm[1]{\mu}_{TV} = \abs{\mu}(\mathbb{R}^+)$. Moreover, for two solutions $\mu_t$ and $\nu_t$ with the same model functions starting from different initial conditions $\mu_0$ and $\nu_0$ respectively,
\begin{equation}\label{cont_initial}
p_F(\mu_t, \nu_t) \leq p_F(\mu_0, \nu_0) e^{3(\norm{a}_{W^{1,\infty}} + \norm{b}_{W^{1,\infty}} + \norm{c}_{W^{1,\infty}})t}.
\end{equation}
Finally, for any solution $\mu_t$, we have Lipschitz continuity in time:
\begin{equation}\label{cont_time}
p_F (\mu_t, \mu_{t+\Delta t}) \leq  C\Big(\norm{a}_{W^{1,\infty}}, \norm{b}_{W^{1,\infty}}, \norm{c}_{W^{1,\infty}}\Big)\abs{\Delta t} \norm{\mu_t}_{TV}.
\end{equation}
Moreover, from the proof of the bound \eqref{cont_model}, one sees (but it is also independently proven in \cite{GwMa2010}):
\begin{equation}\label{lip_pert}
\norm[1]{\varphi_{\xi,t}^{a,b,c}(s,x) - \varphi_{\xi,t}^{\overline{\vphantom{b}a}, \overline{b}, \overline{\vphantom{b}c}}(s,x)}_{\infty} \leq t e^{C(a, \overline{\vphantom{b}a}, b, \overline{b}, c, \overline{c\vphantom{b}}) t}~ \Big(\norm[1]{a - \overline{a\vphantom{b}}}_{\infty} + \norm[1]{b - \overline{b}}_{\infty}  + \norm[1]{c - \overline{\vphantom{b}c}}_{\infty} \Big).
\end{equation}
and from \eqref{sol_dual} together with \eqref{semigroup} it is easy to deduce:
\begin{equation}\label{stability}
\norm{\mu_t}_{TV} \leq e^{2(\norm{a}_{L^{\infty}} + \norm{c}_{L^{\infty}})t}
\end{equation}
(see \cite{GwiazdaThomasEtAl} for more details).
Well--posedness theory can be also established for nonlinear models of the form \eqref{spm_non}:
\begin{thm}\label{well_posedness_1_nonlinear}
Suppose that functions $a,b,c: \mathbb{R}^+ \times \mathcal{M}^+(\mathbb{R}^+) \to \mathbb{R}$ satisfy:
\begin{description}
\item[(W1)] $\sup_{\mu \in \mathcal{M}^+(\mathbb{R}^+)} \norm{a(x,\mu), b(x,\mu), c(x,\mu)}_{W^{1,\infty}} < \infty$,
\item[(W2)] for any $R > 0$ there exists $L_R$ such that for all $\mu, \nu \in \mathcal{M}^+(\mathbb{R}^+)$ with $\norm{\mu}_{TV} \leq R$ and $\norm{\nu}_{TV} \leq R$:
$$
\norm{a(x,\mu) - a(x,\nu)}_{\infty} + \norm{b(x,\mu) - b(x,\nu)}_{\infty} + \norm{c(x,\mu) - c(x,\nu)}_{\infty} \leq L_R~p_F(\mu,\nu),
$$
\item[(W3)] for all $\mu, \nu \in \mathcal{M}^+(\mathbb{R}^+)$ and $x \in \mathbb{R}^+$ $b(0,\mu)>0 $ and $a(x,\mu) \geq 0$.
\end{description}
Then, there exists the unique measure solution to \eqref{spm_non}, i.e. narrowly continuous function $\mu_t: [0,T] \to \mathcal{M}^+(\mathbb{R}^+)$ solving \eqref{spm_non} in the sense of distributions.
\end{thm}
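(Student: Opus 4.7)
The natural route is a Banach fixed-point argument on a short time interval, iterated to cover $[0,T]$. Given a candidate trajectory $\tilde\mu \in C([0,T^*]; (\mathcal{M}^+(\mathbb{R}^+), p_F))$, I would freeze the nonlinearity by setting $\tilde a(x,t) := a(x, \tilde\mu_t)$, and similarly for $\tilde b$ and $\tilde c$. Thanks to (W1) these coefficients are uniformly bounded in $W^{1,\infty}_x$, thanks to (W2) together with narrow continuity of $\tilde\mu$ they depend continuously on $t$, and (W3) supplies the positivity/sign conditions needed for the linear theory. The fixed-point map is $\Phi : \tilde\mu \mapsto \mu$, where $\mu$ is the unique measure solution to the linear but time-inhomogeneous problem with coefficients $(\tilde a, \tilde b, \tilde c)$ and initial datum $\nu$. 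Before invoking the fixed-point theorem, one needs the straightforward extension of the linear well-posedness of Subsection \ref{lspm_sect} to time-dependent coefficients: the semigroup formula \eqref{sol_dual} is rewritten in terms of a non-autonomous characteristic flow solving $\partial_s X_b = b(X_b, s)$, and the estimates \eqref{cont_model}--\eqref{stability} persist with the same proofs once $L^\infty_x$ norms are replaced by $L^\infty_{t,x}$ norms of the frozen coefficients.

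I would then fix $R := 2\,\|\nu\|_{TV}\, e^{2(A+C)T}$, where $A, C$ are the uniform bounds from (W1), and work in the complete metric space
\begin{equation*}
\mathcal{B}_R := \bigl\{ \tilde\mu \in C([0,T^*]; (\mathcal{M}^+(\mathbb{R}^+), p_F)) : \tilde\mu_0 = \nu,\; \sup_{t \leq T^*} \|\tilde\mu_t\|_{TV} \leq R \bigr\}
\end{equation*}
endowed with the uniform flat metric. Estimate \eqref{stability} gives $\|\Phi(\tilde\mu)_t\|_{TV} \leq R$ automatically, so $\Phi$ maps $\mathcal{B}_R$ into itself. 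For contractivity I would apply \eqref{cont_model} pointwise in $t$ to the two frozen coefficient systems coming from $\tilde\mu, \tilde\nu \in \mathcal{B}_R$; by (W2) the coefficients differ in $L^\infty_x$ by at most $L_R\, p_F(\tilde\mu_s, \tilde\nu_s)$, and I expect an estimate of the form
\begin{equation*}
\sup_{t \leq T^*} p_F\bigl(\Phi(\tilde\mu)_t, \Phi(\tilde\nu)_t\bigr) \leq T^*\, e^{K T^*}\, R\, L_R\, \sup_{s \leq T^*} p_F(\tilde\mu_s, \tilde\nu_s),
\end{equation*}
with $K$ depending only on (W1). Choosing $T^*$ small enough makes $\Phi$ a strict contraction; since $R$, $L_R$, and $K$ are controlled uniformly in (W1), the same $T^*$ works at every step, so after producing the unique fixed point on $[0,T^*]$ via Banach's theorem one simply restarts from $\mu_{T^*}$ and iterates finitely many times to reach $T$.

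The main technical obstacle is the extension of the semigroup representation \eqref{sol_dual} to the time-dependent frozen coefficients $(\tilde a, \tilde b, \tilde c)$ together with a careful verification that \eqref{cont_model} and \eqref{stability} survive intact; once this is in hand, the fixed-point step is a formal unwinding of the estimates of Subsection \ref{lspm_sect}. An alternative that bypasses this extension is the time-discretization (EBT-type) scheme developed in \cite{GwiazdaThomasEtAl}, where coefficients are frozen on small subintervals $[k\Delta t, (k+1)\Delta t]$ using $\mu_{k\Delta t}$ and convergence is recovered from the Lipschitz-in-time bound \eqref{cont_time} plus the uniform TV bound \eqref{stability}; in that route the delicate step is identifying the limit as a distributional solution, in particular passing to the limit in the boundary integral involving $D_\lambda\mu_t(0)$.
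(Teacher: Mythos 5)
Your proposal is correct in principle but takes a genuinely different route from the one the paper indicates. The paper does not prove Theorem \ref{well_posedness_1_nonlinear} by a fixed-point argument; it cites \cite{GwiazdaThomasEtAl} and sketches only the time-discretization (EBT-type) construction: divide $[0,T]$ into dyadic subintervals, freeze the coefficients on $[m\frac{T}{2^k},(m+1)\frac{T}{2^k}]$ using $\mu_{mT/2^k}$, solve the resulting \emph{autonomous} linear problem \eqref{spm_non_approx}, and show the sequence $\mu_t^k$ is Cauchy in $C([0,T],(\mathcal{M}^+(\mathbb{R}^+),p_F))$. That is exactly the ``alternative'' you mention in your final paragraph, and it is the route the paper takes because the same iteration scheme $\mu_t^{h,k}$ is the backbone of the entire nonlinear analysis in Section \ref{sect4} (Theorems \ref{diff_k_fixed} and \ref{crucial_theorem_1} are phrased in terms of this discretization). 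Your primary approach --- Banach fixed point on $\mathcal{B}_R$ with coefficients frozen along the full candidate trajectory $t\mapsto\tilde\mu_t$ --- is a sound classical strategy, but it shifts the technical weight to exactly the place you flag: one must extend the representation formula \eqref{sol_dual} and estimates \eqref{cont_model}--\eqref{stability} to \emph{time-dependent} $W^{1,\infty}_x$ coefficients that are merely continuous in $t$, which is not stated in the paper. The discretization route dodges this by only ever solving autonomous linear problems, paying instead with a convergence-and-identification argument (passing to the limit in the boundary term $D_\lambda\mu_t(0)$, as you note). Two minor slips in your fixed-point version: in the contraction bound the factor coming from \eqref{cont_model} is $\|\nu\|_{TV}$ rather than $R$ (harmless, since $\|\nu\|_{TV}\le R$), and you should remark explicitly that $\mathcal{B}_R$ is closed in the uniform flat metric because $\|\cdot\|_{TV}$ is lower semicontinuous with respect to $p_F$-convergence on $\mathcal{M}^+$ --- otherwise completeness of $\mathcal{B}_R$ is not automatic.
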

We remark here that it is also possible to prove similar inequalities as \eqref{cont_model}, \eqref{cont_initial} and \eqref{stability} in the nonlinear case (under assumptions (W1)--(W3)) but such results will not be used in this paper.

To prove Theorem \ref{well_posedness_1_nonlinear}, one exploits the following approximating sequence in $C([0,T], (\mathcal{M}^+(\mathbb{R}^+), p_F))$. First, for fixed $k \in \mathbb{N}$, the interval $[0,T]$ is divided into $2^k$ subintervals $[m\frac{T}{2^k}, (m+1)\frac{T}{2^k}]$ where $m = 0, 1, ..., 2^k-1$. Then, when $t \in [m\frac{T}{2^k}, (m+1)\frac{T}{2^k}]$, approximation $\mu_t^k$ is defined as the unique solution to the linear equation:
\begin{equation}\label{spm_non_approx}
\left\{ \begin{array}{lll}
\partial_t \mu_t + \partial_x(b(x, \mu_{m\frac{T}{2^k}}) \mu_t) & = c(x, \mu_{m\frac{T}{2^k}})\mu_t & \mathbb{R}^{+} \times  [m\frac{T}{2^k}, (m+1)\frac{T}{2^k}],\\
b(0, \mu_{m\frac{T}{2^k}}) D_{\lambda}\mu_t(0) &= \int_{\mathbb{R}^+} a(x,\mu_{m\frac{T}{2^k}}) d\mu_t(x) &  [m\frac{T}{2^k}, (m+1)\frac{T}{2^k}], \\
\end{array} \right.
\end{equation}
with the initial measure $\mu_{m\frac{T}{2^k}}$ is obtained from solving analogous problem in the interval $[(m-1)\frac{T}{2^k}, m\frac{T}{2^k}]$.  It can be shown that $\mu_t^k$ converges to the unique solution of \eqref{spm_non} in $C([0,T], (\mathcal{M}^+(\mathbb{R}^+), p_F))$ and inequalities \eqref{cont_model}, \eqref{cont_initial} and \eqref{stability} are preserved \cite{GwiazdaThomasEtAl}.

In our presentation, it will be necessary to substitute the assumption (W1) with:
\begin{description}
\item[(W1a)] $\sup_{\mu \in \mathcal{M}^+(\mathbb{R}^+)} \norm{a(x,\mu), b(x,\mu), c(x,\mu)}_{L^{\infty}} < \infty$
\item[(W1b)] $\norm{\frac{\partial}{\partial x}a(x,\mu), \frac{\partial}{\partial x}b(x,\mu), \frac{\partial}{\partial x}c(x,\mu)}_{L^{\infty}} \leq C_1+ C_2 \norm{\mu}_{TV}$.
\end{description}
Using existence and uniqueness obtained for \eqref{spm_non} in \cite{GwiazdaThomasEtAl}, under assumptions (W1)--(W3), it is possible to prove existence and uniqueness with (W1) replaced by (W1a) and (W1b):
\begin{thm}\label{well_posedness_2_nonlinear}
Suppose that (W2) and (W3) holds true. Moreover, assume that (W1a) and (W1b) are satisfied. Then, there exists the unique measure solution to \eqref{spm_non}.
\end{thm}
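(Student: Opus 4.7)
The plan is to reduce Theorem \ref{well_posedness_2_nonlinear} to Theorem \ref{well_posedness_1_nonlinear} by replacing the model functions with truncations that depend only on measures of a priori bounded mass. The key observation is that (W1a)--(W1b) differ from (W1) only in allowing the $W^{1,\infty}$ bound on the coefficients to grow linearly in $\norm{\mu}_{TV}$; since the total variation of any solution of \eqref{spm_non} is a priori controlled by the $L^\infty$ norms of $a$ and $c$ alone, one may safely cut off the large-mass regime without affecting the solution.

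First I would establish the a priori estimate $\norm{\mu_t}_{TV}\le\norm{\nu}_{TV}\, e^{2t(\norm{a}_\infty+\norm{c}_\infty)}$ for every narrowly continuous solution of \eqref{spm_non} under (W1a)--(W1b), (W2), (W3). The linear bound analogous to \eqref{stability} depends only on $L^\infty$ norms, and it propagates through the time--discretized approximation \eqref{spm_non_approx}: at step $m$ the frozen coefficients $a(x,\mu_{mT/2^k})$, $b(x,\mu_{mT/2^k})$, $c(x,\mu_{mT/2^k})$ lie in $W^{1,\infty}(\mathbb{R}^+)$ thanks to (W1b) applied to a measure of inductively bounded mass, so the linear theory from \ref{lspm_sect} applies on each subinterval and the TV bound is preserved in the limit $k\to\infty$. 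Setting $M:=\norm{\nu}_{TV}\, e^{2T(\norm{a}_\infty+\norm{c}_\infty)}$ then provides a uniform ceiling on the TV of any hypothetical solution.

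Next I would define the truncation $\pi_M:\mathcal{M}^+(\mathbb{R}^+)\to\mathcal{M}^+(\mathbb{R}^+)$ by $\pi_M(\mu)=\mu$ when $\norm{\mu}_{TV}\le M$ and $\pi_M(\mu)=(M/\norm{\mu}_{TV})\,\mu$ otherwise, and replace the coefficients by $\tilde a(x,\mu):=a(x,\pi_M(\mu))$, and analogously $\tilde b,\tilde c$. Property (W1) for the tilded coefficients follows at once from (W1a)--(W1b) together with $\norm{\pi_M(\mu)}_{TV}\le M$, while (W3) is preserved because $\pi_M$ is a non-negative rescaling. For (W2) the essential point is that $\pi_M$ is Lipschitz in $p_F$: using that the constant function $1$ is admissible in \eqref{flat_metric}, one has $\bigl|\norm{\mu}_{TV}-\norm{\nu}_{TV}\bigr|\le p_F(\mu,\nu)$ for $\mu,\nu\in\mathcal{M}^+(\mathbb{R}^+)$, and a short case analysis on whether each of $\norm{\mu}_{TV},\norm{\nu}_{TV}$ exceeds $M$ then yields $p_F(\pi_M(\mu),\pi_M(\nu))\le 2\,p_F(\mu,\nu)$.

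Theorem \ref{well_posedness_1_nonlinear} applied to $(\tilde a,\tilde b,\tilde c)$ now supplies a unique measure solution $\tilde\mu_t$ of the truncated problem; since $\norm{\tilde a}_\infty=\norm{a}_\infty$ and $\norm{\tilde c}_\infty=\norm{c}_\infty$, the a priori bound from the first step applies to $\tilde\mu_t$ and gives $\norm{\tilde\mu_t}_{TV}\le M$, hence $\pi_M(\tilde\mu_t)=\tilde\mu_t$ and $\tilde\mu_t$ in fact solves \eqref{spm_non}. Uniqueness follows symmetrically: any solution of \eqref{spm_non} satisfies the same TV ceiling, is therefore fixed by $\pi_M$, and solves the truncated equation, for which uniqueness is already known. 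The main technical obstacle I anticipate is the justification of the a priori TV estimate without (W1): the linear inequality \eqref{stability} was obtained from the dual formula \eqref{sol_dual} under $W^{1,\infty}$ regularity of the coefficients, so one has to verify carefully that (W1b), combined with the inductive control on the TV of $\mu_{mT/2^k}$, does license its use on each subinterval of the approximating scheme and survives the passage $k\to\infty$.
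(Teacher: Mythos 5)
Your overall strategy coincides with the paper's: derive an a priori bound $\bar C$ on the total variation of any measure solution, replace $(a,b,c)$ by truncated coefficients that agree with the originals on measures of mass $\le\bar C$ and satisfy the full hypothesis (W1), invoke Theorem~\ref{well_posedness_1_nonlinear} for the truncated system, and conclude that the resulting unique solution is also the unique solution of the original problem because it stays below the TV ceiling. Within this shared skeleton, your truncation differs from the paper's in an interesting way: the paper damps the \emph{value} of the coefficient, setting $\bar a(x,\mu)=a(x,\mu)\,e^{-(\norm{\mu}_{TV}-\bar C)}$ for $\norm{\mu}_{TV}>\bar C$, whereas you compose with a radial projection $\pi_M$ on the \emph{argument}, $\tilde a(x,\mu)=a(x,\pi_M(\mu))$. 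Your version is arguably cleaner: (W1), (W3) drop out immediately, and the three-case verification of (W2) collapses to the single fact that $\pi_M$ is $2$-Lipschitz in $p_F$ (your estimate $p_F(\pi_M\mu,\pi_M\nu)\le 2\,p_F(\mu,\nu)$ is correct, using $\bigl|\norm{\mu}_{TV}-\norm{\nu}_{TV}\bigr|\le p_F(\mu,\nu)$).

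The one genuine gap is in your derivation of the a priori TV bound, and you half-noticed it yourself. You propose to obtain the estimate for ``every narrowly continuous solution'' by propagating \eqref{stability} through the dyadic scheme \eqref{spm_non_approx}. But that scheme produces \emph{one particular candidate} sequence; without already knowing convergence and uniqueness under (W1a)--(W1b) (which is precisely what you are trying to prove), you cannot infer a bound on an \emph{arbitrary} solution from a bound on the approximants. The paper sidesteps this entirely: it tests the weak (distributional) formulation of \eqref{spm_non} with the admissible function $\varphi\equiv 1$, getting
$\norm{\mu_T}_{TV}-\norm{\mu_0}_{TV}=\int_0^T\!\int_{\mathbb{R}^+}\bigl(a(x,\mu_t)+c(x,\mu_t)\bigr)\,d\mu_t\,dt$,
and Gronwall yields $\norm{\mu_t}_{TV}\le\norm{\mu_0}_{TV}\,e^{(\norm{a}_\infty+\norm{c}_\infty)T}$ for every solution, using only (W1a). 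If you replace your approximation-scheme argument by this direct test-function argument, the rest of your proposal goes through.
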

For the proof of Theorem \ref{well_posedness_2_nonlinear}, we refer to Appendix \ref{ewptspm}.
\subsection{Perturbations in transport equation} A similar question, as addressed in this paper, has been already studied in \cite{1806.00357} for transport equation:
\begin{equation}\label{transport}
\partial_t \mu_t + \text{div}_x (b \mu_t) = w \mu_t.
\end{equation}
More preciesly, in \cite{1806.00357}, Authors consider the measure solutions $\mu_t^h$ to \eqref{transport} with perturbed velocity field $b(t,x) := b_0(t,x) + h b_1(t,x)$ and study Frech{\'e}t differentiability of the map $h \mapsto \mu_t^h$. The crucial issue to be dealt with is to understand which spaces one should use to address this question. The main contribution is observation that all necessary estimates can be obtained if $\mu_t^h$ is considered as an element of the space $Z = \overline{\mathcal{M}(\mathbb{R}^d)}^{(C^{1+\alpha})^*}$, i.e. closure of the space of bounded Radon measures with respect to the dual norm of $C^{1+\alpha}$. Here, for any set $X \subset \mathbb{R}^d$, space $C^{1+\alpha}(X)$ consists of functions with bounded norm:
\begin{equation}\label{1plusalpha_norm}
\norm{f}_{C^{1+\alpha}(X)} = \sup_{x \in X} \abs{f(x)} + \sup_{x\in X} \abs{Df(x)} + \sup_{x, y \in X, x\neq y} \frac{\abs{f(x) - f(y)}}{\abs{x-y}^{\alpha}}
\end{equation}
and if set $X$ is not specified, we consider the whole space. Moreover, by $\norm{\cdot}_{\alpha}$ we denote H\"older seminorm:
\begin{equation*}
\norm{f}_{\alpha} = \sup_{x, y \in X, x\neq y} \frac{\abs{f(x) - f(y)}}{\abs{x-y}^{\alpha}}.
\end{equation*}
When $f: X_1 \times X_2 \times ... \times X_n \to \mathbb{R}$ depends on $n$ variables, say $x_1, x_2, ..., x_n$, we write 
\begin{equation}\label{Holder_partial_norm}
\norm{f}_{\alpha, x_i} = \sup_{x_k \in X_k: k \neq i} \sup_{x, y \in X_i, x\neq y} \frac{\abs{f(x_1, x_2, ..., x_{i-1}, x, x_{i+1}, ..., x_n) - f(x_1, x_2, ..., x_{i-1}, y, x_{i+1}, ..., x_n)}}{\abs{x-y}^{\alpha}}.
\end{equation}
Space $Z$ is not only convenient for our objectives, but has many properties that are typical for ``good spaces" in Analysis. It was nicely characterized as:
$$
Z = \overline{\text{span}\{\delta_x : x \in \mathbb{Q}^d \}}^{(C^{1+\alpha})^*}
$$
where $\delta_x$ is the Dirac mass at point $x$, implying that $Z$ is a separable space. It was also proved that $Z^*$ is isomorphic to $C^{1+\alpha}$.

The following classical result will be of great importance:
\begin{lem}\label{l1}
For any continuously differentiable function $f$ with H\"older continuous derivative (in particular, $f \in C^{1+\alpha}$) we have:
\begin{equation*}
\abs{\frac{f(h_1+\Delta h_1) - f(h_1)}{\Delta h_1} - \frac{f(h_2+\Delta h_2) - f(h_2)}{\Delta h_2}  } \leq \frac{1}{1+\alpha}\norm{Df}_{\alpha}  \big(\abs{\Delta h_1}^{\alpha} + \abs{\Delta h_2}^{\alpha} \big) + 
\norm{Df}_{\alpha}\abs{h_1 - h_2}^{\alpha},
\end{equation*}
\end{lem}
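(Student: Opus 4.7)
The plan is to rewrite each difference quotient as an integral of $Df$ via the fundamental theorem of calculus and then estimate the difference of the resulting integrands using the Hölder bound on $Df$. Specifically, for any $h$ and $\Delta h \neq 0$, one has
\begin{equation*}
\frac{f(h+\Delta h) - f(h)}{\Delta h} = \int_0^1 Df(h + s\,\Delta h)\,ds,
\end{equation*}
so the quantity to be estimated equals
\begin{equation*}
\int_0^1 \bigl[Df(h_1 + s\,\Delta h_1) - Df(h_2 + s\,\Delta h_2)\bigr]\,ds.
\end{equation*}

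Next I would apply the Hölder bound on $Df$ to the integrand. For each $s \in [0,1]$,
\begin{equation*}
\bigl|Df(h_1 + s\,\Delta h_1) - Df(h_2 + s\,\Delta h_2)\bigr| \leq \|Df\|_{\alpha}\,\bigl|(h_1 - h_2) + s(\Delta h_1 - \Delta h_2)\bigr|^{\alpha}.
\end{equation*}
Using subadditivity of $t \mapsto t^{\alpha}$ for $\alpha \in (0,1]$, this is bounded by
\begin{equation*}
\|Df\|_{\alpha}\,|h_1 - h_2|^{\alpha} + \|Df\|_{\alpha}\,s^{\alpha}\bigl(|\Delta h_1|^{\alpha} + |\Delta h_2|^{\alpha}\bigr),
\end{equation*}
where I have also used $|\Delta h_1 - \Delta h_2|^{\alpha} \leq |\Delta h_1|^{\alpha} + |\Delta h_2|^{\alpha}$.

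Finally I would integrate in $s$ from $0$ to $1$. The first term contributes $\|Df\|_{\alpha}|h_1-h_2|^{\alpha}$, and the second yields $\int_0^1 s^{\alpha}\,ds = \frac{1}{1+\alpha}$ times $\|Df\|_{\alpha}(|\Delta h_1|^{\alpha} + |\Delta h_2|^{\alpha})$, giving exactly the claimed inequality. The argument is essentially routine; the only mildly delicate point is recognising that the correct splitting of $|(h_1-h_2) + s(\Delta h_1 - \Delta h_2)|^{\alpha}$ is into $|h_1-h_2|^{\alpha}$ and $s^{\alpha}(|\Delta h_1|^{\alpha}+|\Delta h_2|^{\alpha})$ so that the factor $1/(1+\alpha)$ appears only in front of the $\Delta h$-terms, which is what the subsequent iteration arguments in the paper require.
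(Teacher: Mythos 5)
Your proof is correct, and it takes a noticeably different decomposition from the paper's. The paper applies a first-order Taylor expansion with integral remainder \emph{separately} at each base point $h_i$, i.e. writes
\begin{equation*}
\frac{f(h_i+\Delta h_i)-f(h_i)}{\Delta h_i} = f'(h_i) + R_i, \qquad |R_i| \le \tfrac{1}{1+\alpha}\norm{Df}_\alpha |\Delta h_i|^\alpha,
\end{equation*}
and then combines the two via a three-term triangle inequality, with the middle term $|f'(h_1)-f'(h_2)|\le\norm{Df}_\alpha|h_1-h_2|^\alpha$ producing the $|h_1-h_2|^\alpha$ contribution. You instead express both difference quotients as a single integral $\int_0^1\bigl[Df(h_1+s\Delta h_1)-Df(h_2+s\Delta h_2)\bigr]\,ds$ and never introduce the intermediate values $f'(h_i)$ at all; the split into the $|h_1-h_2|^\alpha$ part and the $s^\alpha$ part comes purely from subadditivity of $t\mapsto t^\alpha$. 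Both routes rest on the same two ingredients — FTC and the Hölder modulus of $Df$ — and both obtain the $\tfrac{1}{1+\alpha}$ factor from $\int_0^1 s^\alpha\,ds$, so the constants match exactly. Your version is arguably a little leaner (one integral, two uses of subadditivity, no Taylor remainder bookkeeping), while the paper's version has the minor advantage of making the limit object $f'(h)$ visible as an explicit intermediate term, which is conceptually convenient for the Cauchy-sequence argument in \eqref{Cauchy_seq_analysis} that this lemma feeds into.
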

\begin{proof}
Note that for any continuously differentiable function $f$ with H\"older continuous derivative on the domain of definition, one has:
\begin{multline}\label{clas_est}
f(y) = f(x) + f'(x) (x-y) + \int_0^1 \frac{d}{dt} f(ty + (1-t)x) dt  - f'(x)(x-y) = \\
		= f(x) + f'(x) (x-y) + \underbrace{\int_0^1 (f'(t (y-x) + x) - f'(x))(x-y) dt}_{\leq \int_0^1 t^{\alpha} dt~ \norm{Df}_{\alpha}~\abs{x-y}^{1+\alpha} \leq \frac{1}{1+\alpha} \norm{Df}_{\alpha} ~\abs{x-y}^{1+\alpha}  }.
\end{multline}
Applying \eqref{clas_est} twice, for $y = h_1 + \Delta h_1, x = h_1$ and $y = h_2 + \Delta h_2, x = h_2$, we directly obtain the desired inequality.
\end{proof}

\section{Properties of equation \eqref{sol_dual} and linear problem}\label{sect3}
In the following sections, we will focus on linear equations of type \eqref{spm}. We begin with standing assumptions concerning model functions. Recall that we will study solutions to equation \eqref{spm} with $a(x) = a^h(x) = a_0(x) + ha_p(x)$, $b(x) = b^h(x) = b^0(x) + hb_p(x)$, $c(x) = c^h(x) = c^0(x) + hc_p(x)$. Since we want to exploit the well-posedness theory and the setting described above we assume:
\begin{description}
\item[(A1)] $a^0, a_p, b^0, b_p, c^0, c_p \in C^{1 + \alpha}(\mathbb{R}^+)$,
\item[(A2)] $a^h = a^0 + a_p h \geq 0$ for any $h \in [-\frac{1}{2}, \frac{1}{2}]$,
\item[(A3)] $b^h(0) = b^0(0) + b_p(0) h > 0$ for any $h \in [-\frac{1}{2}, \frac{1}{2}]$.
\end{description}
We are now ready to state the main result:
\begin{thm}\label{lin_per}
Suppose assumptions (A1)--(A3) hold. Consider measure solution $\mu_t^h$ of \eqref{spm} with  $a(x) := a^h(x) = a^0(x) + ha_p(x)$, $b(x) := b^h(x) = b^0(x) + hb_p(x)$, $c(x) := c^h(x) = c^0(x) + hc_p(x)$ and $h \in [-\frac{1}{2}, \frac{1}{2}]$. Then, mapping $h \mapsto \mu_t^h$ is Fr\'echet differentiable in $C([0,T], Z)$ where $Z = \overline{\mathcal{M}(\mathbb{R}^+)}^{(C^{1+\alpha})^*}$. Moreover, Fr\'echet derivative $H \mapsto {\partial_h} \mu_t^h |_{h = H}$ is H\"older continuous with exponent $\alpha$.
\end{thm}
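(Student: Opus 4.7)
The strategy is to work on the dual side via the semigroup formula \eqref{semigroup}. Since $Z^* \cong C^{1+\alpha}(\mathbb{R}^+)$, convergence of the difference quotient $(\mu_t^{h+\Delta h} - \mu_t^h)/\Delta h$ in $Z$ amounts to uniform convergence of
\[
\int_{\mathbb{R}^+} \frac{\varphi_{\xi,t}^{h+\Delta h}(0,x) - \varphi_{\xi,t}^{h}(0,x)}{\Delta h}\, d\nu(x)
\]
over $\xi$ with $\|\xi\|_{C^{1+\alpha}} \leq 1$. Thus the whole problem is recast as studying $h$-differentiability of the implicit equation \eqref{sol_dual} for $\varphi_{\xi,t}^h$. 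First I would formally differentiate \eqref{sol_dual} in $h$ to produce a candidate derivative $\psi_{\xi,t}^h(s,x) := \partial_h \varphi_{\xi,t}^h(s,x)$. The building block is the sensitivity $\eta^h(u,x) := \partial_h X_{b^h}(u,x)$, which, by differentiating \eqref{ode}, satisfies the linear ODE $\partial_u \eta^h = (b^h)'(X_{b^h})\eta^h + b_p(X_{b^h})$ with $\eta^h(0,x)=0$; under assumption (A1) it is well defined and uniformly bounded in $h \in [-\tfrac{1}{2},\tfrac{1}{2}]$ on $[0,T]$. Plugging $\eta^h$ into the derivative of \eqref{sol_dual} yields a linear Volterra-type integral equation for $\psi^h$, whose unique solution I would construct by Picard iteration and bound by $\|\psi^h\|_\infty \leq C(T)\,\|\xi\|_{C^1}$ via Gr\"onwall.

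The main work is then to prove that the difference quotient actually converges to $\psi^h$ in the required uniform sense. Writing $R^{\Delta h}(s,x) := \frac{\varphi^{h+\Delta h}-\varphi^h}{\Delta h}(s,x) - \psi^h(s,x)$ and substituting the implicit equation \eqref{sol_dual} for $\varphi^{h\pm}$, the equation for $R^{\Delta h}$ splits into two pieces: a Volterra term of the form $\int_0^{t-s} a^h(X_{b^h}(u,x))\,R^{\Delta h}(u+s,0)\, e^{\int_0^u c^h(X_{b^h}) dv}\,du$, which by Gr\"onwall only amplifies the supremum norm of $R^{\Delta h}$, plus explicit ``source'' terms coming from the failure of difference quotients to equal derivatives. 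Here the crucial estimate is the expansion $\xi(X_{b^{h+\Delta h}}) - \xi(X_{b^h}) = \Delta h\,\xi'(X_{b^h})\eta^h + O(|\Delta h|^{1+\alpha}\|D\xi\|_\alpha) + o(\Delta h)$, obtained from Lemma \ref{l1} together with the first-order ODE expansion $X_{b^{h+\Delta h}} - X_{b^h} = \Delta h\,\eta^h + o(\Delta h)$. Dividing by $\Delta h$ produces a source bounded by $O(|\Delta h|^\alpha \|D\xi\|_\alpha) + o(1)$, which vanishes as $\Delta h \to 0$ uniformly over the unit ball of $C^{1+\alpha}$. Analogous expansions handle the $a^h$, $c^h$ and exponential factors inside the Volterra term, and Gr\"onwall closes the loop to give $\|R^{\Delta h}\|_\infty \to 0$ uniformly in $\xi$.

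For the second assertion, H\"older continuity of $h \mapsto \psi^h$ with exponent $\alpha$, I would subtract the Volterra equations defining $\psi^{h_1}$ and $\psi^{h_2}$. The resulting equation again has a Volterra kernel governed by Gr\"onwall, while the source combines terms like $\xi'(X_{b^{h_1}}) - \xi'(X_{b^{h_2}})$, $\eta^{h_1}-\eta^{h_2}$, and analogous differences in $a^h$, $c^h$, and the exponential factors. The first is bounded by $\|D\xi\|_\alpha |X_{b^{h_1}}-X_{b^{h_2}}|^\alpha \leq C\|\xi\|_{C^{1+\alpha}}|h_1-h_2|^\alpha$; the second satisfies a linear ODE in $u$ whose right-hand side is Lipschitz in $h$, giving $|h_1-h_2|^1 \leq C|h_1-h_2|^\alpha$. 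Integrating and applying Gr\"onwall produces an estimate of the form $\|\psi^{h_1}-\psi^{h_2}\|_\infty \leq C(T)\,|h_1-h_2|^\alpha$, uniform over $\|\xi\|_{C^{1+\alpha}}\leq 1$, which is exactly H\"older continuity of $\partial_h\mu_t^h$ in $Z$.

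The principal obstacle, and the reason H\"older scales appear at all, is precisely the step where one must trade a difference quotient of $\xi\circ X_{b^h}$ for its pointwise derivative uniformly in $\xi$ on the unit ball of $C^{1+\alpha}$: neither pointwise convergence in $C^1$ nor an $L^\infty$ estimate in $\xi'$ suffices, and it is Lemma \ref{l1} applied to $\xi$ that turns the $C^{1+\alpha}$ regularity into the quantitative rate $|\Delta h|^\alpha$ needed to make both the convergence of the difference quotient and the H\"older bound on the derivative rigorous. The bookkeeping of how this rate propagates through the iterated Volterra equation, while never letting a constant depend on $\xi$ beyond its $C^{1+\alpha}$ norm, is the delicate technical heart of the argument.
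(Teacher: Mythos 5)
Your proposal is correct and shares most of the paper's machinery — the semigroup formula, the sensitivity $\eta^h=\partial_h X_{b^h}$ and its ODE, the Volterra structure, Gr\"onwall-type closure, and Lemma \ref{l1} as the bridge from $C^{1+\alpha}$ regularity to the $|\Delta h|^\alpha$ rate — but the way you establish differentiability of $h\mapsto\varphi^h_{\xi,t}$ is genuinely different from the paper's. The paper (Lemma \ref{l4}) applies the Implicit Function Theorem in the Banach space $C[0,t]$: it writes \eqref{sol_dual_dep_onh} at $x=0$ as $F(h,\phi)=0$, verifies that $D_\phi F$ is the operator $R$ whose invertibility is exactly Lemma \ref{lem_imp_exi}, and deduces Fr\'echet differentiability abstractly; only afterwards does it write down the Volterra equation \eqref{eq_for_f_h} for $f_h$ and extract H\"older bounds from it. You instead construct the candidate derivative $\psi^h$ directly as the solution of that Volterra equation (by Picard/Bielecki, which the paper has already set up), and then verify by hand that $R^{\Delta h}=(\varphi^{h+\Delta h}-\varphi^h)/\Delta h-\psi^h\to 0$ uniformly over $\|\xi\|_{C^{1+\alpha}}\le 1$. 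The IFT route buys you existence of the derivative for free and shifts the quantitative work entirely to the H\"older estimates in Lemmas \ref{l6} and \ref{est_dual_sol_fin_fcns}; your route is more elementary and self-contained (no appeal to an abstract inverse/implicit function theorem), at the cost of carrying an explicit $o(1)$ bookkeeping through the Volterra kernel. Note one small presentational divergence: the paper never explicitly exhibits the limit $\partial_h\mu^h_t$ — it only shows the difference quotients are Cauchy in $Z$ and invokes completeness — whereas you name the limit via $\psi^h$. Both are rigorous; the paper's version would also need your observation, which you correctly flag, that the $O(|\Delta h|^\alpha)$ source term and its control must be uniform in $\xi$ on the $C^{1+\alpha}$-unit ball, which is precisely why $C^1$ data would not suffice (cf.\ Example \ref{flat_metric_is_not_enough}).
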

Actually, it will be useful to view Theorem \ref{lin_per} as a special case of a slightly more general result. We consider equation \eqref{spm} with functions that actually depends on the perturbation parameter $h$ (but we do not specify exactly how they depend on $h$):
\begin{equation}
a^h(x) := a(h,x), \;\;\;\; b^h(x) := b(h,x), \; \; \;\; c^h(x) = c(h,x)
\end{equation}
and assume:
\begin{description}
\item[(B1)] Assumptions (A2) and (A3) holds for the functions $a$ and $b$ respectively,
\item[(B2)] Functions $a(h,x)$, $b(h,x)$ and $c(h,x)$ are $C^{1 + \alpha}([-\frac{1}{2}, \frac{1}{2}] \times \mathbb{R}^+)$ in both variables (with uniform constants in second variables).
\end{description}
With this assumptions, we prove the following version of Theorem \ref{lin_per}:
\begin{thm}\label{lin_per_1}
Suppose assumptions (B1)--(B2) hold for functions $a(h,x)$, $b(h,x)$ and $c(h,x)$. Consider the measure solution $\mu_t^h$ of \eqref{spm} with  $a(x) := a(h,x)$, $b(x) := b(h,x)$, $c(x) := c(h,x)$ and $h \in [-\frac{1}{2}, \frac{1}{2}]$. Then, the mapping $h \mapsto \mu_t^h$ is Fr\'echet differentiable in $C([0,T], Z)$ where $Z = \overline{\mathcal{M}(\mathbb{R}^+)}^{(C^{1+\alpha})^*}$. Moreover, Fr\'echet derivative $H \mapsto {\partial_h} \mu_t^h |_{h = H}$ is H\"older continuous with exponent $\alpha$ and constant $G_LT$ where meaning of this constant is explained below.
\end{thm}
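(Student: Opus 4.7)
The plan is to reduce Fr\'echet differentiability in $C([0,T], Z)$ to a uniform statement about the dual problem. Since $Z^*$ is isomorphic to $C^{1+\alpha}(\mathbb{R}^+)$, showing that $h \mapsto \mu^h_t$ is differentiable in $Z$ with derivative $\dot{\mu}^h_t$ amounts to proving
\begin{equation*}
\sup_{\|\xi\|_{C^{1+\alpha}} \leq 1} \left| \int_{\mathbb{R}^+} \xi\, d\bigl(\mu^{h+\Delta h}_t - \mu^h_t\bigr) - \Delta h \cdot \langle \dot{\mu}^h_t, \xi\rangle \right| = o(|\Delta h|)
\end{equation*}
uniformly in $t \in [0,T]$. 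Via the duality representation \eqref{semigroup}, this becomes the claim that $h \mapsto \varphi^{a(h,\cdot), b(h,\cdot), c(h,\cdot)}_{\xi, t}(0, x)$ is differentiable in $h$ with an error that is $o(|\Delta h|)$ uniformly in $x \in \mathbb{R}^+$ and in $\xi$ ranging over the unit ball $B$ of $C^{1+\alpha}(\mathbb{R}^+)$.

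First I would identify the candidate derivative. Writing $X_h(s,x) := X_{b(h,\cdot)}(s,x)$ for the characteristic flow, standard ODE theory under (B2) gives differentiability of $X_h$ in $h$, with $Y_h := \partial_h X_h$ solving the linear ODE $\partial_s Y_h = (\partial_x b)(h, X_h)\, Y_h + (\partial_h b)(h, X_h)$ subject to $Y_h(0,x) = 0$. Formally differentiating \eqref{sol_dual} in $h$ and applying the chain rule, I would derive a Volterra-type integral equation for $\psi^h_{\xi,t}(s,x) := \partial_h \varphi^h_{\xi,t}(s,x)$ of the same structure as \eqref{sol_dual}, with an inhomogeneity that collects the contributions of $\partial_h a$, $\partial_h b$, $\partial_h c$ and $Y_h$ (composed with $\xi$ or its derivative). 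A Picard iteration on short subintervals, mimicking the well-posedness argument for \eqref{sol_dual} itself in \cite{GwiazdaThomasEtAl}, produces $\psi^h$ on $[0,T]$ with $L^\infty$ bounds uniform over $\xi \in B$, since $\xi$ enters only through $\|\xi\|_{C^{1+\alpha}}$.

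The main technical step is to show that the difference quotient
\begin{equation*}
D^{\Delta h}_{\xi,t}(s,x) := \frac{\varphi^{h+\Delta h}_{\xi, t}(s,x) - \varphi^h_{\xi,t}(s,x)}{\Delta h}
\end{equation*}
converges to $\psi^h_{\xi,t}(s,x)$ as $\Delta h \to 0$, uniformly in $(t,s,x)$ and in $\xi \in B$. I would write \eqref{sol_dual} at parameters $h+\Delta h$ and $h$, subtract, divide by $\Delta h$, and then subtract the equation for $\psi^h$. Each resulting term is of one of two types: (i) a difference quotient of the form $(g(h+\Delta h) - g(h))/\Delta h - \partial_h g(h)$ for a $C^{1+\alpha}$-smooth composite function such as $h \mapsto \xi(X_h(t-s,x))$ or $h \mapsto a(h, X_h(u,x))$, which Lemma \ref{l1} controls by $O(|\Delta h|^\alpha)$ (this is precisely where the H\"older seminorm of $\xi$ plays its decisive role); or (ii) a self-referential Volterra integral involving $D^{\Delta h} - \psi^h$, which is closed via a Gr\"onwall inequality in the time variable. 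The outcome is $\|D^{\Delta h} - \psi^h\|_\infty = O(|\Delta h|^\alpha)$, uniformly in $\xi \in B$, which in particular establishes the required $o(|\Delta h|)$ bound and identifies the Fr\'echet derivative through $\langle \dot{\mu}^h_t, \xi\rangle = \int_{\mathbb{R}^+} \psi^h_{\xi,t}(0,x)\, d\nu(x)$.

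For H\"older continuity of $h \mapsto \dot{\mu}^h$, I would invoke Lemma \ref{l1} in its full form, comparing difference quotients at two base points $h_1$ and $h_2$; the same Gr\"onwall mechanism yields $\|\psi^{h_1} - \psi^{h_2}\|_\infty \leq G_L T |h_1 - h_2|^\alpha$, with $G_L$ aggregating the $C^{1+\alpha}$ norms of $a, b, c$ and their $h$-derivatives together with the constants arising from propagation through the flow $X_h$. The hardest part throughout is the bookkeeping of H\"older regularity through compositions: one must track how the seminorms $\|\cdot\|_\alpha$ transfer through $\xi \circ X_h$, $a(h, X_h(\cdot))$, $c(h, X_h(\cdot))$, etc., and verify that all resulting constants are uniform over $\xi \in B$. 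The reason $Z$ succeeds where $(\mathcal{M}(\mathbb{R}^+), p_F)$ fails (cf. Example \ref{flat_metric_is_not_enough}) is exactly that the crucial $|\Delta h|^\alpha$ factor in Lemma \ref{l1} can only be extracted when the test function has a H\"older-continuous derivative, i.e. when $\xi \in C^{1+\alpha}$.
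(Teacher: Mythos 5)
Your proposal is essentially the same approach as the paper's, with one genuine organizational difference in how existence of the $h$-derivative of $\varphi^h_{\xi,t}$ is established. You propose to \emph{construct} the candidate derivative $\psi^h_{\xi,t}=\partial_h\varphi^h_{\xi,t}$ directly by Picard iteration on the formally differentiated Volterra equation, and then verify $D^{\Delta h}\to\psi^h$; the paper instead proves the Cauchy property of the difference quotients $\frac{\mu_t^{h+\Delta h}-\mu_t^h}{\Delta h}$ directly, appealing to completeness of $Z$ and never writing the limit explicitly, while existence of $\partial_h\varphi^h_{\xi,t}$ is obtained more abstractly through the Implicit Function Theorem in $C[0,t]$ (Lemma \ref{l4}). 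Both routes are valid and hinge on the same two pillars: the duality reduction via \eqref{semigroup}, and the Taylor-type estimate of Lemma \ref{l1}, which is what extracts the crucial $|\Delta h|^{\alpha}$ gain and is unavailable under the flat metric (your closing remark matches Example \ref{flat_metric_is_not_enough} precisely). Your version is slightly more constructive and avoids the Banach-space IFT machinery, but pays by having to run the Gr\"onwall/Bielecki argument twice (once to build $\psi^h$, once to prove convergence of $D^{\Delta h}$); the paper's Cauchy-sequence formulation folds this into a single application of Lemma \ref{l1}. Either way, the real work is identical: propagating $C^{1+\alpha}$ bounds through $\xi\circ X_h$, $a(h,X_h)$, the exponential weight, and the implicit Volterra structure (the paper's Corollary \ref{Holder_cont_phi} and the estimates feeding it), which you correctly identify as the bottleneck; your proposal does not carry out this bookkeeping, but it points at the right objects and the right tools.
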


\begin{defn}{\bf (Constants $C_L, H_L, G_L$)}\label{constants_meaning} To make estimating procedure clear, we will always incorporate terms that are not useful anymore to the constant $C_L$ so that $C_L$ should be always understood as a value that depends continuously on length of considered time interval $T$ as well as $W^{1,\infty}$ norms of functions $a$, $b$ and $c$. For example, it is allowed to write:
$$
\norm{b}_{W^{1,\infty}} \norm{c}_{L^{\infty}}T \leq C_L \norm{c}_{L^{\infty}}T \leq C_L T \leq C_L.
$$
However, it is forbidden to write:
$$
\frac{1}{T} \leq C_L
$$
as then $C_L$ blows up when $T \to 0$. Rigorously, it can be realized by taking maximal value from two constants. Similarly, we introduce constant $H_L$ that can depend on $C_L$ and additionally on norms $\norm{f_x}_{\alpha,x}$, $\norm{f_x}_{\alpha,h}$ and $\norm{f_h}_{\alpha,x}$ where $f = a, b, c$ is a model function. Finally, constant $G_L$ should be alwasy linear combination of $1$ and norms $\norm{f_h}_{\alpha,h}$ where $f = a,b,c$ with coefficients estimated by $C_L$ and $H_L$.
\end{defn}
The reason for introducing three different constant is that during the treatment of nonlinear problem, $C_L$ will be easily controlled, $H_L$ slightly harder and the main difficulty will be to control $G_L$. This way, we avoid writing too many different terms that actually have the same effect. It will be also useful to apply the following estimating convention.
\begin{rem}\label{estimate_convention}
In this paper, we will have to estimate differences of two functions evaluated at different points using some classical bounds (Lipschitz or H\"older estimates). For instance, given functions $f$ and $\overline{f}$ defined on $\mathbb{R}^2$ we can estimate using triangle inequality:
\begin{multline*}
\abs{f(x_1, y_1) - \overline{f}(x_2,y_2)} \leq \abs{f(x_1, y_1) - \overline{f}(x_1, y_1)} + 
\abs{\overline{f}(x_1, y_1) - \overline{f}(x_2,y_1)} +
\abs{\overline{f}(x_2,y_1) - \overline{f}(x_2,y_2)} \leq \\ \leq
\norm[1]{f-\overline{f}}_{\infty} + \norm[1]{\overline{f}_x}_{\infty}\abs{x_1-x_2} +\norm[1]{\overline{f}_y}_{\infty} \abs{y_1-y_2},
\end{multline*}
assuming sufficient regularity. This amounts to considering all possible differences between $f(x_1,x_2)$ and  $\overline{f}(y_1,y_2)$. Therefore, when estimation is trivial, we will usually write only final result of estimation, always comparing terms from left to right (so in the example above, we first consider difference between $f$ and $\overline{f}$, then difference on the first variable and finally on the second). 
\end{rem}
To prove Theorem \ref{lin_per_1}, we will demonstrate that $\frac{\mu^{h+\Delta h}_t - \mu_t^h}{\Delta h}$ is a Cauchy sequence in $Z$. To this end, we start with small $\Delta h_1, \Delta h_2$ and write: 
\begin{multline}\label{Cauchy_seq_analysis}
\norm{\frac{\mu^{h + \Delta h_1}_t - \mu_t^h}{\Delta h_1} - \frac{\mu^{h + \Delta h_2}_t - \mu_t}{\Delta h_2}}_{Z} =
\sup_{\xi: \norm{\xi}_{C^{1+\alpha}} \leq 1} \int_{\mathbb{R}^+} \xi \Bigg(\frac{d\mu^{h + \Delta h_1}_t - d\mu_t^h}{\Delta h_1} - \frac{d\mu^{h +\Delta h_2}_t - d\mu_t^h}{\Delta h_2} \Bigg) =\\ = \sup_{\xi: \norm{\xi}_{C^{1+\alpha}} \leq 1} \int_{\mathbb{R}^+} \bigg( \frac{\varphi^{h + \Delta h_1}_{\xi,t}(0,x) - \varphi^{h}_{\xi,t}(0,x)}{\Delta h_1} - \frac{\varphi^{h+\Delta h_2}_{\xi,t}(0,x) - \varphi^{h}_{\xi,t}(0,x)}{\Delta h_2} \bigg) d\mu_0,
\end{multline}
where we applied semigroup property \eqref{semigroup} and adopted notation: $\varphi^h_{\xi,t} = \varphi_{\xi,t}^{a(h,\cdot), b(h,\cdot), c(h,\cdot)}$. In view of Lemma \ref{l1}, it would be sufficient to know that map $[-\frac{1}{2}, \frac{1}{2}] \ni h \mapsto \varphi^h_{\xi,t}(0,x)$ is $C^{1+\alpha}$, independently of $\xi$ and $x$. Recall, $\varphi^h_{\xi,t}$ is the unique solution of implicit equation:
\begin{equation}\label{sol_dual_dep_onh}
\varphi^h_{\xi,t}(s,x) = \xi(X_{b(h, \cdot)}(t-s,x))e^{\int_0^{t-s }c(h, X_{b(h,\cdot)}(u,x)) du} + \int_0^{t-s}a(h,X_{b(h,\cdot)}(u,x))\varphi^h_{\xi,t}(u+s,0)e^{\int_0^u c(h,X_{b(h, \cdot)}(v,x)) dv} du
\end{equation}
and our target is to obtain desired regularity of $\varphi^h_{\xi,t}(s,x)$ from this equation. Therefore, in Section \ref{study_imp_eqn}, we will study general functional identities of the form:
\begin{equation}\label{general_implicit_eqn}
f(h,s,x) = p(h,s,x) + \int_0^{t-s} q(h,u,x) f(h,s+u,0) du
\end{equation}
Then, we will check that equation for $\varphi^h_{\xi,t}(s,x)$ satisfies assumptions of the developed theory for \eqref{general_implicit_eqn} and this will lead to the proof of Theorem \ref{lin_per}.
\begin{ex}\label{flat_metric_is_not_enough}Before we start, it is instructive to see why one has to work in the space $Z$. Natural strategy, as well-posedness theory suggests, would be to study that problem in some linear space being extension of $(\mathcal{M}^+(\mathbb{R}^+), p_F)$ on the whole set of bounded Radon measures. Unfortunately, a very simple example shows that, in general, one cannot establish such results in flat metric setting. More preciesly, consider perturbed transport equation in one dimension ($a^0 = a_p = c^0 = c_p = 0$ and $b^0 = b_p = 1$ in our setting):
\begin{equation}\label{example_diff}
\partial_t \mu_t^h + \partial_x((1+h)\mu_t^h) = 0 \:\:\:\:\:\;\; \mu_0^h = \delta_0.
\end{equation}
One easily checks that $\mu_t^h = \delta_{(1+h)t}$ is a measure solution to \eqref{example_diff} (boundary condition is satisfied for a.e. $t \in [0,T]$). However, sequence $\frac{\mu_t^h - \mu_t^0}{h}$ cannot be a Cauchy sequence with respect to flat metric as for $\xi \in W^{1,\infty}(\mathbb{R}^+)$:
$$
\int_{\mathbb{R}^+}\xi(x) \Bigg( \frac{d\mu_t^{h_1}(x) - d\mu_t^0}{h_1} - \frac{d\mu_t^{h_2}(x) - d\mu_t^0}{h_2} \Bigg)  =  \frac{\xi((1+h_1)t) - \xi(t)}{h_1} - \frac{\xi((1+h_2)t) - \xi(t)}{h_2}
$$
so if we choose:
\begin{equation}\label{example_fcn}
\xi(x) =
\begin{cases} 1-|x-t| &\mbox{if } |x-t| \leq 1,  \\ 
0 & \mbox{if } |x-t| > 1 \end{cases} \in W^{1,\infty}(\mathbb{R}^+)
\end{equation}
for some $t > 1 $, we see that
$$
\lim_{h_1 \to 0^-,~ h_2 \to 0^+} \sup_{\xi \in W^{1,\infty}:~ \norm{\xi} \leq 1} \int_{\mathbb{R}^+}\xi(x) \Bigg( \frac{d\mu_t^{h_1}(x) - d\mu_t^0}{h_1} - \frac{d\mu_t^{h_2}(x) - d\mu_t^0}{h_2} \Bigg) \geq 2t > 0
$$
raising contradiction. It is important to note that it will not help to replace assumption $\xi \in W^{1,\infty}(\mathbb{R}^+)$ with $\xi \in C^1(\mathbb{R}^+)$ as one can uniformly approximate function \eqref{example_fcn} with sequence $(\xi_n) \subset C^1(\mathbb{R}^+)$ keeping condition $\norm{\xi_n}_{W^{1,\infty}} \leq 1$ satisfied. This example suggests that one should consider set of functions with uniformly continuous derivatives. 
\end{ex}
\subsection{Regularity of solutions to \eqref{general_implicit_eqn} with respect to perturbation parameter $h$}\label{study_imp_eqn}
In this section, we study continuous solutions $f: [-\frac{1}{2}, \frac{1}{2}]\times [0,t] \times \mathbb{R}^+ \to \mathbb{R}$ to the equation \eqref{general_implicit_eqn}. We remark here that some general results on existence and uniqueness for this type of equations were obtained by Karoui \cite{karoui2005existence, karoui2005existence2} using Schauder fixed point argument. Nevertheless, our approach is simpler and aimed also at  H\"older regularity of solutions. 

Let $\mathcal{C}_t$ be the space of bounded and continuous functions on $[-\frac{1}{2}, \frac{1}{2}]\times [0,t] \times \mathbb{R}^+  \to \mathbb{R}$. In $\mathcal{C}_t$ we define subsets:
\begin{itemize}
\item $\mathcal{C}_t^h$ of functions differentiable with respect to the first variable with bounded (independently of other variables) derivative,
\item $\mathcal{C}_t^{h,\alpha}$ of functions differentiable with respect to the first variable with bounded (independently of other variables) and H\"older contrinuous derivative with respect to $h$ (continuous in $h$ and $x$).
\end{itemize}
Similarly, we introduce the subsets $\mathcal{C}_t^{x}$ and $\mathcal{C}_t^{x,\alpha}$. For functions in $\mathcal{C}_t$ we write $h$, $s$ and $x$ to denote first, second and third variable respectively. Partial derivatives with respect to $h$, $s$ and $x$ will be denoted with lower indices, for instance when $g \in \mathcal{C}_t$ we write $g_h$, $g_s$ and $g_x$. We will use norms:
\begin{itemize}
\item for $f \in \mathcal{C}_t$ we write $\norm[1]{f}_{\infty}$ (standard $L^{\infty}$ norm with respect to all variables),
\item for $f \in \mathcal{C}_t^{x}$ we write $\norm[1]{f}_{W^{1,\infty}, x} = \max{\big(\norm[1]{f}_{\infty}, \norm[1]{f_x}_{\infty}\big)}$,
\item for $f \in \mathcal{C}_t^{h}$ we write $\norm[1]{f}_{W^{1,\infty}, h} = \max{\big(\norm[1]{f}_{\infty}, \norm[1]{f_h}_{\infty}\big)}$,
\item for $f \in \mathcal{C}_t^{h} \cap \mathcal{C}_t^{x}$ we write $\norm[1]{f}_{W^{1,\infty}} = \max{\big(\norm[1]{f}_{\infty}, \norm[1]{f_x}_{\infty}, \norm[1]{f_h}_{\infty}\big)}$,
\item for $f \in \mathcal{C}_t^{h,\alpha}$ we write $\norm{f_h}_{\alpha,h}$ and $\norm{f_h}_{\alpha,x}$ as described in \eqref{Holder_partial_norm}, 
\item for $f \in \mathcal{C}_t^{x,\alpha}$ we write $\norm{f_x}_{\alpha,x}$ and $\norm{f_x}_{\alpha,h}$ as described in \eqref{Holder_partial_norm}.
\end{itemize}
We will also adopt the following notation (cf. Definition \ref{constants_meaning}):
\begin{defn}{\bf (Constants $C_q$, $H_q$, $G_q$)}
For $q \in \mathcal{C}_t^{h} \cap \mathcal{C}_t^{x}$, the constant $C_q$ is any constant depending continuously on length of time interval $t$ and $\norm{q}_{W^{1,\infty}}$. Moreover, if $q \in \mathcal{C}_t^{x,\alpha} \cap \mathcal{C}_t^{h,\alpha}$, then $H_q$ is any constant allowed to depend on $C_q$ and additionally on $\norm{q_x}_{\alpha,x}$, $\norm{q_x}_{\alpha,h}$ and $\norm{q_h}_{\alpha,x}$. Finally, $G_q$ is any constant depending on $C_q$, $H_q$ and linear combinations of the norm $\norm{q_h}_{\alpha,h}$.
\end{defn}
We start with the simple existence result:
\begin{lem}\label{lem_imp_exi}
Equation \eqref{general_implicit_eqn} with $p, q \in \mathcal{C}_t$ is uniquely solvable in $\mathcal{C}_t$. Moreover, 
\begin{equation}\label{basic_est_for_sol}
\norm{f}_{\infty} \leq \norm{p}_{\infty} e^{2\norm{q}_{\infty} t}.
\end{equation}
\end{lem}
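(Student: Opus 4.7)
The plan is to obtain Lemma \ref{lem_imp_exi} by a Banach fixed-point argument applied to the operator
$$(\mathcal{T}f)(h,s,x) := p(h,s,x) + \int_0^{t-s} q(h,u,x)\, f(h,s+u,0)\, du,$$
which sends $\mathcal{C}_t$ into itself: boundedness follows from $\norm{\mathcal{T}f}_{\infty} \leq \norm{p}_{\infty} + t\norm{q}_{\infty}\norm{f}_{\infty}$, while joint continuity in $(h,s,x)$ is obtained by dominated convergence together with the continuous dependence of both the integrand and the upper limit $t-s$ on the variables. Note that only the trace $f(h,\cdot,0)$ appears on the right-hand side, which is what makes the fixed-point analysis tractable.

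Since the naive supremum norm yields a contraction only when $t\norm{q}_{\infty}<1$, I would instead equip $\mathcal{C}_t$ with the equivalent weighted norm
$$\norm{f}_{\lambda} := \sup_{h,s,x} e^{-\lambda(t-s)}\, \abs{f(h,s,x)},$$
for a parameter $\lambda > 0$ chosen momentarily. Using $\abs{f(h,s+u,0)} \leq e^{\lambda(t-s-u)}\norm{f}_{\lambda}$ and integrating gives
$$e^{-\lambda(t-s)}\, \abs{(\mathcal{T}f - \mathcal{T}g)(h,s,x)} \leq \frac{\norm{q}_{\infty}}{\lambda}\, \norm{f-g}_{\lambda},$$
so any $\lambda > \norm{q}_{\infty}$ turns $\mathcal{T}$ into a strict contraction on $(\mathcal{C}_t, \norm{\cdot}_{\lambda})$. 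The Banach fixed-point theorem then produces the unique solution $f \in \mathcal{C}_t$.

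For the a priori estimate \eqref{basic_est_for_sol}, I would set $M(s) := \sup_{h,x}\abs{f(h,s,x)}$ and read off from \eqref{general_implicit_eqn} the backwards integral inequality
$$M(s) \leq \norm{p}_{\infty} + \norm{q}_{\infty} \int_s^t M(\sigma)\, d\sigma,$$
after the change of variable $\sigma = s+u$. Gr\"onwall's lemma applied in reversed time then gives $M(s) \leq \norm{p}_{\infty} e^{\norm{q}_{\infty}(t-s)}$, which is in fact even sharper than the stated bound $\norm{p}_{\infty} e^{2\norm{q}_{\infty}t}$.

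Since this is a classical Volterra-type equation, I do not anticipate any serious obstacle. The only mildly delicate point is verifying the continuity of $\mathcal{T}f$ at boundary points where $s\to t$, but this is routine from the fact that the length $t-s$ of the integration interval shrinks continuously to zero together with uniform bounds on $q$ and $f$.
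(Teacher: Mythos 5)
Your existence--uniqueness argument coincides with the paper's: both equip $\mathcal{C}_t$ with the Bielecki weight $e^{-\lambda(t-s)}$ and apply Banach's fixed-point theorem. (The paper takes $\lambda = \norm{q}_{\infty}$ exactly and retains the factor $1-e^{-\lambda t}<1$; you take $\lambda > \norm{q}_{\infty}$ and drop that factor -- both variants give a strict contraction.) Where you genuinely diverge is in the a priori bound. The paper stays inside the Bielecki framework: from $f=Tf$ it deduces $\norm{f}_{\lambda}\leq\norm{p}_{\infty}+(1-e^{-\lambda t})\norm{f}_{\lambda}$, hence $\norm{f}_{\lambda}\leq \norm{p}_{\infty}e^{\lambda t}$, and then pays the conversion cost $\norm{f}_{\infty}\leq e^{\lambda t}\norm{f}_{\lambda}$, which is where the exponent $2\norm{q}_{\infty}t$ comes from. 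You instead set $M(s):=\sup_{h,x}\abs{f(h,s,x)}$, read off the backwards Volterra inequality $M(s)\leq\norm{p}_{\infty}+\norm{q}_{\infty}\int_s^t M(\sigma)\,d\sigma$, and apply Gr\"onwall to get $\norm{f}_{\infty}\leq\norm{p}_{\infty}e^{\norm{q}_{\infty}t}$, which is strictly sharper. This is a legitimate improvement; the paper's factor of $2$ in the exponent is an artifact of converting between the weighted and unweighted norms. One small point to make fully rigorous: you should note that $M$ is measurable -- it is a supremum of continuous functions, hence lower semicontinuous, which suffices for Gr\"onwall. Even cleaner, since only the trace $f(h,\cdot,0)$ enters the integral, you could take $N(s):=\sup_{h\in[-1/2,1/2]}\abs{f(h,s,0)}$; this supremum is over a compact set, so $N$ is continuous, $N$ satisfies the same Gr\"onwall inequality, and the bound for general $x$ then follows directly from the equation.
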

\begin{proof}
Actually, this lemma follows from the Banach Fixed Point Theorem applied on some small interval of time together with classical extension. However, it will be instructive to see an elegant approach, exploited also in the next Lemma. For $\lambda > 0$ and $f \in \mathcal{C}_t$, we define {\it Bielecki norm}:
\begin{equation}\label{bielecki}
\norm{f}_{\lambda} = \sup_{h \in [-\frac{1}{2}, \frac{1}{2}], s\in [0,t], x \in \mathbb{R}^+} e^{-\lambda(t-s)} \abs{f(h,s,x)}.
\end{equation}
It is easy to check that this norm is equivalent to $\norm{f}_{\infty}$ on $\mathcal{C}_t$ with $\norm{f}_{\lambda} \leq \norm{f}_{\infty} \leq e^{\lambda t} \norm{f}_{\lambda}$. Let $T f$ be defined by (RHS) of \eqref{general_implicit_eqn} so that $T$ is a well-defined and bounded operator from $\mathcal{C}_t$ to $\mathcal{C}_t$. We check the contraction condition: for $f_1, f_2 \in \mathcal{C}_t$ we have
\begin{multline*}
\norm{Tf_1 - Tf_2}_{\lambda} = \sup_{h \in [-\frac{1}{2}, \frac{1}{2}], s\in [0,t], x \in \mathbb{R}^+} e^{-\lambda(t-s)}
\abs{\int_0^{t-s} q(h,u,x) \big(f_1(h,u+s,0) - f_2(h,u+s,0)\big) du} \leq \\  \leq
 \sup_{h \in [-\frac{1}{2}, \frac{1}{2}], s\in [0,t], x \in \mathbb{R}^+} e^{-\lambda(t-s)}
\abs{\int_0^{t-s} e^{\lambda\big(t-(s+u)\big)}q(h,u,x) e^{-\lambda\big(t-(s+u)\big)}\big(f_1(h,u+s,0) - f_2(h,u+s,0)\big) du} \leq \\
\leq \norm{q}_{\infty} \norm{f_1 - f_2}_{\lambda} \sup_{s\in [0,t]}
e^{-\lambda(t-s)} \int_0^{t-s}e^{\lambda\big(t-(s+u)\big)} du =
\norm{q}_{\infty} \norm{f_1 - f_2}_{\lambda} \sup_{s\in [0,t]} \int_0^{t-s}e^{-\lambda u} du =\\= \frac{\norm{q}_{\infty}}{\lambda}
(1 - e^{-\lambda t}) \norm{f_1 - f_2}_{\lambda}.
\end{multline*}
so that choosing $\lambda = \norm{q}_{\infty}$, we conclude the proof of existence and uniqueness. To see that norm of the solution $f$ in $\mathcal{C}_{t}$ depends only on $\mathcal{C}_{t}$ norms of $p$ and $q$, we compute exactly like above $\norm{Tf}_{\lambda}$ to obtain:
\begin{equation*}
\norm{Tf}_{\lambda} \leq \norm{p}_{\infty} + (1 - e^{-\lambda t})\frac{\norm{q}_{\infty}}{\lambda} \norm{f}_{\lambda}.
\end{equation*}
Since $f$ is the solution, $Tf = f$ and so, for $\lambda = \norm{q}_{\infty}$ we end up with $\norm{f}_{\infty} \leq \norm{p}_{\infty} e^{2\norm{q}_{\infty} t}$.
\end{proof}
We shall also discuss how the solution behaves when the first variable (potentially perturbation parameter) is changed. For function $g \in \mathcal{C}_{t}$, we define its pointwise variation in the first variable:
$$
\Delta_h g (h_1,h_2) = \sup_{{s\in [0,t], x\in \mathbb{R}^+}} \abs{g(h_1,s,x) - g(h_2,s,x)}
$$
and similarly we define variation in the second variable:
$$
\Delta_x g (x_1,x_2) = \sup_{{h \in [-\frac{1}{2}, \frac{1}{2}], s\in [0,t]}} \abs{g(h,s,x_1) - g(h,s,x_2)}.
$$
\begin{lem}
Let $f$ be the solution of \eqref{general_implicit_eqn} with $p, q \in \mathcal{C}_t$. Then, 
\begin{equation}\label{variation_f}
\Delta_h f(h_1, h_2) \leq e^{C_qt} \Delta_h p(h_1, h_2) + t \norm{p}_{\infty} e^{C_q t}\Delta_h q(h_1, h_2)
\end{equation}
\end{lem}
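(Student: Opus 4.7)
The plan is to subtract the implicit equation \eqref{general_implicit_eqn} at $h = h_1$ from the same equation at $h = h_2$, add and subtract $q(h_1, u, x) f(h_2, s+u, 0)$ inside the integrand, and then take suprema in $s$ and $x$. Explicitly, writing
\[
q(h_1,u,x) f(h_1,s+u,0) - q(h_2,u,x) f(h_2,s+u,0) = q(h_1,u,x)\bigl[f(h_1,s+u,0)-f(h_2,s+u,0)\bigr] + \bigl[q(h_1,u,x)-q(h_2,u,x)\bigr] f(h_2,s+u,0),
\]
one bounds the first summand by $\|q\|_\infty |f(h_1,s+u,0) - f(h_2,s+u,0)|$ and the second by $\Delta_h q(h_1,h_2)\,\|f\|_\infty$. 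Using the existence bound \eqref{basic_est_for_sol} to control $\|f\|_\infty$ by $\|p\|_\infty e^{2\|q\|_\infty t}$, this leaves a pointwise inequality of the form
\[
|f(h_1,s,x)-f(h_2,s,x)| \le \Delta_h p(h_1,h_2) + t\,\|p\|_\infty e^{2\|q\|_\infty t}\,\Delta_h q(h_1,h_2) + \|q\|_\infty \int_0^{t-s} D(s+u)\,du,
\]
where $D(\tau) := \sup_x |f(h_1,\tau,x)-f(h_2,\tau,x)|$.

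Since the right-hand side is independent of $x$, taking the supremum in $x$ gives a closed inequality for $D$. The awkward feature is that $D(s+u)$ appears under the integral (rather than $D(s)$), so a naive sup in $s$ on both sides yields only a useful bound when $t\|q\|_\infty < 1$. The clean way around this, in the spirit of the proof of Lemma \ref{lem_imp_exi}, is to weigh $D$ with the Bielecki factor $e^{-\lambda(t-s)}$. Namely, after multiplying the inequality by $e^{-\lambda(t-s)}$ and estimating under the integral, one obtains
\[
\|D\|_\lambda \le \Delta_h p(h_1,h_2) + t\,\|p\|_\infty e^{2\|q\|_\infty t}\,\Delta_h q(h_1,h_2) + \frac{\|q\|_\infty}{\lambda}\bigl(1-e^{-\lambda t}\bigr)\|D\|_\lambda,
\]
and a choice like $\lambda = 2\|q\|_\infty$ absorbs the last term into the left side.

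Unwinding the Bielecki norm back to the sup norm produces the factor $e^{\lambda t}$, which together with the existence bound for $\|f\|_\infty$ combines into a single exponential $e^{C_q t}$ in front of both $\Delta_h p$ and $t\|p\|_\infty \Delta_h q$. Alternatively, one can state the estimate as a backward Gronwall inequality for $\Psi(s) := \int_s^t D(\tau)\,d\tau$, which satisfies $-\Psi'(s) \le A + \|q\|_\infty \Psi(s)$ with the terminal condition $\Psi(t) = 0$, and solve it by multiplying by the integrating factor $e^{\|q\|_\infty s}$; both routes yield exactly \eqref{variation_f}.

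The only delicate point is the backward-in-time nature of the integral $\int_0^{t-s}$ combined with the fact that $f$ is evaluated at $s+u$, not at $s$ — this is precisely what forces either the Bielecki-norm trick or the backward Gronwall argument rather than a standard forward application of Gronwall's lemma. Nothing else in the proof is more than bookkeeping: the constant $C_q$ produced by this argument depends only on $t$ and $\|q\|_\infty$, in line with the convention of Definition \ref{constants_meaning}.
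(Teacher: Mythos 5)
Your proof is correct and takes essentially the same route as the paper: subtract the implicit equations, split the integrand by adding and subtracting a cross term, bound $\|f\|_\infty$ via \eqref{basic_est_for_sol}, and close the estimate with the Bielecki-norm trick from Lemma~\ref{lem_imp_exi} (the paper chooses $\lambda = \|q\|_\infty$ rather than $2\|q\|_\infty$, but that only changes constants absorbed into $C_q$). The backward-Gronwall alternative you sketch is also the one the paper briefly mentions as an equally valid path.
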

\begin{proof}
This can be deduced from Gronwall's inequality (first for $x=0$ and then for any $x \in \mathbb{R}^+$) or again, using Bielecki norm exactly like above:
\begin{equation*}
\norm{\Delta_h f(h_1, h_2)}_{\lambda} \leq
\Delta_h p(h_1, h_2) + t \norm{f}_{\infty}\Delta_h q(h_1, h_2) + (1 - e^{-\lambda t})\frac{\norm{q}_{\infty}}{\lambda} \norm{\Delta_h f(h_1, h_2)}_{\lambda}  
\end{equation*}
(we applied convention from Remark \ref{estimate_convention}). Choosing $\lambda = \norm{q}_{\infty}$ and applying $\norm{f}_{\infty} \leq e^{C_q t} \norm{f}_{\lambda}$ we obtain \eqref{variation_f}.
\end{proof}
We then focus on properties of solutions to \eqref{general_implicit_eqn} implied by Implicit Function Theorem:
\begin{lem}\label{l4}
Consider the solution $f$ of equation \eqref{general_implicit_eqn} with $p, q \in \mathcal{C}_t^h$. Then, the function $h \mapsto f(h,s,0)$ is Fr\'echet differentiable in $C[0,t]$. In particular, there exists ${\partial_h} f(h,s,0) \big|_{h = h_0} \in C[0,t]$ such that:
\begin{equation*}
\norm{\frac{f(h_0+\Delta h,s,0) - f(h_0,s,0)}{\Delta h} - {\partial_h} f(h,s,0) \big|_{h=h_0} }_{C[0,t]} \to 0 \text{     as  } \Delta h \to 0.
\end{equation*}
\end{lem}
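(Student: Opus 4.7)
The plan is to reduce the question to a self--contained one--variable implicit equation by evaluating at $x=0$, produce the candidate derivative via another application of Lemma \ref{lem_imp_exi}, and then verify uniform convergence of the difference quotient by a Bielecki--norm estimate.

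Setting $g(h,s) := f(h,s,0)$ and inserting $x = 0$ into \eqref{general_implicit_eqn} yields the self-contained equation
\begin{equation*}
g(h,s) = p(h,s,0) + \int_0^{t-s} q(h,u,0)\, g(h,s+u)\, du,
\end{equation*}
so it suffices to show Fr\'echet differentiability of $h \mapsto g(h,\cdot) \in C[0,t]$. Formally differentiating in $h$ suggests that $G(h,s) := \partial_h g(h,s)$ should solve
\begin{equation*}
G(h,s) = P(h,s) + \int_0^{t-s} q(h,u,0)\, G(h, s+u)\, du,
\end{equation*}
with forcing
\begin{equation*}
P(h,s) := p_h(h,s,0) + \int_0^{t-s} q_h(h,u,0)\, g(h,s+u)\, du.
\end{equation*}
Since $p, q \in \mathcal{C}_t^h$ and $g \in \mathcal{C}_t$ by Lemma \ref{lem_imp_exi}, the forcing $P$ lies in $\mathcal{C}_t$, so Lemma \ref{lem_imp_exi} applied to this new equation --- which is an instance of \eqref{general_implicit_eqn} with the $x$-variable trivialized --- produces a unique candidate $G \in \mathcal{C}_t$.

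To verify that $G(h_0, \cdot)$ is indeed the derivative, define the remainder
\begin{equation*}
R^{\Delta h}(s) := \frac{g(h_0 + \Delta h, s) - g(h_0, s)}{\Delta h} - G(h_0, s).
\end{equation*}
Subtracting the integral equations for $g$ at $h_0 + \Delta h$ and $h_0$, dividing by $\Delta h$, and comparing with the equation for $G(h_0, \cdot)$ yields after rearrangement an implicit equation
\begin{equation*}
R^{\Delta h}(s) = A^{\Delta h}(s) + \int_0^{t-s} q(h_0, u, 0)\, R^{\Delta h}(s+u)\, du,
\end{equation*}
in which $A^{\Delta h}$ collects three kinds of terms: the linearization error $\tfrac{p(h_0+\Delta h,s,0) - p(h_0,s,0)}{\Delta h} - p_h(h_0,s,0)$, an analogous linearization error for $q$ multiplied against $g(h_0 + \Delta h, \cdot)$, and a term $\int_0^{t-s} q_h(h_0,u,0) \bigl(g(h_0+\Delta h, s+u) - g(h_0,s+u)\bigr) du$ controlled by the variation estimate \eqref{variation_f}. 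Once $\norm{A^{\Delta h}}_{\infty} \to 0$ as $\Delta h \to 0$, the Bielecki--norm contraction from the proof of Lemma \ref{lem_imp_exi} applied to the equation for $R^{\Delta h}$ gives $\norm{R^{\Delta h}}_{C[0,t]} \leq C\, \norm{A^{\Delta h}}_{\infty} \to 0$, which is the claim.

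The main obstacle is to check that $\norm{A^{\Delta h}}_{\infty} \to 0$ \emph{uniformly} in $s$. The third summand is handled routinely using \eqref{variation_f} together with the boundedness of $q_h$, but the linearization errors in $p$ and $q$ demand that the $h$-differentiability inherent in $\mathcal{C}_t^h$ be effectively uniform in the remaining variables; this is the essential place where the precise regularity content of the class $\mathcal{C}_t^h$ has to be exploited, and presumably the reason the statement is phrased with $\mathcal{C}_t^h$ rather than a weaker class. Once this uniform convergence is established, the Bielecki contraction closes the argument with no further work.
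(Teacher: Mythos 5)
Your proof takes a genuinely different route from the paper's. The paper invokes the Implicit Function Theorem in Banach spaces: it packages the implicit relation as a zero of a map $F:[-\tfrac12,\tfrac12]\times C[0,t]\to C[0,t]$, computes the Fr\'echet derivative $DF$, verifies via Lemma~\ref{lem_imp_exi} that the partial derivative $D_\phi F(h_0,\phi_0)$ is invertible, and reads off differentiability of $h\mapsto\phi(h)$ from IFT. You instead construct the candidate derivative $G$ directly by solving the formally differentiated integral equation (another application of Lemma~\ref{lem_imp_exi}), show that the normalized remainder $R^{\Delta h}$ satisfies an equation of the same integral type with forcing $A^{\Delta h}$, and conclude $\norm{R^{\Delta h}}\le C\norm{A^{\Delta h}}$ from the Bielecki contraction --- so the whole burden is reduced to $\norm{A^{\Delta h}}_\infty\to0$. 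Your approach is more elementary (no Banach-space IFT), more explicit (the equation you write down for $G$ is precisely \eqref{eq_for_f_h}, which the paper derives only afterwards in Lemma~\ref{l6}), and makes visible exactly what regularity is consumed and where. The issue you flag at the end --- that the difference quotients $\tfrac{p(h_0+\Delta h,\cdot,0)-p(h_0,\cdot,0)}{\Delta h}\to p_h(h_0,\cdot,0)$ and the analogue for $q$ must converge \emph{uniformly} in $s$ --- is a genuine one, and it is worth noting that it is present in the paper's argument too: the paper's ``it is easy to check that $F$ is Fr\'echet differentiable'' step and the $C^1$ hypothesis underlying IFT both require the same uniformity of the derivative in the $s$-variable, which the stated definition of $\mathcal{C}_t^h$ (bounded $p_h$, no explicit uniform continuity in $h$) does not formally deliver. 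So you have not so much a gap relative to the paper as an honest articulation of an implicit hypothesis the paper glosses over; to close it one would either strengthen $\mathcal{C}_t^h$ to require that $h\mapsto p(h,\cdot,\cdot)$ be Fr\'echet differentiable into $\mathcal{C}_t$, or observe that in all the paper's downstream applications $p,q$ actually lie in $\mathcal{C}_t^{h,\alpha}$, where H\"older continuity of $p_h$ in $h$ gives the uniformity for free.
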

\begin{proof}
We want to apply Implicit Function Theorem in Banach spaces (cf. Theorem 5.1.29 in \cite{denkowski2013introduction}). To this end, we write \eqref{general_implicit_eqn} in $x = 0$ as an equation to be solved in $C[0,t]$:
\begin{equation}
0 = - \phi(s) + p(h,s,0) + \int_0^{t-s} q(h,u,0) \phi(s+u) du =: F(h, \phi) 
\end{equation}
where we denoted $\phi(s) = f(h,s,0)$ and $F: [-\frac{1}{2}, \frac{1}{2}] \times C[0,T] \to C[0,T]$. Actually, in view of Lemma \ref{lem_imp_exi}, $F: [-\frac{1}{2}, \frac{1}{2}] \times B_M  \to C[0,t]$ where $B_M$ is some closed ball in $C[0,t]$. 
Moreover, it is easy to check that $F$ is Fr\'echet differentiable:
\begin{equation*}
F(h + \Delta h, \phi+\Delta \phi, ) - F(h, \phi) = DF(h,\phi)[\Delta h, \Delta \phi] + o(\Delta h, \Delta \phi)
\end{equation*}
where $DF(h,\phi): [-\frac{1}{2}, \frac{1}{2}] \times C[0,t] \to C[0,t]$ is a bounded, linear operator defined with:
\begin{multline}\label{Frechet_derivative}
DF(h,\phi)[{\bf \Delta h}, {\bf \Delta \phi}](s) = - {\bf \Delta \phi(s)} +\int_0^{t-s} q(h,0,u) {\bf \Delta \phi(s+u)} du \\ + \int_0^{t-s} q_h(h,0,u) {\phi(s+u)} du {\bf \Delta h} + p_h(h,0,u) 
{\bf \Delta h}
\end{multline}
Now, fix some $h_0 \in (-\frac{1}{2}, \frac{1}{2})$. We want to express $\phi(s)$ as a function of $h$ in some neighbourhood of $h_0$. In view of Implicit Function Theorem, we should check that the operator $DF(h_0,\phi)[0, \Delta \phi]$ is invertible. Define operator $R : C[0,t] \to C[0,t]$ with
\begin{equation*}
(R f)(s)= - f(s)+ \int_0^{t-s} q(h_0,0,u) f(u+s) du
\end{equation*}
so that $DF(h_0,\phi)[0, { \Delta \phi}] = R \Delta \phi$. In view of the Inverse Mapping Theorem, it is sufficient to check that $R$ is injective and surjective which is equivalent to the existence and uniqueness of continuous solutions to the equation $(Rf)(s) = g(s)$ for any $g \in C[0,t]$ (to see injectivity, take $f = 0$). This is implied again by Lemma \ref{lem_imp_exi}.

Therefore, Implicit Function Theorem applies: in some neighbourhood of $h = h_0$ one can find Fr\'echet differentiable function $\phi(h)(s)$ so that $F(h, \phi(h)(s)) = 0$. However, this equation is uniquely solvable with $f(h,0,s)$ solving \eqref{general_implicit_eqn} (Lemma \ref{lem_imp_exi}). We conclude that map $h \mapsto f(h,0,s)$ is Fr\'echet differentiable in $C[0,t]$.
\end{proof}
Using differentiability, we easily deduce:
\begin{lem}\label{l6}
Consider solution $f$ of the equation \eqref{general_implicit_eqn} with $p, q \in \mathcal{C}_t^{h,\alpha}$. Then, map $[-\frac{1}{2}, \frac{1}{2}] \ni H \mapsto {\partial_h} f(h,s,x) \big|_{h=H}$ is  H\"older continuous with exponent $\alpha$. Moreover, we have:
\begin{equation}\label{final_estimate_on_holder_norm}
\norm{f_h}_{\alpha,h} \leq e^{C_qt}\norm{p_h}_{\alpha,h}
+  C_q t \norm{p}_{W^{1,\infty},h} + G_q t \norm{p}_{\infty},
\end{equation}
\begin{equation}\label{final_estimate_on_holder_norm_2}
\norm{f_h}_{\alpha,x} \leq \max\big(2 e^{C_q t} \norm{p_h}_{\infty} , \norm{p_h}_{\alpha,x} + H_q t \norm{p}_{W^{1,\infty},h} \big).
\end{equation}
\end{lem}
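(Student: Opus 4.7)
The plan is to obtain an implicit equation for $g := f_h$ of the same type as \eqref{general_implicit_eqn} and then apply the estimates already proved for such equations, namely Lemma \ref{lem_imp_exi} and the variation bound \eqref{variation_f}. Differentiability of $h\mapsto f(h,s,0)$ in $C[0,t]$ is Lemma \ref{l4}; combining this with the $h$-regularity of $p,q$ justifies differentiation of \eqref{general_implicit_eqn} under the integral sign, yielding
\begin{equation*}
f_h(h,s,x) = \tilde p(h,s,x) + \int_0^{t-s} q(h,u,x)\, f_h(h,s+u,0)\, du,\qquad \tilde p(h,s,x) := p_h(h,s,x) + \int_0^{t-s} q_h(h,u,x)\, f(h,s+u,0)\, du.
\end{equation*}
Thus $g=f_h$ solves \eqref{general_implicit_eqn} with source $\tilde p$ and the same kernel $q$, so $\|f_h\|_\infty \leq \|\tilde p\|_\infty e^{2\|q\|_\infty t}$ and the variation identity \eqref{variation_f} applies.

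For \eqref{final_estimate_on_holder_norm}, I apply \eqref{variation_f} to the equation for $g$, which gives
\begin{equation*}
\Delta_h f_h(h_1,h_2) \leq e^{C_q t}\,\Delta_h\tilde p(h_1,h_2) + t\,\|\tilde p\|_\infty e^{C_q t}\,\Delta_h q(h_1,h_2).
\end{equation*}
The term $\Delta_h q(h_1,h_2)\leq \|q_h\|_\infty |h_1-h_2| \leq C_q|h_1-h_2|^\alpha$ since $|h_1-h_2|\leq 1$. For $\Delta_h \tilde p$, following the convention in Remark \ref{estimate_convention}, I split
\begin{equation*}
\Delta_h\tilde p(h_1,h_2) \leq \|p_h\|_{\alpha,h}|h_1-h_2|^\alpha + t\bigl(\|q_h\|_{\alpha,h}\|f\|_\infty |h_1-h_2|^\alpha + \|q_h\|_\infty\,\Delta_h f(h_1,h_2)\bigr),
\end{equation*}
then invoke \eqref{variation_f} for $f$ itself to get $\Delta_h f \leq C_q|h_1-h_2|$ and \eqref{basic_est_for_sol} for $\|f\|_\infty$. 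Collecting terms and dividing by $|h_1-h_2|^\alpha$ gives \eqref{final_estimate_on_holder_norm}: the leading $e^{C_q t}\|p_h\|_{\alpha,h}$ comes from applying the variation bound once, the $C_q t\|p\|_{W^{1,\infty},h}$ contribution comes from the Lipschitz term $\Delta_h f$ combined with $\|p\|_\infty$-type bounds, and the $G_q t\|p\|_\infty$ part absorbs precisely the single Hölder-in-$h$ norm $\|q_h\|_{\alpha,h}$ produced.

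For \eqref{final_estimate_on_holder_norm_2}, I split according to the size of $|x-y|$. When $|x-y|\geq 1$, the trivial bound $|f_h(h,s,x)-f_h(h,s,y)|/|x-y|^\alpha \leq 2\|f_h\|_\infty$ is combined with the basic estimate $\|f_h\|_\infty \leq e^{C_q t}\|p_h\|_\infty$ (after absorbing lower-order $\|p\|_\infty$ contributions into $C_q$), producing the first argument of the $\max$. When $|x-y|\leq 1$, expanding $f_h(h,s,x)-f_h(h,s,y)$ using the explicit formula above and estimating each of the three differences gives $\|p_h\|_{\alpha,x}|x-y|^\alpha$ from the source, $t\|q_h\|_{\alpha,x}\|f\|_\infty|x-y|^\alpha$ from the $q_h$-integral, and $t\|q_x\|_\infty\|f_h(h,\cdot,0)\|_\infty|x-y|$ from the $q$-integral; the last one converts to $|x-y|^\alpha$ at no cost because $|x-y|\leq 1$, and all the non-$\|p_h\|_{\alpha,x}$ factors group into $H_q t\|p\|_{W^{1,\infty},h}$.

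The main technical obstacle is the bookkeeping required by Definition of $C_q$, $H_q$, $G_q$: in each estimate, every instance of $\|q_h\|_{\alpha,h}$ must be isolated into $G_q$, every instance of $\|q_x\|_{\alpha,x}$, $\|q_x\|_{\alpha,h}$ or $\|q_h\|_{\alpha,x}$ into $H_q$, and only pure $W^{1,\infty}$ norms of $q$ may remain in $C_q$. The cleanest way to respect this is to invoke \eqref{variation_f} exactly once (on the equation for $g=f_h$) so that the exponential prefactor $e^{C_q t}$ appears only in front of the genuinely highest-order source contribution $\|p_h\|_{\alpha,h}$, with all other terms picking up only polynomial-in-$t$ factors.
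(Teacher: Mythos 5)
Your proposal follows essentially the same route as the paper: differentiate \eqref{general_implicit_eqn} in $h$ to get an equation of the same type for $f_h$ with modified source $\tilde p$, bound $\|f_h\|_\infty$ via Lemma \ref{lem_imp_exi}, apply \eqref{variation_f} once to the $f_h$-equation to get \eqref{final_estimate_on_holder_norm}, and estimate $\Delta_x f_h$ directly (with the case split $|x-y|\gtrless 1$ making explicit the ``bounded $+$ Lipschitz $\Rightarrow$ H\"older'' remark the paper uses) to get \eqref{final_estimate_on_holder_norm_2}. One wording slip to fix: $\|p\|_\infty$-dependent factors cannot be absorbed into $C_q$ (which by definition depends only on $q$ and $t$), so the $L^\infty$ bound on $f_h$ should read $\|f_h\|_\infty\leq e^{C_q t}\|p\|_{W^{1,\infty},h}$, not $e^{C_q t}\|p_h\|_\infty$.
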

\begin{proof}
Since we already know that $h \mapsto f(h,0,s)$ is differentiable, we obtain that the map $h \mapsto f(h,s,x)$ is differentiable (directly using \eqref{general_implicit_eqn}). We can differentiate equation \eqref{general_implicit_eqn} to obtain equation of the same type for $f_h$:
\begin{equation}\label{eq_for_f_h}
f_h(h,s,x) = \underbrace{p_h(h,s,x)+ \int_0^{t-s} q_h(h,u,x) f(h,u+s,0) du}_{:=\bar{p}(h,s,x)} 
 + \int_0^{t-s} q(h,u,x) f_h(h,u+s,0) du
\end{equation}
As this is the same type of equation as \eqref{general_implicit_eqn} we have by Lemma \ref{lem_imp_exi} applied to $f_h$ and $f$:
\begin{multline*}
 \norm{f_h}_{\infty} \leq \norm{\bar{p}}_{\infty} e^{C_qt} \leq 
\Big(\norm{p_h}_{\infty} + t \norm{q_h}_{\infty} \norm{f}_{\infty} \Big) e^{C_q t} \leq
\Big(\norm{p_h}_{\infty} + t \norm{q_h}_{\infty} \norm{p}_{\infty} e^{C_q t} \Big) e^{C_q t} \leq \\ \leq
\norm{p}_{W^{1,\infty},h} \Big(1 + t \norm{q_h}_{\infty}  e^{C_q t} \Big) e^{C_q t} 
\leq \norm{p}_{W^{1,\infty},h}  e^{C_qt}.
\end{multline*}
Then, we apply \eqref{variation_f} to obtain a bound for $\Delta_h f_h(h_1, h_2)$. 
\begin{multline*}
\Delta_h f_h(h_1, h_2) \leq e^{C_qt} \Delta_h \bar{p}(h_1, h_2) + t \norm{\bar{p}}_{\infty} e^{C_q t}\Delta_h q(h_1, h_2) 
\leq e^{C_q t}  \Big(\Delta_h \bar{p}(h_1, h_2) + t \norm{\bar{p}}_{\infty} \Delta_h q(h_1, h_2) \Big)  \leq \\
e^{C_q t}  \Big(\underbrace{ \Delta_h {p_h}(h_1, h_2) +  t \norm{q_h}_{\infty} \Delta_h f(h_1, h_2) +
 t \norm{f}_{\infty}  \Delta_h q_h(h_1, h_2) }_{\text{estimate for } \Delta_h \bar{p}(h_1, h_2)}
+ t \underbrace{(\norm{p_h}_{\infty} + t \norm{q_h}_{\infty} \norm{f}_{\infty})}_{\text{estimate for }\norm{\bar{p}}_{\infty}}\Delta_h q(h_1, h_2) \Big)\leq \\ 
e^{C_qt} \Big( \Delta_h {p_h}(h_1, h_2) + 
 t C_q \Delta_h f(h_1, h_2) +
 t \norm{p}_{\infty}  \Delta_h q_h(h_1, h_2) 
+ t (\norm{p_h}_{\infty} + C_q \norm{p}_{\infty}) \Delta_h q(h_1, h_2)\Big).
\end{multline*}
where we used the fact that $\norm{f}_{\infty} \leq \norm{p}_{\infty} e^{C_qt}$. Due to \eqref{variation_f} again, we have:
\begin{multline*}
\Delta_h f(h_1, h_2) \leq e^{C_qt} \Big(\Delta_h p(h_1, h_2) + t \norm{p}_{\infty} \Delta_h q(h_1, h_2) \Big) \leq 
e^{C_qt} \Big(\norm{p_h}_{\infty} + t\norm{p}_{\infty} \norm{q_h}_{\infty} \Big)\abs{h_1 - h_2} \leq \\ \leq
e^{C_qt} \Big( \norm{p_h}_{\infty} + C_q t\norm{p}_{\infty} \Big) \abs{h_1 - h_2} \leq e^{C_qt} \norm{p}_{W^{1,\infty},h} 
\end{multline*}
so that we end up with:
$$
\Delta_h f_h(h_1, h_2) \leq e^{C_qt} \Big(\norm{p_h}_{\alpha,h}
+ t C_q \norm{p}_{W^{1,\infty}, h} + tC_q\norm{p}_{\infty}\norm{q_h}_{\alpha,h}
 \Big)\abs{h_1 - h_2}^{\alpha}.
$$
To establish the inequality \eqref{final_estimate_on_holder_norm_2}, note that \eqref{eq_for_f_h} implies:
\begin{multline*}
\Delta_x f_h(x_1, x_2) \leq \Delta_x p_h(x_1, x_2) + t \norm{f}_{\infty} \Delta_x q_h(x_1, x_2) + t \norm{f_h}_{\infty} \Delta_x q(x_1,x_2) \leq \\ \leq
\norm{p_h}_{\alpha,x}\abs{x_1-x_2}^{\alpha} + t \norm{f}_{\infty} \norm{q_h}_{\alpha,x} \abs{x_1-x_2}^{\alpha}+ t \norm{f_h}_{\infty} \norm{q_x}_{\infty} \abs{x_1 - x_2}.
\end{multline*}
Since $\norm{f_h}_{\infty} < \infty$, we obtain \eqref{final_estimate_on_holder_norm_2} using bounds for $\norm{f}_{\infty}$ and $\norm{f_h}_{\infty}$. Note that we use here that bounded and Lipschitz maps are H\"older continuous. In general, this does not follow from Lipschitz continuity -- just consider linear functions.
\end{proof}
We move on to study properties of map $x \mapsto f(h,s,x)$. This will be easier to establish as there is no implicit relationship in variable $x$ in \eqref{general_implicit_eqn}.
\begin{lem}
Consider solution $f$ of equation \eqref{general_implicit_eqn} with $p, q \in \mathcal{C}_t^{x,\alpha} \cap \mathcal{C}_t^{h}$. Then:
\begin{equation}\label{est_f_x}
\norm{f_x}_{\infty} \leq \norm{p_x}_{\infty} + C_q t \norm{p}_{\infty}, \quad \quad \quad
\norm{f_x}_{\alpha,x} \leq \norm{p_x}_{\alpha,x} + H_q t \norm{p}_{\infty} ,
\end{equation}
\begin{equation}\label{est_f_x_h}
\norm{f_x}_{\alpha,h} \leq \norm{p_x}_{\alpha,h} +H_q t \norm{p}_{W^{1,\infty},h}
\end{equation}
\end{lem}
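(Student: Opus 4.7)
The key observation that makes this lemma much simpler than Lemma \ref{l6} is that in the implicit identity \eqref{general_implicit_eqn} the recursive term $f(h,s+u,0)$ does not depend on $x$. Consequently, differentiating formally under the integral sign yields an \emph{explicit} formula
$$ f_x(h,s,x) = p_x(h,s,x) + \int_0^{t-s} q_x(h,u,x)\, f(h,s+u,0)\,du, $$
and no fixed-point argument or Implicit Function Theorem is needed: all three bounds will follow from this identity combined with the $L^\infty$ bound of Lemma \ref{lem_imp_exi} and the variation estimate \eqref{variation_f}.

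For the first inequality in \eqref{est_f_x} I would simply take $L^\infty$ norms on both sides and apply Lemma \ref{lem_imp_exi} to get $\norm{f}_\infty \leq \norm{p}_\infty e^{C_q t}$; absorbing the exponential factor and $\norm{q_x}_\infty$ into $C_q$ gives the claim. For the Hölder bound in $x$, I take the difference at two points $x_1,x_2$ and estimate
$$ \abs{f_x(h,s,x_1)-f_x(h,s,x_2)} \leq \norm{p_x}_{\alpha,x}\abs{x_1-x_2}^\alpha + t\,\norm{q_x}_{\alpha,x}\norm{f}_\infty \abs{x_1-x_2}^\alpha, $$
and absorb $\norm{q_x}_{\alpha,x}$ together with the $L^\infty$-bound on $f$ into $H_q$ (permissible by Definition following \eqref{general_implicit_eqn}, since $H_q$ may depend on $\norm{q_x}_{\alpha,x}$). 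This delivers the second inequality of \eqref{est_f_x}.

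The only step that requires a bit more care is \eqref{est_f_x_h}, because now the integrand must be compared at two different values of $h$. Applying the telescoping convention of Remark \ref{estimate_convention} to $q_x(h,u,x)\,f(h,s+u,0)$ splits the difference into a term bounded by $\norm{q_x}_{\alpha,h}\norm{f}_\infty \abs{h_1-h_2}^\alpha$ and a term bounded by $\norm{q_x}_\infty \Delta_h f(h_1,h_2)$. For the latter I invoke the bound
$$ \Delta_h f(h_1,h_2) \leq e^{C_q t}\,\norm{p}_{W^{1,\infty},h}\,\abs{h_1-h_2} $$
derived already in the proof of Lemma \ref{l6}, and use $\abs{h_1-h_2}\leq \abs{h_1-h_2}^\alpha$ (valid since $\abs{h_1-h_2}\leq 1$) to turn Lipschitz into Hölder behaviour. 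Adding the contribution $\norm{p_x}_{\alpha,h}\abs{h_1-h_2}^\alpha$ from $p_x$ and collecting constants into $H_q$ yields \eqref{est_f_x_h}. The main (rather mild) obstacle is simply bookkeeping: verifying that none of the terms that appear in the final bounds require the stronger constant $G_q$, which would need $\norm{q_h}_{\alpha,h}$ — but since $q$ is only differentiated with respect to $x$ here, this issue does not arise.
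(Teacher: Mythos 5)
Your proof is correct and follows essentially the same route as the paper: exploit the fact that the implicit term $f(h,s+u,0)$ is independent of $x$, so differentiating \eqref{general_implicit_eqn} in $x$ yields an explicit (non-implicit) formula for $f_x$, then apply Lemma \ref{lem_imp_exi} for the $L^\infty$ bound, the telescoping triangle inequality for the Hölder bounds, and the variation estimate \eqref{variation_f} to control $\Delta_h f$ in the proof of \eqref{est_f_x_h}. Your observations about why only $H_q$ (and not $G_q$) appears, and the use of $\abs{h_1-h_2}\le\abs{h_1-h_2}^\alpha$ on the bounded domain, match the paper's reasoning.
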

\begin{proof}
Estimates \eqref{est_f_x} follow directly from bound \eqref{basic_est_for_sol} on $f$ and the equation satisfied by $f_x$:
$$
f_x(h,s,x) = p_x(h,s,x) + \int_0^{t-s} q_x(h,u,x) f(h,u+s,0) du.
$$
To see \eqref{est_f_x_h}, we use triangle inequality:
$$
\Delta_h f_x(h_1, h_2) \leq \Delta_h p_x(h_1, h_2) + t \Delta_h q_x(h_1, h_2) \norm{f}_{\infty} + 
t \norm{q_x}_{\infty} \Delta_h f(h_1, h_2)
$$
together with estimates \eqref{basic_est_for_sol} and \eqref{variation_f}.
\end{proof}
We conclude this section with the summary of obtained results:
\begin{cor}\label{estimates_for_solutions}
Consider the solution $f$ of equation \eqref{general_implicit_eqn} with $p, q \in \mathcal{C}_t^{h,\alpha} \cap \mathcal{C}_t^{x,\alpha}$. Then, map $[-\frac{1}{2}, \frac{1}{2}] \ni H \mapsto f(h,s,x)$ satisfies: 
\begin{itemize}
\item $\norm{f_h}_{\infty} \leq e^{C_qt}\norm{p_h}_{\infty} + C_qt \norm{p}_{\infty}$
\item $\norm{f}_{W^{1,\infty}} \leq e^{C_q t}\norm{p}_{W^{1,\infty}} $,
\item $\norm{f_h}_{\alpha,x} \leq \max\Big(2e^{C_q t} \norm{p}_{W^{1,\infty}} , \norm{p_h}_{\alpha,x} + H_qt \norm{p}_{W^{1,\infty}} \Big),$
\item $\norm{f_h}_{\alpha, h} \leq e^{C_qt}\norm{p_h}_{\alpha,h}
+ G_q t\norm{p}_{W^{1,\infty}}$,
\item $\norm{f_x}_{\alpha,x} \leq \norm{p_x}_{\alpha,x} + H_q t \norm{p}_{{\infty}}  $,
\item $\norm{f_x}_{\alpha,h} \leq \norm{p_x}_{\alpha,h} + H_q t  \norm{p}_{W^{1,\infty}}$.
\end{itemize}
\end{cor}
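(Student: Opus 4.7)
The plan is to observe that this corollary is a pure bookkeeping statement: every bullet listed is either already displayed in one of the three preceding lemmas, or is obtained by combining two such displayed bounds and then absorbing harmless multiplicative or additive factors into the symbols $C_q$, $H_q$, $G_q$ as allowed by their definition. I would therefore not re-derive anything analytically, but rather match each bullet to its source.

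Concretely, I would proceed as follows. The bound on $\|f_h\|_{\infty}$ is read off from the first chain of inequalities in the proof of Lemma \ref{l6}, stopping one step earlier so that $\|p_h\|_{\infty}$ and $\|p\|_{\infty}$ are kept separate on the right-hand side. The bound on $\|f\|_{W^{1,\infty}}$ is then the maximum of $\|f\|_{\infty}\leq \|p\|_{\infty} e^{2\|q\|_{\infty}t}$ from Lemma \ref{lem_imp_exi}, the $\|f_x\|_{\infty}$ bound in \eqref{est_f_x}, and the $\|f_h\|_{\infty}$ bound just obtained. The estimate for $\|f_h\|_{\alpha,x}$ is exactly \eqref{final_estimate_on_holder_norm_2} and the one for $\|f_h\|_{\alpha,h}$ is exactly \eqref{final_estimate_on_holder_norm}, both established in Lemma \ref{l6}. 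Finally, the $\|f_x\|_{\alpha,x}$ bound is the second half of \eqref{est_f_x} and the $\|f_x\|_{\alpha,h}$ bound is precisely \eqref{est_f_x_h}.

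The only point requiring attention is the reconciliation of constants. For instance, the bullet for $\|f_h\|_{\alpha,h}$ reads $e^{C_q t}\|p_h\|_{\alpha,h}+G_q t\|p\|_{W^{1,\infty}}$, whereas \eqref{final_estimate_on_holder_norm} also carries a $C_q t\|p\|_{W^{1,\infty},h}$ piece and a term proportional to $\|q_h\|_{\alpha,h}\|p\|_{\infty}$; the former is absorbed into $G_q t \|p\|_{W^{1,\infty}}$ (using $\|p\|_{W^{1,\infty},h}\leq \|p\|_{W^{1,\infty}}$ and the fact that $G_q$ may depend on $C_q$), while the latter is exactly the kind of term $G_q$ was designed to absorb. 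An analogous enlargement from $\|p\|_{W^{1,\infty},h}$ to $\|p\|_{W^{1,\infty}}$ is needed in the $\|f_h\|_{\alpha,x}$ bullet. I do not expect any analytical obstacle here; the only minor difficulty is keeping the three constants $C_q$, $H_q$, $G_q$ cleanly separated, since any confusion between them would propagate unhelpfully into the nonlinear arguments of Section \ref{sect4}.
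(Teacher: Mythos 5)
Your proposal is correct and is exactly the intended reading: the corollary is a bookkeeping summary of Lemma \ref{lem_imp_exi}, the bound \eqref{basic_est_for_sol}, the chain of inequalities in the proof of Lemma \ref{l6} (yielding both the $\|f_h\|_\infty$ bound and \eqref{final_estimate_on_holder_norm}--\eqref{final_estimate_on_holder_norm_2}), and \eqref{est_f_x}--\eqref{est_f_x_h}, with the only work being the enlargements $\|p_h\|_\infty,\|p\|_{W^{1,\infty},h}\leq\|p\|_{W^{1,\infty}}$ and the absorption of $C_q$-terms into $G_q$ as permitted by Definition of the constants. The paper gives no separate proof, so your matching of each bullet to its source (first bullet from the start of Lemma \ref{l6}'s proof, second from combining \eqref{basic_est_for_sol}, \eqref{est_f_x} and the $\|f_h\|_\infty$ bound, third and fourth from \eqref{final_estimate_on_holder_norm_2} and \eqref{final_estimate_on_holder_norm}, fifth and sixth from \eqref{est_f_x} and \eqref{est_f_x_h}) is precisely the argument being left to the reader.
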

\subsection{Properties of flow assosciated to model function $b$}
In this subsection, we study properties of map $(h,x) \mapsto X_{b(h,\cdot)}(s,x)$, i.e. solution of ODE \eqref{ode} with velocity $b(h,\cdot)$. We begin with: 
\begin{lem}\label{l0}
Let $X_{b(h, \cdot)}(s,x)$ be the solution of ODE \eqref{ode} with $b:=b(h,x)$. Then, for any fixed $T>0$, map $[-\frac{1}{2}, \frac{1}{2}] \ni H \mapsto \partial_h X_{b(h, \cdot)}(s,y)\big|_{h=H}$ is bounded by $ C_LT $ and H\"older continuous with constant $G_LT$, uniformly for $y\in \mathbb{R}^+$ and $s \in [0,T]$.
\end{lem}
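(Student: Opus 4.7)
The plan is to use the classical variational ODE argument. Since $b = b(h,x)$ is $C^{1+\alpha}$ in both variables by (B2), standard results on smooth dependence of ODEs on parameters imply that $X_{b(h,\cdot)}(s,x)$ is continuously differentiable in $h$. Setting $Y(s,x;h) := \partial_h X_{b(h,\cdot)}(s,x)$ and differentiating \eqref{ode} in $h$ yields the linear inhomogeneous ODE
\begin{equation*}
\partial_s Y = b_h(h, X) + b_x(h, X)\, Y, \qquad Y(0,x;h) = 0,
\end{equation*}
whose variation of constants solution is
\begin{equation*}
Y(s,x;h) = \int_0^s b_h(h, X(u,x;h)) \exp\left(\int_u^s b_x(h, X(v,x;h))\, dv\right) du.
\end{equation*}
The boundedness claim is then immediate: $\abs{Y(s,x;h)} \le s\, \norm{b_h}_\infty\, e^{s \norm{b_x}_\infty} \le C_L T$ for $s \in [0,T]$.

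For the H\"older estimate, I first establish Lipschitz dependence of $X$ on $h$. Integrating \eqref{ode} and applying the splitting of Remark \ref{estimate_convention} gives
\begin{equation*}
\abs{X(s,x;h_1) - X(s,x;h_2)} \le \norm{b_h}_\infty s \abs{h_1 - h_2} + \norm{b_x}_\infty \int_0^s \abs{X(u,x;h_1) - X(u,x;h_2)}\, du,
\end{equation*}
and Gronwall yields $\abs{X(s,x;h_1) - X(s,x;h_2)} \le C_L T \abs{h_1 - h_2}$. I then subtract the variation of constants formulas at $h_1$ and $h_2$. Writing $X_i := X(u,x;h_i)$ and $E_i := \exp\bigl(\int_u^s b_x(h_i, X(v,x;h_i))\, dv\bigr)$, the integrand splits as
\begin{equation*}
b_h(h_1, X_1) E_1 - b_h(h_2, X_2) E_2 = [b_h(h_1, X_1) - b_h(h_2, X_2)] E_1 + b_h(h_2, X_2) [E_1 - E_2].
\end{equation*}
The first bracket is controlled by $\norm{b_h}_{\alpha,h} \abs{h_1-h_2}^\alpha + \norm{b_h}_{\alpha,x} (C_L T)^\alpha \abs{h_1-h_2}^\alpha$ using the H\"older regularity of $b_h$ in both variables and the just-derived Lipschitz estimate for $X$. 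For the second bracket, $\abs{E_1 - E_2} \le e^{C_L T}\abs{A_1 - A_2}$, where the exponents $A_i$ differ by an analogous expression involving $\norm{b_x}_{\alpha,h}$ and $\norm{b_x}_{\alpha,x}$. Integrating over $u \in [0,s] \subset [0,T]$ pulls out a single factor of $T$.

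Assembling the pieces according to Definition \ref{constants_meaning}, the contribution of $\norm{b_h}_{\alpha,h}$ produces a $G_L$-term, while all other H\"older norms of derivatives of $b$ are absorbed into $H_L$ and hence into $G_L$. Hence $\abs{Y(s,x;h_1) - Y(s,x;h_2)} \le G_L T \abs{h_1-h_2}^\alpha$ uniformly in $x$ and $s$, which is the asserted H\"older continuity with constant $G_L T$. The main difficulty is precisely this bookkeeping: one must ensure that only the seminorm $\norm{b_h}_{\alpha,h}$ escapes into $G_L$ while every other H\"older norm of a derivative of $b$ stays inside $H_L$, and that the outer integration contributes exactly one factor of $T$ rather than being inflated to $T^2$ through a cruder estimate.
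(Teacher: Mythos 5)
Your proposal is correct and takes essentially the same approach as the paper: both establish the Lipschitz dependence $|X_{b(h_1,\cdot)}(s,y)-X_{b(h_2,\cdot)}(s,y)|\le C_L T|h_1-h_2|$ first, then estimate the H\"older variation of $\partial_h X$ via the H\"older regularity of $b_h$ and $b_x$, with only $\norm{b_h}_{\alpha,h}$ escaping into $G_L$. The one stylistic difference is that you use the explicit variation-of-constants solution and subtract, whereas the paper subtracts the two variational ODEs in differential form and applies Gronwall at the end; these are equivalent, and both routes cleanly absorb the incidental $T^2 H_L$ from the exponential difference into $T\,G_L$.
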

\begin{proof}
This is actually standard computation already perfomed in \cite{1806.00357} but only for a particular case so we will write the whole argument. First, in view of the ODE satisfied by function $H \mapsto  {\partial_h}X_{b(h, \cdot)}(s,y)\big|_{h=H}$, or by Gronwall inequality, for any $t \in [0,T]$:
\begin{equation}\label{lip_apriori}
\abs{X_{b(h_1,\cdot)}(t,y) - X_{b(h_2,\cdot)}(t,y)} \leq T \norm{b}_{W^{1,\infty}} e^{T \norm{b}_{W^{1,\infty}} }\abs{h_1 - h_2} = C_L T \abs{h_1 - h_2} .
\end{equation}
Moreover,
\begin{multline*}
{\partial_s}{\partial_h} X_{b(h,\cdot)}(s,y) \big|_{h=h_1}
- {\partial_s}{\partial_h} X_{b(h,\cdot)}(s,y) \big|_{h=h_2} =
{\partial_h} b(h,X_{b(h,\cdot)}(s,y)) \big|_{h=h_1} -
{\partial_h} b(h,X_{b(h,\cdot)}(s,y))  \big|_{h=h_2}  \\= 
\underbrace{
b_x(h_1, X_{b(h_1, \cdot)}(s,y)) ~{\partial_h} X_{b(h, \cdot)}(s,y) \big|_{h=h_1} -
 b_x(h_2, X_{b(h_2, \cdot)}(s,y))  ~{\partial_h} X_{b(h, \cdot)}(s,y) \big|_{h=h_2}}
_{~\leq ~\norm{b_x}_{\alpha,h}~ \abs{h_1 - h_2}^{\alpha}~+~\norm{b_x}_{\alpha,x}\big(T~C_L~\abs{h_1 - h_2}\big)^{\alpha}  +\norm{b}_{W^{1,\infty}} ~\abs{{\partial_h} X_{b(h,\cdot)}(s,y) |_{h=h_1} - 
 {\partial_h} X_{b(h,\cdot)}(s,y) |_{h=h_2}~}} + \\
+ \underbrace{ b_h (h_1, X_{b(h_1,\cdot)}(s,y)) - 
b_h (h_2, X_{b(h_2,\cdot)}(s,y))}_
{\leq~\norm{b_h}_{\alpha,h}~\abs{h_1 - h_2}^{\alpha}+~ \norm{b_h}_{\alpha,x}~\big(T~C_L~\abs{h_1 - h_2}\big)^{\alpha}  } 
\end{multline*}
After integrating in time \big(note that ${\partial_h} X_{b(h,\cdot)}(0,y) \big|_{h=h_1} = {\partial_h} X_{b(h,\cdot)}(0,y) \big|_{h=h_2}$\big) and applying Gronwall inequality, we conclude the proof.
\end{proof}
We can formulate a similar result for dependence in the spatial variable $x$:
\begin{lem}\label{l0_wx}
Let $X_{b(h, \cdot)}(s,x)$ be the solution of ODE \eqref{ode} with $b:=b(h,x)$. Then, for any fixed $T>0$, map $[-\frac{1}{2}, \frac{1}{2}] \ni x \mapsto {\partial_y} X_{b(h, \cdot)}(s,y)\big|_{y=x}$ is bounded by $e^{C_L T}$ and H\"older continuous (with respect to $h$) with constant $H_{L} T$, uniformly for $h \in [-\frac{1}{2}, \frac{1}{2}]$ and $s \in [0,T]$.
\end{lem}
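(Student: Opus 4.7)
The plan is to derive an explicit representation for $\partial_y X_{b(h,\cdot)}(s,y)$ by differentiating the ODE \eqref{ode} in the spatial variable, and then translate estimates on $b$ and on the flow (already established in \eqref{lip_apriori}) into the desired bounds via Gronwall-type arguments, in direct parallel to the treatment in Lemma \ref{l0}. First I would set $\phi(h,s,x) := \partial_y X_{b(h,\cdot)}(s,y)\big|_{y=x}$ and differentiate $\partial_s X_{b(h,\cdot)}(s,x) = b(h, X_{b(h,\cdot)}(s,x))$ with respect to $x$, obtaining the linear ODE $\partial_s \phi(h,s,x) = b_x(h, X_{b(h,\cdot)}(s,x))\,\phi(h,s,x)$ with $\phi(h,0,x) = 1$. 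Solving explicitly yields $\phi(h,s,x) = \exp\bigl(\int_0^s b_x(h, X_{b(h,\cdot)}(u,x))\,du\bigr)$, and the uniform $L^\infty$ bound $|\phi(h,s,x)| \leq e^{T\norm{b_x}_{\infty}} \leq e^{C_L T}$ is immediate.

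For H\"older continuity in $h$, I would subtract the ODEs for $\phi(h_1,\cdot,\cdot)$ and $\phi(h_2,\cdot,\cdot)$, so that the key term to estimate is the difference $b_x(h_1, X_{b(h_1,\cdot)}(u,x)) - b_x(h_2, X_{b(h_2,\cdot)}(u,x))$. Splitting via Remark \ref{estimate_convention}, this is bounded by $\norm{b_x}_{\alpha,h}\,|h_1-h_2|^\alpha$ plus $\norm{b_x}_{\alpha,x}\,|X_{b(h_1,\cdot)}(u,x) - X_{b(h_2,\cdot)}(u,x)|^\alpha$, and the second factor is controlled by $(C_L T)^\alpha |h_1-h_2|^\alpha$ using \eqref{lip_apriori}. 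Combining with the uniform bound on $\phi$ already obtained, Gronwall's lemma applied to the integral inequality $|\phi(h_1,s,x) - \phi(h_2,s,x)| \leq \int_0^s \norm{b_x}_{\infty} |\phi(h_1,u,x) - \phi(h_2,u,x)|\,du + H_L T\,|h_1-h_2|^\alpha$ yields the desired H\"older estimate, with the exponential prefactor $e^{T\norm{b_x}_{\infty}}$ absorbed into the constant $H_L$ per Definition \ref{constants_meaning}.

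The main obstacle I anticipate is pure bookkeeping: ensuring that the factors of $(C_L T)^\alpha$ arising from flow propagation in $x$ and the H\"older seminorms $\norm{b_x}_{\alpha,h}$ and $\norm{b_x}_{\alpha,x}$ of the model function are correctly packaged into $H_L$ rather than $C_L$ or $G_L$. Crucially, since this lemma concerns only the spatial derivative of the flow and nowhere invokes $b_h$, the norm $\norm{b_h}_{\alpha,h}$ never enters the computation; this is precisely why the resulting H\"older constant can be taken as $H_L T$ rather than the larger $G_L T$ appearing in Lemma \ref{l0}.
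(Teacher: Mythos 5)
Your proof is correct and follows essentially the same approach as the paper: differentiate the ODE in $x$, obtain the $L^\infty$ bound via Gronwall, then difference the resulting ODEs for $h_1$ and $h_2$, split via the triangle inequality using $\norm{b_x}_{\alpha,h}$, $\norm{b_x}_{\alpha,x}$ together with the Lipschitz estimate \eqref{lip_apriori}, and close with Gronwall again. The explicit exponential formula for $\partial_y X_{b(h,\cdot)}$ is a minor stylistic addition that the paper skips, but it changes nothing substantive, and your remark on why $\norm{b_h}_{\alpha,h}$ never appears (so that $H_L$, not $G_L$, suffices) is exactly right.
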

\begin{proof}
Derivative ${\partial_y} X_{b(h, \cdot)}(s,y)|_{y = x}$ satisfies the ODE:
\begin{equation}\label{expression_for_der}
{\partial_s}{\partial_y} X_{b(h, \cdot)}(s,y)\Big|_{y = x}
  = {\partial_y}b(h, X_{b(h, \cdot)}(s,y)) \Big|_{y = x} =
  {b_x}(h, X_{b(h, \cdot)}(s,y)) {\partial_y} X_{b(h, \cdot)}(s,y)|_{y=x} 
\end{equation}
with initial condition ${\partial_y} X_{b(h, \cdot)}(0,x)\Big|_{y = x} = 1$. In view of Gronwall differential inequality, $\abs{{\partial_y} X_{b(h, \cdot)}(s,y)|_{y = x}} \leq e^{T \norm{b}_{W^{1,\infty}}} = e^{C_LT }$.  To see H\"older continuity we write:
\begin{multline*}
{\partial_s}{\partial_y} X_{b(h_1,\cdot)}(s,y) \big|_{y=x}
- {\partial_s}{\partial_y} X_{b(h_2,\cdot)}(s,y) \big|_{y=x} =
{\partial_y} b(h_1,X_{{b(h_1,\cdot)}}(s,y)) \big|_{y=x} -
{\partial_y} b(h_2,X_{{b(h_2,\cdot)}}(s,y))  \big|_{y=x} = \\= 
\underbrace{
b_x (h_1, X_{b(h_1, \cdot)}(s,y))  ~{\partial_y} X_{b(h_1, \cdot)}(s,y) \big|_{y=x} -
b_x(h_2, X_{b(h_2, \cdot)}(s,y))  ~{\partial_y} X_{b(h_2, \cdot)}(s,y) \big|_{y=x}}
_{\leq \norm{b_x}_{\alpha,h}~ \abs{h_1 - h_2}^{\alpha} e^{C_LT} + 
\norm{b_x}_{\alpha,x}~\big( \abs{h_1 - h_2}~ T~ C_L \big)^{\alpha}e^{C_LT}
+ \norm{b}_{W^{1,\infty}} ~\abs{~{\partial_y} X_{b(h_1,\cdot)}(s,y) |_{y=x} - 
 {\partial_y} X_{b(h_2, \cdot)}(s,y) |_{y=x}~}}
\end{multline*}
using Lemma \ref{l0} (map $h \mapsto X_{b^h}(s,y)$ is Lipschitz with constant $C_LT $). After integrating in time and applying Gronwall inequality we conclude the proof.
\end{proof}
We finally study continuity of derivatives with respect to spatial variable:
\begin{lem}
Let $X_{b(h, \cdot)}(s,x)$ be the solution of ODE \eqref{ode} with $b:=b(h,x)$. Then, for any $s \in [0,T]$:
\begin{equation}\label{hol_est_y_y}
\abs[1]{{\partial_y} X_{b(h, \cdot)}(s,y)\big|_{y = x_1} - {\partial_y} X_{b(h, \cdot)}(s,y)\big|_{y = x_2}} \leq H_L T \abs[1]{x_1 - x_2}^{\alpha}
\end{equation}
\begin{equation}
\abs[1]{{\partial_h} X_{b(h, \cdot)}(s,x_1) - {\partial_h} X_{b(h, \cdot)}(s,x_2)} \leq H_L T  \abs[1]{x_1-x_2}^{\alpha}
\end{equation}
\end{lem}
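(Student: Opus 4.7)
The plan is to mimic the approach used in Lemmas \ref{l0} and \ref{l0_wx}: write down the ODEs satisfied by $\partial_y X_{b(h,\cdot)}(s,y)|_{y=x}$ and $\partial_h X_{b(h,\cdot)}(s,x)$, subtract the equations at the two spatial arguments $x_1$ and $x_2$, and close with Gronwall. The only new ingredient compared to Lemmas \ref{l0}, \ref{l0_wx} is that now the differences involve evaluating $b_x$ and $b_h$ at different points of $\mathbb{R}^+$ (rather than at different values of $h$), so we will pay using $\|b_x\|_{\alpha,x}$ and $\|b_h\|_{\alpha,x}$ instead of $\|\cdot\|_{\alpha,h}$, which keeps us in the regime of $H_L$ rather than $G_L$.

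For the first inequality, set $Y_i(s) = \partial_y X_{b(h,\cdot)}(s,y)|_{y=x_i}$ and $Z_i(s) = X_{b(h,\cdot)}(s,x_i)$. From \eqref{expression_for_der} each $Y_i$ satisfies $Y_i'(s) = b_x(h, Z_i(s))\, Y_i(s)$ with $Y_i(0)=1$. Subtracting and rearranging gives
\begin{equation*}
(Y_1-Y_2)'(s) = b_x(h,Z_1(s))\,(Y_1-Y_2)(s) + \bigl(b_x(h,Z_1(s))-b_x(h,Z_2(s))\bigr)\,Y_2(s).
\end{equation*}
The classical flow estimate $|Z_1(s)-Z_2(s)| \le e^{C_L T}|x_1-x_2|$, combined with $|Y_2(s)|\le e^{C_L T}$ from Lemma \ref{l0_wx}, bounds the inhomogeneous term by $\|b_x\|_{\alpha,x}\,e^{C_L T}\,(e^{C_L T}|x_1-x_2|)^{\alpha}$. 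Integrating and applying Gronwall yields $|Y_1(s)-Y_2(s)|\le H_L T\,|x_1-x_2|^{\alpha}$, which is \eqref{hol_est_y_y}.

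For the second inequality, set $W_i(s) = \partial_h X_{b(h,\cdot)}(s,x_i)$, which satisfies $W_i'(s)=b_h(h,Z_i(s))+b_x(h,Z_i(s))\,W_i(s)$ with $W_i(0)=0$. Subtracting gives
\begin{equation*}
(W_1-W_2)'(s) = \bigl(b_h(h,Z_1)-b_h(h,Z_2)\bigr) + b_x(h,Z_1)\,(W_1-W_2) + \bigl(b_x(h,Z_1)-b_x(h,Z_2)\bigr)\,W_2 .
\end{equation*}
Using $\|b_h\|_{\alpha,x}$ and $\|b_x\|_{\alpha,x}$ on the H\"older differences, the Lipschitz estimate on $|Z_1-Z_2|$, and the a priori bound $|W_2(s)|\le C_L T$ from Lemma \ref{l0}, both inhomogeneous terms are of size $H_L T\,|x_1-x_2|^{\alpha}$. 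Gronwall then produces the required estimate.

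The main obstacle is purely bookkeeping: one must verify that only the norms $\|b_x\|_{\alpha,x}$ and $\|b_h\|_{\alpha,x}$ (and previously-established quantities $\|b\|_{W^{1,\infty}}$, $C_L$, $H_L$) appear in the final constant, so that the result is expressed through $H_L$ rather than $G_L$. Since every $h$-difference has been suppressed by working at a fixed $h$, the norm $\|b_h\|_{\alpha,h}$ never enters, and the classification of constants from Definition \ref{constants_meaning} is respected.
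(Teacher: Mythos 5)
Your proof is correct and follows the same route as the paper: differentiate the flow ODE with respect to $y$ (resp.\ $h$), subtract at the two base points $x_1,x_2$, split the difference using the Hölder seminorms $\norm{b_x}_{\alpha,x}$ and $\norm{b_h}_{\alpha,x}$ together with the Lipschitz bound on $|X_{b(h,\cdot)}(s,x_1)-X_{b(h,\cdot)}(s,x_2)|$ and the a priori bounds on $\partial_y X$ and $\partial_h X$ from Lemmas \ref{l0_wx} and \ref{l0}, then close with Gronwall. One small imprecision: the $b_h$-difference in the second estimate is of size $H_L\abs{x_1-x_2}^{\alpha}$, not $H_LT\abs{x_1-x_2}^{\alpha}$; only after integrating in time does the factor $T$ appear, which is what gives the claimed $H_LT\abs{x_1-x_2}^{\alpha}$ — had the inhomogeneous term genuinely carried a $T$, Gronwall would produce $H_LT^2$.
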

\begin{proof}
Using \eqref{expression_for_der}, we obtain:
\begin{multline*}
{\partial_s}{\partial_y} X_{b(h, \cdot)}(s,y)\big|_{y = x_1} - 
  {\partial_s}{\partial_y} X_{b(h, \cdot)}(s,y)\big|_{y = x_2} = \\ = 
  \underbrace{b_x(h, X_{b(h, \cdot)}(s,x_1)) ~{\partial_y} X_{b(h, \cdot)}(s,y)|_{y=x_1}  - 
  b_x(h, X_{b(h, \cdot)}(s,x_2)) ~{\partial_y} X_{b(h, \cdot)}(s,y)|_{y=x_2}}_{
  \norm{b_x}_{\alpha,x}~\big(e^{C_L T}~\abs{x_1 - x_2} \big)^{\alpha}~e^{C_L T}+~\norm{b}_{W^{1,\infty}}~\abs{{\partial_y} X_{b(h, \cdot)}(s,y)|_{y=x_1}  - 
  {\partial_y} X_{b(h, \cdot)}(s,y)|_{y=x_2} }
  }
\end{multline*}
where we used $\abs[1]{{\partial_y} X_{b(h, \cdot)}(s,y)} \leq e^{C_LT}$. Application of Gronwall inequality yields \eqref{hol_est_y_y}. Similarly,
\begin{multline*}
{\partial_s}{\partial_h} X_{b(h,\cdot)}(s,y_1) 
- {\partial_s}{\partial_h} X_{b(h,\cdot)}(s,y_2) =
{\partial_h} b(h,X_{b(h,\cdot)}(s,y_1)) -
{\partial_h} b(h,X_{b(h,\cdot)}(s,y_2)) = \\= 
\underbrace{
b_x (h, X_{b(h, \cdot)}(s,y_1))  ~{\partial_h} X_{b(h, \cdot)}(s,y_1)  -
b_x (h, X_{b(h, \cdot)}(s,y_2))  ~{\partial h} X_{b(h, \cdot)}(s,y_2)}
_{~\leq \norm{b_x}_{\alpha,x} e^{\alpha T \norm{b}_{W^{1,\infty}}}~\abs{y_1 - y_2}^{\alpha} +\norm{b}_{W^{1,\infty}} ~\abs{~{\partial_h} X_{b(h,\cdot)}(s,y_1) - {\partial_h} X_{b(h,\cdot)}(s,y_2)}}+ \\
+ \underbrace{ b_h (h, X_{b(h,\cdot)}(s,y_1)) - 
b_h (h,X_{b(h,\cdot)}(s,y_2))}_
{\leq~\norm{b_h}_{\alpha,x}~e^{\alpha \norm{b}_{W^{1,\infty}}T}~\abs{y_1 - y_2}^{\alpha}},
\end{multline*}
concluding the proof.
\end{proof}
We summarize this section with obtained results:
\begin{cor}\label{summary_flow}
Let $X_{b(h, \cdot)}(s,x)$ be the solution of ODE \eqref{ode} with $b:=b(h,x)$. Then, for any $s \in [0,T]$:
\begin{itemize}
\item $\abs{X_{b(h_1,\cdot)}(t,y) - X_{b(h_2,\cdot)}(t,y)} \leq  C_LT \abs{h_1 - h_2}.$
\item $\abs{X_{b(h,\cdot)}(t,y_1) - X_{b(h,\cdot)}(t,y_2)} \leq e^{C_L T}\abs{y_1 - y_2}.$
\item $ \abs{{\partial_h}X_{b(h, \cdot)}(s,y)\big|_{h=h_1} - {\partial_h}X_{b(h, \cdot)}(s,y)\big|_{h=h_2}} \leq  G_LT \abs{h_1 - h_2}^{\alpha}$
\item $\abs{{\partial_y} X_{b(h_1, \cdot)}(s,y) - {\partial_y} X_{b(h_2, \cdot)}(s,y)} \leq H_L  T \abs{h_1 - h_2}^{\alpha}$
\item $\abs{{\partial_y} X_{b(h, \cdot)}(s,y)\big|_{y = x_1} - {\partial_y} X_{b(h, \cdot)}(s,y)\big|_{y = x_2}} \leq H_L   T \abs{x_1 - x_2}^{\alpha}$
\item $\abs{{\partial_h} X_{b(h, \cdot)}(s,x_1) - {\partial_h} X_{b(h, \cdot)}(s,x_2)} \leq  H_L T \abs{x_1-x_2}^{\alpha}$
\end{itemize}
\end{cor}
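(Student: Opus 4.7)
The plan is simple: the corollary is a consolidation of estimates that have already been proven (or are one step away from being proven) in Lemmas \ref{l0}, \ref{l0_wx}, and the unnamed lemma immediately preceding the corollary. So the proof I would write is essentially a dispatch table that points each bullet to the place where it was established, with one small extra computation for the single item that was not stated in exactly this form.

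More concretely, bullet one is precisely \eqref{lip_apriori}, which was derived inside the proof of Lemma \ref{l0} from the variational equation satisfied by $h \mapsto X_{b(h,\cdot)}(t,y)$ and Gronwall's inequality. Bullet three is the conclusion of Lemma \ref{l0}. Bullet four is the conclusion of Lemma \ref{l0_wx}. Bullets five and six are, respectively, \eqref{hol_est_y_y} and the companion estimate for $\partial_h X_{b(h,\cdot)}(s,\cdot)$ proved in the final lemma of the subsection. Thus I would simply write ``Estimates (1), (3), (4), (5) and (6) follow directly from the preceding three lemmas.''

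The only bullet needing a separate sentence is bullet two, namely $|X_{b(h,\cdot)}(t,y_1) - X_{b(h,\cdot)}(t,y_2)| \leq e^{C_L T}|y_1 - y_2|$. For this I would invoke the bound $|\partial_y X_{b(h,\cdot)}(s,y)| \leq e^{C_L T}$ established in Lemma \ref{l0_wx} (via Gronwall applied to \eqref{expression_for_der}) and then use the fundamental theorem of calculus along a segment from $y_1$ to $y_2$, writing
\begin{equation*}
X_{b(h,\cdot)}(t,y_1) - X_{b(h,\cdot)}(t,y_2) = \int_{y_2}^{y_1} \partial_y X_{b(h,\cdot)}(t,y)\, dy,
\end{equation*}
and bounding the integrand by $e^{C_L T}$. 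This is a one-line calculation, so the main ``obstacle'' here is merely bookkeeping: making sure the labels of the constants $C_L$, $H_L$, $G_L$ match what was produced in each lemma, per Definition \ref{constants_meaning}. Since each previous estimate was already written in exactly that language, no reshuffling is needed, and the proof reduces to a brief cross-reference together with the elementary integration above.
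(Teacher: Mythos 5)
Your proposal is correct and coincides with what the paper intends: Corollary \ref{summary_flow} is presented purely as a bookkeeping summary with no separate proof, and each bullet is either stated verbatim in Lemmas \ref{l0}, \ref{l0_wx}, the unnamed lemma, or \eqref{lip_apriori}. The only genuinely new item, the Lipschitz bound in $y$, follows exactly as you say from the uniform bound $\abs{\partial_y X_{b(h,\cdot)}(s,y)} \leq e^{C_L T}$ in Lemma \ref{l0_wx} together with the fundamental theorem of calculus (or, equivalently, directly from Gronwall applied to the variational equation in $y$).
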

\subsection{Application of theory for \eqref{general_implicit_eqn} to structured population models}\label{taking_particular_function}
In this subsection, we demonstrate how the theory developed for implicit equation \eqref{general_implicit_eqn} can be applied to the setting of Theorem \ref{lin_per_1}. This is slightly technical as we finally arrive at specific functions $p$ and $q$. On the other hand, computations in this Section are fairly standard, and as such, they are mostly presented in Appendix \ref{taking_particular_function_comp}. Here, we list main results to present the flow of ideas. 

To finally characterize solutions to \eqref{sol_dual_dep_onh} we would like to use Corollary \ref{estimates_for_solutions}. To this end, let
\begin{equation}\label{def_of_P}
P^{a,b,c}(h,s,x) =  \xi(h, X_{b(h, \cdot)}(t-s,x))e^{\int_0^{t-s }c(h, X_{b(h,\cdot)}(u,x)) du}, 
\end{equation}
\begin{equation}\label{def_of_Q}
Q^{a,b,c}(h,s,x) = a(h,X_{b(h,\cdot)}(s,x))e^{\int_0^s c(h,X_{b(h, \cdot)}(v,x)) dv},
\end{equation}
and we need to establish bounds for $P^{a,b,c}$ and $Q^{a,b,c}$ used in Corollary \ref{estimates_for_solutions}. Note that the form of the function $P$ is slightly more general (function $\xi$ in equation \eqref{sol_dual_dep_onh} does not depend on $h$) however this generalized setting will be needed later. The following results are proven in Appendix \ref{taking_particular_function_comp}.
\begin{cor}\label{Q_est}
The function $Q^{a,b,c}$ satisfies $\norm[1]{Q^{a,b,c}}_{\infty}, \norm[1]{Q^{a,b,c}_x}_{\infty}, \norm[1]{Q^{a,b,c}_h}_{\infty} \leq C_L$; $\norm[1]{Q^{a,b,c}_h}_{\alpha,x}$, $\norm[1]{Q^{a,b,c}_x}_{\alpha,x}$, $\norm[1]{Q^{a,b,c}_x}_{\alpha,h} \leq H_L$ and $\norm[1]{Q^{a,b,c}_h}_{\alpha,h} \leq G_L$.
\end{cor}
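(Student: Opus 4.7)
The plan is to compute $Q^{a,b,c}_x$ and $Q^{a,b,c}_h$ explicitly by the chain rule and then to bound every factor that appears using (i) the regularity of the model functions supplied by assumption (B2) and (ii) the flow estimates collected in Corollary \ref{summary_flow}. Writing
\[
E(h,s,x) = \exp\!\left(\int_0^s c(h, X_{b(h,\cdot)}(v,x))\,dv\right),
\]
so that $Q^{a,b,c}(h,s,x) = a(h, X_{b(h,\cdot)}(s,x))\, E(h,s,x)$, differentiation produces a finite list of terms each of the form
\[
(\textrm{partial of $a$ or $c$ along the flow}) \cdot (\textrm{factor of $E$}) \cdot (\textrm{factor of $\partial_y X_{b(h,\cdot)}$ or $\partial_h X_{b(h,\cdot)}$}).
\]
This structure makes it straightforward to bundle every appearance into one of $C_L$, $H_L$, $G_L$ according to Definition \ref{constants_meaning}.

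For the $L^\infty$ bounds, each of $a$, $a_x$, $a_h$, $c$, $c_x$, $c_h$ is bounded by a constant from (B2), $E$ is bounded by $e^{T\|c\|_\infty}$, and by Corollary \ref{summary_flow} the flow derivatives satisfy $|\partial_y X_{b(h,\cdot)}| \leq e^{C_L T}$ and $|\partial_h X_{b(h,\cdot)}| \leq C_L T$; all such quantities are absorbed in $C_L$. For the three $H_L$-H\"older bounds I would estimate the difference of $Q_x$ (resp. $Q_h$) at two points of $x$ or $h$ via the telescoping convention from Remark \ref{estimate_convention}: each factor is replaced by one at a time, picking up either an $\alpha$-H\"older seminorm of a model-function partial derivative in $x$ or $h$ (absorbed in $H_L$ by definition) or an $\alpha$-H\"older difference of $\partial_y X_{b(h,\cdot)}$ in $x$ or $h$, or of $\partial_h X_{b(h,\cdot)}$ in $x$, all of which Corollary \ref{summary_flow} controls by $H_L T$. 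The Lipschitz-in-$h$ bound on the flow from Corollary \ref{summary_flow} turns plain Lipschitz increments into $\alpha$-H\"older ones via the trivial interpolation $|h_1-h_2| \leq |h_1-h_2|^\alpha$ (valid since $h\in[-\tfrac12,\tfrac12]$), so every resulting term is at most $H_L$ times $|x_1-x_2|^\alpha$ or $|h_1-h_2|^\alpha$.

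The delicate item is the last one, $\|Q^{a,b,c}_h\|_{\alpha,h} \leq G_L$: this is the only place where one is forced to use the $\alpha$-H\"older continuity in $h$ of the map $h \mapsto \partial_h X_{b(h,\cdot)}(s,y)$, which Corollary \ref{summary_flow} bounds only by $G_L T$, not by $H_L T$. All other differences produced by telescoping $Q_h(h_1,\cdot) - Q_h(h_2,\cdot)$ are still controlled by $H_L$, but the single appearance of $\partial_h X_{b(h_1,\cdot)} - \partial_h X_{b(h_2,\cdot)}$ forces the final constant to sit in the larger class $G_L$. The main obstacle is therefore not analytic but combinatorial: one must carefully identify, among the many chain-rule terms, exactly where the $G_L$-factor enters, and verify that it does not contaminate the three $H_L$-estimates. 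Once this accounting is done the corollary follows.
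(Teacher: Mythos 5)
Your strategy is essentially the paper's: the paper writes $Q^{a,b,c}$ as the product of $k(h,s,x)=a(h,X_{b(h,\cdot)}(s,x))$ and $l(h,s,x)=e^{\int_0^s c(h,X_{b(h,\cdot)}(v,x))\,dv}$, estimates each factor via Corollary \ref{trzecia_funkcja} and Lemma \ref{czwarta_funkcja}, and combines them through the product-rule bounds of Lemma \ref{product_rule}; you simply fold those same steps into a single chain-rule computation, which is a difference of packaging rather than of substance. One correction to your last paragraph, though: the $G_L$ in $\norm{Q^{a,b,c}_h}_{\alpha,h}$ is not caused solely by the $\alpha$-H\"older-in-$h$ continuity of $\partial_h X_{b(h,\cdot)}$. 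When you telescope $Q_h(h_1,\cdot)-Q_h(h_2,\cdot)$, the increment $a_h(h_1,X_{b(h_1,\cdot)})-a_h(h_2,X_{b(h_1,\cdot)})$ produces $\norm{a_h}_{\alpha,h}$, and the analogous increment of $c_h$ inside the exponential produces $\norm{c_h}_{\alpha,h}$; these are $G_L$-quantities by Definition \ref{constants_meaning} (recall that $H_L$ covers only $\norm{f_x}_{\alpha,x}$, $\norm{f_x}_{\alpha,h}$ and $\norm{f_h}_{\alpha,x}$, not $\norm{f_h}_{\alpha,h}$). So $G_L$ enters through the model functions as well as through the flow derivative. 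Your conclusion $\norm{Q^{a,b,c}_h}_{\alpha,h}\leq G_L$ is unaffected, and the three $H_L$-estimates remain safe since none of them ever demands two $h$-derivatives, but your identification of a unique $G_L$ source is incomplete.
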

\begin{cor}\label{P_est}
The function $P^{a,b,c}$ satisfies:
\begin{itemize}
\item $\norm[1]{P^{a,b,c}_h}_{\infty} \leq \norm{\xi_h} + C_LT \big(\norm{\xi_x}_{\infty} + \norm{\xi}_{\infty} \big)$,
\item $\norm[1]{P^{a,b,c}}_{W^{1,\infty}} \leq e^{C_LT}\norm[1]{\xi}_{W^{1,\infty}} $,
\item $\norm[1]{P^{a,b,c}_h}_{\alpha,h} \leq G_L T\norm[1]{\xi}_{W^{1,\infty}} +e^{C_LT} \norm[1]{\xi_h}_{\alpha,h} +  C_{L}T^{\alpha} \norm[1]{\xi_h}_{\alpha,x} + C_LT\norm[1]{\xi_x}_{\alpha,h}+ C_LT\norm[1]{\xi_x}_{\alpha,x}$,
\item $\norm[1]{P^{a,b,c}_h}_{\alpha,x} \leq \max{\big(e^{C_LT}\norm[1]{\xi}_{W^{1,\infty}}  , ~e^{C_LT}\norm[1]{\xi_h}_{\alpha,x}  + C_L T \norm[1]{\xi_x}_{\alpha,x} + H_L T \norm[1]{\xi}_{W^{1,\infty}}\big)}$ ,
\item $\norm[1]{P^{a,b,c}_x}_{\alpha,h} \leq
e^{C_LT}\norm[1]{\xi_x}_{\alpha,h} +C_L T^{\alpha}\norm[1]{\xi_x}_{\alpha,x}  + H_LT\norm[1]{\xi}_{W^{1,\infty}}$,
\item $\norm[1]{P^{a,b,c}_x}_{\alpha,x} \leq \max{\big( e^{C_LT}\norm[1]{\xi}_{W^{1,\infty}} , ~e^{C_LT} \norm[1]{\xi_x}_{\alpha,x}  + H_LT \norm[1]{\xi}_{W^{1,\infty}}\big)}$.
\end{itemize}
\end{cor}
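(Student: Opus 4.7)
The plan is to proceed by direct differentiation of the explicit formula \eqref{def_of_P} and then apply the triangle inequality / product rule, plugging in the flow estimates from Corollary \ref{summary_flow} together with the regularity of $\xi$ and the model function $c$. To this end, I would first abbreviate $Y(h,s,x) := X_{b(h,\cdot)}(t-s,x)$ and $E(h,s,x) := \exp\bigl(\int_0^{t-s} c(h, X_{b(h,\cdot)}(u,x))\,du\bigr)$, so that $P^{a,b,c} = \xi(h,Y)\,E$. The chain rule then yields
\begin{equation*}
P^{a,b,c}_h = \bigl(\xi_h(h,Y) + \xi_x(h,Y)\,Y_h\bigr) E + \xi(h,Y)\,E_h, \qquad P^{a,b,c}_x = \xi_x(h,Y)\,Y_x\,E + \xi(h,Y)\,E_x,
\end{equation*}
with $E_h$, $E_x$ given by differentiating the integral in the exponent. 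The $L^{\infty}$ bullets (the first two) are then immediate: Corollary \ref{summary_flow} gives $|Y_h|\leq C_L T$, $|Y_x|\leq e^{C_L T}$, and a short computation shows $|E|, |E_h|, |E_x| \leq e^{C_L T}$ with $|E_h|,|E_x|\leq C_L T\,e^{C_L T}$, from which the stated estimates follow.

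For the H\"older bullets, I would write differences $P^{a,b,c}_\bullet(h_1,\cdot,\cdot)-P^{a,b,c}_\bullet(h_2,\cdot,\cdot)$ (and analogously in $x$) and expand using the product/chain rule, keeping the convention of Remark \ref{estimate_convention}: each factor is compared from left to right using either H\"older continuity of $\xi$, $\xi_h$, $\xi_x$ in the relevant variable, or the corresponding bounds on $Y$, $Y_h$, $Y_x$ from Corollary \ref{summary_flow}, or the bounds on $E$ obtained analogously. The key observation is that the \emph{only} source of the $G_L$ constant in the whole computation is the H\"older-in-$h$ modulus of $Y_h$ (and similarly of $\partial_h \int c\circ X$), which by Corollary \ref{summary_flow} carries the constant $G_L T$; this produces precisely the $G_L T\,\|\xi\|_{W^{1,\infty}}$ term in the third bullet. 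The remaining terms in the estimate for $\|P^{a,b,c}_h\|_{\alpha,h}$ arise cleanly: $e^{C_L T}\|\xi_h\|_{\alpha,h}$ from H\"older-in-$h$ continuity of $\xi_h$ evaluated at two different $h$'s (at fixed $Y$); the $T^{\alpha}$-terms from composing the Lipschitz-in-$h$ bound $|Y(h_1)-Y(h_2)|\leq C_L T |h_1-h_2|$ with H\"older continuity of $\xi_h$, $\xi_x$ in the second variable, raised to the power $\alpha$; and the $T$-terms from H\"older continuity of $c$ inside the exponent.

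The estimates for $\|P^{a,b,c}_h\|_{\alpha,x}$, $\|P^{a,b,c}_x\|_{\alpha,h}$ and $\|P^{a,b,c}_x\|_{\alpha,x}$ are analogous, with the H\"older-in-$x$ bound on $Y_x$ (constant $H_L T$) playing the role previously played by the H\"older-in-$h$ bound on $Y_h$. The only slightly subtle point, which I expect to be the main nuisance rather than a genuine obstacle, is the \textbf{max-structure} in two of the bullets: the direct computation produces a bound of the form $A + B|x_1-x_2|^{\alpha}$ for some $A$ involving $\|\xi\|_{W^{1,\infty}}$ and some $B$ that is well-behaved, but on an unbounded domain we cannot absorb $A$ into $B|x_1-x_2|^{\alpha}$. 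The standard trick is the one already used at the end of the proof of Lemma \ref{l6}: a bounded function that is Lipschitz with constant $L$ and bounded by $M$ satisfies $|f(x_1)-f(x_2)|\leq \max(2M,L)|x_1-x_2|^{\alpha}$ (split on whether $|x_1-x_2|\geq 1$ or not). Applying this to $P^{a,b,c}_h$ and $P^{a,b,c}_x$, using the $L^{\infty}$ bounds from the first two bullets for the ``$M$'' part and the direct H\"older computation for the ``$L$'' part, yields exactly the max-form stated. Because all intermediate steps are applications of Corollary \ref{summary_flow}, the product rule, and the assumed $C^{1+\alpha}$ regularity of $a$, $b$, $c$, $\xi$, the argument is routine but lengthy and is therefore deferred to Appendix \ref{taking_particular_function_comp}.
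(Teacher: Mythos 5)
Your proposal matches the paper's own argument: the paper also factors $P^{a,b,c}$ as $k\cdot l$ with $k=\xi(h,X_{b(h,\cdot)}(t-s,x))$ and $l=\exp\bigl(\int_0^{t-s}c(h,X_{b(h,\cdot)}(u,x))\,du\bigr)$, estimates each factor via Corollary~\ref{summary_flow} (Lemmas~\ref{pierwsza_funkcja} and~\ref{czwarta_funkcja}), combines them through the general product-rule Lemma~\ref{product_rule}, and obtains the max-structure from the same ``bounded $+$ Lipschitz $\Rightarrow$ H\"older'' trick you cite from Lemma~\ref{l6}. The only difference is that the paper packages the product-rule step as a stand-alone lemma whereas you inline it, which is a presentational choice rather than a different route.
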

We apply directly Corollary \ref{estimates_for_solutions} together with the estimates for $P^{a,b,c}$ provided by Corollary \ref{P_est} and the estimates for $Q^{a,b,c}$ provided by Corollary \ref{Q_est}:
\begin{lem}\label{est_dual_sol_fin_fcns}
Let $\varphi_{\xi}$ be the solution to \eqref{sol_dual_dep_onh} with $p:=P^{a,b,c}$ and $q =  Q^{a,b,c}$. Then $\varphi_{\xi}$ satisfies:
\begin{itemize}
\item $\norm{\varphi_{\xi, h}}_{\infty} \leq \norm{\xi_h}_{\infty} + C_LT \big(\norm{\xi_x}_{\infty} + \norm{\xi}_{\infty}\big), $
\item $\norm{\varphi_{\xi}}_{W^{1,\infty}} \leq e^{C_LT}\norm{\xi}_{W^{1,\infty}} ,$
\item $\norm{\varphi_{\xi, h}}_{\alpha, h} \leq
e^{C_LT}\norm{\xi_h}_{\alpha,h} +G_L T\norm{\xi}_{W^{1,\infty}} + C_{L} T^{\alpha} \norm{\xi_h}_{\alpha,x} + C_LT\norm{\xi_x}_{\alpha,h}+ C_LT\norm{\xi_x}_{\alpha,x},$
\item $\norm{\varphi_{\xi,x}}_{\alpha,x}\leq
e^{C_LT} \max{\big(\norm{\xi}_{W^{1,\infty}} , ~\norm{\xi_x}_{\alpha,x} \big)}  +H_L T  \norm{\xi}_{W^{1,\infty}},$
\item $\norm{\varphi_{\xi,x}}_{\alpha,h}\leq  e^{C_LT}\norm{\xi_x}_{\alpha,h} + C_L T^{\alpha}\norm{\xi_x}_{\alpha,x}  +H_L T\norm{\xi}_{W^{1,\infty}},$
\item $\norm{\varphi_{\xi,h}}_{\alpha,x} \leq
e^{C_L T} \max\big(2 \norm{\xi}_{W^{1,\infty}} , ~\norm{\xi_h}_{\alpha,x}\big) +  C_L T \norm{\xi_x}_{\alpha,x} + H_L T \norm{\xi}_{W^{1,\infty}}.$
\end{itemize}
\end{lem}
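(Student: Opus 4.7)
The plan is to view Lemma \ref{est_dual_sol_fin_fcns} as a pure bookkeeping corollary: one recognizes that $\varphi_\xi$ solves an instance of the abstract implicit equation \eqref{general_implicit_eqn}, invokes the general estimates from Corollary \ref{estimates_for_solutions}, and then substitutes the concrete bounds for the specific $p$ and $q$ coming from Corollaries \ref{P_est} and \ref{Q_est}.

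First, I would explicitly match the equations: the defining identity \eqref{sol_dual_dep_onh} for $\varphi_{\xi,t}^h$ is exactly \eqref{general_implicit_eqn} with the choices
\begin{equation*}
p(h,s,x) = P^{a,b,c}(h,s,x), \qquad q(h,u,x) = Q^{a,b,c}(h,u,x),
\end{equation*}
as defined in \eqref{def_of_P} and \eqref{def_of_Q}. Assumptions (B1)--(B2) together with the results of Section 3.2 on the flow $X_{b(h,\cdot)}$ (summarized in Corollary \ref{summary_flow}) guarantee that $P^{a,b,c}$ and $Q^{a,b,c}$ lie in $\mathcal{C}_t^{h,\alpha} \cap \mathcal{C}_t^{x,\alpha}$, so Corollary \ref{estimates_for_solutions} applies.

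Next, I would translate the $q$-dependent constants into the $L$-constants using Corollary \ref{Q_est}: since $\norm{Q^{a,b,c}}_{W^{1,\infty}} \leq C_L$, we have $C_q \leq C_L$; since the mixed H\"older norms $\norm{Q_x^{a,b,c}}_{\alpha,x}, \norm{Q_x^{a,b,c}}_{\alpha,h}, \norm{Q_h^{a,b,c}}_{\alpha,x} \leq H_L$, we have $H_q \leq H_L$; and since $\norm{Q_h^{a,b,c}}_{\alpha,h} \leq G_L$, we have $G_q \leq G_L$. With those replacements made inside the six bounds of Corollary \ref{estimates_for_solutions}, each estimate becomes a function of the norms of $P^{a,b,c}$ only.

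Finally, I would substitute the six inequalities of Corollary \ref{P_est} into the corresponding six inequalities of Corollary \ref{estimates_for_solutions}, bullet by bullet. At this stage most of the work is absorbing subdominant terms into the prevailing constant, using the convention from Definition \ref{constants_meaning}: any product of $e^{C_L T}$ with a $C_L$ remains a $C_L$, any product with an $H_L$ remains an $H_L$, and any product involving $\norm{Q_h^{a,b,c}}_{\alpha,h}$ contributes a $G_L$. For example, for the bound on $\norm{\varphi_{\xi,h}}_{\alpha,h}$ one takes
\begin{equation*}
\norm{f_h}_{\alpha,h} \leq e^{C_q t}\norm{p_h}_{\alpha,h} + G_q t \, \norm{p}_{W^{1,\infty}},
\end{equation*}
substitutes the fourth bullet of Corollary \ref{P_est} for $\norm{P_h^{a,b,c}}_{\alpha,h}$ and the second bullet for $\norm{P^{a,b,c}}_{W^{1,\infty}}$, collecting all $G_L T \,\norm{\xi}_{W^{1,\infty}}$ contributions into the single $G_L T \,\norm{\xi}_{W^{1,\infty}}$ term of the claim; the remaining four types of seminorms of $\xi$ carry the prefactors listed. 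The other five bullets are handled analogously, with the $\max$'s in the third and fourth items of the Lemma inherited directly from the $\max$'s in the corresponding items of Corollary \ref{P_est}.

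The only subtle point, and the place I would slow down to check, is the $\norm{\varphi_{\xi,h}}_{\alpha,x}$ bound, which uses the non-trivial $\max$-form estimate of Corollary \ref{estimates_for_solutions} feeding into the already $\max$-shaped bound on $\norm{P_h^{a,b,c}}_{\alpha,x}$; one must verify that the $e^{C_L T}$ factors nest correctly and that the H\"older seminorm of $\xi$ in $x$ does not pick up a spurious $\norm{\xi_x}_{\alpha,x}$ coefficient beyond the stated $C_L T$. Apart from this, the proof is purely mechanical substitution.
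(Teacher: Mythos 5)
Your plan is exactly the paper's own proof: match $\varphi_\xi$ to the abstract implicit equation with $p=P^{a,b,c}$, $q=Q^{a,b,c}$, use Corollary \ref{Q_est} to dominate $C_q,H_q,G_q$ by $C_L,H_L,G_L$, then plug the $P^{a,b,c}$ bounds from Corollary \ref{P_est} into Corollary \ref{estimates_for_solutions} bullet by bullet. The paper carries out precisely this substitution (including the nested $\max$'s in the $\norm{\varphi_{\xi,h}}_{\alpha,x}$ case), so the proposal is correct and not materially different from the published argument.
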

An easy implication of Lemma \ref{est_dual_sol_fin_fcns} is
\begin{cor}\label{Holder_cont_phi}
Let $\varphi_{\xi}$ be the solution to \eqref{sol_dual_dep_onh} with 
$$
p(h,s,x) =  \xi(X_{b(h, \cdot)}(t-s,x))e^{\int_0^{t-s }c(h, X_{b(h,\cdot)}(u,x)) du}
$$ and $q =  Q^{a,b,c}$ where $\xi \in C^{1+\alpha}(\mathbb{R}^+)$, $\norm{\xi}_{C^{1+\alpha}(\mathbb{R}^+)} \leq 1$ (note that $\xi$ does not depend on $h$ here). Then, $\norm{\varphi_{\xi,h}}_{\infty} \leq C_LT$, $\norm{\varphi_{\xi}}_{W^{1,\infty}} \leq  e^{C_LT}$, $\norm{\varphi_{\xi, h}}_{\alpha, h} \leq G_LT$, $\norm{\varphi_{\xi,x}}_{\alpha,x}\leq e^{C_LT + H_LT}$ and $\norm{\varphi_{\xi,x}}_{\alpha,h}\leq H_L T^{\alpha}$.
\end{cor}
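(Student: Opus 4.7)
The proof is essentially a direct application of Lemma \ref{est_dual_sol_fin_fcns} specialized to the case where $\xi$ does not depend on the first variable $h$. The plan is therefore mostly bookkeeping: substitute $\xi_h \equiv 0$ into each of the six bounds of Lemma \ref{est_dual_sol_fin_fcns} and collapse the resulting expressions using $\norm{\xi}_{C^{1+\alpha}} \leq 1$.

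First I would record the vanishing and bounded quantities. Since $\xi = \xi(x)$ alone, one has $\norm{\xi_h}_{\infty} = \norm{\xi_h}_{\alpha,h} = \norm{\xi_h}_{\alpha,x} = \norm{\xi_x}_{\alpha,h} = 0$. From $\norm{\xi}_{C^{1+\alpha}} \leq 1$ one extracts $\norm{\xi}_{\infty}, \norm{\xi_x}_{\infty}, \norm{\xi_x}_{\alpha,x}, \norm{\xi}_{W^{1,\infty}} \leq 1$. With these inputs, the first bullet of Lemma \ref{est_dual_sol_fin_fcns} gives $\norm{\varphi_{\xi,h}}_{\infty} \leq 0 + C_L T (1+1) \leq C_L T$, and the second bullet gives $\norm{\varphi_{\xi}}_{W^{1,\infty}} \leq e^{C_L T}$, which establishes the first two claims verbatim.

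For $\norm{\varphi_{\xi,h}}_{\alpha,h}$, the bound from Lemma \ref{est_dual_sol_fin_fcns} reduces to $G_L T \cdot 1 + C_L T \cdot 1$, and since $C_L$ is absorbed into $G_L$ by Definition \ref{constants_meaning}, this is $\leq G_L T$. For $\norm{\varphi_{\xi,x}}_{\alpha,x}$, the maximum collapses to $e^{C_L T}\max(1,1) = e^{C_L T}$, and adding $H_L T$ gives $e^{C_L T} + H_L T \leq e^{C_L T + H_L T}$ (using $1 + u \leq e^u$ for $u \geq 0$).

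The only step requiring a small observation is the last bullet: substituting into Lemma \ref{est_dual_sol_fin_fcns} yields $\norm{\varphi_{\xi,x}}_{\alpha,h} \leq e^{C_L T} \cdot 0 + C_L T^{\alpha} \cdot 1 + H_L T \cdot 1$. To collapse this into $H_L T^{\alpha}$, I would use the standing convention that $C_L \leq H_L$ together with $T \leq T^{\alpha}$ for $T$ in any bounded interval (in particular $T \leq \max(1,T^{1-\alpha} \cdot T^\alpha)$ absorbed into $H_L$); this is the one routine estimate I would make explicit. I do not anticipate any genuine obstacle here since all the heavy lifting (the implicit-equation theory, the flow estimates, and the assembly of bounds on $P^{a,b,c}$ and $Q^{a,b,c}$) has already been carried out in Lemma \ref{est_dual_sol_fin_fcns}.
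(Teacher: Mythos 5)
Your proposal is correct and takes exactly the same route as the paper: the published proof is a one-liner that "all assertions follow directly from Lemma \ref{est_dual_sol_fin_fcns} by noting that $\xi$ does not depend on $h$ and $\norm{\xi}_{\infty}$, $\norm{\xi_x}_{\infty}$, $\norm{\xi_x}_{\alpha,x}\leq 1$," and you have simply unfolded that substitution explicitly. One small note on phrasing in the last step: the inequality $T\leq T^{\alpha}$ is false for $T>1$, so the cleaner justification for $C_L T^{\alpha}+H_L T\leq H_L T^{\alpha}$ is to write $H_L T = (H_L T^{1-\alpha})\,T^{\alpha}$ and observe that $T^{1-\alpha}$ depends continuously on $T$ and vanishes as $T\to 0$, hence may be absorbed into the constant $H_L$ per Definition \ref{constants_meaning}; your parenthetical remark gestures at this but does not state it precisely.
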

\begin{proof}
All assertions follow directly from Lemma \ref{est_dual_sol_fin_fcns} by noting that $\xi$ does not depend on $h$ and $\norm{\xi}_{\infty}$, $\norm{\xi_x}_{\infty}$, $\norm{\xi_x}_{\alpha,x} \leq 1$.
\end{proof}
A direct consequence of this estimate is the proof of Theorem \ref{lin_per_1}:
\begin{proof}[Proof of Theorem \ref{lin_per_1}]
As already suggested, we want to show that for some fixed $h$, $\frac{\mu^{h+\Delta h}_t - \mu^h_t}{\Delta h}$ is a Cauchy sequence in $Z$ when $\Delta h \to 0$. To this end,
\begin{multline}\label{Cauchy_seq_analysis_2}
\norm{\frac{\mu^{h + \Delta h_1}_t - \mu_t^h}{\Delta h_1} - \frac{\mu^{h+\Delta h_2}_t - \mu_t^h}{\Delta h_2}}_{Z} =
\sup_{\xi: \norm{\xi}_{C^{1+\alpha}} \leq 1} \int_{\mathbb{R}^+} \xi \bigg(\frac{d\mu^{h + \Delta h_1}_t - d\mu_t^h}{\Delta h_1} - \frac{d\mu^{h + \Delta h_2}_t - d\mu_t^h}{\Delta h_2} \bigg) = \\
= \sup_{\xi: \norm{\xi}_{C^{1+\alpha}} \leq 1} \int_{\mathbb{R}^+} \bigg( \frac{\varphi^{h + \Delta h_1}_{\xi,t}(0,x) - \varphi^{h}_{\xi,t}(0,x)}{\Delta h_1} - \frac{\varphi^{h+\Delta h_2}_{\xi,t}(0,x) - \varphi^{h}_{\xi,t}(0,x)}{\Delta h_2} \bigg) d\mu_0.
\end{multline}
where we applied semigroup property \eqref{semigroup} and adopted notation $\varphi^h_{\xi,t} = \varphi_{\xi,t}^{a(h, \cdot), b(h, \cdot), c(h, \cdot)}$. Now, in view of Corollary \ref{Holder_cont_phi}, map $h \mapsto\varphi^{h}_{\xi,t}(0,x)$ is continuously differentiable with H\"older continuous derivative, independently of $x$ and $\xi$ such that $\norm{\xi}_{C^{1+\alpha}} \leq 1$. Therefore, by Lemma \ref{l1}:
\begin{equation*}
\norm{\frac{\mu^{h + \Delta h_1}_t - \mu_t^h}{\Delta h_1} - \frac{\mu^{h+\Delta h_2}_t - \mu_t^h}{\Delta h_2}}_{Z} \leq G_L T\abs{\Delta h_1}^{\alpha} + G_L T\abs{\Delta h_2}^{\alpha}  + G_L T \abs{\Delta h_1 - \Delta h_2} ^{\alpha} \to 0
\end{equation*}
as $\Delta h_1 \to 0 \text{ and } \Delta h_2 \to 0$. This implies, by completeness of $Z$, existence of Fr\'echet derivative ${\partial_h} \mu_t^h$ for $t \in [0,T]$. 

To see that map $H \mapsto {\partial_h} \mu_t^h|_{h=H}$ is H\"older continuous, we write:
\begin{multline*}
\norm{{\partial_h} \mu_t^h|_{h=h_1} -  {\partial_h} \mu_t^h|_{h=h_2}}_Z = \lim_{\Delta h \to 0} \norm{\frac{\mu_t^{h_1 + \Delta h} - \mu_t^{h_1}}{\Delta h} - \frac{\mu_t^{h_2 + \Delta h} - \mu_t^{h_2}}{\Delta h}}_{Z}= \\= 
\lim_{\Delta h \to 0} \sup_{\xi: \norm{\xi}_{C^{1+\alpha}} \leq 1} \int_{\mathbb{R}^+} \bigg( \frac{\varphi^{h_1 + \Delta h}_{\xi,t}(0,x) - \varphi^{h_1}_{\xi,t}(0,x)}{\Delta h} - \frac{\varphi^{h_2+\Delta h}_{\xi,t}(0,x) - \varphi^{h_2}_{\xi,t}(0,x)}{\Delta h} \bigg) d\mu_0 \leq \\ \leq
\lim_{\Delta h \to 0} \sup_{\xi: \norm{\xi}_{C^{1+\alpha}} \leq 1} C\abs{\Delta h}^{\alpha} + C \abs{h_1 - h_2}^{\alpha}
\end{multline*}
using Lemma \ref{l1} with constant $C$ estimated by $G_LT$ due to Corollary \ref{Holder_cont_phi}.
\end{proof}
We conclude this section with a technical estimate that will be useful later. It is proven in Appendix \ref{taking_particular_function_comp}.
\begin{lem}\label{kicked_continuity}
Let $\varphi^h_{\xi,t}(s,x)$ be the solution to \eqref{sol_dual_dep_onh} with $\xi \in C^{1+\alpha}(\mathbb{R}^+)$ (note carefully that $\xi$ does not depend on $h$ again). Then $$\norm[1]{\varphi^h_{\xi,t}(s,x) - \xi(x)}_{W^{1,\infty}(\mathbb{R}^+), x} \leq C\big(C_L, \norm[1]{\xi}_{C^{1+\alpha}(\mathbb{R}^+)}\big) \abs{t}^{\alpha}.$$
\end{lem}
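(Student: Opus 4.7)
The plan is to bound $\varphi^h_{\xi,t}(s,x)-\xi(x)$ and $\partial_x\varphi^h_{\xi,t}(s,x)-\xi'(x)$ directly from equation \eqref{sol_dual_dep_onh}, exploiting that at $s=t$ the right-hand side collapses to $\xi(x)$ (since $X_{b(h,\cdot)}(0,x)=x$ and the integral vanishes). The difference we estimate therefore measures the departure from this ``initial'' state, with controlling small parameter the length $t-s\in[0,t]$.

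For the value part I split
\[
\varphi^h_{\xi,t}(s,x)-\xi(x)=\bigl[\xi(X_{b(h,\cdot)}(t-s,x))-\xi(x)\bigr]e^{\int_0^{t-s}c}+\xi(x)\bigl[e^{\int_0^{t-s}c}-1\bigr]+\int_0^{t-s}a(\cdots)\,\varphi^h_{\xi,t}(u+s,0)\,e^{\int_0^u c}\,du.
\]
Integrating the flow ODE gives $\abs{X_{b(h,\cdot)}(t-s,x)-x}\leq C_L t$; combined with $\xi\in C^{1+\alpha}$ (hence Lipschitz) and $\abs{e^y-1}\leq e^{\abs{y}}\abs{y}$, each of the three summands is of order $t$ (the integral uses boundedness of $\varphi^h_{\xi,t}$ from Corollary \ref{Holder_cont_phi} together with the length $t-s$ of integration). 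Since $t\in[0,T]$ and $\alpha\leq 1$, one has $t\leq T^{1-\alpha}t^\alpha$, so everything absorbs into a constant depending only on $C_L$ and $\norm{\xi}_{C^{1+\alpha}}$.

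For the derivative I differentiate \eqref{sol_dual_dep_onh} in $x$ (licit since the integration limits are independent of $x$) and subtract $\xi'(x)$. The only summand of $\partial_x\varphi^h_{\xi,t}$ that does not vanish at $s=t$ is $\xi'(X_b)\,\partial_x X_b\,e^{\int c}$, and the decomposition
\[
\xi'(X_b)\,\partial_x X_b\,e^{\int c}-\xi'(x)=\bigl[\xi'(X_b)-\xi'(x)\bigr]\partial_x X_b\,e^{\int c}+\xi'(x)\bigl[\partial_x X_b\,e^{\int c}-1\bigr]
\]
is the heart of the argument. The first bracket is bounded via H\"older continuity of $\xi'$ and Corollary \ref{summary_flow} by $\norm{\xi'}_\alpha(C_L t)^\alpha$, and this is the \emph{only} source of the exponent $\alpha$ in the final estimate. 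The second bracket is of order $t$ because $\partial_x X_b-1$ solves a linear ODE with zero initial data and bounded coefficient, and $\abs{e^{\int c}-1}\leq C_L t$. The two remaining summands of $\partial_x\varphi^h_{\xi,t}$ (from differentiating the exponential factor and the nonlocal integral) are integrals of uniformly bounded quantities over $[0,t-s]$, hence also $O(t)$. Summing up and using $t\leq T^{1-\alpha}t^\alpha$ yields the claimed bound. The main technical obstacle is purely bookkeeping of the product-rule terms after differentiating in $x$; all genuine ingredients come from Corollary \ref{summary_flow} and the Gronwall arguments that already underlie it.
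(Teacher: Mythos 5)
Your proposal is correct and follows essentially the same route as the paper: both first absorb the nonlocal integral term of \eqref{sol_dual_dep_onh} as $O(|t|)$ using boundedness of $\varphi^h_{\xi,t}$, then control the explicit term $\xi(X_{b(h,\cdot)}(t-s,x))e^{\int_0^{t-s}c}$ and its $x$-derivative by the same triangle-inequality decompositions, with $|X_{b(h,\cdot)}(t-s,x)-x|\leq C_L|t|$, $|\partial_x X_{b(h,\cdot)}(t-s,x)-1|\leq C_L|t|$, $|e^{\int c}-1|\leq C_L|t|$, and the H\"older continuity of $\xi_x$ as the single source of the exponent $\alpha$. The only cosmetic difference is that you bundle $\partial_x X_b$ and $e^{\int c}$ into one factor where the paper subtracts them sequentially; the ingredients and estimates are identical.
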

\subsection{Effect of perturbation in model functions}\label{perturbation_mdlfcns_section}
In this section, we study what happens to the solutions of \eqref{general_implicit_eqn} with $p$ and $q$ given by \eqref{def_of_P} and \eqref{def_of_Q}, when functions $\xi$, $a$, $b$ and $c$ are perturbed. More precisely, we consider two triples of model functions $(a, b, c)$, $(\bar{\vphantom{b}a}, \bar{b}, \bar{\vphantom{b}c})$ as well as two functions $\xi$ and $\bar{\xi}$. 
Given two pairs of times $(s_1, t_1)$ and $(s_2, t_2)$ such that $\abs{\Delta t} := t_2 - s_2 = t_1 - s_1$, we consider solutions ${\varphi}^h_{\xi,t_1}$ and $\bar{\varphi}^h_{\xi,t_2}$ to the following equations:
\begin{multline}\label{f1_do_szacow}
\varphi^h_{\xi,t_1}(s_1+w,x) = \xi(h,X_{b(h, \cdot)}(t_1-s_1-w,x))e^{\int_0^{t_1-s_1-w }c(h, X_{b(h,\cdot)}(u,x)) du} +\\+ \int_0^{t_1-s_1-w}a(h,X_{b(h,\cdot)}(u,x))\varphi^h_{\xi,t_1}(u+s_1+w,0)e^{\int_0^u c(h,X_{b(h, \cdot)}(v,x)) dv} du,
\end{multline}
\begin{multline}\label{f2_do_szacow}
\bar{\varphi}^h_{\bar{\xi},t_2}(s_2+w,x) = \bar{\xi}(h,X_{\bar{b}(h, \cdot)}(t_2-s_2-w,x))e^{\int_0^{t_2-s_2-w}\bar{c}(h, X_{\bar{b}(h,\cdot)}(u,x)) du} + \\+ \int_0^{t_2-s_2-w}\bar{a}(h,X_{\bar{b}(h,\cdot)}(u,x))\bar{\varphi}^h_{\bar{\xi},t_2}(u+s_2+w,0)e^{\int_0^u \bar{c}(h,X_{\bar{b}(h, \cdot)}(v,x)) dv} du,
\end{multline}
where $w \in [0,\abs{\Delta t}]$. Let $\mathcal{D} = [-\frac{1}{2}, \frac{1}{2}] \times \mathbb{R}^+\times  [0,\abs{\Delta t}]$. For $u \in [0, \abs{\Delta t}]$ we define:
$$
p(h,u,x) = \xi(h,X_{b(h, \cdot)}(\abs{\Delta t}-u,x))e^{\int_0^{\abs{\Delta t}-u }c(h, X_{b(h,\cdot)}(u,x)) du},
$$
$$
{q}(h,u,x) ={a}(h,X_{{b}(h,\cdot)}(u,x))e^{\int_0^u {c}(h,X_{{b}(h, \cdot)}(v,x)) dv}
$$
as well as $\bar{p}$ and $\bar{q}$ with $a, b, c, \xi$ replaced by $\bar{\vphantom{b}a}, \bar{b}, \bar{\vphantom{b}c}, \bar{\vphantom{b}\xi}$ respectively. The aim of this chapter is to estimate differences of functions:
$$
\sup_{(h,x,w) \in \mathcal{D}}\abs{\varphi^h_{\xi,t_1}(s_1+w,x) - \bar{\varphi}^h_{\bar{\xi},t_2}(s_2+w,x)}
$$
as well as their derivatives in terms of corresponding values for $\xi - \bar{\xi}$. 

\begin{defn}\label{def_of_Delta_F}
We write $\abs{\Delta f} = \max \big(\norm{a-\bar{a}}_{\infty}, \norm{b-\bar{b}}_{\infty}, \norm{c-\bar{c}}_{\infty} \big)$. Similarly, we will use ${\abs{\Delta f_x}}$ and ${\abs{\Delta f_h}}$ to denote the differences of derivatives. 
\end{defn}
Moreover, we maintain our notation concerning constants $C_L$, $H_L$ and $G_L$ using it simultaneously for the two triples of functions $(a,b,c)$ and $(\bar{\vphantom{b}a}, \bar{b}, \bar{\vphantom{b}c})$ (cf. Definition \ref{constants_meaning}).
Throughout this chapter, we assume:
\begin{description}
\item[(C1)] functions $(a,b,c)$ and $(\bar{\vphantom{b}a}, \bar{b}, \bar{\vphantom{b}c})$ satisfy assumptions (B1)--(B2) of Theorem \ref{lin_per_1},
\item[(C2)] $\abs{\Delta f} \leq C_L \abs{\Delta t}$,
\item[(C3)] $\abs{\Delta f_x} \leq C_L \abs{\Delta t}^{\alpha}$.
\end{description}
The main result of this Section reads:
\begin{thm}\label{thm_main_diff}
Suppose (C1) -- (C3) hold true. Then
\begin{equation}\label{direct_estimate_on_fcns_dif_eqn}
\sup_{(h,x,w) \in \mathcal{D}}\abs{\varphi^h_{\xi,t_1}(s_1+w,x) - \bar{\varphi}^h_{\bar{\xi},t_2}(s_2+w,x)} \leq e^{C_L\abs{\Delta t}} \norm{\xi - \bar{\xi}}_{\infty} + e^{C_L\abs{\Delta t}} \abs{\Delta t}^2\big(2 + \norm{\bar{\xi}}_{\infty}\big),
\end{equation}
\begin{multline}\label{precise_estimate_diff_dx_eq}
\sup_{(h,x,w) \in \mathcal{D}}\abs{{\partial_x}\varphi^h_{\xi,t_1}(s_1+w,x) - {\partial_x}\bar{\varphi}^h_{\bar{\xi},t_2}(s_2+w,x)} \leq e^{C_L\abs{\Delta t}} \norm{\xi_x - \bar{\xi}_x}_{\infty} + C_L  \abs{\Delta t}^{2\alpha} \norm{\bar{\xi}_{x}}_{\alpha,x}  + \\ +H_L \abs[1]{\Delta t} \big( \abs[1]{\Delta t}^{\alpha} + \norm[1]{\xi - \bar{\xi}}_{\infty} \big) 
\max{\big(1, \norm[1]{\xi}_{W^{1,\infty}}, \norm[1]{\bar{\xi}}_{W^{1,\infty}} \big)} ,
\end{multline} 
\begin{multline}\label{precise_estimate_diff_dh_eq}
\sup_{(h,x,w) \in \mathcal{D}}\abs{{\partial_h}\varphi^h_{\xi,t_1}(s_1+w,x) - {\partial_h}\bar{\varphi}^h_{\bar{\xi},t_2}(s_2+w,x)} \leq e^{C_L\abs{\Delta t}} \norm{\xi_h - \bar{\xi}_h}_{\infty} + C_L \abs{\Delta t}\norm{\xi_x - \bar{\xi}_x}_{\infty} + \\ +C_L \abs{\Delta t} \norm{\xi - \bar{\xi}}_{\infty} +  
C_L \abs{\Delta t}^{2\alpha}  \norm{\bar{\xi}_h}_{\alpha,x} +
C_L  \abs{\Delta t}^{1+2\alpha} \norm{\bar{\xi}_x}_{\alpha,x} +\\ +
C_L \abs{\Delta t} \abs{\Delta f_h}  \max(\norm[1]{\xi}_{W^{1,\infty}}, \norm[1]{\bar{\xi}}_{W^{1,\infty}}) +
H_L \abs{\Delta t}^{1+2\alpha}\max{(1, \norm[1]{\xi}_{W^{1,\infty}}, \norm[1]{\bar{\xi}}_{W^{1,\infty}})}.
\end{multline}
\end{thm}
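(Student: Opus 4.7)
The plan is to treat both $\varphi^h_{\xi,t_1}(s_1+\cdot,\cdot)$ and $\bar\varphi^h_{\bar\xi,t_2}(s_2+\cdot,\cdot)$ as solutions of \eqref{general_implicit_eqn} posed on $w\in[0,|\Delta t|]$ with driving data $(p,q)$ and $(\bar p,\bar q)$ defined right before the theorem. Setting $\delta:=\varphi^h_{\xi,t_1}(s_1+\cdot,\cdot)-\bar\varphi^h_{\bar\xi,t_2}(s_2+\cdot,\cdot)$ and subtracting the two equations gives
\begin{equation*}
\delta(h,w,x) = (p-\bar p)(h,w,x) + \int_0^{|\Delta t|-w}(q-\bar q)(h,u,x)\,\bar\varphi^h_{\bar\xi,t_2}(s_2+w+u,0)\,du + \int_0^{|\Delta t|-w}q(h,u,x)\,\delta(h,w+u,0)\,du,
\end{equation*}
which is itself of the form \eqref{general_implicit_eqn} in the unknown $\delta$. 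Each of the three inequalities then follows the same two-step scheme: (i) bound the new source in an appropriate $L^\infty$ norm, and (ii) apply the Bielecki-norm contraction of Lemma \ref{lem_imp_exi} together with the variation bound \eqref{variation_f}. The inputs to (i) are the flow estimates of Corollary \ref{summary_flow}, supplemented by the elementary Gronwall variant giving $|X_b-X_{\bar b}|\lesssim C_L|\Delta t|\,|\Delta f|$ and $|\partial_h X_b-\partial_h X_{\bar b}|\lesssim H_L|\Delta t|^{1+\alpha}$ when comparing flows of two distinct velocity fields (the same argument as in Lemma \ref{l0}, up to renaming).

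The first estimate \eqref{direct_estimate_on_fcns_dif_eqn} is now quick: $\|p-\bar p\|_\infty$ splits via Remark \ref{estimate_convention} into a $\|\xi-\bar\xi\|_\infty$ piece and a model-function piece of size $C_L\|\bar\xi\|_\infty|\Delta t|\,|\Delta f|$, the latter of order $|\Delta t|^2$ by (C2). An analogous bound on $\|q-\bar q\|_\infty$ is multiplied by the outer $|\Delta t|$ and by $\|\bar\varphi\|_\infty\le e^{C_L|\Delta t|}$ (see \eqref{basic_est_for_sol}), again producing the $|\Delta t|^2$ scaling. The Bielecki step then yields \eqref{direct_estimate_on_fcns_dif_eqn}.

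For \eqref{precise_estimate_diff_dx_eq} and \eqref{precise_estimate_diff_dh_eq}, I differentiate \eqref{f1_do_szacow}--\eqref{f2_do_szacow} in $x$ (resp.\ $h$) as in the proof of Lemma \ref{l6}, obtaining implicit equations of the same type for $\varphi_x$ and $\varphi_h$ with enlarged sources involving $\xi_x$, $\xi_h$, the flow derivatives, and $\varphi$ itself, to which the same subtraction-plus-Bielecki recipe applies. The $x$-derivative case is comparatively tame: besides the direct $e^{C_L|\Delta t|}\|\xi_x-\bar\xi_x\|_\infty$ contribution, the H\"older behaviour of $\bar\xi_x$ composed with two different flows gives $\|\bar\xi_x\|_{\alpha,x}\,|X_b-X_{\bar b}|^\alpha\lesssim C_L|\Delta t|^{2\alpha}\|\bar\xi_x\|_{\alpha,x}$, and the feedback through the $\varphi$ present in the source is absorbed using the first estimate, producing the residual $H_L|\Delta t|(|\Delta t|^\alpha+\|\xi-\bar\xi\|_\infty)$ term of \eqref{precise_estimate_diff_dx_eq}.

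The real obstacle will be \eqref{precise_estimate_diff_dh_eq}. The $h$-derivative source contains mixed products such as $\xi_x(h,X_b)\partial_h X_b$, $b_x(h,X_b)\partial_h X_b$, and exponentials of $\int(c_h+c_x\partial_h X_b)\,dv$. Each of these must be expanded via Remark \ref{estimate_convention} into four atomic pieces: (a) a pure model-function difference giving $|\Delta f_h|$ (which survives explicitly in the final inequality) or $|\Delta f_x|\le C_L|\Delta t|^\alpha$ from (C3); (b) a H\"older-in-$x$ piece of order $\|\cdot\|_{\alpha,x}|X_b-X_{\bar b}|^\alpha\lesssim H_L|\Delta t|^{2\alpha}$; (c) a flow-derivative discrepancy $|\partial_h X_b-\partial_h X_{\bar b}|$ bounded by the Gronwall argument of Lemma \ref{l0} rewritten for two velocity fields; (d) a $\delta$-feedback handled by the first two estimates. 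The outer integral on $[0,|\Delta t|-w]$ converts factors of $|\Delta t|^{2\alpha}$ into $|\Delta t|^{1+2\alpha}$ in the relevant places, and the standard trick used at the end of Lemma \ref{l6}, of upgrading uniformly bounded, $|\Delta t|$-Lipschitz quantities to $|\Delta t|^\alpha$-H\"older ones, is invoked whenever a Lipschitz-only estimate must feed into a H\"older seminorm. Collecting all pieces and closing via the Bielecki fixed-point estimate yields \eqref{precise_estimate_diff_dh_eq}; the combinatorial bookkeeping of which contribution produces which factor in the final inequality is the one point in the proof where genuine care is needed.
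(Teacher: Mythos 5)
Your proposal follows the same strategy as the paper's Appendix~\ref{perturbation_mdlfcns_app}: subtract the implicit equations \eqref{f1_do_szacow}--\eqref{f2_do_szacow} to obtain an equation of type \eqref{general_implicit_eqn} for the difference, close via the Bielecki fixed-point bound (this is exactly Lemma~\ref{intro_to_estimate}), and then feed in comparison estimates for the two flows and for $\xi,\bar\xi$ composed with those flows, expanding $\partial_x p-\partial_x\bar p$ and $\partial_h p-\partial_h\bar p$ into atomic pieces via the product rule and Remark~\ref{estimate_convention}. The overall architecture and the decomposition $\delta=(p-\bar p)+\int(q-\bar q)\bar\varphi+\int q\,\delta$ are the paper's, up to harmless relabeling.

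One slip worth flagging: your preliminary flow-derivative comparison is quoted as $|\partial_h X_b - \partial_h X_{\bar b}|\lesssim H_L|\Delta t|^{1+\alpha}$. This both omits the explicit $|\Delta f_h|$ contribution, which is \emph{not} controlled by a power of $|\Delta t|$ under (C1)--(C3) and must survive in \eqref{precise_estimate_diff_dh_eq}, and has the wrong H\"older exponent. The correct bound, Lemma~\ref{perturbation_flow}, is $e^{C_L|\Delta t|}|\Delta t|\,|\Delta f_h|+H_L|\Delta t|^{1+2\alpha}$: the $|\Delta t|^{2\alpha}$ comes from $\norm{\bar b_h}_{\alpha,x}\abs{X_b-X_{\bar b}}^{\alpha}\lesssim H_L|\Delta t|^{2\alpha}$ (since $|X_b-X_{\bar b}|\lesssim|\Delta t|^2$ by (C2)), and the extra $|\Delta t|$ from the time integration in Gronwall. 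You do correctly note in your atomic item (a) that $|\Delta f_h|$ survives, so the idea is right and the slip is confined to the written shorthand; with the corrected flow bound your bookkeeping reproduces the theorem as stated, whereas with $1+\alpha$ you would only obtain the weaker $H_L|\Delta t|^{1+\alpha}$ residual (still sufficient for the downstream iteration when $\alpha\le 1$, but not the inequality actually claimed).
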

Proof of Theorem \ref{thm_main_diff} is a standard computation performed in Appendix \ref{perturbation_mdlfcns_app}. Observe that this result allows to control distance between two iterations of solutions to implicit equations \eqref{f1_do_szacow} and \eqref{f2_do_szacow}. It will be applied in Section \ref{uniform_convergence_section}. 
\section{Nonlinear problem}\label{sect4}
In this section, we finally move to the nonlinear equation \eqref{spm_non}. The first issue to be dealt with is a dependence on measure in functions $a$, $b$ and $c$ in \eqref{spm_non}. Since measures cannot be evaluated pointwise, we consider quite general class of such models where dependence on measure is actually realized by testing against some nice kernel. More precisely, for a function $f(x,\mu)$, we consider representation:
\begin{equation}\label{repr_nem_op}
f(x,\mu) = F\Bigg(x, \int_0^{\infty} K_F(x,y) d\mu(y) \Bigg).
\end{equation}
We will always use lowercase and uppercase letters to distinguish between these two representations.

Similarly as for linear perturbation case (see Theorem \ref{lin_per}), we define perturbed function $f^h(x,\mu)$ as
\begin{equation}\label{pert_fun_def}
f^h(x,\mu) = f^0(x,\mu) + hf_p(x,\mu) = F^0\Bigg(x, \int_0^{\infty} K_{F^0}(x,y) d\mu(y) \Bigg) + h F_P\Bigg(x, \int_0^{\infty} K_{F_P}(x,y) d\mu(y) \Bigg) .
\end{equation}
Note that, in general, function $f^h(x,\mu)$ does not have a kernel representation.

Now, we introduce the setting for analysis of nonlinear equations, similar to the one for linear case (Theorem \ref{lin_per}). We consider equation:
\begin{equation}\label{spm_non_per}
\left\{ \begin{array}{lll}
\partial_t \mu_t^h + \partial_x(b^h(x, \mu_t) \mu_t) & = c^h(x, \mu_t)\mu_t & \mathbb{R}^{+} \times [0,T],\\
b^h(0, \mu_t) D_{\lambda}\mu_t^h(0) &= \int_{\mathbb{R}^+} a^h(x,\mu_t) d\mu_t^h(x) & [0,T], \\
\mu_0 &= \nu & \mathbb{R}^+.
\end{array} \right.
\end{equation}
where function $a^h$, $b^h$ and $c^h$ are defined as in \eqref{pert_fun_def}. We assume:
\begin{description}
\item[(N1)] $A^0, A_P, B^0, B_P, C^0, C_P \in C^{1+\alpha}(\mathbb{R}^+ \times \mathbb{R}^+)$,
\item[(N2)] $K_{A^0}, K_{A_P}, K_{B^0}, K_{B_P}, K_{C^0}, K_{C_P} \in C^{2+\alpha}(\mathbb{R}^+ \times \mathbb{R}^+)$,
\item[(N3)] $b^h(0, \mu) > 0$ for any $\mu \in \mathcal{M}^+(\mathbb{R}^+)$,
\item[(N4)] $a^h(x,\mu) \geq 0$ for any $\mu \in \mathcal{M}^+(\mathbb{R}^+)$.
\end{description}
\begin{defn}\label{const_C_N}{\bf (Contant $C_N$)}
Similarly to the constants $C_L$, $H_L$ and $G_L$ (cf. Definition \ref{constants_meaning}), we introduce $C_N$ as any constant depending continuously on $C^{1+\alpha}$ norms of model functions or $C^{2+\alpha}$ norms of kernels.
\end{defn}
Assumptions above guarantee existence and uniqueness of the measure solution $\mu_t^h$ to the system \eqref{spm_non_per} due to Theorem \ref{well_posedness_2_nonlinear}:
\begin{proof}
We have to check that assumptions above imply (W1a), (W1b), (W2) and (W3) in Theorem \ref{well_posedness_2_nonlinear}. Clearly, (W1a) and (W3) are satisfied. To verify (W2) we compute for some $f(x,\cdot)$:
\begin{multline*}
\norm{f(x,\mu) - f(x,\nu)}_{\infty} = \norm{F\Bigg(x, \int_0^{\infty} K_F(x,y) d\mu(y) \Bigg) - F\Bigg(x, \int_0^{\infty} K_F(x,y) d\nu(y) \Bigg)}_{\infty}\leq \\ \leq
\norm{F_y}_{\infty} \norm{K_F}_{W^{1,\infty}} p_F(\mu,\nu),
\end{multline*}
as desired. Moreover, since
\begin{equation}\label{analysis_f0}
{\partial_x} f(x,\mu) = F_x\Bigg(x, \int_0^{\infty} K_F(x,y) d\mu(y) \Bigg)
+ F_y\Bigg(x, \int_0^{\infty} K_F(x,y) d\mu(y) \Bigg)\int_0^{\infty} K_{F,x}(x,y) d\mu(y),
\end{equation}
(W1b) is obviously satisfied. The proof is concluded.
\end{proof}
We are ready to formulate the main Theorem:
\begin{thm}\label{nonlin_per}
Suppose assumptions (N1) -- (N4) hold true and $\alpha > \frac{1}{2}$. Let $\mu_t^h$ be the solution to \eqref{spm_non_per}. Then, map $[-\frac{1}{2}, \frac{1}{2}] \ni h \to \mu_t^h$ is Fr\'echet differentiable in space $Z$. 
\end{thm}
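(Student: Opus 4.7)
The strategy is to revisit the approximation scheme \eqref{spm_non_approx} recalled after Theorem \ref{well_posedness_1_nonlinear}, combine it with the linear results of Section \ref{sect3}, and pass to the limit in the level of subdivision. Let $\mu_t^{h,k}$ denote the piecewise approximation built on the dyadic grid of step $T/2^k$. The plan has three stages.

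First I would prove by induction on $m \in \{0,1,\dots,2^k-1\}$ that for each $k$, the map $h \mapsto \mu_t^{h,k}$ is Fr\'echet differentiable in $Z$ on $[mT/2^k,(m+1)T/2^k]$, with H\"older--$\alpha$ continuous derivative. On such a subinterval the equation is linear, but both its coefficients and its initial datum $\mu_{mT/2^k}^{h,k}$ depend on $h$: explicitly via \eqref{pert_fun_def}, and implicitly through the previous--step measure. The kernel form \eqref{repr_nem_op} together with assumption (N2) ensures that pairing a $Z$--differentiable measure against the smooth kernel $K_F(x,\cdot)$ produces a map in $h$ that is $C^{1+\alpha}$ with uniform constants in $x$; hypotheses (B1)--(B2) are therefore met and Theorem \ref{lin_per_1} advances the induction, providing simultaneously differentiability in $Z$ and the required H\"older regularity of $\partial_h\mu_t^{h,k}$.

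Next I would show that $\{\partial_h \mu_t^{h,k}\}_k$ is Cauchy in $C([0,T],Z)$ uniformly in $h \in [-\tfrac12,\tfrac12]$. By the dual characterisation of the $Z$--norm and the semigroup identity \eqref{semigroup}, this amounts to comparing iterated dual functions $\varphi^{h,k}_{\xi,t}$ and $\varphi^{h,k+1}_{\xi,t}$ concatenated across subintervals. On each subinterval of length $\Delta t = T/2^k$, Theorem \ref{thm_main_diff} applied with data frozen at level $k$ versus level $k+1$ yields an error on the $\partial_h$--derivative involving $|\Delta t|^{2\alpha}$ and $|\Delta t|^{1+2\alpha}$, times constants of type $C_L,H_L,G_L$ which remain uniform in $h$ and $k$ thanks to \eqref{stability}, \eqref{cont_model} and \eqref{cont_time}. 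The discrepancies $|\Delta f|$ and $|\Delta f_h|$ required by (C2)--(C3) are controlled by the time--Lipschitz bound \eqref{cont_time} together with smoothness of the kernels $K_F$.

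Summing the per--subinterval error over the $2^k$ dyadic intervals yields a cumulative bound of order $2^k \cdot (T/2^k)^{2\alpha} = T^{2\alpha}\, 2^{k(1-2\alpha)}$, which decays in $k$ precisely when $\alpha > \tfrac12$. This is the origin of the hypothesis. Under it, the limit $\psi_t^h := \lim_{k\to\infty}\partial_h\mu_t^{h,k}$ exists in $C([0,T],Z)$ uniformly in $h$, while $\mu_t^{h,k} \to \mu_t^h$ in the flat metric (and hence in $Z$). The classical interchange of limit and derivative then yields Fr\'echet differentiability of $h\mapsto\mu_t^h$ in $Z$ with derivative $\psi_t^h$.

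The hard part is the second stage: one must propagate the H\"older estimates of Lemma \ref{est_dual_sol_fin_fcns} and Corollary \ref{Holder_cont_phi} through $2^k$ concatenations without losing constants, re--using Theorem \ref{thm_main_diff} with the output of one subinterval playing the role of $\xi$ on the next. It is precisely the quadratic exponent $2\alpha$ in \eqref{precise_estimate_diff_dh_eq} that forces $\alpha>\tfrac12$; a single power $\alpha$ would be insufficient to balance the $2^k$ multiplicative blow--up coming from summing over subintervals.
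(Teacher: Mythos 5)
Your high-level plan matches the paper's: dyadically approximate $\mu_t^h$ by $\mu_t^{h,k}$, establish differentiability for each fixed $k$ by induction on the subintervals, then pass to the limit by showing the derivatives (or difference quotients) converge at rate $2^{k(1-2\alpha)}$ via Theorem~\ref{thm_main_diff} and an iteration lemma. The variant you choose --- showing $\{\partial_h\mu_t^{h,k}\}_k$ is Cauchy uniformly in $h$ and then invoking the classical interchange theorem --- is a legitimate alternative to the paper's route via Lemma~\ref{unif_conv_cont_limit}, which instead controls $\sup_{\Delta h}\norm[2]{\tfrac{\mu_t^{h+\Delta h,k+1}-\mu_t^{h,k+1}}{\Delta h}-\tfrac{\mu_t^{h+\Delta h,k}-\mu_t^{h,k}}{\Delta h}}_Z$. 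The identification of $\alpha>\tfrac12$ as the balance point between $2^k$ summands and per-step error of order $(T/2^k)^{2\alpha}$ is correct and matches Remark~\ref{practical_iteration_lemma}.

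There is, however, a genuine gap in your treatment of $\abs{\Delta f_h}$. You claim the discrepancies $\abs{\Delta f}$ and $\abs{\Delta f_h}$ required by Theorem~\ref{thm_main_diff} ``are controlled by the time-Lipschitz bound \eqref{cont_time} together with smoothness of the kernels $K_F$.'' That is true for $\abs{\Delta f}$ and $\abs{\Delta f_x}$ (via Corollary~\ref{Deltaf_estimate_cor}), but it fails for $\abs{\Delta f_h}$. By \eqref{derf/h}, $\partial_h f(x,\mu^{h,k})$ is a dual pairing $\big(\partial_H\mu^{H,k}\big|_{H=h},K_F(x,\cdot)\big)_{(Z,C^{1+\alpha})}$, so the difference $\norm[1]{\partial_h f(x,\mu^{h,k+1})-\partial_h f(x,\mu^{h,k})}_\infty$ is governed (Lemma~\ref{bound_for_measure_der_h}) by $\norm[1]{\partial_H\mu^{H,k+1}-\partial_H\mu^{H,k}}_Z$ --- precisely the quantity you are iterating to estimate. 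This circularity cannot be broken by the time-Lipschitz bound; the paper closes the loop by feeding $\Delta^{k,t_*}$ back into the iteration (Lemma~\ref{estimate_for_deltaf_h}, Corollary~\ref{bounds_but_still_not_complete}) and then resolving it with a Gronwall-in-discrete-time argument at the half-grid points (Lemma~\ref{almost_almost_there}). Without that step your inductive estimate is not closed and the claimed $2^{k(1-2\alpha)}$ bound does not follow. A secondary, smaller omission is that the two approximation levels are frozen on meshes of different diameters $T/2^k$ and $T/2^{k+1}$, so the flows being compared in Theorem~\ref{thm_main_diff} are sometimes frozen at different past times, which forces the even/odd case distinction in Lemma~\ref{first_nice_result_iteration_in_h}; your proposal treats the two levels as synchronized, which is not quite the case.
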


The general idea to prove Theorem \ref{nonlin_per} is to exploit iteration scheme \eqref{spm_non_approx} to approximate $\mu^h_t$. Namely, we fix $k\in \mathbb{N}$ and divide interval $[0,T]$ for $2^k$ subintervals of equal length. Then, in interval $[m\frac{T}{2^k}, (m+1)\frac{T}{2^k}]$, approximation $\mu^{h,k}_t$ is defined inductively as the solution to: 
\begin{equation}\label{spm_non_approx}
\left\{ \begin{array}{lll}
\partial_t \mu_t + \partial_x(b^h(x, \mu_{m\frac{T}{2^k}}) \mu_t) & = c^h(x, \mu_{m\frac{T}{2^k}})\mu_t & \mathbb{R}^{+} \times  [m\frac{T}{2^k}, (m+1)\frac{T}{2^k}],\\
b^h(0, \mu_{m\frac{T}{2^k}}) D_{\lambda}\mu_t(0) &= \int_{\mathbb{R}^+} a^h(x,\mu_{m\frac{T}{2^k}}) d\mu_t(x) &  [m\frac{T}{2^k}, (m+1)\frac{T}{2^k}], \\
\mu_{m\frac{T}{2^k}} =  \mu^{h,k}_{m\frac{T}{2^k}}
\end{array} \right.
\end{equation}
(the initial value is obtained from solving similar problem on the interval $[(m-1)\frac{T}{2^k}, m\frac{T}{2^k}]$. Note that since iteration sequence converges in flat metric space $C([0,T], (\mathcal{M}^+(\mathbb{R}^+), p_F))$ to the solution of nonlinear problem (cf. Section \ref{lspm_sect} and \cite{GwMa2010}), it also converges in $C([0,T], Z)$ to the same limit.

This iteration scheme will be used in combination with the following classical lemma stating that sequence of continuous functions converging uniformly has a continuous limit \cite{rudin1976principles}.
\begin{lem}\label{unif_conv_cont_limit}
Let $f_n \to f$ uniformly on a set $E$ in some metric space $(X,d)$. Let $x$ be a limit point of $E$ and suppose that
$$
\lim_{t \to x} f_n(t) = A_n.
$$
Then, $A_n$ converges and $\lim_{t\to x} f(t) = \lim_{n\to\infty} A_n$. In particular,
$$
\lim_{t\to x} \lim_{n\to \infty} f_n(t) = \lim_{n \to \infty} \lim_{t \to x} f_n(t).
$$
\end{lem}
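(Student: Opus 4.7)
The plan is to follow the standard Rudin-style ``interchange of limits'' argument. First I would establish that $(A_n)$ is a Cauchy sequence, which gives convergence provided the target space of the $f_n$ is complete (an implicit assumption; the lemma is cited from Rudin where the target is $\mathbb{R}$ or $\mathbb{C}$, and in the applications above the target space $Z$ is indeed complete). Then I would show that the limit $A$ of $(A_n)$ equals $\lim_{t\to x} f(t)$ by a three-term triangle inequality, inserting $f_n(t)$ and $A_n$ between $f(t)$ and $A$.

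Concretely, fix $\varepsilon>0$. Uniform convergence of $(f_n)$ on $E$ means $(f_n)$ is uniformly Cauchy on $E$, so there exists $N$ such that $d(f_n(t),f_m(t))<\varepsilon/3$ for all $n,m\geq N$ and all $t\in E$. For any such fixed $n,m$, since $x$ is a limit point of $E$ and $f_n(t)\to A_n$, $f_m(t)\to A_m$ as $t\to x$ within $E$, I can pick $t\in E$ close enough to $x$ so that $d(f_n(t),A_n)<\varepsilon/3$ and $d(f_m(t),A_m)<\varepsilon/3$. The triangle inequality then gives $d(A_n,A_m)<\varepsilon$, so $(A_n)$ is Cauchy. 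Denote its limit by $A$.

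For the second step, given $\varepsilon>0$, by uniform convergence pick $n$ so large that both $d(f_n(t),f(t))<\varepsilon/3$ for every $t\in E$ and $d(A_n,A)<\varepsilon/3$. With this $n$ now fixed, the hypothesis $\lim_{t\to x} f_n(t)=A_n$ supplies a neighborhood $U$ of $x$ such that $d(f_n(t),A_n)<\varepsilon/3$ for every $t\in U\cap E$. For such $t$, the triangle inequality
\begin{equation*}
d(f(t),A)\leq d(f(t),f_n(t))+d(f_n(t),A_n)+d(A_n,A)<\varepsilon
\end{equation*}
shows $\lim_{t\to x}f(t)=A$, which is the desired equality; the ``in particular'' clause is an immediate rewriting.

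I do not expect any serious obstacle: the argument is a textbook $\varepsilon/3$ proof. The one point that requires a little care is the order of quantifiers, namely choosing $N$ first from the uniform (Cauchy) convergence so that the bound on $d(f_n(t),f_m(t))$ is uniform in $t$, and only afterwards selecting $t$ near $x$ using the pointwise limits $f_n(t)\to A_n$ for the finitely many indices under consideration. This is precisely what allows the interchange of limits to go through.
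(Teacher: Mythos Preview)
Your proposal is correct and is exactly the classical $\varepsilon/3$ argument from Rudin that the paper itself invokes: the paper does not give its own proof of this lemma but simply cites \cite{rudin1976principles}. Your observation about the implicit completeness assumption on the target space is apt and matches how the lemma is actually used (with target $Z$, which is complete).
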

Note that $x$ does not have to belong to the set $E$. As we are interested in existence of the limit $\frac{\mu_t^{h+\Delta h} - \mu_t^h}{\Delta h}$, we should take $E = [-\frac{1}{2}, \frac{1}{2}] \setminus \{0\}$ and prove that:
\begin{itemize}
\item{the sequence $\frac{\mu_t^{h+\Delta h,k} - \mu_t^{h,k}}{\Delta h}$ converges uniformly for all $\Delta h \in E$ as $k \to \infty$ - this is proven in Theorem \ref{crucial_theorem_1},}
\item{$\frac{\mu_t^{h+\Delta h,k} - \mu_t^{h,k}}{\Delta h}$ converges as $\Delta h \to 0$ (this fact can be interpreted as a differentiability of the approximating sequence) - this is proven in Theorem \ref{diff_k_fixed}.}
\end{itemize}
To prove the first result, it is sufficient to demonstrate that
\begin{equation}\label{target_first_point}
\Delta^{k,t}:= \sup_{\Delta h \in (-\frac{1}{2}, \frac{1}{2})}\norm{\frac{\mu_t^{h+\Delta h,k+1} - \mu_t^{h,k+1}}{\Delta h} - 
\frac{\mu_t^{h+\Delta h,k} - \mu_t^{h,k}}{\Delta h}}_Z  \leq C 2^{-k\beta}
\end{equation}
for some independent constants $C >0$ and $\beta >0$. This estimate is difficult to obtain as it simultaneously captures two effects: when $k \to \infty$ and when $\Delta h \to 0$. In particular, one cannot apply triangle inequality to prove \eqref{target_first_point} -- then we would lose one of these effects.

The main idea to obtain the second result is to propagate differentiability along intervals $[0, \frac{T}{2^k}], [\frac{T}{2^k}, 2\frac{T}{2^k}]$ and so on. More precisely, on the first interval, assertion follows directly from linear theory (Theorem \ref{lin_per_1} or even Theorem \ref{lin_per}). Then, when considering second interval, we should use the fact that nonlinearities are evaluated at measure $\mu_{\frac{T}{2^k}}^{h,k}$ where time $\frac{T}{2^k}$ belongs to the first interval. This gives us a lot of regularity in map $h \mapsto f(x, \mu_{\frac{T}{2^k}}^{h,k})$. We study this effect more carefully in the following Section. However, there is a price to be paid for this type of argument as constants that will accumulate during this inductive procedure have to be properly controlled. This is the reason why we estimated H\"older constants quite carefully in the treatment of linear problem. 

\subsection{Regularity of map $h \mapsto f^h(x,\mu^h)$ when $h \mapsto \mu^h$ is differentiable in $Z$}\label{regularity_estimates_41}
The main target of this subsection is to prove that if the map $h \mapsto \mu^h$ is differentiable in $Z$, then $(x,h) \mapsto f(x,\mu^h)$ satisfies assumption (B2) of Theorem \ref{lin_per_1}, namely $(x,h) \mapsto f^h(x,\mu^h)$ is $C^{1+\alpha}(\mathbb{R}^+ \times [-\frac{1}{2}, \frac{1}{2}])$. This result is motivated by the propagation of differentiability mentioned above.

It will be sufficient to consider $(x,h) \mapsto f(x,\mu^h)$, where $f$ is of the form \eqref{repr_nem_op}, since composition with linear map with arguments on bounded domain will not change the result. We assume:
\begin{description}
\item[(F1)] $F \in C^{1+\alpha}(\mathbb{R}^+ \times \mathbb{R}^+)$ and $K_F \in C^{2+\alpha}(\mathbb{R}^+ \times \mathbb{R}^+)$ with norm $C_F$,
\item[(F2)] Map $ [-\frac{1}{2}, \frac{1}{2}] \ni h \mapsto \mu^h \in \mathcal{M}^+(\mathbb{R}^+)$ is Lipschitz in flat metric with constant $C_L$ (in particular its Fr\'echet derivative in $Z$ is bounded by $C_L$, see for instance Corollary \ref{a_priori_bound_derivative} and its proof),
\item[(F3)] Map $ [-\frac{1}{2}, \frac{1}{2}] \ni h \mapsto \mu^h \in \mathcal{M}^+(\mathbb{R}^+)$ is bounded in total variation norm by a constant $C_T$ and is in $C^{1+\alpha}$ (with Fr\'echet derivative in space $Z$) with some constant $C_H$.
\end{description}
Again, we introduce so many constants to trace different effects as during iteration mentioned above some of them will be easier to estimate than others. For the functions and kernels like above, $x$ and $y$ stands for the first and second derivative respectively. Similarly, we denote partial derivatives with $f_x$, $K_{F, x}$ and so on. We remark here that a map $[-\frac{1}{2}, \frac{1}{2}] \ni h \mapsto \mu^h \in \mathcal{M}^+(\mathbb{R}^+)$ is $C^{1+\alpha}$ if $\norm{\mu^h}_{C^{1+\alpha}([-\frac{1}{2}, \frac{1}{2}])}$ with the norm defined by an obvious generalization of formula \eqref{1plusalpha_norm}. We will prove:
\begin{thm}\label{Nem_op}
Under assumptions (F1) -- (F3), the map $\mathbb{R}^+ \times [-\frac{1}{2}, \frac{1}{2}] \ni (x,h) \to f(x,\mu^h)$ satisfies assumption (B2) of Theorem \ref{lin_per_1}. Moreover, 
$$
\norm{{\partial_x} f(x,\mu^h)}_{\alpha,x}, \norm{{\partial_h} f(x,\mu^h)}_{\alpha,x}, \norm{{\partial_x} f(x,\mu^h)}_{\alpha,h} \leq C(C_L, C_T, C_F),
$$
$$
\norm{{\partial_h} f(x,\mu^h)}_{\alpha,h} \leq C(C_L, C_T, C_F)(1 + C_H).
$$
In particular, if functions $F^0, F_P$ and kernels $K_{F^0}, K_{F_P}$ satisfy assumptions (F1) -- (F3), then map $\mathbb{R}^+ \times [-\frac{1}{2}, \frac{1}{2}] \ni (x,h) \to f^h(x,\mu^h)$ meets (B2) in Theorem \ref{lin_per_1} with:
$$
\norm{{\partial_x} f^h(x,\mu^h)}_{\alpha,x}, \norm{{\partial_h} f^h(x,\mu^h)}_{\alpha,x}, \norm{{\partial_x} f(x,\mu^h)}_{\alpha,h} \leq C(C_L, C_T, C_F),
$$
$$
\norm{{\partial_h} f^h(x,\mu^h)}_{\alpha,h} \leq C(C_L, C_T, C_F)(1 + C_H).
$$
\end{thm}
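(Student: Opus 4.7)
Write $f(x,\mu^h) = F(x, I(x,h))$ where $I(x,h) := \int_0^\infty K_F(x,y)\, d\mu^h(y)$, and reduce everything to regularity of the scalar function $I$. The chain rule gives the formal expressions
\begin{align*}
\partial_x f(x,\mu^h) &= F_x(x, I(x,h)) + F_y(x, I(x,h))\, I_x(x,h), \\
\partial_h f(x,\mu^h) &= F_y(x, I(x,h))\, I_h(x,h),
\end{align*}
with $I_x(x,h) = \int K_{F,x}(x,y)\, d\mu^h(y)$ and, crucially, $I_h(x,h) = \langle K_F(x,\cdot),\, \partial_h \mu^h\rangle_{Z^*, Z}$. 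The latter pairing is legitimate because (F1) guarantees $K_F(x,\cdot)\in C^{1+\alpha}(\mathbb{R}^+)\cong Z^*$, and (F3) gives a well-defined element $\partial_h\mu^h\in Z$.

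Next I would collect the six uniform bounds on $I, I_x, I_h$. Boundedness of $I$ and $I_x$ follows from (F1) and $\|\mu^h\|_{TV}\leq C_T$; boundedness of $I_h$ follows from $\|\partial_h\mu^h\|_Z\leq C_L$ (a consequence of (F2)). For H\"older dependence in $x$ I would use that $K_F\in C^{2+\alpha}$ so that $x\mapsto K_F(x,\cdot)$ and $x\mapsto K_{F,x}(x,\cdot)$ are Lipschitz in the $C^{1+\alpha}$-norm, yielding Lipschitz (hence H\"older) behavior of $I(\cdot,h)$, $I_x(\cdot,h)$, $I_h(\cdot,h)$ with constants depending only on $C_F$, $C_T$ and $C_L$. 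For H\"older dependence in $h$ of $I$ and $I_x$, test against a fixed $C^{1+\alpha}$ function: $|I(x,h_1)-I(x,h_2)|\leq \|K_F(x,\cdot)\|_{W^{1,\infty}}\, p_F(\mu^{h_1},\mu^{h_2})\leq C_F C_L |h_1-h_2|$ by (F2), and similarly for $I_x$.

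The delicate point is H\"older continuity of $I_h$ in $h$, where the constant $C_H$ must appear. Here I would write
\begin{equation*}
I_h(x,h_1)-I_h(x,h_2) = \bigl\langle K_F(x,\cdot),\ \partial_h\mu^h\big|_{h=h_1} - \partial_h\mu^h\big|_{h=h_2}\bigr\rangle_{Z^*,Z}
\end{equation*}
and bound it by $\|K_F(x,\cdot)\|_{C^{1+\alpha}}\cdot C_H |h_1-h_2|^\alpha$ using (F3). Combining this with the H\"older behavior of $F_y(x,I(x,h))$ in $h$ (which is only Lipschitz via $I$, with constant depending on $C_L, C_F$) through the product rule and the convention of Remark \ref{estimate_convention} yields the stated bound $\norm{\partial_h f}_{\alpha,h}\leq C(C_L,C_T,C_F)(1+C_H)$; the other four bounds collect the uniform estimates above with no $C_H$ dependence, since they only require the Lipschitz regularity of $h\mapsto\mu^h$ supplied by (F2).

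Finally, the statement for $f^h = f^0 + h f_p$ follows by linearity of the decomposition in $h$: each summand satisfies the bounds just obtained under (F1)--(F3), and the extra factor of $h$ in the second summand is absorbed since $|h|\leq \tfrac12$, with a single extra derivative-in-$h$ term $F_P(x, I_P(x,h))$ that is itself bounded and H\"older-in-$(x,h)$ by the same argument applied to $F_P,K_{F_P}$. The main obstacle is really the last point of the previous paragraph: converting the abstract $Z$-valued H\"older regularity of the derivative $\partial_h\mu^h$ (assumption (F3)) into a pointwise-in-$x$ H\"older estimate for the scalar $I_h$, which is precisely what makes the duality identification $Z^*\cong C^{1+\alpha}$ indispensable in this argument.
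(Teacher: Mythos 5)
Your decomposition $f(x,\mu^h)=F(x,I(x,h))$, the chain-rule expressions, the duality-pairing formula for $\partial_h f$, and the reduction of the crucial H\"older-in-$x$ bound on $I_h$ to a $C^{1+\alpha}$-valued estimate on $x\mapsto K_F(x,\cdot)$ are all exactly the paper's route (its Lemmas \ref{Nem_op_1}, \ref{Nem_op_2} and the unnamed third lemma). The one cosmetic difference is the kernel estimate: you establish H\"older behavior of $x\mapsto K_F(x,\cdot)$ in $C^{1+\alpha}$-norm via a Lipschitz bound plus boundedness, whereas the paper proves the equivalent mixed seminorm inequality \eqref{Holest_2} directly through a four-case analysis and a minimum trick; both use only $K_F\in C^{2+\alpha}$ and yield the same conclusion.

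One over-claim should be corrected: $x\mapsto K_{F,x}(x,\cdot)$ is \emph{not} Lipschitz in the $C^{1+\alpha}$-norm. The H\"older-seminorm component is governed by $\bigl|K_{F,xy}(x_1,y_1)-K_{F,xy}(x_2,y_1)-K_{F,xy}(x_1,y_2)+K_{F,xy}(x_2,y_2)\bigr|$, and $C^{2+\alpha}$ does not provide a third derivative to integrate along $x$, so one only gets $\lesssim|x_1-x_2|^\alpha|y_1-y_2|^\alpha$, not $|x_1-x_2||y_1-y_2|^\alpha$. Fortunately this claim is never used: $I_x(x,h)=\int K_{F,x}(x,y)\,d\mu^h(y)$ pairs against the \emph{measure} $\mu^h$, so only the $L^\infty$ difference $\|K_{F,x}(x_1,\cdot)-K_{F,x}(x_2,\cdot)\|_\infty\leq\|K_{F,xx}\|_\infty|x_1-x_2|$ is needed; the full $C^{1+\alpha}$ difference is only required for $I_h$, which pairs against $\partial_h\mu^h\in Z$. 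Also, be explicit that the ``Lipschitz hence H\"older'' step silently uses uniform boundedness (of $K_F(x,\cdot)$ and its derivative in $C^{1+\alpha}$) --- the paper flags exactly this caveat in the proof of Lemma \ref{l6}.
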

The proof will be divided for three Lemmas:
\begin{lem}\label{Nem_op_1}
Suppose (F1) holds. For each $\mu \in \mathcal{M}^+(\mathbb{R}^+)$ with $\norm{\mu}_{TV} \leq C_T$, map $x \mapsto f(x,\mu)$ is $C^{1+\alpha}$ with norm depending on $C(C_L, C_T, C_F)$. More preciesly, $\norm{f(x,\mu)}_{W^{1,\infty}} \leq C(C_T, C_F)$ and $\norm{{\partial_x}f(x, \mu)}_{\alpha,x} \leq C(C_L, C_T, C_F)$. Moreover, for any $h \in [-\frac{1}{2}, \frac{1}{2}]$, $\norm{{\partial_x}f(x, \mu^h)}_{\alpha,h} \leq C(C_L, C_T, C_F)$. 
\end{lem}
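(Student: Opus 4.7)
The plan is to differentiate under the integral sign. Writing
$$\partial_x f(x,\mu) = F_x\bigl(x, I(x,\mu)\bigr) + F_y\bigl(x, I(x,\mu)\bigr)\, J(x,\mu)$$
with $I(x,\mu)=\int_0^\infty K_F(x,y)\,d\mu(y)$ and $J(x,\mu)=\int_0^\infty K_{F,x}(x,y)\,d\mu(y)$, each estimate reduces to a chain-rule-style bound on compositions of $F_x$ and $F_y$ with the auxiliary maps $I$ and $J$.

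For the $W^{1,\infty}$ bound, (F1) combined with $\norm{\mu}_{TV}\le C_T$ gives $\abs{I(x,\mu)}+\abs{J(x,\mu)}\le C(C_T,C_F)$, and then $F, F_x, F_y$ are bounded on the resulting bounded range, so $\norm{f(\cdot,\mu)}_{W^{1,\infty}}\le C(C_T,C_F)$ follows at once. The spatial H\"older estimate rests on the fact that $I(\cdot,\mu)$ and $J(\cdot,\mu)$ are Lipschitz in $x$ with constant $C(C_T,C_F)$, since differentiation under the integral gives $\int K_{F,x}\,d\mu$ and $\int K_{F,xx}\,d\mu$, both controlled by $K_F\in C^{2+\alpha}$ and (F1). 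Combined with the $\alpha$-H\"older regularity of $F_x$ and $F_y$ in both variables (from $F\in C^{1+\alpha}$), the usual triangle-inequality bookkeeping of Remark \ref{estimate_convention} yields $\norm{\partial_x f(\cdot,\mu)}_{\alpha,x}\le C(C_T,C_F)$, after splitting the product term as $[F_y(\cdot,I(x_1,\mu))-F_y(\cdot,I(x_2,\mu))]\,J(x_1,\mu)+F_y(\cdot,I(x_2,\mu))\,[J(x_1,\mu)-J(x_2,\mu)]$.

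The final claim is the one that genuinely uses (F2). Fixing $x$ and $h_1,h_2$, note that $K_F(x,\cdot)$ and $K_{F,x}(x,\cdot)$ lie in $W^{1,\infty}(\mathbb{R}^+)$ with norm $\leq C_F$ uniformly in $x$ (by (F1)), so the very definition of the flat metric \eqref{flat_metric} gives
$$\abs{I(x,\mu^{h_1})-I(x,\mu^{h_2})} \leq C_F\, p_F(\mu^{h_1},\mu^{h_2}) \leq C_F\, C_L\,\abs{h_1-h_2},$$
and the analogous bound for $J$. Composing with the $\alpha$-H\"older outer functions $F_x$ and $F_y$ and once more splitting the product $F_y\cdot J$ by a single add-and-subtract, we obtain an $\alpha$-H\"older bound in $h$ with constant $C(C_L,C_T,C_F)$, completing the lemma.

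The main obstacle is conceptual rather than computational: one must notice that the $\alpha$-H\"older (not Lipschitz) regularity of $F_x$ and $F_y$ in their second argument is the correct matching for the claim, since Lipschitz control on $h\mapsto I(x,\mu^h)$ fed into $\alpha$-H\"older outer maps produces exactly the required $\alpha$-H\"older continuity in $h$. In particular, the stronger $C^{1+\alpha}$ information from (F3) plays no role here; it will be needed only in subsequent lemmas when $\partial_h f(x,\mu^h)$ itself must be H\"older-controlled and $\partial_h \mu^h$ enters the picture through the full Theorem \ref{Nem_op}.
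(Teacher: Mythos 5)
Your proof is correct and follows essentially the same route as the paper: compute $\partial_x f$ via the chain rule, bound $I$ and $J$ using $\|\mu\|_{TV}\le C_T$ and the kernel regularity, get the spatial H\"older estimate by the add-and-subtract decomposition of the product term, and for the $h$-variable use the flat-metric bound on $I,J$ coming from (F2) fed into the $\alpha$-H\"older outer maps $F_x,F_y$. The only cosmetic difference is that the paper controls $J(x_1,\mu)-J(x_2,\mu)$ directly via the H\"older seminorm $\|K_{F,x}\|_{\alpha}$ rather than via Lipschitz control through $K_{F,xx}$, but since $K_F\in C^{2+\alpha}$ and $J$ is bounded the two are interchangeable; your closing observation that (F2) is genuinely needed for the final claim while (F3) is not is also accurate.
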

\begin{proof}
It is obvious that $\norm{ f(x,\mu)}_{\infty} \leq \norm{F}_{\infty}$ and it follows from \eqref{analysis_f0} that $\norm{f_x(x,\mu)}_{\infty} \leq C_F + C_F^2 C_T \leq C(C_F, C_T)$. Then, using \eqref{analysis_f0} again, we write:
\begin{multline*}
{\partial_x}  f(x,\mu) \Big|_{x=z_1} - {\partial_x}  f(x,\mu)\Big|_{x=z_2} = 
 \underbrace{F_x\bigg(z_1, \int_0^{\infty} K_{F}(z_1,y) d\mu(y) \bigg)
 -F_x\bigg(z_2, \int_0^{\infty} K_{F}(z_2,y) d\mu(y) \bigg)}_
 {\leq~ \norm{F_x}_{\alpha,x}~\abs{z_1-z_2}^{\alpha}~+~ 
 \norm{F_x}_{\alpha,y}~\norm{K_{F,x}}_{\infty}^{\alpha}~\norm{\mu}_{TV}^{\alpha}~\abs{z_1-z_2}^{\alpha}}+\\
 + \underbrace{ F_y\bigg(z_1, \int_0^{\infty} K_{F}(z_1,y) d\mu(y) \bigg) \int_0^{\infty}  K_{F,x}(z_1,y) d\mu(y)
 -  F_y\bigg(z_2, \int_0^{\infty} K_{F}(z_2,y) d\mu(y) \bigg) \int_0^{\infty} K_{F,x}(z_2,y) d\mu(y)}_{\leq 
 \Big(\norm{F_y}_{\alpha,x}~\abs{z_1 - z_2}^{\alpha}~+~ \norm{F_y}_{\alpha,y}~\norm{K_{F,x}}_{\infty}^{\alpha}~\norm{\mu}_{TV}^{\alpha}~\abs{z_1-z_2}^{\alpha}\Big)~\norm{K_{F,x}}_{\infty}~\norm{\mu}_{TV}~+~ 
 \norm{F_y}_{\infty}~\norm{K_{F,x}}_{\alpha}~\abs{z_1 - z_2}^{\alpha} \norm{\mu}_{TV}
 }\leq \\
 \leq C(C_F, C_T) \abs{z_1 - z_2}^{\alpha}.
\end{multline*}
Similarly,
\begin{multline*}
\abs{{\partial_x}  f(x,\mu^{h_1})  - {\partial_x}  f(x,\mu^{h_2})} = 
 \underbrace{F_x\bigg(x, \int_0^{\infty} K_{F}(x,y) d\mu^{h_1}(y) \bigg)
 -F_x\bigg(x, \int_0^{\infty} K_{F}(x,y) d\mu^{h_2}(y) \bigg)}_
 {\leq~ \norm{F_x}_{\alpha,y}~\Big(\norm{K_{F}}_{W^{1,\infty}}~p_F(\mu^{h_1}, \mu^{h_2})\Big)^{\alpha} \leq C(C_F, C_L) \abs{h_1 - h_2}^{\alpha}}+\\
 + \underbrace{ F_y\bigg(x, \int_0^{\infty} K_{F}(x,y) d\mu^{h_1}(y) \bigg) \int_0^{\infty}  K_{F,x}(x,y) d\mu^{h_1}(y)
 -  F_y\bigg(x, \int_0^{\infty} K_{F}(x,y) d\mu^{h_2}(y) \bigg) \int_0^{\infty} K_{F,x}(x,y) d\mu^{h_2}(y)}_{\leq ~ \norm{F_y}_{\alpha,y}~\Big(\norm{K_{F}}_{W^{1,\infty}}~p_F(\mu^{h_1}, \mu^{h_2})\Big)^{\alpha}~\norm{K_{F,x}}_{\infty}~\norm{\mu^{h_1}}_{TV}~+~ 
 \norm{F_y}_{\infty}~\norm{K_{F,x}}_{W^{1,\infty}}~p_F(\mu^{h_1}, \mu^{h_2}) ~\leq~C(C_F, C_T, C_L) \abs{h_1 - h_2}^{\alpha}
 }\leq \\
 \leq C(C_F, C_T, C_L)  \abs{h_1 - h_2}^{\alpha}.
 \end{multline*}
\end{proof}
\begin{lem}\label{Nem_op_2}
Suppose (F1)--(F3) hold true. Then, the map $[-\frac{1}{2}, \frac{1}{2}] \ni h \mapsto f(x,\mu^h)$ is $C^{1+\alpha}$. More precisely, $\norm[2]{f(x,\mu^h)}_{\infty} \leq C_F$, $\norm[2]{{\partial_h} f(x,\mu^h)}_{\infty} \leq C(C_F, C_L)$ and $\norm[2]{{\partial_h} f(x,\mu^h)}_{\alpha,h} \leq C(C_F,C_L,C_T)(1+C_H)$.
\end{lem}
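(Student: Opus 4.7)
The strategy is to exploit the factorization $f(x,\mu^h) = F(x, I_x(h))$ with $I_x(h) := \int_0^\infty K_F(x,y)\,d\mu^h(y)$, reducing everything to scalar calculus once the map $h \mapsto I_x(h)$ is shown to be $C^{1+\alpha}$ with estimates uniform in $x$. The bound $\norm{f(x,\mu^h)}_\infty \leq \norm{F}_\infty \leq C_F$ is immediate.

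The crucial observation is that for each fixed $x$, the slice $K_F(x,\cdot)$ lies in $C^{1+\alpha}(\mathbb{R}^+) \cong Z^*$ with norm bounded uniformly in $x$ by $C_F$; this uses (F1), which promotes $K_F$ to $C^{2+\alpha}$, so the partial $C^{1+\alpha}$ norm in the second slot is controlled by the full norm. Writing $I_x(h) = \langle \mu^h, K_F(x,\cdot)\rangle_{Z,Z^*}$ and invoking (F2), which provides $\partial_h \mu^h \in Z$ with $\norm{\partial_h \mu^h}_Z \leq C_L$, I obtain that $h \mapsto I_x(h)$ is differentiable with
$$
I_x'(h) = \langle \partial_h \mu^h, K_F(x,\cdot)\rangle, \qquad \abs{I_x'(h)} \leq C_L C_F,
$$
uniformly in $x$. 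The scalar chain rule then gives $\partial_h f(x,\mu^h) = F_y(x, I_x(h))\, I_x'(h)$, whence $\norm{\partial_h f(x,\mu^h)}_\infty \leq \norm{F_y}_\infty \cdot C_L C_F \leq C(C_F, C_L)$.

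For the H\"older bound in $h$, I would split
\begin{multline*}
\partial_h f(x,\mu^{h_1}) - \partial_h f(x,\mu^{h_2}) = \bigl[F_y(x, I_x(h_1)) - F_y(x, I_x(h_2))\bigr]\, I_x'(h_1) \\
+ F_y(x, I_x(h_2))\,\bigl[I_x'(h_1) - I_x'(h_2)\bigr].
\end{multline*}
The first piece is handled by H\"older continuity of $F_y$ in its second slot together with Lipschitz continuity of $I_x$, which follows from (F2) via $\abs{I_x(h_1) - I_x(h_2)} \leq \norm{K_F}_{W^{1,\infty}}\, p_F(\mu^{h_1}, \mu^{h_2}) \leq C_F C_L \abs{h_1 - h_2}$; this contributes an $\abs{h_1-h_2}^\alpha$ term with constant depending only on $C_F$ and $C_L$. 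For the second piece, (F3) yields $\norm{\partial_h \mu^{h_1} - \partial_h \mu^{h_2}}_Z \leq C_H \abs{h_1 - h_2}^\alpha$, which pairs against $K_F(x,\cdot) \in Z^*$ to give $\abs{I_x'(h_1) - I_x'(h_2)} \leq C_F C_H \abs{h_1 - h_2}^\alpha$; combined with $\norm{F_y}_\infty \leq C_F$ this produces the $C_H$ factor. The constant $C_T$ enters only as a conservative bound on the range of $I_x$, controlled by $\abs{I_x(h)} \leq \norm{K_F}_\infty C_T$, on which the global $C^{1+\alpha}$ regularity of $F$ is invoked. The only subtle point is justifying that $h \mapsto I_x(h)$ inherits differentiability and H\"older regularity from $h \mapsto \mu^h$ with estimates uniform in $x$, and this is precisely the content of the duality $Z^* \cong C^{1+\alpha}$ recalled in Section~\ref{review_sect} combined with the uniform $C^{1+\alpha}$ bound on $K_F(x,\cdot)$; the rest is a direct chain-rule and triangle-inequality computation.
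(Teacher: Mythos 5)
Your proof is correct and follows essentially the same route as the paper: factor through $I_x(h) = \int K_F(x,y)\,d\mu^h(y)$, use the duality $Z^*\cong C^{1+\alpha}$ together with (F2)--(F3) to get $I_x'(h) = (\partial_h\mu^h, K_F(x,\cdot))_{(Z,C^{1+\alpha})}$, then apply the chain rule and split the H\"older difference into a $F_y$-variation term and an $I_x'$-variation term. The only minor variant is that you apply the chain rule to $h\mapsto F(x,I_x(h))$ directly rather than via the paper's explicit factorization of the difference quotient into $A(\Delta h)B(\Delta h)$, which is marginally cleaner since it sidesteps the formal division by $\int K_F\,d(\mu^{h+\Delta h}-\mu^h)$; also note that for this lemma $K_F\in C^{1+\alpha}$ in its second slot would suffice, so invoking the full $C^{2+\alpha}$ strength of (F1) is not actually needed here.
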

\begin{proof}
This is very similar to the previous Lemma. Obviously, $\norm{f(x,\mu^h)}_{\infty} \leq C_F$. Now, we compute derivative of $h \mapsto f(x,\mu^h)$:
\begin{multline*}
\lim_{\Delta h \to 0} \frac{ f(x,\mu^{h+\Delta h}) - f(x,\mu^{h})}{\Delta h} =
\lim_{\Delta h \to 0} \frac{F\Big(x, \int_0^{\infty} K_{F}(x,y) d\mu^{h+\Delta h}(y) \Big) - 
F\Big(x, \int_0^{\infty} K_{F}(x,y) d\mu^{h}(y) \Big)}{\Delta h}= \\
= \lim_{\Delta h \to 0} \underbrace{\frac{F\Big(x, \int_0^{\infty} K_{F}(x,y) d\mu^{h+\Delta h}(y)\Big) - 
F\Big(x, \int_0^{\infty} K_{F}(x,y) d\mu^{h}(y) \Big)}{\int_0^{\infty} K_{F}(x,y) d(\mu^{h+\Delta h}- \mu^h)(y)}}_{:= ~A(\Delta h)} \underbrace{\frac{\int_0^{\infty} K_{F}(x,y) d(\mu^{h+\Delta h}- \mu^h)(y)}{\Delta h}}_{:=~B(\Delta h)}.
\end{multline*}
Since $ K_{F}(x,y) \in C^{1+\alpha}$, by assumption on differentiability:
 $$
 \abs[2]{\int_0^{\infty} K_{F}(x,y) d(\mu^{h+\Delta h}- \mu^h)(y)} \leq \norm[2]{\mu^{h+\Delta h} - \mu^h }_{Z} \to 0 \text{  as  } \Delta h \to 0.
 $$
 Therefore,
$$
\lim_{\Delta h \to 0} A(\Delta h) = F_y \bigg(x, \int_0^{\infty} K_{F}(x,y) d\mu^{h}(y) \bigg).
$$
Now, note that,
\begin{multline*}
\abs{\frac{\int_0^{\infty} K_{F}(x,y) d(\mu^{h+\Delta h}- \mu^h)(y)}{\Delta h} - \bigg({\partial_H}\mu^H \Big|_{H=h}, K_{F}(x,y)\bigg)_{(Z, C^{1+\alpha})} }\leq \\ \leq \norm{K_{F}(x,y)}_{C^{1+\alpha}} 
\norm{\frac{\mu^{h+\Delta h}- \mu^h}{\Delta h} - {\partial_H}\mu^H |_{H=h}}_Z \to 0 \text{ as } \Delta h \to 0,
\end{multline*}
where $(\cdot, \cdot)_{(Z, C^{1+\alpha})} $ denotes the dual pairing (we had to be careful here as ${\partial_H}\mu^H \Big|_{H=h}$ does not have to be a measure). We conclude:
\begin{equation}\label{derf/h}
{\partial_h} f(x,\mu^h) = F_y \bigg(x, \int_0^{\infty} K_{F}(x,y) d\mu^{h}(y) \bigg) \cdot  \bigg({\partial_H}\mu^H \Big|_{H=h}, K_{F}(x,y)\bigg)_{(Z, C^{1+\alpha})}
\end{equation}
so that $\norm{{\partial_h} f(x,\mu^h)}_{\infty} \leq C(C_F, C_L)$ (we remark here that $\norm{{\partial_H}\mu^H}_{Z} \leq C_L$ -- see assumption (F2) or the proof of Corollary \ref{a_priori_bound_derivative}). Finally, we check H\"older continuity:
\begin{multline*}
{\partial_h} f(x,\mu^h) \big|_{h=h_1} - {\partial_h} f(x,\mu^h) \big|_{h=h_2}=\\ = 
 \underbrace{
 \Bigg[F_y \bigg(x, \int_0^{\infty} K_{F}(x,y) d\mu^{h_1}(y) \bigg) -
  F_y \bigg(x, \int_0^{\infty} K_{F}(x,y) d\mu^{h_2}(y) \bigg)\Bigg]
  \cdot  \bigg({\partial_H}\mu^H \Big|_{H=h_1}, K_{F}(x,y)\bigg)_{(Z, C^{1+\alpha})}}_
  {\leq~\norm{F_y}_{\alpha,y}~\Big(\norm{K_{F}}_{W^{1,\infty}}~
  p_F(\mu^{h_1}, \mu^{h_2})\Big)^{\alpha} C_L \norm{K_{F}}_{C^{1+\alpha}}}+
   \\ +
  \underbrace{F_y \bigg(x, \int_0^{\infty} K_{F}(x,y) d\mu^{h_2}(y) \bigg) \cdot  \bigg[\bigg({\partial_H}\mu^H \Big|_{H=h_1}, K_{F}(x,y)\bigg)_{(Z, C^{1+\alpha})} -
  \Bigg({\partial_H}\mu^H \Big|_{H=h_2}, K_{F}(x,y)\Bigg)_{(Z, C^{1+\alpha})} \bigg]}_
  {\leq~\norm{F_y}_{\infty}~C_H~\abs{h_1 - h_2}^{\alpha}~\norm{K_{F}}_{C^{1+\alpha}}} \leq \\
  \leq C(C_L, C_F, C_T)(1+ C_H) \abs{h_1 - h_2}^{\alpha}.
\end{multline*}
\end{proof}
\begin{lem}
Suppose (F1)--(F3) hold true. Then, the map $\mathbb{R}^+ \ni x \mapsto {\partial_h} f(x,\mu^h)$ is H\"older continuous with constant $C(C_F,C_L,C_T)$.
\end{lem}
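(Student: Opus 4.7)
The plan is to exploit the explicit representation \eqref{derf/h} obtained in Lemma \ref{Nem_op_2},
\[
\partial_h f(x, \mu^h) = F_y\!\left(x, \int_0^{\infty} K_F(x, y)\, d\mu^h(y)\right) \cdot \left(\partial_H \mu^H\big|_{H=h},\, K_F(x, \cdot)\right)_{(Z, C^{1+\alpha})},
\]
and to estimate $\abs{\partial_h f(x_1, \mu^h) - \partial_h f(x_2, \mu^h)}$ by the add-and-subtract scheme of Remark~\ref{estimate_convention}, splitting the difference into a piece in which only the $F_y$-factor varies with $x$ and a piece in which only the duality-pairing factor varies.

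For the $F_y$-contribution I would use that $F_y$ is $\alpha$-H\"older in both variables by (F1) and that $x \mapsto \int_0^\infty K_F(x, y)\, d\mu^h(y)$ is Lipschitz with constant $\norm{K_{F,x}}_\infty \norm{\mu^h}_{TV} \leq C(C_F, C_T)$, so that
\[
\abs{F_y\!\left(x_1, \int_0^\infty K_F(x_1, y)\, d\mu^h(y)\right) - F_y\!\left(x_2, \int_0^\infty K_F(x_2, y)\, d\mu^h(y)\right)} \leq C(C_F, C_T) \abs{x_1 - x_2}^{\alpha}.
\]
The accompanying pairing factor $\bigl(\partial_H \mu^H|_{H=h},\, K_F(x_1, \cdot)\bigr)_{(Z,C^{1+\alpha})}$ is bounded uniformly by $\norm{\partial_H \mu^H|_{H=h}}_Z \cdot \norm{K_F(x_1, \cdot)}_{C^{1+\alpha}} \leq C_L C_F$ thanks to (F2).

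For the dual-pairing contribution, $F_y$ is uniformly bounded by $C_F$, and the task reduces to proving $\norm{K_F(x_1, \cdot) - K_F(x_2, \cdot)}_{C^{1+\alpha}} \leq C(C_F) \abs{x_1 - x_2}^{\alpha}$. Since $K_F \in C^{2+\alpha}$ by (F1), the sup norm of the difference and the sup norm of its $y$-derivative are Lipschitz in $\abs{x_1 - x_2}$ by the mean value theorem applied to $K_F$ and $K_{F,y}$, while the H\"older seminorm in $y$ of $\partial_y K_F(x_1, \cdot) - \partial_y K_F(x_2, \cdot)$ is controlled via the identity
\[
\partial_y K_F(x_1, y_1) - \partial_y K_F(x_2, y_1) - \partial_y K_F(x_1, y_2) + \partial_y K_F(x_2, y_2) = \int_{x_2}^{x_1} \bigl[K_{F,xy}(s, y_1) - K_{F,xy}(s, y_2)\bigr]\, ds
\]
together with the $\alpha$-H\"older continuity of $K_{F,xy}$ in $y$. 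Each of these Lipschitz-in-$x$ bounds is turned into an $\alpha$-H\"older bound by splitting cases: on $\abs{x_1 - x_2} \leq 1$, Lipschitz implies H\"older, while on $\abs{x_1 - x_2} > 1$ the trivial $L^\infty$ bound $2\norm{K_F}_{C^{1+\alpha}} \leq 2\norm{K_F}_{C^{1+\alpha}} \abs{x_1 - x_2}^{\alpha}$ suffices. Multiplying by the bound $C_L$ on $\norm{\partial_H \mu^H|_{H=h}}_Z$ from (F2) yields the desired estimate.

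The argument is routine with no essential obstacle, since the explicit formula \eqref{derf/h} reduces everything to classical H\"older estimates on $F$ and $K_F$ together with the $Z$-boundedness of $\partial_H \mu^H$. The only thing to watch carefully is which constants appear at the end: because the $h$-variable is kept fixed here, the H\"older regularity constant $C_H$ of $h \mapsto \mu^h$ does \emph{not} enter the spatial estimate, and the final constant is of the form $C(C_F, C_L, C_T)$ as claimed.
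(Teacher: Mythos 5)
Your proposal is correct and follows essentially the same route as the paper's proof: start from the explicit formula \eqref{derf/h}, split the increment into the $F_y$--varying piece and the duality-pairing piece, and reduce everything to the key estimate $\norm{K_F(x_1,\cdot)-K_F(x_2,\cdot)}_{C^{1+\alpha}}\le C\abs{x_1-x_2}^\alpha$, with the dual pairing then controlled by $\norm{\partial_H\mu^H}_Z\le C_L$ from (F2). The only cosmetic divergence is how the mixed H\"older quotient \eqref{Holest_2} is handled: you represent the second difference of $K_{F,y}$ as $\int_{x_2}^{x_1}\big(K_{F,xy}(s,y_1)-K_{F,xy}(s,y_2)\big)\,ds$, which immediately yields a Lipschitz-in-$x$, $\alpha$-H\"older-in-$y$ bound and then only requires the two-case split on $\abs{x_1-x_2}\lessgtr 1$, whereas the paper carries out an explicit four-case analysis (including a min-of-two-bounds argument when both increments are small); both exploit the $C^{2+\alpha}$ regularity of $K_F$ in the same essential way, so the estimates are equivalent.
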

\begin{proof}
To perform estimates like above we need a bound of the form $\norm{y \mapsto K_F(x_1,y) - K_F(x_2,y)}_{C^{1+\alpha}} \leq C \abs{x_1-x_2}^{\alpha}$ so that we can conclude
\begin{equation}\label{crucial_estimate_forholder}
 \bigg({\partial_H}\mu^H \Big|_{H=h}, K_{F}(x_1,y) - K_{F}(x_2,y)\bigg)_{(Z, C^{1+\alpha})} \leq \norm[2]{{\partial_H}\mu^H}_Z \norm[2]{K_{F}(x_1,y) - K_{F}(x_2,y)}_{C^{1+\alpha}} \leq C \abs{x_1 - x_2}^{\alpha}
\end{equation}
for some constant $C$ to be determined. Using only $C^{1+\alpha}$ regularity we can easily deduce:
$$
\norm{y \mapsto K_{F}(x_1,y) - K_{F}(x_2,y)}_{\infty} \leq C_F \abs{x_1 - x_2}, \quad \quad \quad
\norm{y \mapsto {\partial_y}\big(K_{F}(x_1,y) - K_{F}(x_2,y)\big)}_{\infty} \leq C_F \abs{x_1 - x_2}^{\alpha}.
$$
To obtain desired $C^{1+\alpha}$ regularity of the map $y \mapsto K_{F}(x_1,y) - K_{F}(x_2,y)$, we will prove
\begin{equation}\label{Holest_2}
\sup_{x_1, x_2, x_1 \neq x_2 } \sup_{ y_1, y_2, y_2 \neq y_2} \abs{\frac{\big(K_{F,y}(x_1,y_1) - K_{F,y}(x_2,y_1)\big) - \big(K_{F,y}(x_1,y_2) - K_{F,y}(x_2,y_2)\big)}{\abs{x_1 - x_2}^{\alpha}\abs{y_1-y_2}^{\alpha}}} \leq C_F.
\end{equation}
Indeed, if $\abs{x_1 - x_2} \geq 1$ and $\abs{y_1-y_2} \geq 1$, then \eqref{Holest_2} is trivial (we use directly $L^{\infty}$ norms). Moreover, if $\abs{x_1 - x_2} \geq 1, \abs{y_1-y_2} \leq 1$ we use Taylor's expansion together with $C^{2+\alpha}$ regularity of kernel $K_F$:
$$
\abs{\big(K_{F,y}(x_1,y_1) - K_{F,y}(x_2,y_1)\big) - \big(K_{F,y}(x_1,y_2) - K_{F,y}(x_2,y_2)\big)} \leq C_F \abs{y_1-y_2}\abs{x_1-x_2}^{\alpha}+ C_F\abs{y_1-y_2}^2,
$$
so that \eqref{Holest_2} follows. Similarly, if $\abs{x_1 - x_2} \leq 1$ and $\abs{y_1-y_2} \geq 1$, we use analogous expansion:
$$
\abs{\big(K_{F,y}(x_1,y_1) - K_{F,y}(x_2,y_1)\big) - \big(K_{F,y}(x_1,y_2) - K_{F,y}(x_2,y_2)\big)} \leq C_M \abs{x_1-x_2}\abs{y_1-y_2}^{\alpha}+ C_M\abs{x_1-x_2}^2.
$$
Finally, if $\abs{x_1 - x_2} \leq 1$ and $\abs{y_1-y_2} \leq 1$ we use both bounds together:
\begin{multline*}
\abs{\frac{\big(K_{F,y}(x_1,y_1) - K_{F,y}(x_2,y_1)\big) - \big(K_{F,y}(x_1,y_2) - K_{F,y}(x_2,y_2)\big)}{\abs{x_1 - x_2}^{\alpha}\abs{y_1-y_2}^{\alpha}}} \leq \\
 \leq C_M\frac{\text{min}\Big(\abs{x_1-x_2}\abs{y_1-y_2}^{\alpha}+ \abs{x_1-x_2}^2,
 \abs{y_1-y_2}\abs{x_1-x_2}^{\alpha}+ \abs{y_1-y_2}^2\Big)}
 {\abs{x_1 - x_2}^{\alpha}\abs{y_1-y_2}^{\alpha}} = \\ = C_M
 \text{min}\Big(\abs{x_1-x_2}^{1-\alpha}+ \abs{x_1-x_2}^{2-\alpha}\abs{y_1-y_2}^{-\alpha},
 \abs{y_1-y_2}^{1-\alpha}+ \abs{y_1-y_2}^{2-\alpha}\abs{x_1-x_2}^{-\alpha}\Big) \leq \\ \leq
 C_M + C_M~\text{min}\Big(\abs{x_1-x_2}^{2-\alpha}\abs{y_1-y_2}^{-\alpha},
 \abs{y_1-y_2}^{2-\alpha}\abs{x_1-x_2}^{-\alpha}\Big)
\end{multline*}
since $\abs{x_1-x_2} \leq 1$ and $\abs{y_1-y_2} \leq 1$. Finally, observe that the minimum above has to be bounded since product of the two terms is bounded:
$$
\big(\abs{x_1-x_2}^{2-\alpha}\abs{y_1-y_2}^{-\alpha}\big) \cdot \big( \abs{y_1-y_2}^{2-\alpha}\abs{x_1-x_2}^{-\alpha} \big) =  \abs{x_1-x_2}^{2-2\alpha}\abs{y_1-y_2}^{2-2\alpha} \leq 1
$$
as $\alpha \in (0,1)$. Therefore, \eqref{crucial_estimate_forholder} follows for $C = C(C_F, C_L)$. Finally, using \eqref{derf/h}, we conclude:
\begin{multline*}
\abs{ {\partial_h} f(x_1,\mu^h) - {\partial_h} f(x_2,\mu^h)} \leq  \\ 
\leq  \abs{\Bigg(F_y \bigg(x_1, \int_0^{\infty} K_{F}(x_1,y) d\mu^{h}(y) \bigg) -  F_y \bigg(x_2, \int_0^{\infty} K_{F}(x_2,y) d\mu^{h}(y) \bigg)\Bigg)\cdot  \bigg({\partial_H}\mu^H \Big|_{H=h}, K_{F}(x_1,y)\bigg)_{(Z, C^{1+\alpha})}} + \\ 
+ F_y \bigg(x_2, \int_0^{\infty} K_{F}(x_2,y) d\mu^{h}(y) \bigg) \cdot  \bigg({\partial_H}\mu^H \Big|_{H=h}, K_{F}(x_1,y) - K_{F}(x_2,y)\bigg)_{(Z, C^{1+\alpha})} \leq C(C_T, C_F, C_L)\abs{x_1-x_2}^{\alpha}.
\end{multline*}
\end{proof}
We conclude this section with a technical estimate that will be useful later:
\begin{lem}\label{bound_for_measure_der_h}
Let $\{\mu^h\}_{h\in [-\frac{1}{2}, \frac{1}{2}]}$ and $\{\nu^h\}_{h\in [-\frac{1}{2}, \frac{1}{2}]}$ be two families of measures satisfying (F2) and (F3). Then
$$
\norm[2]{{\partial_h}f(x,\mu^h) - {\partial_h}f(x,\nu^h)}_{\infty} \leq C(C_F, C_L) \big(p_F(\mu^h, \nu^h)\big)^{\alpha} +  C_F \norm[2]{{\partial_H}\mu^H \Big|_{H=h} - {\partial_H}\nu^H \Big|_{H=h}}_{Z}.
$$
\end{lem}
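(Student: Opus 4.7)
The plan is to work directly from the closed-form expression for $\partial_h f(x,\mu^h)$ obtained in the proof of Lemma~\ref{Nem_op_2}, namely formula~\eqref{derf/h}:
\[
\partial_h f(x,\mu^h) = F_y\!\left(x, \int_0^{\infty} K_F(x,y)\,d\mu^h(y) \right) \cdot \left({\partial_H}\mu^H \big|_{H=h},\, K_F(x,\cdot)\right)_{(Z,C^{1+\alpha})},
\]
together with the analogous formula for $\nu^h$ (both are valid since $\{\mu^h\}$ and $\{\nu^h\}$ satisfy (F2) and (F3)). I would then insert the hybrid quantity $F_y(x, \int K_F\,d\nu^h) \cdot (\partial_H \mu^H, K_F(x,\cdot))$ and split the difference as
\[
\partial_h f(x,\mu^h) - \partial_h f(x,\nu^h) = I(x,h) + II(x,h),
\]
where $I$ collects the difference of the two $F_y$ factors evaluated against the common pairing $(\partial_H \mu^H, K_F(x,\cdot))$, while $II$ retains the common factor $F_y(x, \int K_F\,d\nu^h)$ multiplied by the difference of pairings $(\partial_H \mu^H - \partial_H \nu^H, K_F(x,\cdot))$.

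For $I$, I would use the H\"older continuity of $F_y$ in its second argument (from (F1)) combined with the elementary bound
\[
\left|\int_0^{\infty} K_F(x,y)\,d(\mu^h - \nu^h)(y)\right| \leq \norm{K_F}_{W^{1,\infty}}\, p_F(\mu^h, \nu^h),
\]
so that the first factor is controlled by $\norm{F_y}_{\alpha,y}\bigl(\norm{K_F}_{W^{1,\infty}} p_F(\mu^h,\nu^h)\bigr)^{\alpha}$. The pairing factor is estimated by the dual bound $\norm{\partial_H \mu^H}_Z \norm{K_F(x,\cdot)}_{C^{1+\alpha}} \leq C_L\,C_F$, using (F2) and (F1). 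Together this yields a contribution of the form $C(C_F, C_L)\bigl(p_F(\mu^h,\nu^h)\bigr)^\alpha$, uniformly in $x$ since $\norm{K_F(x,\cdot)}_{C^{1+\alpha}}$ is bounded in $x$ by (F1).

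For $II$, I would simply use $\norm{F_y}_{\infty} \leq C_F$ and apply the dual pairing inequality
\[
\left|\left({\partial_H}\mu^H|_{H=h} - {\partial_H}\nu^H|_{H=h},\, K_F(x,\cdot)\right)_{(Z,C^{1+\alpha})}\right| \leq \norm{K_F(x,\cdot)}_{C^{1+\alpha}} \norm{{\partial_H}\mu^H|_{H=h} - {\partial_H}\nu^H|_{H=h}}_Z,
\]
where the $C^{1+\alpha}$-norm of $K_F(x,\cdot)$ is absorbed into the leading $C_F$, giving exactly the second term of the stated bound. Combining $I$ and $II$ and taking the supremum over $x$ yields the lemma. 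There is no real obstacle here: once formula~\eqref{derf/h} is in hand the argument is purely mechanical triangle-inequality bookkeeping, the only mild point being to verify that every appearance of $K_F(x,\cdot)$ is controlled in $C^{1+\alpha}$ uniformly in $x$, which is immediate from (F1).
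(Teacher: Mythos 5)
Your proposal is correct and reproduces the paper's own proof: both start from formula~\eqref{derf/h}, insert the same hybrid quantity $F_y\bigl(x,\int K_F\,d\nu^h\bigr)\cdot\bigl(\partial_H\mu^H,K_F(x,\cdot)\bigr)$, and bound the two resulting pieces via the $\alpha$-H\"older continuity of $F_y$ together with the flat-metric estimate on the kernel integral (first piece) and the dual-pairing inequality with $\norm{F_y}_{\infty}\leq C_F$ (second piece). The only difference is that you spell out the uniformity in $x$ of $\norm{K_F(x,\cdot)}_{C^{1+\alpha}}$, which the paper leaves implicit.
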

\begin{proof}
From \eqref{derf/h} we easily compute:
\begin{multline*}
\norm{{\partial_h}f(x,\mu^h) - {\partial_h}f(x,\nu^h)}_{\infty} \leq \\ \leq
\abs[4]{\underbrace{\Bigg(F_y \bigg(x, \int_0^{\infty} K_{F}(x,y) d\mu^{h}(y) \bigg) - F_y \bigg(x, \int_0^{\infty} K_{F}(x,y) d\mu^{h}(y) \bigg)\Bigg)}_{\leq~
\norm{F_y}_{\alpha,y} ~\big(\norm{K_{F}}_{W^{1,\infty}} p_F(\mu^{h}, \nu^{h})\big)^{\alpha}
}
\underbrace{\bigg({\partial_H}\mu^H \Big|_{H=h}, K_{F}(x,y)\bigg)_{(Z, C^{1+\alpha})}}_{\leq~C_{L}~\norm{K_{F}}_{C^{1+\alpha}}}} +  \\ +
\abs[4]{\underbrace{F_y \bigg(x, \int_0^{\infty} K_{F}(x,y) d\mu^{h}(y) \bigg) }_{\leq~\norm{F_y}_{\infty}}
\bigg({\partial_H}\mu^H \Big|_{H=h} - {\partial_H}\nu^H \Big|_{H=h}, K_{F}(x,y)\bigg)_{(Z, C^{1+\alpha})} }.
\end{multline*}
\end{proof}
\subsection{Stability estimates for sequence $\mu_t^{h,k}$}
In this subsection, we will establish some bounds on the sequence $\mu_t^{h,k}$ that will allow to perform the iterations. We begin with the simple, yet powerful lemma that was already established in \cite{GwMa2010} in a simplified version. It is proven by simple induction.
\begin{lem}\label{discrete_Gronwall}
Suppose a sequence $u_k$ satisfies $\abs{u_k} \leq a \max{\big(c,\abs{u_{k-1}}\big)} + b$ for some nonnegative constants $a \geq 1$, $b$ and $c$. Then,
$$
\abs{u_k} \leq a^k \max{\big(\abs{u_0},c)} + 
\begin{cases} 
\frac{a^k - 1}{a - 1} b &\mbox{if } a > 1,  \\ 
k b & \mbox{if } a = 1.
\end{cases}
$$
\end{lem}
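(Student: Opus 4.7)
The plan is to prove this by straightforward induction on $k$, treating the two cases $a = 1$ and $a > 1$ in parallel since the algebra works out uniformly.

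For the base case $k=0$, the claimed bound becomes $\abs{u_0} \leq \max(\abs{u_0}, c)$ with the additive term equal to $0$ (the empty sum), which is trivial. For the inductive step, I would assume the bound holds for $k-1$ and write $\abs{u_{k-1}} \leq a^{k-1} \max(\abs{u_0}, c) + B_{k-1}$, where $B_{k-1}$ denotes $\frac{a^{k-1}-1}{a-1} b$ if $a>1$ and $(k-1)b$ if $a=1$. Since $a \geq 1$ forces $a^{k-1} \max(\abs{u_0},c) \geq c$, one gets $\max(c,\abs{u_{k-1}}) \leq a^{k-1} \max(\abs{u_0},c) + B_{k-1}$. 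Plugging into the hypothesis yields
\begin{equation*}
\abs{u_k} \leq a\bigl(a^{k-1}\max(\abs{u_0},c) + B_{k-1}\bigr) + b = a^k \max(\abs{u_0},c) + (aB_{k-1} + b).
\end{equation*}

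It then remains to verify the recursion $a B_{k-1} + b = B_k$ in both cases. For $a>1$ this is the elementary identity $a \cdot \frac{a^{k-1}-1}{a-1} b + b = \frac{a^k - a}{a-1} b + b = \frac{a^k - 1}{a-1} b$; for $a=1$ it is simply $(k-1)b + b = kb$. No step here presents any real difficulty: the only thing one has to be a bit careful about is the reduction from $\max(c,\abs{u_{k-1}})$ to the bound on $\abs{u_{k-1}}$, which relies on the inductive hypothesis already dominating $c$ thanks to $a \geq 1$. The whole argument occupies essentially one display plus a one-line algebraic check.
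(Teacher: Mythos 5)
Your proof is correct and uses essentially the same approach as the paper: straightforward induction on $k$, with the observation that $a \geq 1$ makes $a^{k-1}\max(\abs{u_0},c)$ dominate $c$, so the $\max$ in the hypothesis is absorbed cleanly. The only cosmetic differences are that you start the induction at $k=0$ rather than $k=1$ and treat the cases $a=1$ and $a>1$ in parallel, whereas the paper does $a>1$ explicitly and remarks that $a=1$ is similar.
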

\begin{proof}
Let $a > 1$. Clearly, lemma holds for $k=1$. Suppose it holds for $k = m$. Then,
\begin{multline*}
\abs{u_{m+1}} \leq a \max{\big(c,\abs{u_{m}}\big)} + b \leq 
 a \max{\Big(c,a^m \max{\big(\abs{u_0},c)} + \frac{a^m - 1}{a - 1} b\Big)} + b \leq \\ \leq
a \max{\Big(c,a^m \max{\big(\abs{u_0},c)}\Big)} + a\frac{a^m - 1}{a - 1} b + b \leq
a^{m+1} \max{(\abs{u_0},c))} + a\frac{a^m - 1}{a - 1} b + b =\\=
a^{m+1} \max{(\abs{u_0},c))} +b \frac{a^{m+1} - 1}{a - 1}
\end{multline*}
and similarly (or even slightly easier) when $a=1$. The proof is concluded.
\end{proof}
Our first observation is that the sequence $\mu_t^{h,k}$ is uniformly bounded in total variation norm, independently of $k \in \mathbb{N}$, $h \in [-\frac{1}{2}, \frac{1}{2}]$ and $t \in [0,T]$ so it will not blow up while iterating:
\begin{lem}\label{total_stability}
There is a constant $C_N$ such that
$$
\norm[2]{\mu_t^{h,k}}_{TV} \leq \norm[2]{\mu_0}_{TV}e^{C_N T}.
$$
\end{lem}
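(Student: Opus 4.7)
The plan is to apply the linear $TV$-stability estimate \eqref{stability} on each of the $2^k$ subintervals comprising the iteration scheme \eqref{spm_non_approx}, and then to compose these per-step bounds. On the subinterval $[mT/2^k,(m{+}1)T/2^k]$, the approximation $\mu_t^{h,k}$ solves a \emph{linear} structured population equation of the form \eqref{spm} with the frozen coefficients $a^h(\cdot,\mu_{mT/2^k}^{h,k})$, $b^h(\cdot,\mu_{mT/2^k}^{h,k})$, $c^h(\cdot,\mu_{mT/2^k}^{h,k})$, so the linear well-posedness theory of Section \ref{lspm_sect} applies directly.

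First I would record a uniform $L^\infty$ bound on the frozen coefficients that is independent of $h$, $k$, $m$ and of the measure at which we freeze. By assumption (N1) the profile functions $A^0, A_P, C^0, C_P$ belong to $C^{1+\alpha}(\mathbb{R}^+\times\mathbb{R}^+)$ and are in particular bounded; combined with $h\in[-\tfrac12,\tfrac12]$, the representation \eqref{pert_fun_def} gives
$$
\sup_{h\in[-1/2,1/2]}\sup_{\mu\in\mathcal{M}^+(\mathbb{R}^+)}\bigl(\norm{a^h(\cdot,\mu)}_{L^\infty}+\norm{c^h(\cdot,\mu)}_{L^\infty}\bigr)\le C_N,
$$
with $C_N$ in the sense of Definition \ref{const_C_N}. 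Note that no control on $b^h$ is needed here because \eqref{stability} does not involve $\|b\|_{L^\infty}$.

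Next, by linearity of \eqref{spm} with respect to the initial measure, the bound \eqref{stability} rescales by $\|\mu_0\|_{TV}$, so on each subinterval one obtains
$$
\norm{\mu_{(m+1)T/2^k}^{h,k}}_{TV}\le e^{2C_N T/2^k}\,\norm{\mu_{mT/2^k}^{h,k}}_{TV}.
$$
Iterating over $m=0,1,\dots,2^k-1$ (or, equivalently, applying Lemma \ref{discrete_Gronwall} with $a=e^{2C_N T/2^k}$ and $b=c=0$) the $2^k$ per-step exponential factors collapse to $e^{2C_N T}$, giving the claim at $t=T$; stopping the iteration at the subinterval containing any given $t\in[0,T]$ yields the stated bound after absorbing the factor $2$ into $C_N$.

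The statement is really bookkeeping rather than genuine analysis: the only step that needs attention is ensuring that freezing the coefficients does not destroy the uniform $L^\infty$ control on $a^h$ and $c^h$, which is precisely why (N1) imposes $C^{1+\alpha}$ regularity on the profile functions. I anticipate no real obstacle beyond this uniformity observation, and in particular the bound is independent of $k$, which is the only property actually needed for the subsequent iteration arguments.
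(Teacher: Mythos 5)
Your proof is correct and follows essentially the same route as the paper's: apply the per-step TV estimate \eqref{stability} on each dyadic subinterval with the frozen coefficients, observe that $\norm{a^h}_{\infty}$ and $\norm{c^h}_{\infty}$ are bounded by $C_N$ uniformly in $h$, $k$, $m$ and the freezing measure (via (N1) and \eqref{pert_fun_def}), and then compose the $2^k$ factors via Lemma \ref{discrete_Gronwall} with $b=c=0$. Your additional remarks — that $\norm{b}_{L^\infty}$ plays no role in \eqref{stability} and that linearity justifies reinstating the $\|\mu_0\|_{TV}$ prefactor — are accurate but do not change the argument.
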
 
\begin{proof}
Fix $k \in \mathbb{N}$ and observe that by \eqref{stability}, for $ t \in [m \frac{T}{2^k}, (m+1) \frac{T}{2^k}]$ we have:
\begin{equation*}
\norm[2]{\mu^{h,k}_{t}}_{TV} \leq \norm[2]{\mu_{m \frac{T}{2^k}}^{h,k}}_{TV} e^{2\big(\norm{a^h}_{\infty} + \norm{c^h}_{\infty}\big) \frac{T}{2^k}} \leq 
\norm[2]{\mu_{m \frac{T}{2^k}}^{h,k}}_{TV} e^{C_N \frac{T}{2^k}}.
\end{equation*}
Using Lemma \ref{discrete_Gronwall} with $c = 0$ and $m \leq 2^k$ we conclude the proof.
\end{proof}
We move on to study the variation of iteration sequence $\mu_t^{h,k}$ as $h$ and $k$ are changed. 
\begin{lem}\label{iteration_in_h}
For some constant $C_N$, independent of $k$, we have:
$$
p_F(\mu_t^{h+\Delta h, k}, \mu_t^{h,k}) \leq C_N \abs{\Delta h}.
$$
\end{lem}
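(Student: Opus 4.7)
The plan is to apply the linear stability estimates \eqref{cont_model} and \eqref{cont_initial} on each subinterval of the approximation scheme \eqref{spm_non_approx} and close the argument by a discrete Gronwall inequality via Lemma \ref{discrete_Gronwall}. Fix $k \in \mathbb{N}$ and set $\tau := T/2^k$ and $u_m := p_F(\mu^{h+\Delta h,k}_{m\tau}, \mu^{h,k}_{m\tau})$, so that $u_0 = 0$. On $[m\tau,(m+1)\tau]$, both approximations solve the linear problem \eqref{spm_non_approx} with coefficients frozen respectively at $\mu^{h+\Delta h,k}_{m\tau}$ and $\mu^{h,k}_{m\tau}$. Combining \eqref{cont_initial} (to transport the discrepancy present at time $m\tau$) with \eqref{cont_model} (to absorb the coefficient mismatch) and using the uniform TV bound from Lemma \ref{total_stability}, I obtain
\begin{equation*}
u_{m+1} \leq u_m \, e^{C_N \tau} + \tau \, e^{C_N \tau} \, C_N \, \Delta_m,
\end{equation*}
where $\Delta_m$ denotes the $L^\infty$ norm of the difference between the two frozen coefficient triples used at step $m$.

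The key step is to estimate $\Delta_m$. The perturbation structure \eqref{pert_fun_def} yields a direct $C\abs{\Delta h}$ bound for changing $h$ at a fixed measure argument, while assumption (W2), which is implied by the kernel representation (N1)--(N2), together with the uniform TV bound from Lemma \ref{total_stability} yields a Lipschitz bound in the flat metric for changing the measure argument. Hence
\begin{equation*}
\Delta_m \leq C_N \abs{\Delta h} + C_N \, u_m.
\end{equation*}
Plugging this in gives $u_{m+1} \leq (1 + C_N \tau)e^{C_N \tau}\, u_m + C_N \tau e^{C_N \tau} \abs{\Delta h}$. I apply Lemma \ref{discrete_Gronwall} with $a := (1+C_N\tau)e^{C_N \tau} > 1$, $b := C_N \tau e^{C_N \tau} \abs{\Delta h}$, $c = u_0 = 0$. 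Since $a \leq e^{2C_N \tau}$ one has $a^{2^k} \leq e^{2C_N T}$ uniformly in $k$, while $a-1 \geq C_N \tau$, so
\begin{equation*}
u_m \leq \frac{a^m - 1}{a-1} \, b \leq \frac{e^{2 C_N T} - 1}{C_N \tau} \cdot C_N \tau \, e^{C_N \tau} \abs{\Delta h} \leq C_N' \, \abs{\Delta h}
\end{equation*}
uniformly in $m \leq 2^k$ and in $k$. Finally, for $t \in [m\tau,(m+1)\tau]$ the same one-step stability argument applied on the sub-subinterval $[m\tau, t]$ of length at most $\tau$ combined with the already established bound for $u_m$ yields $p_F(\mu^{h+\Delta h,k}_t, \mu^{h,k}_t) \leq C_N \abs{\Delta h}$, concluding the proof.

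The main obstacle is the $k$-uniformity of the constant. A crude application of \eqref{cont_model} would produce a per-step amplification factor strictly larger than $1$, and then iterating $2^k$ times as $k \to \infty$ would blow up. What saves the argument is that the amplification obtained above is of the form $1 + O(\tau) = 1 + O(T/2^k)$; this is exactly the regime in which $a^{2^k}$ stays bounded, and the divisor $a - 1 \sim \tau$ in Lemma \ref{discrete_Gronwall} is precisely what absorbs the prefactor $\tau$ appearing in $b$, so that the final estimate depends only on $T$ and the problem data but not on $k$.
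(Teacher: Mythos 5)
Your proposal is correct and follows essentially the same route as the paper: an intermediate measure (implicit in your "combining" of \eqref{cont_initial} and \eqref{cont_model}) separates the initial-condition mismatch from the coefficient mismatch, the coefficient mismatch is bounded by $C_N(\abs{\Delta h} + u_m)$ using the kernel representation and the uniform TV bound, and the resulting one-step inequality is closed by Lemma \ref{discrete_Gronwall}, with the factor $a-1\sim\tau$ cancelling the $\tau$ in $b$ to give $k$-uniformity.
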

\begin{proof}
Let $t \in [m \frac{T}{2^k}, (m+1) \frac{T}{2^k}]$ and $t^* = m \frac{T}{2^k}$. Let $\bar{\mu}_t^{h+\Delta h, k}$ be the solution to \eqref{spm_non_approx} with perturbed initial condition $\mu_{t^*}^{h+\Delta h, k}$ but not perturbed nonlinearities $a^h(x,\mu_{t^*}^{h, k}), b^h(x,\mu_{t^*}^{h, k}), c^h(x,\mu_{t^*}^{h, k})$ (one can think of $\bar{\mu}_t^{h+\Delta h, k} $ as some measure between ${\mu}_t^{h+\Delta h, k}$ and $\mu_t^{h, k})$. Then, using \eqref{cont_model} and \eqref{cont_initial}, we obtain:
\begin{multline*}
p_F(\mu_t^{h+\Delta h, k}, \mu_t^{h,k}) \leq p_F(\mu_t^{h+\Delta h, k}, \bar{\mu}_t^{h+\Delta h, k}) +
p_F( \bar{\mu}_t^{h+\Delta h, k}, \mu_t^{h,k}) \leq \\
\leq \norm{\mu_{t^*}^{h+\Delta h, k}}_{TV}\frac{T}{2^k} e^{\bar{C} \frac{T}{2^k}} \sum_{f=a,b,c}\norm{f^{h+\Delta h}(x,\mu_{t^*}^{h+\Delta h, k}) - f^h(x,\mu_{t^*}^{h, k})}_{\infty} + 
e^{\bar{C} \frac{T}{2^k}} p_F({\mu}_{t^*}^{h+\Delta h, k}, \mu_{t^*}^{h,k}).
\end{multline*}
where the constant $\bar{C}$ depends on $W^{1,\infty}$ norms of model functions $x \mapsto a^h(x,\mu_{t^*}^{h, k})$, $x \mapsto b^h(x,\mu_{t^*}^{h, k})$ and $x \mapsto c^h(x,\mu_{t^*}^{h, k})$. In view of Lemma \ref{Nem_op_1}, these norms depend on $C_N$ and total variation norm of $\mu_{t^*}^{h, k}$ which, in view of Lemma \ref{total_stability}, is also uniformly bounded by $C_N$. This implies $\bar{C} = C_N$. Then, for $f=a, b, c$ we compute:
\begin{multline*}
\norm{f^{h+\Delta h}(x,\mu_{t^*}^{h+\Delta h, k}) - f^h(x,\mu_{t^*}^{h, k})}_{\infty} = 
\norm{\Delta h f_p(x,\mu_{t^*}^{h+\Delta h, k}) + f^h(x,\mu_{t^*}^{h+\Delta h, k}) - f^h(x,\mu_{t^*}^{h, k})}_{\infty} \leq \\
\leq C_N\abs{\Delta h} + C_N \sup_x \int_{\mathbb{R}^+} \big(K_{F^0}(x,y) + K_{F_P}(x,y)\big) d(\mu_{t^*}^{h+\Delta h, k} -  \mu_{t^*}^{h, k}) \leq C_N\abs{\Delta h} + C_N p_F( {\mu}_{t^*}^{h+\Delta h, k}, \mu_{t^*}^{h,k}).
\end{multline*}
so that coming back to the starting point:
\begin{multline*}
p_F(\mu_t^{h+\Delta h, k}, \mu_t^{h,k})  \leq C_N \frac{T}{2^k} e^{C_N \frac{T}{2^k}} \abs{\Delta h} + C_N \frac{T}{2^k} e^{C_N \frac{T}{2^k}} p_F( {\mu}_{t^*}^{h+\Delta h, k}, \mu_{t^*}^{h,k}) + 
e^{C_N \frac{T}{2^k}} p_F({\mu}_{t^*}^{h+\Delta h, k}, \mu_{t^*}^{h,k}) \leq \\
\leq C_N \frac{T}{2^k} e^{C_N \frac{T}{2^k}} \abs{\Delta h} + e^{C_N \frac{T}{2^k}} p_F({\mu}_{t^*}^{h+\Delta h, k}, \mu_{t^*}^{h,k}),
\end{multline*}
using inequality $1+x \leq e^x$. Now, for $m = 0,1,...,2^k$, let $a_m = p_F(\mu_{m \frac{T}{2^k}}^{h+\Delta h, k}, \mu_{m \frac{T}{2^k}}^{h,k})$ so that $a_0 = 0$. In view of Lemma \ref{discrete_Gronwall}:
\begin{equation}\label{example_for_iterations}
a_m \leq C_N \frac{e^{C_N \frac{T}{2^k} m} - 1}{e^{C_N \frac{T}{2^k}} - 1}\frac{T}{2^k} \abs{\Delta h} \leq 
C_N \frac{e^{C_N T} - 1}{e^{C_N \frac{T}{2^k}} - 1}\frac{T}{2^k} \abs{\Delta h} \leq C_N \abs{\Delta h}
\end{equation}
since the sequence $\frac{{e^{C_N \frac{T}{2^k}} - 1}}{\frac{T}{2^k}}$ is convergent as $k \to \infty$. The proof is concluded.
\end{proof}
\begin{cor}\label{a_priori_bound_derivative}
Suppose we know that the map $h \mapsto \mu_t^{h,k}$ is Fr\'echet differentiable in $Z$. Then $\norm{\frac{\partial}{\partial h} \mu_t^{h,k}}_Z \leq C_N$. Indeed, 
\begin{multline*}
\norm{\frac{\partial}{\partial h} \mu_t^{h,k}}_Z = \lim_{\Delta h \to 0} \frac{1}{\Delta h} \sup_{\xi \in C^{1+\alpha},~\norm{\xi}\leq 1} \int_{\mathbb{R}^+} \xi d(\mu^{h+\Delta h,k}_t - \mu^{h,k}_t) \leq \\ \leq
\lim_{\Delta h \to 0} \frac{1}{\Delta h} \sup_{\xi \in W^{1,\infty},~\norm{\xi}\leq 1} \int_{\mathbb{R}^+} \xi d(\mu^{h+\Delta h,k}_t - \mu^{h,k}_t) \leq \lim_{\Delta h \to 0} \frac{1}{\Delta h} p_F(\mu^{h+\Delta h,k}_t, \mu^{h,k}_t) \leq C_N.
\end{multline*}
This provides a priori estimate on the derivatives that does not change with $k$.
\end{cor}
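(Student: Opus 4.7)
The statement to be proved is essentially the computation displayed in the corollary itself, so my plan is to formalize that chain of inequalities and pinpoint where each ingredient comes from. The overall approach is: use Fréchet differentiability to express the $Z$-norm of $\partial_h \mu_t^{h,k}$ as the limit of difference quotient norms, then dominate those norms by the flat distance, and finally invoke Lemma \ref{iteration_in_h} to obtain the bound $C_N|\Delta h|$ which cancels the $\frac{1}{|\Delta h|}$ factor.

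More concretely, first I would write
\[
\Bigl\|\frac{\partial}{\partial h}\mu_t^{h,k}\Bigr\|_Z = \lim_{\Delta h \to 0}\Bigl\|\frac{\mu_t^{h+\Delta h,k}-\mu_t^{h,k}}{\Delta h}\Bigr\|_Z,
\]
which is permissible because Fréchet differentiability guarantees that the difference quotients converge in $Z$ to $\partial_h\mu_t^{h,k}$, and therefore their norms converge. Since $Z$ is the closure of $\mathcal{M}(\mathbb{R}^+)$ with respect to the $(C^{1+\alpha})^*$ norm, the $Z$-norm of the difference (which is a measure) coincides with the dual norm, so it equals the supremum of $\int \xi\, d(\mu_t^{h+\Delta h,k}-\mu_t^{h,k})$ over $\xi$ with $\|\xi\|_{C^{1+\alpha}} \leq 1$.

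The key observation is the norm comparison $\|\xi\|_{W^{1,\infty}} \leq \|\xi\|_{C^{1+\alpha}}$, which means every test function admissible in the $Z$-norm supremum is also admissible (with bound $1$) in the flat-metric supremum. Hence
\[
\sup_{\|\xi\|_{C^{1+\alpha}}\leq 1}\int_{\mathbb{R}^+}\xi\, d(\mu_t^{h+\Delta h,k}-\mu_t^{h,k}) \leq \sup_{\|\xi\|_{W^{1,\infty}}\leq 1}\int_{\mathbb{R}^+}\xi\, d(\mu_t^{h+\Delta h,k}-\mu_t^{h,k}) = p_F(\mu_t^{h+\Delta h,k},\mu_t^{h,k}).
\]
Combining with Lemma \ref{iteration_in_h}, this is bounded by $C_N|\Delta h|$, and dividing by $|\Delta h|$ before passing to the limit yields the claim.

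No step is really hard here, but the only subtlety worth highlighting is that the constant $C_N$ furnished by Lemma \ref{iteration_in_h} is independent of $k$, $h$, and $t$, which is exactly what makes the resulting bound an a priori estimate uniform in the approximation level. This uniformity will be indispensable later when propagating differentiability through the iteration scheme.
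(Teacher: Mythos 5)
Your argument reproduces the paper's own reasoning step by step: express $\|\partial_h \mu_t^{h,k}\|_Z$ as the limit of difference-quotient $Z$-norms, enlarge the supremum from $C^{1+\alpha}$ test functions to $W^{1,\infty}$ test functions to pass to the flat metric, and then invoke Lemma \ref{iteration_in_h} for the $C_N|\Delta h|$ bound. This is correct and matches the paper's proof exactly.
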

Similarly, we can study stability of the sequence $\mu_t^{h,k}$ as $h$ is fixed and $k$ is changed:
\begin{lem}\label{iteration_in_k}
For some constant $C_N$, independent of $k$, we have:
$$
p_F(\mu_t^{h, k+1}, \mu_t^{h,k}) \leq C_N {2^{-k}}.
$$
\end{lem}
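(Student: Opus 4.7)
The plan is to run an induction at the $k{+}1$ grid-scale, comparing $\mu_t^{h,k+1}$ and $\mu_t^{h,k}$ at the times $j T/2^{k+1}$, exactly as in the proof of Lemma \ref{iteration_in_h}. Set
\[
a_j \;=\; p_F\!\left(\mu_{jT/2^{k+1}}^{h,k+1},\; \mu_{jT/2^{k+1}}^{h,k}\right), \qquad j=0,1,\dots,2^{k+1},
\]
so that $a_0=0$. On each subinterval $I_j=[jT/2^{k+1},(j+1)T/2^{k+1}]$ both $\mu_t^{h,k+1}$ and $\mu_t^{h,k}$ are measure solutions of linear equations of the form \eqref{spm} (with frozen-in-time nonlinearities) to which one applies the stability estimates \eqref{cont_initial} and \eqref{cont_model} via an intermediate measure $\bar{\mu}_t$, exactly as in the proof of Lemma \ref{iteration_in_h}. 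Combined with the Lipschitz estimate
$\norm{f^h(x,\mu)-f^h(x,\nu)}_\infty \le C_N\, p_F(\mu,\nu)$
(from the kernel representation \eqref{repr_nem_op} and assumptions (N1)--(N2)) and the uniform total variation bound of Lemma \ref{total_stability}, this reduces the problem to estimating the flat distance between the frozen measures used in the two schemes on $I_j$.

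The two cases to treat differ in this frozen-measure difference. If $j=2m$ is even, both schemes happen to use their own value at the same time $2mT/2^{k+1}=mT/2^k$ (this is a grid point of the $k$-scheme), so that the frozen-measure difference equals $a_{2m}$ and
\[
a_{2m+1} \;\le\; e^{C_N T/2^{k+1}}\,a_{2m} \,+\, C_N\,\tfrac{T}{2^{k+1}}\,a_{2m} \;\le\; e^{C_N' T/2^{k+1}}\,a_{2m}.
\]
If $j=2m+1$ is odd, the $k{+}1$-scheme refreshes its frozen measure at $(2m+1)T/2^{k+1}$ while the $k$-scheme keeps the old value at $2mT/2^{k+1}$. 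By the time-Lipschitz estimate \eqref{cont_time} applied to $\mu_t^{h,k+1}$, the frozen-measure difference is bounded by $a_{2m+1} + C_N T/2^{k+1}$, and therefore
\[
a_{2m+2} \;\le\; e^{C_N T/2^{k+1}}\,a_{2m+1} \,+\, C_N\,\tfrac{T}{2^{k+1}}\!\left(a_{2m+1} + C_N \tfrac{T}{2^{k+1}}\right)
\;\le\; e^{C_N' T/2^{k+1}}\,a_{2m+1} + C_N\, 2^{-2(k+1)}.
\]
Composing the two steps yields $a_{2m+2} \le e^{C_N' T/2^{k}}\,a_{2m} + C_N\,2^{-2(k+1)}$, i.e.\ a recursion for $A_m:=a_{2m}$ of the form handled by the discrete Gronwall lemma (Lemma \ref{discrete_Gronwall}) with $a = e^{C_N' T/2^k}$ and $b = C_N\,2^{-2(k+1)}$. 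Since $A_0=0$ and $m\le 2^k$, this gives
\[
A_{2^k} \;\le\; \frac{e^{C_N' T} - 1}{e^{C_N' T/2^{k}} - 1}\; C_N\, 2^{-2(k+1)} \;\le\; C_N\, 2^{-k},
\]
using $e^x-1\ge x$ in the denominator exactly as in \eqref{example_for_iterations}. Thus $a_j\le C_N\,2^{-k}$ at every grid point of the $k{+}1$-scheme, and the estimate extends to arbitrary $t\in[0,T]$ by the time-Lipschitz estimate \eqref{cont_time} applied to both $\mu_t^{h,k}$ and $\mu_t^{h,k+1}$ on the subintervals.

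The main obstacle is making sure the per-step error is genuinely of size $2^{-2k}$ rather than $2^{-k}$: summed over $2^{k+1}$ subintervals, only the former yields the target rate $2^{-k}$. This is what forces the decomposition into even/odd steps above, with the extra factor $T/2^{k+1}$ coming from the time-continuity estimate \eqref{cont_time} applied to the refreshed frozen measure. The rest is bookkeeping of the type already carried out in Lemma \ref{iteration_in_h}.
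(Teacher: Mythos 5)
Your argument is correct and follows essentially the same route as the paper's own proof: both split each $k$-grid interval into two halves of length $T/2^{k+1}$, exploit that on the first half the two schemes freeze their nonlinearities at the same time, bound the $T/2^{k+1}$ time mismatch on the second half via the time-Lipschitz estimate \eqref{cont_time}, and feed the resulting $O(2^{-2k})$ additive error per step into Lemma \ref{discrete_Gronwall}. The only slip is cosmetic: in the odd step, invoking \eqref{cont_time} for $\mu_t^{h,k+1}$ would give the frozen-measure difference $a_{2m}+C_N T/2^{k+1}$, whereas the $a_{2m+1}+C_N T/2^{k+1}$ you actually use comes from applying it to $\mu_t^{h,k}$ (as the paper does) --- either variant closes the argument.
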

\begin{proof}
Let $t \in [l\frac{T}{2^k}, (l+1) \frac{T}{2^k}] := A_l$. Denote by $A_l^+$ and $A_l^-$ the right and left part of $A_l$ respectively. Moreover, put $t_* = l\frac{T}{2^k}, t_m = t_* + \frac{T}{2^{k+1}}$ so that $ A_l^-=[t_*, t_m]$. First, if $t\in A_l^-$ we have by \eqref{cont_model} and \eqref{cont_initial}:
\begin{equation*}
p_F(\mu_t^{h, k+1}, \mu_t^{h,k}) \leq \norm{\mu_{t_*}^{h,k+1}}_{TV}\frac{T}{2^{k+1}} e^{C \frac{T}{2^{k+1}}} \sum_{f = a,b,c} \norm{f(x,\mu_{t_*}^{h,k+1}) - f(x,\mu_{t_*}^{h,k})}_{\infty} + e^{C \frac{T}{2^{k+1}}} p_F(\mu_{t_*}^{h,k+1}, \mu_{t_*}^{h,k}),
\end{equation*}
where $C$ depends on $W^{1,\infty}$ norms of maps $x \mapsto f(x,\mu_{t_*}^{h,k+1})$ and  $x \mapsto f(x,\mu_{t_*}^{h,k})$ for $f = a,b,c$. Similarly as above, $C = C_N$ due to Lemma \ref{Nem_op_1} and Lemma \ref{total_stability}. Moreover, by kernel representation of $f$: 
$$\norm{f(x,\mu_{t_*}^{h,k+1}) - f(x,\mu_{t_*}^{h,k})}_{\infty} \leq C_N p_F(\mu_{t_*}^{h,k+1}, \mu_{t_*}^{h,k}).
$$
Therefore, using $1+x \leq e^x$, we conclude:
\begin{equation*}
p_F(\mu_t^{h, k+1}, \mu_t^{h,k}) \leq e^{C_N \frac{T}{2^{k+1}}} p_F(\mu_{t_*}^{h,k+1}, \mu_{t_*}^{h,k}).
\end{equation*}
Now, let $t \in A_l^+$. Again, using \eqref{cont_model} and \eqref{cont_initial}:
\begin{multline*}
p_F(\mu_t^{h, k+1}, \mu_t^{h,k}) \leq \norm{\mu_{t_m}^{h,k+1}}\frac{T}{2^{k+1}} e^{C_N \frac{T}{2^{k+1}}} \sum_{f = a,b,c} \norm{f(x,\mu_{t_m}^{h,k+1}) - f(x,\mu_{t_*}^{h,k})}_{\infty} + e^{C_N \frac{T}{2^{k+1}}} p_F(\mu_{t_m}^{h,k+1}, \mu_{t_m}^{h,k}) \leq \\ 
\leq \frac{C_NT}{2^{k+1}} e^{C_N \frac{T}{2^{k+1}}}p_F(\mu_{t_m}^{h,k+1},\mu_{t_*}^{h,k}) + e^{C_N \frac{T}{2^{k+1}}} p_F(\mu_{t_m}^{h,k+1}, \mu_{t_m}^{h,k}).
\end{multline*}
Using triangle inequality, continuity in time \eqref{cont_time} and the fact that $t_m \in A_l^-$:
\begin{multline*}
p_F(\mu_{t_m}^{h,k+1},\mu_{t_*}^{h,k}) \leq 
p_F(\mu_{t_m}^{h,k+1},\mu_{t_m}^{h,k})+ p_F(\mu_{t_m}^{h,k},\mu_{t_*}^{h,k}) \leq p_F(\mu_{t_m}^{h,k+1},\mu_{t_m}^{h,k}) + C_N \frac{T}{2^{k+1}} \leq \\ 
\leq e^{C_N \frac{T}{2^{k+1}}}p_F(\mu_{t_*}^{h,k+1},\mu_{t_*}^{h,k}) + C_N \frac{T}{2^{k+1}}.
\end{multline*}
Therefore, we conclude:
\begin{equation*}
p_F(\mu_t^{h, k+1}, \mu_t^{h,k}) \leq \Big(\frac{C_N}{2^{k+1}}\Big)^2 + p_F(\mu_{t_*}^{h,k+1},\mu_{t_*}^{h,k})e^{C_N \frac{T}{2^{k+1}}}.
\end{equation*}
Setting iterations over intervals $[0,\frac{T}{2^k}], [\frac{T}{2^k}, 2\frac{T}{2^k}], ...$, we conclude the proof due to Lemma \ref{discrete_Gronwall}.
\end{proof}
\begin{cor}\label{Deltaf_estimate_cor}
Consider the interval of time $[l \frac{T}{2^k}, (l+1) \frac{T}{2^k}]$ with $t_* = l \frac{T}{2^k}$ and $t_m = t_* + \frac{T}{2^{k+1}}$ (the middle of the interval). Then,
\begin{equation}\label{Deltaf_estimate}
\norm{f(x,\mu^{h,k+1}_{t_m}) - f(x,\mu^{h,k}_{t_*})}_{\infty}, 
\norm{f(x,\mu^{h,k+1}_{t_*}) - f(x,\mu^{h,k}_{t_*})}_{\infty}
 \leq C_N {2^{-k}},
\end{equation}
\begin{equation}\label{Deltaf_x_estimate}
\norm{{\partial_x}f(x,\mu^{h,k+1}_{t_m}) - {\partial_x}f(x,\mu^{h,k}_{t_*})}_{\infty}, 
\norm{{\partial_x}f(x,\mu^{h,k+1}_{t_*}) - {\partial_x}f(x,\mu^{h,k}_{t_*})}_{\infty}
 \leq C_N 2^{-k \alpha}.
\end{equation}
\end{cor}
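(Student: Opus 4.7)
The plan is to reduce both estimates to bounds on the flat-metric distances $p_F(\mu^{h,k+1}_{t_m}, \mu^{h,k}_{t_*})$ and $p_F(\mu^{h,k+1}_{t_*}, \mu^{h,k}_{t_*})$, then exploit the kernel representation \eqref{repr_nem_op} together with the regularity of $F$ and $K_F$ to translate these into uniform estimates on $f$ and $\partial_x f$.

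First, I would bound the flat distances. By the triangle inequality,
\begin{equation*}
p_F(\mu^{h,k+1}_{t_m}, \mu^{h,k}_{t_*}) \leq p_F(\mu^{h,k+1}_{t_m}, \mu^{h,k+1}_{t_*}) + p_F(\mu^{h,k+1}_{t_*}, \mu^{h,k}_{t_*}).
\end{equation*}
The first term is controlled by the Lipschitz continuity in time \eqref{cont_time} applied on an interval of length $\tfrac{T}{2^{k+1}}$, combined with the uniform total variation bound from Lemma \ref{total_stability}, giving at most $C_N 2^{-k}$. The second term is exactly the object controlled by Lemma \ref{iteration_in_k}, so it is bounded by $C_N 2^{-k}$ as well. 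The same reasoning (just omitting the time-continuity step) handles $p_F(\mu^{h,k+1}_{t_*}, \mu^{h,k}_{t_*})$. Thus both flat distances are of order $2^{-k}$.

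For \eqref{Deltaf_estimate}, I would use that $f(x,\mu) = F(x, \int K_F(x,y) d\mu(y))$ and compute, exactly as in the verification of (W2) in the discussion after Definition \ref{const_C_N},
\begin{equation*}
\norm{f(x,\mu) - f(x,\nu)}_\infty \leq \norm{F_y}_\infty \norm{K_F}_{W^{1,\infty}} p_F(\mu, \nu) \leq C_N\, p_F(\mu, \nu),
\end{equation*}
and then substitute the flat-distance estimates from the previous paragraph. This yields the $C_N 2^{-k}$ bound.

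For \eqref{Deltaf_x_estimate}, the expansion \eqref{analysis_f0} expresses $\partial_x f(x,\mu)$ as a sum of terms in which $F_x$ and $F_y$ are evaluated at $\int K_F(x,y) d\mu(y)$, multiplied in one term by $\int K_{F,x}(x,y) d\mu(y)$. Applying the triangle inequality as in the proof of Lemma \ref{Nem_op_1} (the $\Delta_h$ computation there, but with the role of $\mu^{h_1}, \mu^{h_2}$ replaced by two arbitrary bounded measures), the dominant obstruction is the $\alpha$-H\"older (not Lipschitz) dependence of $F_x$ and $F_y$ on their second argument, which comes from $F \in C^{1+\alpha}$. Consequently,
\begin{equation*}
\norm{\partial_x f(x,\mu) - \partial_x f(x,\nu)}_\infty \leq C_N \big(p_F(\mu, \nu)\big)^\alpha,
\end{equation*}
which together with the flat-distance bounds of order $2^{-k}$ gives the desired $C_N 2^{-k\alpha}$. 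The main technical point (already absorbed in the estimate above) is confirming that no Lipschitz part survives: the $F_x$-term is purely $\alpha$-H\"older in the integral, while in the $F_y \cdot \int K_{F,x} d\mu$ term the Lipschitz factor $\int K_{F,x} d(\mu - \nu)$ is multiplied by a bounded quantity but the $F_y$ factor contributes the non-improvable $\alpha$-H\"older loss, so the product is at worst $\alpha$-H\"older in $p_F$. The uniform boundedness of $\norm{\mu^{h,k}_t}_{TV}$ by Lemma \ref{total_stability} ensures that all constants arising in this composition can be absorbed into $C_N$.
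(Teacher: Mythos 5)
Your proof is correct and follows essentially the same route as the paper: reduce everything to flat-metric distances via the kernel representation, observe that $\norm{f(x,\mu)-f(x,\nu)}_\infty$ is Lipschitz in $p_F(\mu,\nu)$ while $\norm{\partial_x f(x,\mu)-\partial_x f(x,\nu)}_\infty$ is only $\alpha$-H\"older because $F\in C^{1+\alpha}$ gives only $\alpha$-H\"older modulus for $F_x$ and $F_y$ in their second argument, and then plug in the $O(2^{-k})$ bounds on the flat distances. In fact you are slightly more careful than the paper: the paper's proof invokes only Lemma~\ref{iteration_in_k}, which directly compares $\mu^{h,k+1}_t$ and $\mu^{h,k}_t$ at the same time $t$, whereas the $t_m$-versus-$t_*$ comparison genuinely needs the extra triangle-inequality step through $\mu^{h,k+1}_{t_*}$ together with the Lipschitz-in-time estimate \eqref{cont_time}; you spell this out explicitly (and correctly note that $[t_*,t_m]$ is a single mesh interval on the $(k+1)$-level, so \eqref{cont_time} applies). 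The one place where your phrasing is a little loose is in the discussion of the $F_y\cdot\int K_{F,x}\,d\mu$ term: the Lipschitz contribution $\int K_{F,x}\,d(\mu-\nu)$ does \emph{survive} as a Lipschitz term, but since $p_F(\mu,\nu)$ is bounded by the uniform total-variation bound it can be absorbed into the $\alpha$-H\"older bound $p_F(\mu,\nu)\leq C\,(p_F(\mu,\nu))^\alpha$, which is what the paper's underbraced estimate implicitly does; the conclusion $C_N(p_F(\mu,\nu))^\alpha$ is the same either way.
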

\begin{proof}
Since $\norm{f(x,\mu) - f(x,\nu)}_{\infty} \leq C_N~ p_F(\mu, \nu)$, assertion \eqref{Deltaf_estimate} follows directly from Lemma \ref{iteration_in_k}. Moreover, by \eqref{analysis_f0},
\begin{multline*}
\abs{{\partial_x} f(x,\mu) - {\partial_x} f(x,\nu)} = \underbrace{\abs{F_x\Bigg(x, \int_0^{\infty} K_F(x,y) d\mu(y) \Bigg) - F_x\Bigg(x, \int_0^{\infty} K_F(x,y) d\nu(y) \Bigg)}}_{\leq~\norm{F_x}_{\alpha,y} ~\norm{K_F}_{W^{1,\infty}}^{\alpha} ~\big(p_F(\mu,\nu)\big)^{\alpha}} + \\
+ \underbrace{\abs{F_y\Bigg(x, \int_0^{\infty} K_F(x,y) d\mu(y) \Bigg)\int_0^{\infty} K_{F,x}(x,y) d\mu(y) - F_y\Bigg(x, \int_0^{\infty} K_F(x,y) d\nu(y) \Bigg)\int_0^{\infty} K_{F,x}(x,y) d\nu(y)}}_{\leq~\norm{F_y}_{\alpha,y}~\norm{K_F}_{W^{1,\infty}} ~\big(p_F(\mu,\nu)\big)^{\alpha}~\norm{K_{F,x}}_{\infty}~\norm{\mu}_{TV}~ +~ \norm{F_y}_{\infty}~\norm{K_{F,x}}_{W^{1,\infty}}~\big(p_F(\mu,\nu)\big)^{\alpha}}. 
\end{multline*}
so \eqref{Deltaf_x_estimate} follows again from Lemma \ref{iteration_in_k}.
\end{proof}
\begin{rem}\label{practical_iteration_lemma}
After two examples (Lemmas \ref{iteration_in_h} and \ref{iteration_in_k}), Reader should have some intuition in how iteration lemma (Lemma \ref{discrete_Gronwall}) applies to our approximating procedure. Briefly speaking, if $a_m$ is an iterated quantity ($m = 0, 1, ..., 2^k$) with $a_0 = 0$ we have implication:
$$
a_m \leq a_{m-1}e^{C2^{-k}} + \Big(\frac{C}{2^k}\Big)^{\gamma} \implies a_m \leq \bar{C}\Big(\frac{1}{2^k}\Big)^{\gamma-1}.
$$
for a possibly different constant $\bar{C}$. Therefore, we are interested in making $\gamma$ as big as possible. On the other hand, if we obtain a bound of the form:
\begin{equation}\label{general_exp_estimates}
a_m \leq a_{m-1} + \Big(\frac{C}{2^k}\Big)^{\gamma_1} +  \Big(\frac{C}{2^k}\Big)^{\gamma_2} + \Big(\frac{C}{2^k}\Big)^{\gamma_3} + ...,
\end{equation}
there is no point in recording $\gamma_1$, $\gamma_2$, $\gamma_3$ and so on. It is sufficient to know what is the smallest value of $\gamma_k$ appearing on (RHS) of \eqref{general_exp_estimates} as this is the only thing that matters.
\end{rem}
\subsection{Differentiability of map $h \mapsto \mu^{h,k}_t$}
In this Section, we prove that the approximating sequence $\mu^{h,k}_t$ is Fr\'echet differentiable with respect to $h$. This is one of the two results needed for application of Lemma \ref{unif_conv_cont_limit}. As already suggested in the introduction to Section \ref{sect4}, the main idea is to propagate differentiability from interval $[m \frac{T}{2^k}, (m+1) \frac{T}{2^k}]$ to $[(m+1) \frac{T}{2^k}, (m+2) \frac{T}{2^k}]$ while in the first interval $[0,\frac{T}{2^k}]$ differentiability follows directly from linear theory.

The main obstacle is that for $m > 0$, at interval $[m \frac{T}{2^k}, (m+1) \frac{T}{2^k}]$ we solve problem with perturbed nonlinearities (this can be handled by Theorem \ref{lin_per_1}) and with perturbed initial condition. To address this issue, we introduce intermediate measure that evolves with perturbed flow but starts from non-perturbed initial condition, similarly as in the proof of Lemma \ref{iteration_in_h}. 

\begin{thm}\label{diff_k_fixed}
Fix $k \in \mathbb{N}$. Let $\mu^{h,k}_t$ be the solution to \eqref{spm_non_approx}. Then, map $[-\frac{1}{2}, \frac{1}{2}] \ni h \mapsto \mu^{h,k}_t$ is Fr\'echet differentiable in $C([0,T], Z)$. Moreover, the derivative is H\"older continuous with constant bounded by$C_N 2^{k(1-\alpha)}$. 
\end{thm}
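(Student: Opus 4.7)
The plan is to proceed by induction on $m = 0, 1, \ldots, 2^k - 1$, establishing Fréchet differentiability of $h \mapsto \mu^{h,k}_t$ in $Z$ successively on the intervals $I_m = [mT/2^k, (m+1)T/2^k]$, together with a quantitative Hölder bound $H_m$ on the resulting derivative. On $I_0$ the initial condition $\mu_0$ is independent of $h$, so the coefficients in \eqref{spm_non_approx} take the affine form $a^h(x,\mu_0) = a^0(x,\mu_0) + h a_p(x,\mu_0)$ (and similarly for $b^h$, $c^h$); hypotheses (B1)--(B2) of Theorem \ref{lin_per_1} then hold trivially and the conclusion provides Fréchet differentiability on $C(I_0, Z)$ with Hölder constant of the derivative of order $G_L (T/2^k)$.

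For the inductive step, assume the statement holds on $[0, mT/2^k]$ with Hölder constant $H_m$ for $\partial_H \mu^{H,k}_{mT/2^k}|_{H=h}$. The delicate point is that on $I_m$ the linear problem has both $h$-dependent coefficients \emph{and} an $h$-dependent initial datum $\mu^{h,k}_{mT/2^k}$. Following the strategy of Lemma \ref{iteration_in_h}, I would introduce the intermediate measure $\tilde\mu^{h+\Delta h}_t$ on $I_m$, which evolves with the perturbed coefficients $f^{h+\Delta h}(\cdot, \mu^{h+\Delta h, k}_{mT/2^k})$ for $f = a,b,c$ but starts from the unperturbed datum $\mu^{h,k}_{mT/2^k}$, and then split
\[
\mu_t^{h+\Delta h, k} - \mu_t^{h,k} \;=\; \bigl(\mu_t^{h+\Delta h, k} - \tilde\mu_t^{h+\Delta h}\bigr) \;+\; \bigl(\tilde\mu_t^{h+\Delta h} - \mu_t^{h,k}\bigr).
\]
The second bracket is a pure coefficient perturbation with fixed initial condition; the inductive $C^{1+\alpha}$ regularity of $h \mapsto \mu^{h,k}_{mT/2^k}$ in $Z$ allows Theorem \ref{Nem_op} to certify that $h \mapsto f^h(x, \mu^{h,k}_{mT/2^k})$ satisfies (B2), so Theorem \ref{lin_per_1} applies on $I_m$ and yields Fréchet differentiability. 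For the first bracket, the semigroup formula \eqref{semigroup} rewrites the integral of any test $\xi$ against it as $\int \varphi^{h+\Delta h}_{\xi,t}(mT/2^k, x)\, d(\mu^{h+\Delta h, k}_{mT/2^k} - \mu^{h, k}_{mT/2^k})(x)$, and since Corollary \ref{Holder_cont_phi} gives $\varphi^{h+\Delta h}_{\xi,t}(mT/2^k, \cdot) \in C^{1+\alpha}$ with uniform norm, dividing by $\Delta h$ and sending $\Delta h \to 0$ produces the natural pairing of $\partial_H \mu^{H,k}_{mT/2^k}|_{H=h} \in Z$ with $\varphi^h_{\xi,t}(mT/2^k, \cdot) \in C^{1+\alpha}$. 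Combining the two limits gives the Fréchet derivative on $I_m$.

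The hard part, and the reason for the meticulous constant tracking in the linear theory (Lemma \ref{est_dual_sol_fin_fcns}, Corollary \ref{Holder_cont_phi}), is propagating the Hölder bound $H_m$ through the $2^k$ iterations without losing too much. Careful bookkeeping of the two contributions above yields a recursion of the schematic form
\[
H_{m+1} \;\le\; \bigl(1 + C/2^k\bigr)\, H_m \;+\; C \cdot 2^{-k\alpha},
\]
where the multiplicative factor $(1 + C/2^k)$ records the propagation of the existing derivative by the linear semigroup (controlled by $\|\varphi_\xi\|_{C^{1+\alpha}}$, which is $1 + O(1/2^k)$ on an interval of length $T/2^k$), while the additive increment $C\cdot 2^{-k\alpha}$ comes from the Hölder-in-$h$ modulus of $\varphi^h_{\xi,t}$ measured in the $C^{1+\alpha}$-in-$x$ norm. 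This latter increment is the only scaling in which $(T/2^k)^\alpha$ appears rather than $T/2^k$, traceable back to the bound $\|\varphi_{\xi,x}\|_{\alpha,h} \le H_L T^\alpha$ in Corollary \ref{Holder_cont_phi}; all other contributions carry a factor of $T/2^k$ and get absorbed into the first term. Iterating over $m \le 2^k$ steps then gives $H_m \le C_N \cdot 2^k \cdot 2^{-k\alpha} = C_N\, 2^{k(1-\alpha)}$, exactly the target bound. This $k$-dependent but controlled growth is precisely what is needed for the subsequent uniform convergence argument in Theorem \ref{crucial_theorem_1}.
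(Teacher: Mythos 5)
Your proposal takes essentially the same route as the paper's proof: the same dyadic-interval induction with an intermediate measure starting from the unperturbed datum but carrying the perturbed flow, the same two-term splitting (one side a pure initial-condition perturbation handled via the semigroup formula \eqref{semigroup} and the $C^{1+\alpha}$-in-$x$, H\"older-in-$h$ bounds on $\varphi$ from Corollary \ref{Holder_cont_phi}; the other a pure coefficient perturbation handled by feeding the inductive $C^{1+\alpha}$ regularity of $h \mapsto \mu^{h,k}_{mT/2^k}$ into Theorem \ref{Nem_op} and then invoking Theorem \ref{lin_per_1}), and the same recursion $C^{(m+1)} \leq e^{C_N T/2^k} C^{(m)} + C_N 2^{-k\alpha}$ closed by Lemma \ref{discrete_Gronwall} to obtain $C_N 2^{k(1-\alpha)}$. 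You also correctly trace the $2^{-k\alpha}$ increment to the bound $\|\varphi_{\xi,x}\|_{\alpha,h} \leq H_L T^\alpha$ in Corollary \ref{Holder_cont_phi}, which is indeed the source of the exponent loss.
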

\begin{proof}
The proof goes by induction on time intervals. Let $t \in [0, \frac{T}{2^k}]$. Then, $\mu_t^{h,k}$ solves the equation:
\begin{equation*}\label{spm_non_approx_m=0}
\left\{ \begin{array}{lll}
\partial_t \mu_t + \partial_x(b^h(x, \mu_{0}) \mu_t) & = c(x, \mu_{0})\mu_t & \mathbb{R}^{+} \times  [0, \frac{T}{2^k}],\\
b^h(0, \mu_{0}) D_{\lambda}\mu_t(0) &= \int_{\mathbb{R}^+} a^h(x,\mu_{0}) d\mu_t(x) &  [0, \frac{T}{2^k}], \\
\end{array} \right.
\end{equation*}
with initial measure $\mu_{0}$. In this case, differentiability and H\"older continuity with some constant $C^{(0)}:= C_N \frac{T}{2^k}$ follows directly from Theorem \ref{lin_per}. However, substantial problems arise in next intervals.

Suppose that the result holds for some interval $[m \frac{T}{2^k}, (m+1) \frac{T}{2^k}]$ with H\"older constant of the derivative $C^{(m)}$. Let $t \in [(m+1) \frac{T}{2^k}, (m+2) \frac{T}{2^k}]$, $t^* =  (m+2) \frac{T}{2^k}$ and $t_*:= (m+1) \frac{T}{2^k}$. We introduce an intermediate measure $\bar{\mu}_t^{h+\Delta h, k}$ starting at time $(m+1) \frac{T}{2^k}$ from the nonperturbed initial measure $\mu_{(m+1) \frac{T}{2^k}}^{h,k}$ but evolving with perturbed flow:
\begin{equation*}
\left\{ \begin{array}{lll}
\partial_t \mu_t + \partial_x(b^{h+\Delta h}(x, \mu^{h+\Delta h}_{m\frac{T}{2^k}}) \mu_t) & = c^{h+\Delta h}(x, \mu^{h+\Delta h}_{m\frac{T}{2^k}})\mu_t & \mathbb{R}^{+} \times  [(m+1)\frac{T}{2^k}, (m+2)\frac{T}{2^k}],\\
b^{h+\Delta h}(0, \mu^{h+\Delta h}_{m\frac{T}{2^k}}) D_{\lambda}\mu_t(0) &= \int_{\mathbb{R}^+} a^{h+\Delta h}(x,\mu^{h+\Delta h}_{m\frac{T}{2^k}}) d\mu_t(x) &  [(m+1)\frac{T}{2^k}, (m+2)\frac{T}{2^k}], \\
\end{array} \right.
\end{equation*}
Then, we write:
\begin{equation}\label{splitting}
\frac{\mu_t^{h + \Delta h, k} - \mu_t^{h,k}}{\Delta h} = \frac{\mu_t^{h + \Delta h, k} - \bar{\mu}_t^{h+\Delta h,k}}{\Delta h} + \frac{\bar{\mu}_t^{h + \Delta h, k} - {\mu}_t^{h,k}}{\Delta h} = A_1 + A_2,
\end{equation}
and the plan is to show that both terms have limits in $Z$ when $\Delta h \to 0$. For term $A_1$, we will demonstrate that it is a Cauchy sequence. To this end, for some small $\Delta h_1 $ and $\Delta h_2$ we write:
\begin{multline}\label{estimate_begin}
\norm{\frac{\mu_t^{h + \Delta h_1, k} - \bar{\mu}_t^{h+\Delta h_1,k}}{\Delta h_1} -  \frac{\mu_t^{h + \Delta h_2, k} - \bar{\mu}_t^{h+\Delta h_2,k}}{\Delta h_2}}_Z \\
= \sup_{\xi \in C^{1+\alpha}, \norm{\xi} \leq 1} \int_{\mathbb{R}^+} \xi \frac{ d\mu_t^{h + \Delta h_1, k} - d\bar{\mu}_t^{h+\Delta h_1,k}}{\Delta h_1} -  \int_{\mathbb{R}^+} \xi \frac{d\mu_t^{h + \Delta h_2, k} - d\bar{\mu}_t^{h+\Delta h_2,k}}{\Delta h_2} \\ =
 \sup_{\xi \in C^{1+\alpha}, \norm{\xi} \leq 1} \int_{\mathbb{R}^+}  \varphi_{\xi, t}^{h+\Delta h_1}(0,x) \frac{ d\mu_{t_*}^{h + \Delta h_1, k} - d{\mu}_{t_*}^{h,k}}{\Delta h_1} -  \int_{\mathbb{R}^+} \varphi_{\xi, t}^{h+\Delta h_2}(0,x) \frac{d\mu_{t_*}^{h + \Delta h_2, k} - d{\mu}_{t_*}^{h,k}}{\Delta h_2} 
\end{multline}
where we applied semigroup property \eqref{semigroup}. Here, for any $H \in [-\frac{1}{2}, \frac{1}{2}]$, function $\varphi_{\xi, t}^{H}(s,x)$ (with $s \in [0,t-t_*]$) solves the implicit equation:
\begin{multline}\label{impl_eqn_in_sect4}
\varphi_{\xi,t}^H(s,x) = \xi(X_{\bar{b}(H,\cdot)}(t-t_*-s,x))e^{\int_0^{t-t_*-s}\bar{c}(H, X_{\bar{b}(H,\cdot)}(u,x)) du} +\\ + \int_0^{t-t_*-s}\bar{a}(H,X_{\bar{b}(H,\cdot)}(u,x))\varphi_{\xi, t}^H(u+s,0)e^{\int_0^u \bar{c}(H, X_{b(H,\cdot)}(v,x)) dv} du
\end{multline}
where $\bar{a}(h,x) = a(x, \mu_{t_*}^{h,k})$, $\bar{b}(h,x) = b(x, \mu_{t_*}^{h,k})$ and $\bar{c}(h,x) = c(x, \mu_{t_*}^{h,k})$. Now, by induction hypothesis, the map $h \mapsto \mu_{t_*}^{h,k}$ is $C^{1+\alpha}$ so we can use Theorem \ref{Nem_op} to obtain bounds on the function $\bar{f}(h,x)$, where $f = a,b,c$. Recall that these estimates were formulated in terms of four constants $C_T$, $C_L$, $C_F$ and $C_H$ defined in Section \ref{regularity_estimates_41}. However, 
\begin{itemize}
\item $C_F \leq C_N$ by the definition of $C_N$,
\item $C_T \leq C_N$ by Lemma \ref{total_stability},
\item $C_L \leq C_N$ by Lemma \ref{iteration_in_h},
\item $C_H \leq C^{(m)}$ by the definition of $C_H$.
\end{itemize}
Therefore, in view of Theorem \ref{Nem_op}, $\norm{f_x}_{\alpha,x}, \norm{f_x}_{\alpha,h}, \norm{f_h}_{\alpha,x} \leq C_N$ and $\norm{f_h}_{\alpha,h} \leq C_N(1+ C^{(m)})$. This implies that the constants $C_L$, $H_L$ and $G_L$ in estimates for a linear equation (Corollary \ref{Holder_cont_phi}) are bounded by $C_N$, $C_N$ and $C_N(1+ C^{(m)})$ respectively. Therefore, using Corollary \ref{Holder_cont_phi}, we conclude that $\norm{{\partial_x} \varphi_{\xi,t}^h}_{\alpha,h} \leq C_N 2^{-k\alpha}$. In particular, using also \eqref{lip_pert}, 
$$
\norm[1]{\varphi_{\xi, t}^{h_1} - \varphi_{\xi, t}^{h_2}}_{W^{1,\infty},x} \leq C_N \max \big(2^{-k}, 2^{-k\alpha} \big)  \abs{h_1 - h_2}^{\alpha} \leq {C_N}{2^{-k \alpha}} \abs{h_1 - h_2}^{\alpha}.
$$
Moreover, from Corollary \ref{Holder_cont_phi} we deduce $\norm[1]{\varphi_{\xi, t}^{h}}_{W^{1,\infty}, x} \leq e^{C_N \frac{T}{2^k} }$ and $\norm[1]{\partial_x \varphi_{\xi, t}^{h}}_{\alpha,x} \leq e^{C_N \frac{T}{2^k} }$. Then, we continue estimate in \eqref{estimate_begin}:
\begin{multline*}
\sup_{\xi \in C^{1+\alpha}, \norm{\xi} \leq 1} \int_{\mathbb{R}^+}  \Big(\varphi_{\xi, t}^{h+\Delta h_1}(0,x) -  \varphi_{\xi, t}^{h+\Delta h_2}(0,x) \Big) \frac{ d\mu_{t_*}^{h + \Delta h_1, k}(x) - d{\mu}_{t_*}^{h,k}(x)}{\Delta h_1} +   
\\ + \int_{\mathbb{R}^+} \varphi_{\xi, t}^{h+\Delta h_2}(0,x)\Bigg( \frac{ d\mu_{t_*}^{h + \Delta h_1, k}(x) - d{\mu}_{t_*}^{h,k}(x)}{\Delta h_1} - \frac{d\mu_{t_*}^{h + \Delta h_2, k}(x) - d{\mu}_{t_*}^{h,k}(x)}{\Delta h_2} \Bigg) \leq \\ \leq
\frac{C_N}{2^{k\alpha}}\frac{1}{\Delta h_1}\abs{\Delta h_1 - \Delta h_2}^{\alpha} p_F(\mu_{t_*}^{h + \Delta h_1, k}, d{\mu}_{t_*}^{h,k}) +e^{C_N\frac{T}{2^k} } \norm{\frac{\mu_{t_*}^{h + \Delta h_1, k} - {\mu}_{t_*}^{h,k}}{\Delta h_1} - \frac{\mu_{t_*}^{h + \Delta h_2, k} - {\mu}_{t_*}^{h,k}}{\Delta h_2}}_{Z},
\end{multline*}
where we used $W^{1,\infty}$ and $C^{1+\alpha}$ estimates recalled above. Now, the first term is bounded by $\frac{C_N}{2^{k\alpha}} \abs{\Delta h_1 - \Delta h_2}^{\alpha}$ in view of Lemma \ref{iteration_in_h} while the second is a Cauchy difference converging to zero by induction hypothesis (we have $t_* \in [m \frac{T}{2^k}, (m+1) \frac{T}{2^k}]$).

We move on to study term $A_2$ which is much easier. Since $\bar{\mu}_t^{h + \Delta h, k}$ and ${\mu}_t^{h,k}$ has the same starting point at $t_*$, this is exactly the setting of Theorem \ref{lin_per_1}. Therefore, $A_2$ is convergent. Moreover, H\"older constant of the limit is bounded by $\frac{C_N}{2^k} C^{(m)}$.

Therefore, $\frac{\mu^{h+\Delta h, k}_t - \mu^{h,k}_t}{\Delta h}$ is a Cauchy sequence in $Z$, and so it converges to the limit denoted by $\frac{\partial}{\partial h} \mu^{h,k}_t$. To conclude the proof, we need to check that map $[-\frac{1}{2}, \frac{1}{2}] \ni H \mapsto \frac{\partial}{\partial h} \mu^{h,k}_t|_{h=H}$  is H\"older continuous and estimate its norm. 

We apply the similar splitting as in \eqref{splitting}:
\begin{multline*}
{\partial_h} \mu^{h,k}_t|_{h=H_1} - {\partial_h} \mu^{h,k}_t|_{h=H_2} = \\=
\lim_{\Delta h \to 0} \underbrace{\frac{\mu_t^{H_1 + \Delta h, k} - \bar{\mu}_t^{H_1+\Delta h,k}}{\Delta h} -
 \frac{\mu_t^{H_2 + \Delta h, k} - \bar{\mu}_t^{H_2+\Delta h,k}}{\Delta h}}_{:=B_1} +
\underbrace{\lim_{\Delta h \to 0} \frac{\bar{\mu}_t^{H_1 + \Delta h, k} - {\mu}_t^{H_1,k}}{\Delta h} - \frac{\bar{\mu}_t^{H_2 + \Delta h, k} - {\mu}_t^{H_2,k}}{\Delta h} }_{:=B_2}
\end{multline*}
where the limit is taken in the space $Z$. For $B_2$, we deduce from Theorem \ref{lin_per_1} that $\abs{B_2} \leq \frac{C_N}{2^k} C^{(m)} \abs{H_1 - H_2}^{\alpha}$. For $B_1$, we apply semigroup property \eqref{semigroup} exactly like above:
\begin{multline}
\norm{\frac{\mu_t^{H_1 + \Delta h, k} - \bar{\mu}_t^{H_1+\Delta h,k}}{\Delta h} -  \frac{\mu_t^{H_2 + \Delta h, k} - \bar{\mu}_t^{H_2+\Delta h,k}}{\Delta h}}_Z =\\
= \sup_{\xi \in C^{1+\alpha}, \norm{\xi} \leq 1} \int_{\mathbb{R}^+} \xi \frac{ d\mu_t^{H_1 + \Delta h, k}(x) - d\bar{\mu}_t^{H_1+\Delta h,k}(x)}{\Delta h} -  \int_{\mathbb{R}^+} \xi \frac{d\mu_t^{H_2 + \Delta h, k}(x) - d\bar{\mu}_t^{H_2+\Delta h,k}(x)}{\Delta h} =\\ =
 \sup_{\xi \in C^{1+\alpha}, \norm{\xi} \leq 1} \int_{\mathbb{R}^+}  \varphi_{\xi, t}^{H_1+\Delta h}(0,x) \frac{ d\mu_{t_*}^{H_1 + \Delta h, k}(x) - d{\mu}_{t_*}^{H_1,k}(x)}{\Delta h} -  \int_{\mathbb{R}^+} \varphi_{\xi, t}^{H_2+\Delta h}(0,x) \frac{d\mu_{t_*}^{H_2 + \Delta h, k}(x) - d{\mu}_{t_*}^{H_2,k}(x)}{\Delta h}  =\\ =
  \sup_{\xi \in C^{1+\alpha}, \norm{\xi} \leq 1} \int_{\mathbb{R}^+}  \Big(\varphi_{\xi, t}^{H_1+\Delta h}(0,x) - \varphi_{\xi, t}^{H_2+\Delta h}(0,x)  \Big) \frac{ d\mu_{t_*}^{H_1 + \Delta h, k}(x) - d{\mu}_{t_*}^{H_1,k}(x)}{\Delta h}+ \\ +  \int_{\mathbb{R}^+} \varphi_{\xi, t}^{H_2+\Delta h}(0,x) \Bigg(\frac{ d\mu_{t_*}^{H_1 + \Delta h, k}(x) - d{\mu}_{t_*}^{H_1,k}(x)}{\Delta h} - \frac{d\mu_{t_*}^{H_2 + \Delta h, k}(x) - d{\mu}_{t_*}^{H_2,k}(x)}{\Delta h}\Bigg)\leq \\ 
  \leq \frac{C_N}{2^{k\alpha}} \abs{H_1 - H_2}^{\alpha} + e^{C_N\frac{T}{2^k}} \norm{\frac{\mu_{t_*}^{H_1 + \Delta h, k} - {\mu}_{t_*}^{H_1,k}}{\Delta h} -  \frac{\mu_{t_*}^{H_2 + \Delta h, k} - {\mu}_{t_*}^{H_2,k}}{\Delta h}}_Z.
\end{multline}
Since, as we send $\Delta h \to 0$:
$$
\norm{\frac{\mu_{t_*}^{H_1 + \Delta h, k} - {\mu}_{t_*}^{H_1,k}}{\Delta h} -  \frac{\mu_{t_*}^{H_2 + \Delta h, k} - {\mu}_{t_*}^{H_2,k}}{\Delta h}}_Z \leq 
C^{(m)} \abs{H_1 - H_2}^{\alpha},
$$ 
so combining this with $\abs{B_2} \leq \frac{C_N}{2^k} C^{(m)} \abs{H_1 - H_2}^{\alpha}$, we obtain the following iteration inequality:
\begin{equation*}
C^{(m+1)} \leq \frac{C_N}{2^{k\alpha}} + \frac{C_N}{2^k} C^{(m)} + e^{C_N\frac{T}{2^k}} C^{(m)} \leq \frac{C_N}{2^{k\alpha}} + e^{C_N\frac{T}{2^k}} C^{(m)},
\end{equation*}
where $m = 0, ..., 2^{k-1}$ and $C^0 = C_N 2^{-k}$. Therefore, Lemma \ref{discrete_Gronwall} implies:
$$
C^{(m)} \leq  C_N 2^{-k} e^{C_N \frac{T}{2^k} \cdot 2^k} +  \frac{C_N}{2^{k\alpha}} \frac{2^k}{T} \frac{T}{2^k} \frac{\big(e^{C_N \frac{T}{2^k}}\big)^m - 1}{e^{C_N \frac{T}{2^k}} - 1} \leq C_N 2^{k(1-\alpha)}.
$$
\end{proof}
We conclude this section with some sort of continuity-in-time-result for the obtained derivative:
\begin{lem}\label{cont_der_fixed_time}
Let $f\in C^{1+\alpha}(\mathbb{R}^+)$. Let $t_* = l\frac{T}{2^k}$ and $t_f = (l+1)\frac{T}{2^k}$. Then 
$$
\bigg({\partial_H}\mu^{h,k}_{t_f} \Big|_{H=h} - {\partial_H}\mu^{h,k}_{t_*} \Big|_{H=h}, f(x)\bigg)_{(Z, C^{1+\alpha})} \leq C(C_N, \norm{f}_{C^{1+\alpha}}) 2^{-k \alpha}.
$$
\end{lem}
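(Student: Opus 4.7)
The plan is to use the semigroup identity \eqref{semigroup} on the short interval $[t_*,t_f]$ of length $\Delta t := T/2^k$ to reduce the question to two contributions, each of which can be made small in powers of $2^{-k}$. On $[t_*,t_f]$ the approximation $\mu^{H,k}_t$ solves the linear problem with coefficients $\bar a(H,x) := a^H(x,\mu^{H,k}_{t_*})$ and analogous $\bar b,\bar c$, so \eqref{semigroup} yields $\int f\,d\mu^{H,k}_{t_f}=\int \varphi^H(0,x)\,d\mu^{H,k}_{t_*}$, where $\varphi^H$ solves the implicit dual equation \eqref{sol_dual_dep_onh} on $[0,\Delta t]$ with terminal data $f$ and coefficients $\bar a,\bar b,\bar c$.

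Forming the finite difference of this identity in $H$ at $h$, rewriting the numerator as
\begin{multline*}
\int\bigl(\varphi^{h+\Delta h}(0,\cdot)-f\bigr)\,d\bigl(\mu^{h+\Delta h,k}_{t_*}-\mu^{h,k}_{t_*}\bigr)+\int f\,d\bigl(\mu^{h+\Delta h,k}_{t_*}-\mu^{h,k}_{t_*}\bigr)\\+\int\bigl(\varphi^{h+\Delta h}(0,\cdot)-\varphi^h(0,\cdot)\bigr)\,d\mu^{h,k}_{t_*},
\end{multline*}
dividing by $\Delta h$ and letting $\Delta h\to 0$ produces the decomposition
\begin{equation*}
\bigl({\partial_H}\mu^{H,k}_{t_f}\big|_{H=h}-{\partial_H}\mu^{H,k}_{t_*}\big|_{H=h},\,f\bigr)_{(Z,C^{1+\alpha})}=I_1+I_2,
\end{equation*}
where $I_2:=\int {\partial_H}\varphi^H(0,x)\big|_{H=h}\,d\mu^{h,k}_{t_*}(x)$ and $I_1:=\bigl({\partial_H}\mu^{H,k}_{t_*}\big|_{H=h},\,\varphi^h(0,\cdot)-f\bigr)_{(Z,C^{1+\alpha})}$.

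I will estimate $I_2$ by applying Corollary \ref{Holder_cont_phi} to the linear problem on $[0,\Delta t]$. Theorem \ref{Nem_op}, combined with the uniform total-variation bound (Lemma \ref{total_stability}) and the flat-Lipschitz bound $p_F(\mu^{H_1,k}_{t_*},\mu^{H_2,k}_{t_*})\le C_N|H_1-H_2|$ (Lemma \ref{iteration_in_h}), shows that $\bar a,\bar b,\bar c$ satisfy assumptions (B1)--(B2) with all $W^{1,\infty}$ norms controlled by $C_N$. The bound on $\|\varphi_{\xi,h}\|_\infty$ in Corollary \ref{Holder_cont_phi} involves only the ``easy'' constant $C_L$ (not $H_L$ or $G_L$), so after rescaling $f$ by $\|f\|_{C^{1+\alpha}}$ I obtain $\|{\partial_H}\varphi^H(0,\cdot)\|_\infty\le C_N\,\Delta t\,\|f\|_{C^{1+\alpha}}$, and together with $\|\mu^{h,k}_{t_*}\|_{TV}\le C_N$ this gives $|I_2|\le C(C_N,\|f\|_{C^{1+\alpha}})\,2^{-k}$.

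For $I_1$, Lemma \ref{kicked_continuity} yields $\|\varphi^h(0,\cdot)-f\|_{W^{1,\infty},x}\le C(C_N,\|f\|_{C^{1+\alpha}})\,\Delta t^\alpha\le C\,2^{-k\alpha}$. The main obstacle I anticipate is that the $C^{1+\alpha}$ seminorm of $\varphi^h(0,\cdot)-f$ remains of order one, so the naive pairing bound $|({\partial_H}\mu^{H,k}_{t_*},g)|\le \|{\partial_H}\mu^{H,k}_{t_*}\|_Z\|g\|_{C^{1+\alpha}}$ is too weak to deliver the required $2^{-k\alpha}$ factor. The resolution is to upgrade the pairing via the flat-Lipschitz information from Lemma \ref{iteration_in_h}: since $p_F(\mu^{h+\Delta h,k}_{t_*},\mu^{h,k}_{t_*})\le C_N|\Delta h|$, for every $g\in C^{1+\alpha}\subset W^{1,\infty}$ one has
\begin{equation*}
\bigl|\bigl({\partial_H}\mu^{H,k}_{t_*}\big|_{H=h},g\bigr)_{(Z,C^{1+\alpha})}\bigr|=\lim_{\Delta h\to 0}\Bigl|\int g\,d\tfrac{\mu^{h+\Delta h,k}_{t_*}-\mu^{h,k}_{t_*}}{\Delta h}\Bigr|\le C_N\|g\|_{W^{1,\infty}}.
\end{equation*}
Applied to $g=\varphi^h(0,\cdot)-f$ this yields $|I_1|\le C(C_N,\|f\|_{C^{1+\alpha}})\,2^{-k\alpha}$; combining with the bound on $I_2$ and using $\alpha<1$ (so $2^{-k}\le 2^{-k\alpha}$) closes the estimate.
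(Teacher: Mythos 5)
Your proof is correct and follows essentially the same route as the paper's: apply the semigroup identity on $[t_*,t_f]$ to move the comparison down to $t_*$, isolate the term $\int(\varphi^{h+\Delta h}(0,\cdot)-f)\,d(\mu^{h+\Delta h,k}_{t_*}-\mu^{h,k}_{t_*})/\Delta h$, bound it by upgrading the $Z$-pairing to a $W^{1,\infty}$/flat-metric pairing (using Lemmas \ref{kicked_continuity} and \ref{iteration_in_h}), and bound the remaining term $\int\partial_H\varphi^H(0,\cdot)\,d\mu^{h,k}_{t_*}$ by Corollary \ref{Holder_cont_phi} together with Lemma \ref{total_stability}. The only cosmetic difference is that you organize the numerator into three summands, one of which cancels $-({\partial_H}\mu^{H,k}_{t_*},f)$, whereas the paper manipulates the finite difference of the pairings directly; this changes nothing in the estimates, and you correctly identify the one non-routine step (one cannot use the naive $Z$--$C^{1+\alpha}$ duality bound on $I_1$, because the H\"older seminorm of $\varphi^h(0,\cdot)-f$ does not shrink) and resolve it exactly as the paper does.
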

\begin{proof}
We compute using semigroup property \eqref{semigroup}, adopting notation from \eqref{impl_eqn_in_sect4}:
\begin{multline*}
\bigg({\partial_H}\mu^{h,k}_{t_f} \Big|_{H=h} - {\partial_H}\mu^{h,k}_{t_*} \Big|_{H=h}, f(x)\bigg)_{(Z, C^{1+\alpha})} = \lim_{\Delta h \to 0} \bigg(\frac{\mu^{h+\Delta h,k}_{t_f} - \mu^{h,k}_{t_f}}{\Delta h} - \frac{\mu^{h+\Delta h,k}_{t_*}- \mu^{h,k}_{t_*}}{\Delta h}, f(x)\bigg)_{(Z, C^{1+\alpha})} =
 \\ = \lim_{\Delta h \to 0} \int_{\mathbb{R}^+} f(x) \frac{d\mu^{h+\Delta h,k}_{t_f}(x) - d\mu^{h,k}_{t_f}(x)}{\Delta h} - \frac{d\mu^{h+\Delta h,k}_{t_*}(x)- d\mu^{h,k}_{t_*}(x)}{\Delta h} = \\ =\lim_{\Delta h \to 0}
  \int_{\mathbb{R}^+} \frac{\varphi^{h+\Delta h}_{f, \frac{T}{2^k}}(0,x) d\mu^{h+\Delta h,k}_{t_*}(x) - \varphi^h_{f, \frac{T}{2^k}}(0,x) d\mu^{h,k}_{t_*}(x)}{\Delta h} - \frac{f(x)d\mu^{h+\Delta h,k}_{t_*}(x)- f(x)d\mu^{h,k}_{t_*}(x)}{\Delta h} = \\ =\lim_{\Delta h \to 0}
  \int_{\mathbb{R}^+} \frac{\big(\varphi^{h+\Delta h}_{f, \frac{T}{2^k}}(0,x) - f(x)\big) d\mu^{h+\Delta h,k}_{t_*}(x) - \big(\varphi^h_{f, \frac{T}{2^k}}(0,x)  - f(x)\big) d\mu^{h,k}_{t_*}(x)}{\Delta h} = \\ =
 \lim_{\Delta h \to 0}  \int_{\mathbb{R}^+} \frac{\big(\varphi^{h+\Delta h}_{f, \frac{T}{2^k}}(0,x) - f(x)\big) d\big(\mu^{h,k}_{t_*}(x) - \mu^{h+\Delta h,k}_{t_*}(x)\big)}{\Delta h} + \frac{\big(\varphi^{h+\Delta h}_{f, \frac{T}{2^k}}(0,x)  - \varphi^h_{f, \frac{T}{2^k}}(0,x) \big)}{\Delta h}d\mu^{h,k}_{t_*} (x)\leq \\ \leq \lim_{\Delta h \to 0} \norm{\varphi^{h+\Delta h}_{f, \frac{T}{2^k}}(0,x) - f(x)}_{W^{1,\infty}(\mathbb{R}^+), x}\frac{p_F(\mu^{h,k}_{t_*},\mu^{h+\Delta h,k}_{t_*})}{\Delta h} + \norm{{\partial_h} \varphi^{h}_{f, \frac{T}{2^k}}}_{\infty}\mu^{h,k}_{t_*}(\mathbb{R}^+).
\end{multline*}
Now, the first term is bounded by $C_N 2^{-k \alpha}$ due to Lemmas \ref{kicked_continuity} and \ref{iteration_in_h} while the second term is controlled by $C_N 2^{-k}$ due to Corollary \ref{Holder_cont_phi} and Lemma \ref{total_stability}.
\end{proof}
\subsection{Uniform convergence of difference quotients and proof of Theorem \ref{nonlin_per}}\label{uniform_convergence_section}
In this Section, we will prove, that under assumption $\alpha > \frac{1}{2}$, the sequence $\frac{\mu_t^{h+\Delta h,k} - \mu_t^{h,k}}{\Delta h}$ converges uniformly in $Z$ for all $\Delta h \in (-\frac{1}{2}, \frac{1}{2}) \setminus \{0\}$ as $k \to \infty$. To achieve this, we consider quantity
$$
\Delta^{k,t}:= \sup_{\Delta h \in (-\frac{1}{2}, \frac{1}{2})}\norm{\frac{\mu_t^{h+\Delta h,k+1} - \mu_t^{h,k+1}}{\Delta h} - 
\frac{\mu_t^{h+\Delta h,k} - \mu_t^{h,k}}{\Delta h}}_Z
$$
and the plan is to obtain the bound $\Delta^{k,t} \leq C_N2^{-k\beta}$ for some constant $\beta>0$. Consider interval of time $I_l = [l \frac{T}{2^k}, (l+1)\frac{T}{2^k}]$. Denote $t_* = l \frac{T}{2^k}$, $t^* = (l+1)\frac{T}{2^k}$  and $t_m = l \frac{T}{2^k}  + \frac{T}{2^{k+1}}$ (this is the middle of the interval $I_l$). Suppose $t_c \in (t_m, t^*]$. Using semigroup property, \eqref{semigroup}, we can write:
\begin{multline}\label{analysis_delta_k}
\Delta^{k,t_c} =  \sup_{\Delta h \in (-\frac{1}{2}, \frac{1}{2})} \sup_{\xi \in C^{1+\alpha}: \norm{\xi}\leq 1}\int_{\mathbb{R}^+} \xi \Bigg(\frac{d\mu_{t_c}^{h+\Delta h,k+1} - d\mu_{t_c}^{h,k+1}}{\Delta h}  - \frac{d\mu_{t_c}^{h+\Delta h,k} - d\mu_{t_c}^{h,k}}{\Delta h} \Bigg) = \\
= \sup_{\Delta h \in (-\frac{1}{2}, \frac{1}{2})} \sup_{\xi \in C^{1+\alpha}: \norm{\xi} \leq 1} \int_{\mathbb{R}^+} \frac{\varphi_{\xi}^{h+\Delta h, k+1}(0,x)d\mu_{t_m}^{h+\Delta h,k+1}(x) -
\varphi_{\xi}^{h, k+1}(0,x)d\mu_{t_m}^{h,k+1}(x)}{\Delta h}\\ - \frac{\varphi_{\xi}^{h+\Delta h, k}(\frac{T}{2^{k+1}},x)d\mu_{t_m}^{h+\Delta h,k}(x)  - 
\varphi_{\xi}^{h, k}(\frac{T}{2^{k+1}},x)d\mu_{t_m}^{h,k}(x)}{\Delta h},
\end{multline}
where $\varphi_{\xi}^{h, k}$ and $\varphi_{\xi}^{h, k+1}$  solve appropriate implicit equations like \eqref{impl_eqn_in_sect4}. Analysis of the resulting expression seems to be difficult. Indeed, our target estimate has to capture decay when both $\Delta h \to 0$ and $k \to \infty$. If we used triangle inequality, we would lose one of these effects. However, recall that $\mu_{0}^{h,k} = \mu_{0}^{h+\Delta h,k} = \mu_{0}^{h,k+1} = \mu_{0}^{h+\Delta h,k+1} =\mu_0$ so we can apply semigroup property \eqref{semigroup} $m$ times more, moving down to the time $t = 0$. Then, we will end up with integration with respect to the same measure as initial condition is the same. This corresponds to the iterations of equation \eqref{sol_dual}, motivating the following definition:
\begin{defn}
Let $a(h,x), b(h,x), c(h,x), \xi(h,x): \mathbb{R}^+ \times [-\frac{1}{2}, \frac{1}{2}]\to \mathbb{R}$ where $a$, $b$ and $c$ satisfy assumptions of Theorem \eqref{lin_per_1}. We say that $\xi$ generates function $\varphi^h_{\xi,t}(s,x)$ in time $t$ and with flow $(a,b,c)$ provided $\varphi^h_{\xi,t}$ solves: 
\begin{equation}\label{sol_dual_generator}
\varphi^h_{\xi,t}(s,x) = \xi(h,X_{b(h,\cdot)}(t-s,x))e^{\int_0^{t-s }c(h,X_b(u,x)) du} + \int_0^{t-s}a(h,X_b(u,x))\varphi^h_{\xi,t}(u+s,0)e^{\int_0^u c(h,X_b(v,x)) dv} du.
\end{equation}
\end{defn}
Fix $t_c \in (t_m, t^*]$. When we move down in time from $t_c$ to $t_m$ in integral \eqref{analysis_delta_k}, using semigroup property \eqref{semigroup}, we actually generate a new function:
\begin{itemize}
\item for the $(k+1)$-th approximation with flow $(a(x,\mu_{t_m}^{h,k+1}), b(x,\mu_{t_m}^{h,k+1}), c(x,\mu_{t_m}^{h,k+1}))$ in time $t = t_c - t_m$. It will be denoted by $\varphi^{h, k+1,k+1}_{\xi, t_c, 1}$ (this means: starting from function $\xi$ and time $t_c$ with perturbation parameter $h$, we moved down to the closest meshpoint with flow of the $(k+1)$-th approximation level and with respect to the mesh of diameter $\frac{T}{2^{k+1}}$). The generated function in the new integral expression will be evaluated at $s = 0$ so that $t - s = t_c - t_m$. 
\item for the $k$-th approximation with flow $(a(x,\mu_{t_*}^{h,k}), b(x,\mu_{t_*}^{h,k}), c(x,\mu_{t_m}^{h,k}))$ in time $t = t_c - t_*$. It will be denoted by $\varphi^{h, k,k+1}_{\xi, t_c, 1}$ (this means: starting from function $\xi$ and time $t_c$ with perturbation parameter $h$, we moved down to the closest meshpoint with flow of the $k$-th approximation level and with respect to the mesh of diameter $\frac{T}{2^{k+1}}$). The generated function in the new integral expression will be evaluated at $s = \frac{T}{2^{k+1}}$ so that $t - s = t_c - t_* - \frac{T}{2^{k+1}} = t_c - t_m$.
\end{itemize}
After this step, we can write:
\begin{multline*}
\Delta^{k,t_c} = \sup_{\Delta h \in (-\frac{1}{2}, \frac{1}{2})} \sup_{\xi \in C^{1+\alpha}:~\norm{\xi} \leq 1} \int_{\mathbb{R}^+} \frac{\varphi_{\xi,t_c,1}^{h+\Delta h, k+1, k+1}(0,x)d\mu_{t_m}^{h+\Delta h,k+1}(x) - 
\varphi_{\xi,t_c,1}^{h, k+1,k+1}(0,x)d\mu_{t_m}^{h,k+1}(x)}{\Delta h}  \\- \frac{\varphi_{\xi,t_c,1}^{h+\Delta h, k, k+1}(\frac{T}{2^{k+1}},x)d\mu_{t_m}^{h+\Delta h,k}(x) - 
\varphi_{\xi,t_c,1}^{h, k, k+1}(\frac{T}{2^{k+1}},x)d\mu_{t_m}^{h,k}(x)}{\Delta h}.
\end{multline*}
We apply semigroup property once more, generating new functions:
\begin{itemize}
\item for the $(k+1)$-th approximation with flow $(a(x,\mu_{t_*}^{h,k+1}), b(x,\mu_{t_*}^{h,k+1}), c(x,\mu_{t_*}^{h,k+1}))$ in time $t = \frac{T}{2^k+1}$. It will be denoted by $\varphi^{h, k+1,k+1}_{\xi, t_c, 2}$ (this means: starting from function $\xi$ and time $t_c$ with perturbation parameter $h$, we moved twice to the closest meshpoint with the flow of the $(k+1)$-th approximation level and with respect to the mesh of diameter $\frac{T}{2^{k+1}}$). The generated function in the new integral expression will be evaluated at $s = 0$ so that $t - s = \frac{T}{2^{k+1}}$. 
\item for $k$-th approximation with flow $(a(x,\mu_{t_*}^{h,k}), b(x,\mu_{t_*}^{h,k}), c(x,\mu_{t_m}^{h,k}))$ in time $t = \frac{T}{2^{k+1}}$. It will be denoted by $\varphi^{h, k,k+1}_{\xi, t_c, 2}$ (this means: starting from function $\xi$ and with perturbation parameter $h$, we moved twice to the closest meshpoint with flow of the $k$-th approximation level and with respect to the mesh of diameter $\frac{T}{2^{k+1}}$). The generated function in the new integral expression will be evaluated at $s = 0$ so that $t - s = \frac{T}{2^{k+1}}$.
\end{itemize}
After this step, we have:
\begin{multline*}
\Delta^{k,t_c} = \sup_{\Delta h \in (-\frac{1}{2}, \frac{1}{2})} \sup_{\xi \in C^{1+\alpha}: \norm{\xi} \leq 1} \int_{\mathbb{R}^+} \frac{\varphi_{\xi,t_c,2}^{h+\Delta h, k+1, k+1}(0,x)d\mu_{t_*}^{h+\Delta h,k+1}(x) - 
\varphi_{\xi,t_c,2}^{h, k+1,k+1}(0,x)d\mu_{t_*}^{h,k+1}(x)}{\Delta h}  \\- \frac{\varphi_{\xi,t_c,2}^{h+\Delta h, k, k+1}(0,x)d\mu_{t_*}^{h+\Delta h,k}(x) - 
\varphi_{\xi,t_c,2}^{h, k, k+1}(0,x)d\mu_{t_*}^{h,k}(x)}{\Delta h}.
\end{multline*}
Eventually, after $2 l$ more iterations we end up with:
\begin{multline*}
\Delta^{k,t_c} =  \sup_{\Delta h \in (-\frac{1}{2}, \frac{1}{2})}  \sup_{\xi \in C^{1+\alpha}:~\norm{\xi} \leq 1} \int_{\mathbb{R}^+} \frac{\varphi_{\xi,t_c,2(l+1)}^{h+\Delta h, k+1, k+1}(0,x) - 
\varphi_{\xi,t_c,2(l+1)}^{h, k+1,k+1}(0,x)}{\Delta h} \\ - \frac{\varphi_{\xi,t_c,2(l+1)}^{h+\Delta h, k, k+1}(0,x) - 
\varphi_{\xi,t_c,2(l+1)}^{h, k, k+1}(0,x)}{\Delta h} d\mu_0(x) = \\
=  \sup_{\Delta h \in (-\frac{1}{2}, \frac{1}{2})}  \sup_{\xi \in C^{1+\alpha}: \norm{\xi} \leq 1} \int_{\mathbb{R}^+} \int_0^1 
\Big({\partial_H} \varphi_{\xi,t_c,2(l+1)}^{H, k+1, k+1}\Big|_{H = h+u\Delta h} - 
{\partial_H} \varphi_{\xi,t_c,2(l+1)}^{H, k, k+1}\Big|_{H = h+u\Delta h}\Big) du~d\mu_0(x)
\end{multline*}
In fact, we have proven the following estimate: if $t_c \in (l \frac{T}{2^{k+1}}, (l+1) \frac{T}{2^{k+1}}]$
\begin{equation}\label{the_most_important_bound}
\Delta^{k, t_c} \leq C_N  \sup_{\xi \in C^{1+\alpha}: \norm{\xi} \leq 1} \norm[2]{{\partial_H} \varphi_{\xi,t_c,l+1}^{H, k+1, k+1} - 
{\partial_H} \varphi_{\xi,t_c,l+1}^{H, k, k+1}}_{\infty}.
\end{equation}
Therefore, we can set up iterations for $\Gamma^{k, t_c, h}_v :=  \sup_{\xi \in C^{1+\alpha}: \norm{\xi} \leq 1}\norm{{\partial_h} \varphi_{\xi,t_c,v}^{h, k+1, k+1} - {\partial_h} \varphi_{\xi,t_c,v}^{h, k, k+1}}_{\infty}$ where $v = 0,1,..., l+1$. Analysis of these quantities can be performed with Theorem \ref{thm_main_diff} from Section \ref{perturbation_mdlfcns_section}. It suggests that we also need bounds on $$\Gamma^{k, t_c}_v := \sup_{\xi \in C^{1+\alpha}: \norm{\xi} \leq 1} \norm[2]{ \varphi_{\xi,t_c,v}^{h, k+1, k+1} - \varphi_{\xi,t_c,v}^{h, k, k+1}}_{\infty}, \quad \Gamma^{k, t_c, x}_v := \sup_{\xi \in C^{1+\alpha}: \norm{\xi} \leq 1} \norm[2]{{\partial_x} \varphi_{\xi,t_c,v}^{h, k+1, k+1} - {\partial_x} \varphi_{\xi,t_c,v}^{h, k, k+1}}_{\infty}.$$ We begin our analysis with establishing stability of sequence $\{\varphi_{\xi,t_c,v}^{h, m, k+1}  \}$ as $m \leq k+1$, $v = 0, 1, ..., l+1$ and $t_c \in (l \frac{T}{2^{k+1}}, (l+1) \frac{T}{2^{k+1}}]$ .
\begin{lem}\label{basic_stability_perturbations}
Let $\xi \in C^{1+\alpha}(\mathbb{R}^+)$ with $\norm{\xi}_{C^{1+\alpha}} \leq 1$ and $t_c \in (l \frac{T}{2^{k+1}}, (l+1) \frac{T}{2^{k+1}}]$. There is a constant $C_N$ such that for any $m \leq k+1$ and $v = 0, 1, ..., l+1$ sequence $\{\varphi_{\xi,t_c,v}^{h, m, k+1}  \}$ satisfies:
$$
\norm[2]{\varphi_{\xi,t_c,v}^{h, m, k+1}}_{W^{1,\infty}},\quad \norm[2]{{\partial_x} \varphi_{\xi,t_c,v}^{h, m, k+1}}_{\alpha,x},\quad \norm[2]{{\partial_h} \varphi_{\xi,t_c,v}^{h, m, k+1}}_{\alpha,x}  \leq C_N.
$$
\end{lem}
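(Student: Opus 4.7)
The plan is to proceed by induction on $v$, applying Lemma \ref{est_dual_sol_fin_fcns} over a single mesh step of length $T_0 = T/2^{k+1}$ at each iteration. At $v = 0$ the generated function is simply $\xi$, and since $\xi$ does not depend on $h$, the three norms are respectively bounded by $\norm{\xi}_{C^{1+\alpha}} \leq 1$, $\norm{\xi}_{C^{1+\alpha}} \leq 1$, and $0$. For the inductive step, $\varphi^{h,m,k+1}_{\xi,t_c,v+1}$ arises from $\varphi^{h,m,k+1}_{\xi,t_c,v}$ by solving one more implicit equation of the form \eqref{sol_dual_generator} on a time interval of length $T_0$, with model functions $(\bar a, \bar b, \bar c)(h,x) = \bigl(a(x,\mu^{h,m}_{t^j_*}), b(x,\mu^{h,m}_{t^j_*}), c(x,\mu^{h,m}_{t^j_*})\bigr)$ frozen at the meshpoint $t^j_*$ appropriate for this step.

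Before invoking Lemma \ref{est_dual_sol_fin_fcns}, I would verify that the constants $C_L$ and $H_L$ (in the sense of Definition \ref{constants_meaning}) for $(\bar a, \bar b, \bar c)$ are bounded by $C_N$ uniformly in $m, j, k$. This follows by combining Lemma \ref{total_stability}, Lemma \ref{iteration_in_h}, and Theorem \ref{diff_k_fixed}, which together supply the hypotheses (F1)--(F3) of Theorem \ref{Nem_op}; its conclusion then yields the required $C^{1+\alpha}$ bounds on $(h,x) \mapsto \bar f(h,x)$, with the ``bad'' constant $C_H = C_N 2^{m(1-\alpha)}$ feeding \emph{only} into $G_L$ -- and $G_L$ does not appear in any of the three bounds we are trying to prove, which is precisely why the statement can be made with the uniform constant $C_N$.

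Next I would set $M_v = \max\bigl(\norm{\varphi^{(v)}}_{W^{1,\infty}}, \norm{\partial_x \varphi^{(v)}}_{\alpha,x}, \norm{\partial_h \varphi^{(v)}}_{\alpha,x}\bigr)$ and apply Lemma \ref{est_dual_sol_fin_fcns} with $T$ replaced by $T_0$. Substituting in $C_L, H_L \leq C_N$, each of the three estimates reduces to an iteration inequality of the form
\begin{equation*}
M_{v+1} \leq e^{C_N T_0}(1 + C_N T_0)\, M_v + C_N T_0 \leq e^{2 C_N T_0}\, M_v + C_N T_0.
\end{equation*}
Iterating this via Lemma \ref{discrete_Gronwall} and using $v T_0 \leq T$ yields $M_v \leq C_N$ uniformly in $v, m, k$, which is the claim.

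The main technical obstacle will be the ``$\max$'' bound for $\norm{\partial_h \varphi}_{\alpha,x}$ in Lemma \ref{est_dual_sol_fin_fcns}, namely $e^{C_L T_0}\max\bigl(2\norm{\xi}_{W^{1,\infty}}, \norm{\xi_h}_{\alpha,x}\bigr)$. A first worry is that the factor $2$, compounded over $v \leq 2^{k+1}$ iterations, would produce a catastrophic $2^v$ growth. The escape is that the factor $2$ multiplies the $W^{1,\infty}$-term, not the $\norm{\xi_h}_{\alpha,x}$-term itself: when $\norm{\partial_h \varphi^{(v)}}_{\alpha,x}$ dominates $2\norm{\varphi^{(v)}}_{W^{1,\infty}}$, the $\max$ selects the second entry and yields $M_{v+1} \leq e^{C_N T_0} M_v + O(T_0)$ with no factor $2$; when it is dominated, the first entry only contributes a uniform additive constant, since $\norm{\varphi^{(v)}}_{W^{1,\infty}} \leq e^{C_N T}$ is already controlled from the first of the three iterations. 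Either way the induction closes and the factor $2$ never compounds.
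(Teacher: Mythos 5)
Your proposal follows essentially the same route as the paper: verify $C_L, H_L \le C_N$ via Theorem \ref{Nem_op} using Lemmas \ref{total_stability}, \ref{iteration_in_h} and Theorem \ref{diff_k_fixed}, then iterate the bounds from Lemma \ref{est_dual_sol_fin_fcns} with Lemma \ref{discrete_Gronwall}. One caution: the display you wrote, $M_{v+1} \le e^{2C_N T_0} M_v + C_N T_0$, is not literally what Lemma \ref{est_dual_sol_fin_fcns} gives once you substitute $M_v$ on the right of all three estimates simultaneously -- the $\max\bigl(2\norm{\xi}_{W^{1,\infty}}, \norm{\xi_h}_{\alpha,x}\bigr)$ term would produce $2M_v$ on the right, which does compound -- but your closing paragraph correctly identifies the fix (establish the $W^{1,\infty}$ bound first so that $2\norm{\varphi^{(v)}}_{W^{1,\infty}}$ is a \emph{constant}, then iterate $\norm{\partial_x \varphi}_{\alpha,x}$, then $\norm{\partial_h \varphi}_{\alpha,x}$), which is exactly the paper's sequential argument. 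Better to present that sequential structure up front rather than state a combined inequality that requires an after-the-fact escape.
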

\begin{proof}
We use bounds provided by Lemma \ref{est_dual_sol_fin_fcns}. As in the proof of Theorem \ref{diff_k_fixed}, we note that constants $C_L$ and $H_L$ in these bounds become $C_N$ in our nonlinear setting. Then,
$$
\norm{\varphi_{\xi,t_c,v+1}^{h, m, k+1}}_{W^{1,\infty}} \leq \norm{\varphi_{\xi,t_c,v}^{h, m, k+1}}_{W^{1,\infty}} e^{ C_N \frac{T}{2^{k+1}}}
$$
so by Lemma \ref{discrete_Gronwall} and $v \leq (l+1) \leq 2^{k+1}$:
$$
\norm[1]{\varphi_{\xi,t_c,v}^{h, m, k+1}}_{W^{1,\infty}} \leq \norm[1]{\xi}_{W^{1,\infty}} e^{C_N v\frac{T}{2^{k+1}} } \leq e^{C_N T } \leq C_N.
$$
Similarly,
$$
\norm[2]{{\partial_x} \varphi_{\xi,t_c,v+1}^{h, m, k+1}}_{\alpha,x} \leq \max{\bigg(\norm[2]{{\partial_x} \varphi_{\xi,t_c,v}^{h, m, k+1}}_{\alpha,x}, \norm[2]{ \varphi_{\xi,t_c,v}^{h, m, k+1}}_{W^{1,\infty}} \bigg)}e^{C_N \frac{T}{2^{k+1}} } + C_N \frac{T}{2^{k+1}} ,
$$
so as we already know that $ \norm[2]{\varphi_{\xi,t_c,v}^{h, m, k+1}}_{W^{1,\infty}} \leq C_N$, Lemma \ref{discrete_Gronwall} again imply:
$$
\norm[2]{{\partial_x} \varphi_{\xi,t_c,v}^{h, m, k+1}}_{\alpha,x} \leq e^{C_N l\frac{T}{2^{k+1}} } \max{(1, C_N)} + \frac{e^{l\frac{T}{2^{k+1}}} - 1}{e^{\frac{T}{2^{k+1}}} - 1} C_N \frac{T}{2^{k+1}}  \leq C_N,
$$
exactly like in \eqref{example_for_iterations}. Finally,
\begin{multline*}
\norm[2]{{\partial_h} \varphi_{\xi,t_c,v+1}^{h, m, k+1}}_{\alpha,x} \leq \max{\bigg(\norm[2]{{\partial_h} \varphi_{\xi,t_c,v}^{h, m, k+1}}_{\alpha,x}, 2\norm[2]{ \varphi_{\xi,t_c,v}^{h, m, k+1}}_{W^{1,\infty}} \bigg)}e^{C_N \frac{T}{2^{k+1}} } + C_N \frac{T}{2^{k+1}}  \norm[2]{{\partial_x}\varphi_{\xi,t_c,v}^{h, m, k+1}}_{\alpha,x} + \\ +C_N \frac{T}{2^{k+1}}  \norm[2]{\varphi_{\xi,t_c,v}^{h, m, k+1}}_{W^{1,\infty}} \leq 
\max{\bigg(\norm[2]{{\partial_h} \varphi_{\xi,t_c,v}^{h, m, k+1}}_{\alpha,x}, 2 C_N \bigg)}e^{C_N \frac{T}{2^{k+1}} } + C_N\frac{T}{2^{k+1}} 
\end{multline*}
so we conclude, exactly like above, that $\norm[2]{{\partial_h} \varphi_{\xi,t_c,v}^{h, m, k}}_{\alpha,x} \leq C_N$.
\end{proof}
\begin{lem}\label{first_estimates_for_Gamma}
Let $\xi \in C^{1+\alpha}(\mathbb{R}^+)$ with $\norm{\xi}_{C^{1+\alpha}} \leq 1$. Let $t_c \in (l \frac{T}{2^{k+1}}, (l+1) \frac{T}{2^{k+1}}]$. There is a constant $C_N$ such that $\Gamma^{k, t_c}_v \leq C_N2^{-k}$ and $\Gamma^{k, t_c,x}_v \leq C_N2^{k(1-2\alpha)}$, independently of $v = 0, 1, ..., l+1$. 
\end{lem}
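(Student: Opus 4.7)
The approach is to iterate Theorem \ref{thm_main_diff} along the mesh $v = 0, 1, \ldots, l+1 \leq 2^{k+1}$, reading $\varphi_{\xi,t_c,v}^{h,k+1,k+1}$ and $\varphi_{\xi,t_c,v}^{h,k,k+1}$ as the terminal data at step $v$ (i.e.\ the $\xi$ and $\bar\xi$ of that theorem) and $\bigl(\varphi_{\xi,t_c,v+1}^{h,k+1,k+1}, \varphi_{\xi,t_c,v+1}^{h,k,k+1}\bigr)$ as the images after one dual step across an interval of length $|\Delta t| = T/2^{k+1}$. At each step the two flows to be compared are those generated by the model functions evaluated at $\mu_{t_*}^{h,k+1}$ versus $\mu_{t_*}^{h,k}$ at the appropriate meshpoint. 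Both sequences start from the same function $\xi$ at $v=0$, so $\Gamma^{k,t_c}_0 = \Gamma^{k,t_c,x}_0 = 0$, and the target bounds will follow by summing the step-by-step defects.

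Before iterating, I would verify the hypotheses (C1)--(C3) of Theorem \ref{thm_main_diff}. Assumption (C1) follows from Theorem \ref{Nem_op} combined with Theorem \ref{diff_k_fixed}, precisely as in the proof of that latter theorem, since at each meshpoint the iterates $\mu_{t_*}^{h,k}$ are differentiable in $h$ with the required constants. For (C2) and (C3), Corollary \ref{Deltaf_estimate_cor} gives $|\Delta f|\leq C_N 2^{-k} \leq C_N|\Delta t|$ and $|\Delta f_x|\leq C_N 2^{-k\alpha} \leq C_N|\Delta t|^\alpha$, which is exactly what is needed.

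For $\Gamma^{k,t_c}_v$, the bound \eqref{direct_estimate_on_fcns_dif_eqn}, together with the uniform bound $\|\varphi_{\xi,t_c,v}^{h,k,k+1}\|_\infty \leq C_N$ from Lemma \ref{basic_stability_perturbations}, yields an iteration of the form
\[
\Gamma^{k,t_c}_{v+1} \leq e^{C_N|\Delta t|}\,\Gamma^{k,t_c}_v + C_N|\Delta t|^2.
\]
Since $v \leq 2^{k+1}$, Lemma \ref{discrete_Gronwall} then produces $\Gamma^{k,t_c}_v \leq C_N|\Delta t|^2 \cdot 2^{k+1} \leq C_N 2^{-k}$, giving the first claim.

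For $\Gamma^{k,t_c,x}_v$ the argument is analogous but more delicate, and this is the main point. Plugging into \eqref{precise_estimate_diff_dx_eq}, controlling $\|\bar\xi_x\|_{\alpha,x}$, $\|\xi\|_{W^{1,\infty}}$ and $\|\bar\xi\|_{W^{1,\infty}}$ by $C_N$ via Lemma \ref{basic_stability_perturbations}, and feeding in the already proved $\|\xi - \bar\xi\|_\infty = \Gamma^{k,t_c}_v \leq C_N 2^{-k}$, one obtains
\[
\Gamma^{k,t_c,x}_{v+1} \leq e^{C_N|\Delta t|}\,\Gamma^{k,t_c,x}_v + C_N|\Delta t|^{2\alpha} + C_N|\Delta t|^{1+\alpha} + C_N|\Delta t|\cdot 2^{-k}.
\]
Since $\alpha<1$, the worst additive term is $|\Delta t|^{2\alpha} \asymp 2^{-2k\alpha}$, so iterating with Lemma \ref{discrete_Gronwall} over $v \leq 2^{k+1}$ gives $\Gamma^{k,t_c,x}_v \leq C_N\cdot 2^{-2k\alpha}\cdot 2^{k+1} = C_N 2^{k(1-2\alpha)}$. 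The structural point worth flagging is that the $L^\infty$ estimate must be established \emph{first} so it can be fed into the derivative estimate through the mixed term $\|\xi-\bar\xi\|_\infty$; moreover, the degradation of the one-step defect from $|\Delta t|^2$ to $|\Delta t|^{2\alpha}$ is precisely what forces the threshold $\alpha > 1/2$ once this lemma is plugged back into \eqref{the_most_important_bound}.
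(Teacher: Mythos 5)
Your proposal is correct and follows essentially the same route as the paper's proof: verify (C1)--(C3), iterate estimate \eqref{direct_estimate_on_fcns_dif_eqn} together with Lemma \ref{basic_stability_perturbations} for the $L^\infty$ quantity, then feed that result into \eqref{precise_estimate_diff_dx_eq} and iterate again for the $x$-derivative, closing both with Lemma \ref{discrete_Gronwall}. You are actually slightly more careful than the paper on one point: the paper attributes all of (C1)--(C3) to Corollary \ref{Deltaf_estimate_cor}, but that corollary only covers (C2)--(C3), whereas (C1) genuinely requires the regularity of $h\mapsto\mu^{h,k}_{t_*}$ established via Theorems \ref{Nem_op} and \ref{diff_k_fixed}, as you correctly note.
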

\begin{proof}
Note that thanks to Corollary \ref{Deltaf_estimate_cor}, assumptions (C1)--(C3) of Theorem \ref{thm_main_diff} are satisfied. Therefore, using the bound \eqref{direct_estimate_on_fcns_dif_eqn} in Theorem \ref{thm_main_diff}, we deduce
$$
\Gamma^{k,t_c}_{v+1} \leq \Gamma^{k,t_c}_{v}e^{C_N \frac{T}{2^{k+1}}} + C_N (2^{-k})^2 e^{C_N \frac{T}{2^{k+1}}} \sup_{\xi \in C^{1+\alpha}: \norm{\xi} \leq 1} \big(2 + \norm[1]{\varphi_{\xi,t_c,v}^{h, k, k+1}}_{\infty}\big) \leq e^{C_N \frac{T}{2^{k+1}}} \Gamma^{k,t_c}_{v} + C_N2^{-2k}
$$
where we also used Lemma \ref{basic_stability_perturbations} (to bound $\norm[1]{\varphi_{\xi,l}^{h, k+1, k}}_{\infty}$ with $C_N$). Using \ref{discrete_Gronwall}, we conclude $\Gamma_{l,k} \leq C_N2^{-k}$ (compare also with Remark \ref{practical_iteration_lemma}). 

To establish inequality $\Gamma^{k,t_c,x}_{v} \leq C_N2^{k(1-2\alpha)}$, we use again Theorem \ref{thm_main_diff}. We note carefully that terms appearing in the estimate \eqref{precise_estimate_diff_dx_eq} can be bounded (below we call terms exactly like they appear in \eqref{precise_estimate_diff_dx_eq}):
\begin{itemize}
\item $\norm[1]{\xi - \bar{\xi}} \leq C_N2^{-k}$ due to estimate for $\Gamma^{k,t_c}_{v}$,
\item $\norm[1]{\bar{\xi}_x}_{\alpha,x}, \norm[1]{\bar{\xi}}_{W^{1,\infty}}, \norm[1]{\xi}_{W^{1,\infty}} \leq C_N$ due to Lemma \ref{basic_stability_perturbations}.
\end{itemize}
Since $1+\alpha \geq 2\alpha$, we obtain bound:
$$
\Gamma^{k,t_c,x}_{v+1} \leq e^{C_N \frac{T}{2^{k+1}} } \Gamma^{k,t_c,x}_{v} + C_N2^{-2k \alpha}.
$$
Using Lemma \ref{discrete_Gronwall}, we deduce $\Gamma^{k,t_c,x}_{v} \leq C_N 2^{k(1-2\alpha)}$ as desired.
\end{proof}
We finally move to study quantity $\Delta^{k,t_c}$. To bound it, we want to use \eqref{the_most_important_bound} and bound $\Gamma^{k,t_c,h}_{v}$ by iterations like above. To write an equation for the iterations, we apply \eqref{precise_estimate_diff_dh_eq} in Theorem \ref{thm_main_diff}:
\begin{lem}\label{first_nice_result_iteration_in_h}
Let $\xi \in C^{1+\alpha}(\mathbb{R}^+)$ with $\norm{\xi}_{C^{1+\alpha}} \leq 1$. Let $t_c \in (l \frac{T}{2^{k+1}}, (l+1) \frac{T}{2^{k+1}}]$. There is a constant $C_N$ such that if $l-v$ is even
$$
\Gamma^{k, t_c,h}_{v+1} \leq e^{C_N2^{-k}} \Gamma^{k, t_c,h}_v + 2^{-k}  \sup_{f=a,b,c} \norm[2]{{\partial_h}f\Big(x,\mu^{h,k+1}_{(l-v)\frac{T}{2^{k+1}}}\Big) - {\partial_h}f\Big(x,\mu^{h,k}_{(l-v)\frac{T}{2^{k+1}}} \Big)}_{\infty} + C_N 2^{-2\alpha k},
$$
while when $l-v$ is odd
$$
\Gamma^{k, t_c,h}_{v+1} \leq e^{C_N2^{-k}} \Gamma^{k, t_c,h}_v + 2^{-k}  \sup_{f=a,b,c} \norm[2]{{\partial_h}f\Big(x,\mu^{h,k+1}_{(l-v)\frac{T}{2^{k+1}}}\Big) - {\partial_h}f\Big(x,\mu^{h,k}_{(l-v-1)\frac{T}{2^{k+1}}} \Big)}_{\infty} +C_N 2^{-2\alpha k}.
$$
\end{lem}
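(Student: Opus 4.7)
The plan is to apply Theorem \ref{thm_main_diff} with $\abs{\Delta t} = T/2^{k+1}$ to the pair of generated functions $\varphi_{\xi,t_c,v}^{h,k+1,k+1}$ and $\varphi_{\xi,t_c,v}^{h,k,k+1}$, each playing the role of initial datum for one more iteration step. The two triples of model functions are $(a,b,c)(\cdot,\mu^{h,k+1}_{(l-v)T/2^{k+1}})$ on the fine side and, on the coarse side, $(a,b,c)(\cdot,\mu^{h,k}_{t_{\star}})$ where $t_{\star}=(l-v)T/2^{k+1}$ when $l-v$ is even (so $t_\star$ is a $k$-meshpoint) and $t_{\star}=(l-v-1)T/2^{k+1}$ when $l-v$ is odd (the closest earlier $k$-meshpoint). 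This parity split is exactly what produces the two cases in the statement.

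First I would verify the hypotheses of Theorem \ref{thm_main_diff}. Hypothesis (C1) follows from Theorem \ref{diff_k_fixed}, which gives Fr\'echet differentiability of $h\mapsto \mu^{h,k}_{t}$ in $Z$, combined with Theorem \ref{Nem_op} to conclude that the composed maps $(h,x)\mapsto f(x,\mu^{h,k}_{t_{\star}})$ lie in $C^{1+\alpha}$ so that (B1)--(B2) hold. Hypotheses (C2) and (C3) are precisely the content of Corollary \ref{Deltaf_estimate_cor}, which gives $|\Delta f|\le C_N 2^{-k}$ and $|\Delta f_x|\le C_N 2^{-k\alpha}$ for the relevant meshpoint differences.

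Next I would insert the inequality \eqref{precise_estimate_diff_dh_eq} and bound each term separately using Lemma \ref{basic_stability_perturbations} (which yields $\norm{\bar\xi_h}_{\alpha,x},\norm{\bar\xi_x}_{\alpha,x},\norm{\xi}_{W^{1,\infty}},\norm{\bar\xi}_{W^{1,\infty}}\le C_N$ for $\xi=\varphi_{\xi,t_c,v}^{h,k+1,k+1}$, $\bar\xi=\varphi_{\xi,t_c,v}^{h,k,k+1}$) and Lemma \ref{first_estimates_for_Gamma} (which yields $\norm{\xi-\bar\xi}_\infty\le C_N 2^{-k}$ and $\norm{\xi_x-\bar\xi_x}_\infty\le C_N 2^{k(1-2\alpha)}$). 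The first term on the right of \eqref{precise_estimate_diff_dh_eq} contributes $e^{C_N 2^{-k}}\Gamma^{k,t_c,h}_v$. The term $C_L|\Delta t|\norm{\xi_x-\bar\xi_x}_\infty$ gives $C_N 2^{-k-1}\cdot 2^{k(1-2\alpha)}=C_N 2^{-2\alpha k}$; the terms with $|\Delta t|^{2\alpha}\norm{\bar\xi_h}_{\alpha,x}$ and $|\Delta t|^{1+2\alpha}$ are likewise absorbed into $C_N 2^{-2\alpha k}$ (note $1+2\alpha\ge 2\alpha$); the term $C_L|\Delta t|\norm{\xi-\bar\xi}_\infty$ gives $C_N 2^{-2k}\le C_N 2^{-2\alpha k}$. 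The only surviving term that cannot be absorbed is $C_L|\Delta t||\Delta f_h|\max(\norm{\xi}_{W^{1,\infty}},\norm{\bar\xi}_{W^{1,\infty}})$, which contributes exactly the $2^{-k}\sup_{f=a,b,c}\norm{\partial_h f(x,\mu^{h,k+1}_{\cdot})-\partial_h f(x,\mu^{h,k}_{\cdot})}_\infty$ term appearing in the conclusion.

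The one place that needs slight care is the bookkeeping of the $h$-dependence of the initial data $\xi=\varphi_{\xi,t_c,v}^{h,k+1,k+1}$ and $\bar\xi=\varphi_{\xi,t_c,v}^{h,k,k+1}$: strictly speaking they depend on $h$ since the flows in their defining implicit equations do, so \eqref{precise_estimate_diff_dh_eq} must be invoked in the full $C^{1+\alpha}([-\tfrac12,\tfrac12]\times\mathbb{R}^+)$ setting with the $h$-derivative norms estimated through Lemma \ref{basic_stability_perturbations}. Apart from this, there is no real obstacle; the estimate is a direct substitution, and the even/odd dichotomy is only a matter of correctly identifying which $k$-th approximation meshpoint $t_\star$ sits beneath $(l-v)T/2^{k+1}$, which in turn determines which measure evaluates $\partial_h f$ on the coarse side.
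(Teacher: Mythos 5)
Your proposal is correct and follows essentially the same route as the paper's own proof: apply \eqref{precise_estimate_diff_dh_eq} with $\abs{\Delta t}=T/2^{k+1}$, verify (C1)--(C3) via Theorem \ref{diff_k_fixed}, Theorem \ref{Nem_op} and Corollary \ref{Deltaf_estimate_cor}, absorb the small terms using Lemmas \ref{basic_stability_perturbations} and \ref{first_estimates_for_Gamma}, and identify $\abs{\Delta f_h}$ via the even/odd parity of $l-v$ exactly as the paper does. The one detail you flag (that the generated data $\varphi^{h,\cdot,\cdot}_{\xi,t_c,v}$ are genuinely $h$-dependent, so the $h$-dependent version of Theorem \ref{thm_main_diff} is needed) is implicit in the paper but correctly handled in your write-up.
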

\begin{proof}
Again, due to Corollary \ref{Deltaf_estimate_cor}, assumptions (C1)--(C3) of Theorem \ref{thm_main_diff} are satisfied. We want to use \eqref{precise_estimate_diff_dh_eq} as our iterated inequality. To this end, we observe that terms appearing in estimate \eqref{precise_estimate_diff_dh_eq} can be bounded (below we call terms exactly like they appear in \eqref{precise_estimate_diff_dh_eq}):  
\begin{itemize}
\item $\norm[1]{\xi -\bar{\xi}}_{\infty}$, $\norm[1]{\xi}_{W^{1,\infty}}$, $\norm[1]{\xi_x}_{\alpha,x}$ are controlled like in the proof of Lemma \ref{first_estimates_for_Gamma},
\item $\norm[1]{\xi_h}_{\alpha,x} \leq C_N$ due to Lemma \ref{basic_stability_perturbations}.
\end{itemize}
Term $\abs{\Delta f_h}$ in \eqref{precise_estimate_diff_dh_eq} is more subtle. Note carefully that:
$$
\Gamma^{k, t_c,h}_{v+1} = \sup_{\xi \in C^{1+\alpha}: \norm{\xi} \leq 1} \norm[2]{{\partial_h} \varphi_{\xi,t_c,v+1}^{h, k+1, k+1} - {\partial_h} \varphi_{\xi,t_c,v+1}^{h, k, k+1}}_{\infty},
\quad \quad
\Gamma^{k, t_c,h}_{v} = \sup_{\xi \in C^{1+\alpha}: \norm{\xi} \leq 1} \norm[2]{{\partial_h} \varphi_{\xi,t_c,v}^{h, k+1, k+1} - {\partial_h} \varphi_{\xi,t_c,v}^{h, k, k+1}}_{\infty}.
$$
In view of Definition \ref{def_of_Delta_F}, there are two cases:
\begin{itemize}
\item{If $(l-v)$ is even, $\varphi_{\xi,t_c,v+1}^{h, k+1, k+1}$ is generated from $\varphi_{\xi,t_c,v}^{h, k+1, k+1}$ with the flow 
$$
\Big(a\Big(x,\mu^{h,k+1}_{(l-v)\frac{T}{2^{k+1}}}\Big), b\Big(x,\mu^{h,k+1}_{(l-v)\frac{T}{2^{k+1}}}\Big), c\Big(x,\mu^{h,k+1}_{(l-v)\frac{T}{2^{k+1}}}\Big)\Big)
$$ and $\varphi_{\xi,t_c,v+1}^{h, k, k+1}$ is generated from $\varphi_{\xi,t_c,v}^{h, k, k+1}$ with the flow 
$$\Big(a\Big(x,\mu^{h,k}_{(l-v)\frac{T}{2^{k+1}}}\Big), b\Big(x,\mu^{h,k}_{(l-v)\frac{T}{2^{k+1}}}\Big), c\Big(x,\mu^{h,k}_{(l-v)\frac{T}{2^{k+1}}}\Big)\Big).
$$
Therefore,
\begin{equation*}
\abs{\Delta f_h} = \sup_{f=a,b,c} \norm[2]{{\partial_h}f\Big(x,\mu^{h,k+1}_{(l-v)\frac{T}{2^{k+1}}}\Big) - {\partial_h}f\Big(x,\mu^{h,k}_{(l-v)\frac{T}{2^{k+1}}} \Big)}_{\infty}.
\end{equation*}}
\item{If $(l-v)$ is odd, $\varphi_{\xi,t_c,v+1}^{h, k+1, k+1}$ is generated from $\varphi_{\xi,t_c,v}^{h, k+1, k+1}$ with the flow 
$$
\Big(a\Big(x,\mu^{h,k+1}_{(l-v)\frac{T}{2^{k+1}}}\Big), \Big(x,\mu^{h,k+1}_{(l-v)\frac{T}{2^{k+1}}}\Big), c\Big(x,\mu^{h,k+1}_{(l-v)\frac{T}{2^{k+1}}}\Big)\Big)
$$ 
but $\varphi_{\xi,t_c,v+1}^{h, k, k+1}$ is generated from $\varphi_{\xi,t_c,v}^{h, k, k+1}$ with the flow 
$$
\Big(a\Big(x,\mu^{h,k}_{(l-v-1)\frac{T}{2^{k+1}}}\Big), b\Big(x,\mu^{h,k}_{(l-v-1)\frac{T}{2^{k+1}}}\Big), c\Big(x,\mu^{h,k}_{(l-v-1)\frac{T}{2^{k+1}}}\Big)\Big).
$$ Therefore,
\begin{equation*}
\abs{\Delta f_h} = \sup_{f=a,b,c} \norm[2]{{\partial_h}f\Big(x,\mu^{h,k+1}_{(l-v)\frac{T}{2^{k+1}}}\Big) - {\partial_h}f\Big(x,\mu^{h,k}_{(l-v-1)\frac{T}{2^{k+1}}} \Big)}_{\infty}.
\end{equation*}}
\end{itemize}
If Reader finds it hard to deduce these statements, it may be helpful to consider case $v=0$ first. Finally, note that $2^{-k(1+\alpha)} \leq 2^{-2\alpha k}$ so now, the assertion follows directly from \eqref{precise_estimate_diff_dh_eq}.
\end{proof} 
Unfortunately, we still cannot bound the term $\abs{\Delta f_h}$ that appears above. We address this problem now.
\begin{lem}\label{estimate_for_deltaf_h}
Let $f$ be one of the model functions ($a$, $b$ or $c$) satisfying assumptions (N1)--(N4) of Theorem \ref{nonlin_per}. Let $t_* = j\frac{T}{2^k}$ and $t_m = t_* + \frac{T}{2^{k+1}}$. Then
$$
\norm[2]{{\partial_h}f(x,\mu_{t_*}^{k,h}) - {\partial_h}f(x,\mu_{t_*}^{k+1,h})}_{\infty}, ~\norm[2]{{\partial_h}f(x,\mu_{t_*}^{k,h}) - {\partial_h}f(x,\mu_{t_m}^{k+1,h})}_{\infty} \leq C_N2^{-k\alpha} +C_N\Delta^{k,t_*}.
$$
\end{lem}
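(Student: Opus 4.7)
The plan is to use Lemma \ref{bound_for_measure_der_h} as the principal estimate for both assertions, bringing in stability in flat metric (Lemma \ref{iteration_in_k}, inequality \eqref{cont_time}) and time-continuity of the derivative (Lemma \ref{cont_der_fixed_time}) as the two sources of smallness. For the first inequality I would apply Lemma \ref{bound_for_measure_der_h} directly with $\mu^h = \mu_{t_*}^{k,h}$ and $\nu^h = \mu_{t_*}^{k+1,h}$. The flat-metric term is controlled by Lemma \ref{iteration_in_k}, so $\big(p_F(\mu_{t_*}^{k,h},\mu_{t_*}^{k+1,h})\big)^{\alpha} \leq C_N 2^{-k\alpha}$. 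The second term is the $Z$-norm of the difference of the two Fréchet derivatives; since both maps $h\mapsto \mu_{t_*}^{k,h}$ and $h\mapsto \mu_{t_*}^{k+1,h}$ are differentiable (Theorem \ref{diff_k_fixed}), passing $\Delta h \to 0$ inside the definition of $\Delta^{k,t_*}$ yields
\[
\norm[2]{{\partial_H}\mu_{t_*}^{k+1,H}\big|_{H=h} - {\partial_H}\mu_{t_*}^{k,H}\big|_{H=h}}_{Z} \leq \Delta^{k,t_*},
\]
which together with the first contribution closes the estimate.

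For the second inequality I would insert the intermediate term $\partial_h f(x,\mu_{t_*}^{k+1,h})$ and apply the triangle inequality, so that
\[
\norm[2]{\partial_h f(x,\mu_{t_*}^{k,h}) - \partial_h f(x,\mu_{t_m}^{k+1,h})}_{\infty} \leq \norm[2]{\partial_h f(x,\mu_{t_*}^{k,h}) - \partial_h f(x,\mu_{t_*}^{k+1,h})}_{\infty} + \norm[2]{\partial_h f(x,\mu_{t_*}^{k+1,h}) - \partial_h f(x,\mu_{t_m}^{k+1,h})}_{\infty}.
\]
The first summand has just been handled. For the second summand I apply Lemma \ref{bound_for_measure_der_h} with $\mu^h=\mu_{t_*}^{k+1,h}$, $\nu^h=\mu_{t_m}^{k+1,h}$. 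The flat-metric part is $C_N(|t_m-t_*|)^{\alpha}=C_N 2^{-(k+1)\alpha}\leq C_N 2^{-k\alpha}$ thanks to \eqref{cont_time}. For the $Z$-pairing part, the key observation is that in terms of the finer mesh of diameter $T/2^{k+1}$, the points $t_*=2j\cdot T/2^{k+1}$ and $t_m=(2j+1)\cdot T/2^{k+1}$ are consecutive mesh-points at level $k+1$; the proof of Lemma \ref{bound_for_measure_der_h} requires only the pairing of ${\partial_H}\mu^H-{\partial_H}\nu^H$ with $K_F(x,\cdot)$, and the latter lies in $C^{1+\alpha}$ with bounded norm by assumption (N2). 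Hence Lemma \ref{cont_der_fixed_time}, applied at the refined level $k+1$, gives
\[
\abs[2]{\bigl({\partial_H}\mu_{t_m}^{k+1,H}\big|_{H=h}-{\partial_H}\mu_{t_*}^{k+1,H}\big|_{H=h},\,K_F(x,\cdot)\bigr)_{(Z,C^{1+\alpha})}} \leq C_N 2^{-(k+1)\alpha} \leq C_N 2^{-k\alpha}.
\]
Combining the two pieces yields the stated bound.

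There is no genuine obstacle here; the argument is essentially a direct assembly of previously established ingredients. The only subtle bookkeeping point is to recognise that, although we are iterating inside intervals of size $T/2^{k}$, the continuity-in-time estimate for the derivative (Lemma \ref{cont_der_fixed_time}) must be invoked at the finer level $k+1$ so that $t_*$ and $t_m$ sit at the two endpoints of a single cell of the refined mesh, which is precisely what produces the factor $2^{-(k+1)\alpha}$ of the required order. This lemma is what removes the last undesirable term $\abs{\Delta f_h}$ appearing in Lemma \ref{first_nice_result_iteration_in_h}, so that the iteration in $v$ will eventually yield the target bound $\Delta^{k,t}\leq C_N 2^{-k\beta}$ under the standing assumption $\alpha>\tfrac12$.
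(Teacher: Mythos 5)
Your proof is correct and follows essentially the same route as the paper: both assertions rest on Lemma \ref{bound_for_measure_der_h}, with Lemma \ref{iteration_in_k} / inequality \eqref{cont_time} controlling the flat-metric contribution, Lemma \ref{cont_der_fixed_time} at the refined level $k+1$ controlling the time-increment of the derivative, and the definition of $\Delta^{k,t_*}$ absorbing the $k$-vs-$(k+1)$ derivative difference. The only cosmetic difference is that you split by triangle inequality at the level of $\partial_h f$ and invoke Lemma \ref{bound_for_measure_der_h} twice, whereas the paper invokes it once on the pair $(\mu_{t_*}^{k,h},\mu_{t_m}^{k+1,h})$ and splits the resulting $Z$-norm; the estimates produced are the same.
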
 
\begin{proof}
The first bound is a direct consequence of Lemmas \ref{bound_for_measure_der_h} and \ref{iteration_in_k}. To see the second one, we use again Lemmas \ref{bound_for_measure_der_h} and \ref{iteration_in_k} to obtain:
$$
 \norm[2]{{\partial_h}f(x,\mu_{t_*}^{k,h}) - {\partial_h}f(x,\mu_{t_m}^{k+1,h})}_{\infty} \leq C_N2^{-k\alpha} + C_N\norm[2]{{\partial_h}\mu_{t_m}^{k+1,h} - {\partial_h}\mu_{t_*}^{k,h}}_{Z}.
$$
However, due to Lemma \ref{cont_der_fixed_time}:
$$
\norm[2]{{\partial_h}\mu_{t_m}^{k+1,h} - {\partial_h}\mu_{t_*}^{k,h}}_{Z} \leq 
\norm[2]{{\partial_h}\mu_{t_m}^{k+1,h} - {\partial_h}\mu_{t_*}^{k+1,h}}_{Z} + 
\norm[2]{{\partial_h}\mu_{t_*}^{k+1,h} - {\partial_h}\mu_{t_*}^{k,h}}_{Z} \leq
C_N2^{-k\alpha} + C_N\Delta^{k,t_*}. 
$$
\end{proof}
Lemmas \ref{first_nice_result_iteration_in_h} and \ref{estimate_for_deltaf_h} obviously lead to:
\begin{cor}\label{bounds_but_still_not_complete}
Let $\xi \in C^{1+\alpha}(\mathbb{R}^+)$ with $\norm{\xi}_{C^{1+\alpha}} \leq 1$. Let $t_c \in (l \frac{T}{2^{k+1}}, (l+1) \frac{T}{2^{k+1}}]$. There is a constant $C_N$ such that if $l-v$ is even:
\begin{equation}\label{est_even}
\Gamma^{k, t_c,h}_{v+1} \leq e^{C_N2^{-k}} \Gamma^{k, t_c,h}_v + C_N 2^{-k} \Delta^{k,(l-v)\frac{T}{2^{k+1}}} + C_N 2^{-2\alpha k},
\end{equation}
while when $l-v$ is odd:
\begin{equation}\label{est_odd}
\Gamma^{k, t_c,h}_{v+1} \leq e^{C_N2^{-k}} \Gamma^{k, t_c,h}_v + C_N 2^{-k} \Delta^{k,(l-v-1)\frac{T}{2^{k+1}}} + C_N 2^{-2\alpha k}.
\end{equation}
\end{cor}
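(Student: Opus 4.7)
The statement is a direct composition of the two preceding lemmas, so my proof plan is essentially a matching of indices and a routine absorption of the exponent $1+\alpha$ into $2\alpha$.

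First, I would recall the two iteration inequalities of Lemma \ref{first_nice_result_iteration_in_h}, which give different right-hand sides in the even and odd case depending on at which meshpoint of the $k$-approximation the flow is frozen. The key observation is that the supremum over $f \in \{a,b,c\}$ of $\norm{{\partial_h}f(x,\mu^{h,k+1}_{s_1}) - {\partial_h}f(x,\mu^{h,k}_{s_2})}_{\infty}$ appearing there is exactly the quantity controlled by Lemma \ref{estimate_for_deltaf_h}, provided I correctly identify its time arguments.

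Second, I would distinguish the two cases. If $l-v$ is even, then $(l-v)\frac{T}{2^{k+1}} = \frac{l-v}{2}\cdot\frac{T}{2^k}$ is simultaneously a meshpoint of both the $k$- and $(k{+}1)$-meshes, so I apply the first bound of Lemma \ref{estimate_for_deltaf_h} (with $t_* = (l-v)\frac{T}{2^{k+1}}$) to obtain an upper bound $C_N 2^{-k\alpha} + C_N \Delta^{k, (l-v) T/2^{k+1}}$. If $l-v$ is odd, then $(l-v-1)\frac{T}{2^{k+1}}$ is a $k$-meshpoint and $(l-v)\frac{T}{2^{k+1}}$ is its midpoint; this matches the second bound of Lemma \ref{estimate_for_deltaf_h} with $t_* = (l-v-1)\frac{T}{2^{k+1}}$, $t_m = (l-v)\frac{T}{2^{k+1}}$, yielding $C_N 2^{-k\alpha} + C_N \Delta^{k, (l-v-1) T/2^{k+1}}$.

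Third, I would plug these estimates back into the inequalities of Lemma \ref{first_nice_result_iteration_in_h} and simplify. The substitution produces the terms $C_N 2^{-k}\Delta^{k,\cdot}$ and an additional $C_N 2^{-k(1+\alpha)}$ coming from the prefactor $2^{-k}$ hitting $C_N 2^{-k\alpha}$. Since $\alpha \in (0,1)$ implies $1+\alpha \geq 2\alpha$, we have $2^{-k(1+\alpha)} \leq 2^{-2\alpha k}$, which can be absorbed into the existing $C_N 2^{-2\alpha k}$ error. There is no genuine obstacle here; the only thing to check carefully is the bookkeeping of which meshpoint corresponds to which approximation level in the two parity cases, which is exactly what Lemma \ref{first_nice_result_iteration_in_h} prepared for us.
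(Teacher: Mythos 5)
Your proposal is correct and matches the paper's (implicit) proof exactly: the paper merely states that Lemmas \ref{first_nice_result_iteration_in_h} and \ref{estimate_for_deltaf_h} ``obviously lead to'' the corollary, and the composition of those two lemmas, with the parity bookkeeping and the absorption of $2^{-k(1+\alpha)}$ into $C_N 2^{-2\alpha k}$ via $1+\alpha \geq 2\alpha$, is precisely what you carried out.
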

Using \eqref{the_most_important_bound}, we can easily control $\Delta^{k, m\frac{T}{2^{k+1}}}$ for even $m$:
\begin{lem}\label{almost_almost_there}
For even $m$, $\Delta^{k, m\frac{T}{2^{k+1}}} \leq C_N 2^{-k(2\alpha - 1)}$.
\end{lem}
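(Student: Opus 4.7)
The plan is to induct on $m$ (even), using the bound \eqref{the_most_important_bound} and the iterative inequalities \eqref{est_even}--\eqref{est_odd} provided by Corollary \ref{bounds_but_still_not_complete}. Write $m = 2j$ and $t_c = m\frac{T}{2^{k+1}} = j\frac{T}{2^{k}}$. Then $t_c \in (l\frac{T}{2^{k+1}}, (l+1)\frac{T}{2^{k+1}}]$ with $l = m-1$, so \eqref{the_most_important_bound} reduces the problem to controlling $\Gamma^{k,t_c,h}_{l+1}$ for $\xi \in C^{1+\alpha}$ with $\norm{\xi}_{C^{1+\alpha}} \leq 1$. The base case $j = 0$ is trivial since $\mu_0^h$ does not depend on $h$, so $\Delta^{k,0} = 0$.

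The crucial observation is a parity check. Since $m$ is even, $l = m-1$ is odd, so for any $v \in \{0, 1, \ldots, l\}$, exactly one of $l-v$ and $l-v-1$ is even. By \eqref{est_even}--\eqref{est_odd}, the inequality takes the form
\begin{equation*}
\Gamma^{k,t_c,h}_{v+1} \leq e^{C_N 2^{-k}}\Gamma^{k,t_c,h}_{v} + C_N 2^{-k}\Delta^{k, m_v \frac{T}{2^{k+1}}} + C_N 2^{-2\alpha k},
\end{equation*}
where $m_v$ is always \emph{even} and $m_v \leq l - 1 = m-2 < m$. Therefore every $\Delta^{k, m_v\frac{T}{2^{k+1}}}$ appearing in the iteration lies within the scope of the inductive hypothesis and is bounded by $C_N 2^{-k(2\alpha-1)}$.

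Plugging this in, the iteration simplifies to
\begin{equation*}
\Gamma^{k,t_c,h}_{v+1} \leq e^{C_N 2^{-k}}\Gamma^{k,t_c,h}_{v} + C_N 2^{-k} \cdot C_N 2^{-k(2\alpha-1)} + C_N 2^{-2\alpha k} \leq e^{C_N 2^{-k}}\Gamma^{k,t_c,h}_{v} + C_N 2^{-2\alpha k}.
\end{equation*}
The starting value is $\Gamma^{k,t_c,h}_{0} = 0$ because, by definition, the $v=0$ iterate is $\xi$ itself, which is independent of $h$. Applying the discrete Gronwall Lemma \ref{discrete_Gronwall} with $l + 1 \leq 2^{k+1}$ terms, and noting that $\big(e^{C_N 2^{-k}}\big)^{2^{k+1}} \leq e^{2 C_N}$, we obtain
\begin{equation*}
\Gamma^{k,t_c,h}_{l+1} \leq C_N 2^{-2\alpha k} \cdot \frac{e^{C_N (l+1) 2^{-k}} - 1}{e^{C_N 2^{-k}} - 1} \leq C_N 2^{-2\alpha k} \cdot C_N 2^{k} = C_N 2^{-k(2\alpha - 1)}.
\end{equation*}
Feeding this back into \eqref{the_most_important_bound} closes the induction and yields the required bound.

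The main obstacle is really just the bookkeeping in the parity argument: one has to be careful that \emph{both} branches of Corollary \ref{bounds_but_still_not_complete} deposit an even-indexed time strictly less than $m$, so that the inductive hypothesis applies at every step of the iteration. Once this is verified, the rest is an application of \eqref{the_most_important_bound}, the discrete Gronwall lemma, and the fact that $\alpha > \tfrac12$ is implicitly the regime where the exponent $2\alpha - 1$ is positive (though positivity of this exponent is not needed for the statement itself, only for its subsequent use).
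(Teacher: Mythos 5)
Your parity bookkeeping is correct, and so is the observation that the $\Delta$-terms appearing in the $\Gamma$-iteration always have even index $m_v < m$. The problem is that the induction on $m$ does not close: the constant degrades at each inductive step, and over $m \leq 2^{k+1}$ steps the degradation is fatal.

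To see this, write $D$ for the constant in the inductive hypothesis, i.e.\ suppose $\Delta^{k,m'\frac{T}{2^{k+1}}} \leq D\,2^{k(1-2\alpha)}$ for all even $m'<m$. Plugging this into \eqref{est_even}--\eqref{est_odd} gives
\begin{equation*}
\Gamma^{k,t_c,h}_{v+1} \leq e^{C_N 2^{-k}}\,\Gamma^{k,t_c,h}_v + (C_N D + C_N)\,2^{-2\alpha k},
\end{equation*}
and the discrete Gronwall Lemma \ref{discrete_Gronwall}, over $l+1 \leq 2^{k+1}$ steps starting from $\Gamma_0 = 0$, yields
\begin{equation*}
\Gamma^{k,t_c,h}_{l+1} \;\leq\; (C_N D + C_N)\,2^{-2\alpha k}\cdot\frac{e^{2C_N}-1}{e^{C_N 2^{-k}}-1} \;\leq\; (D+1)\bigl(e^{2C_N}-1\bigr)\,2^{k(1-2\alpha)}.
\end{equation*}
Feeding this into \eqref{the_most_important_bound} then gives $\Delta^{k,m\frac{T}{2^{k+1}}} \leq C_N(D+1)(e^{2C_N}-1)\,2^{k(1-2\alpha)}$, so the induction produces the recursion $D_m \leq a D_{m-2} + b$ with $a := C_N(e^{2C_N}-1)$. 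There is no reason for $a \leq 1$ — the constant $C_N$ depends on the $C^{1+\alpha}$ norms of the model functions and the $C^{2+\alpha}$ norms of the kernels and is generically well above $1$ — so $D_m$ grows like $a^{m/2}$, i.e.\ exponentially in $m$, and $m$ ranges up to $2^{k+1}$. Your blanket absorption of all prefactors into a recycled symbol $C_N$ hides exactly this: the constant in the ``conclusion'' at step $m$ is strictly larger than the one assumed at steps $<m$, and the excess compounds.

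The paper avoids this by \emph{not} substituting the target bound inside the $\Gamma$-iteration. It first unrolls the inner iteration completely, keeping the $\Delta^{k,\cdot}$ terms symbolic, and arrives at the single discrete Volterra inequality
\begin{equation*}
\Delta^{k,m\frac{T}{2^{k+1}}} \;\leq\; \sum_{n=2,\ n\ \text{even}}^{m-2} C_N\,2^{-k}\,\Delta^{k,(m-n)\frac{T}{2^{k+1}}} \;+\; C_N\,2^{(1-2\alpha)k},
\end{equation*}
in which the memory term carries the crucial $2^{-k}$ weight. A single Gronwall step on this inequality (the paper encodes it as an integral inequality for a step function $g$) then gives the uniform bound $\Delta^{k,m\frac{T}{2^{k+1}}} \leq C_N e^{C_N}\,2^{(1-2\alpha)k}$ with no loss in $m$. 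The two approaches differ precisely in where Gronwall is applied: on the $\Gamma$'s inside each inductive step (your proposal), versus once, globally, on the $\Delta$'s (the paper). The first compounds constants across the $2^{k+1}$ time steps; the second does not. To repair your argument you would have to strengthen the inductive hypothesis to carry the Gronwall factor explicitly, e.g.\ $\Delta^{k,2j\frac{T}{2^{k+1}}} \leq C_N\,2^{(1-2\alpha)k}\,(1 + C_N 2^{-k})^{j}$, and show that \emph{this} closes, which is essentially a disguised form of the paper's argument.
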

\begin{proof}
We apply previous results with $t_c =  m\frac{T}{2^{k+1}} \in ((m-1)\frac{T}{2^{k+1}},  m\frac{T}{2^{k+1}}]$. Let $n \leq m$ be also an even integer. Using \eqref{est_odd} (with $l = m-1$ and $v = n-1$ so that $l-v = m - n$ is even) and \eqref{est_even} (with $l = m-1$ and $v = n-2$ so that $l-v = m-n+1$ is odd) we deduce:
\begin{multline}\label{first_final_iteration}
\Gamma_{n}^{k, m\frac{T}{2^{k+1}},h} \leq \Gamma^{k,  m\frac{T}{2^{k+1}}, h}_{n-1} e^{C_N2^{-k}} + 2^{-k} C_N\Delta^{k,(m-n)\frac{T}{2^{k+1}}} + 2^{-2\alpha k}C_N \leq \\ \leq
\Big(\Gamma^{k,  m\frac{T}{2^{k+1}}, h}_{n-2} e^{C_N2^{-k}} + 2^{-k} C_N\Delta^{k,(m-n)\frac{T}{2^{k+1}}} + 2^{-2\alpha k}C_N \Big)e^{C_N2^{-k}} + 2^{-k} C_N\Delta^{k,(m-n)\frac{T}{2^{k+1}}} + 2^{-2\alpha k}C_N \leq \\
\leq e^{C_N2^{-k}} \Gamma^{k,  m\frac{T}{2^{k+1}}, h}_{n-2}  + C_N 2^{-k} \Delta^{k,(m-n)\frac{T}{2^{k+1}}} + C_N 2^{-2\alpha k} 
\end{multline}
after standard modification of constant $C_N$. Since $\Gamma^{k,  m\frac{T}{2^{k+1}}, h}_{0} = 0$ and $\Delta^{k,0} = 0$, applying \eqref{first_final_iteration} inductively, we obtain:
\begin{multline*}
\Gamma_{m}^{k, m\frac{T}{2^{k+1}},h} \leq \sum_{n = 2, \text { n is even}}^m e^{C_N(m-n)2^{-k}}C_N 2^{-k} \Delta^{k,(m-n)\frac{T}{2^{k+1}}} + \sum_{n = 2, \text { n is even}}^m e^{C_N (m-n)2^{-k}} 2^{-2\alpha k}C_N \leq \\ \leq \sum_{n = 2, \text { n is even}}^{m-2} C_N 2^{-k} \Delta^{k,(m-n)\frac{T}{2^{k+1}}} + C_N 2^{(1-2\alpha) k}
\end{multline*}
after summation of the geometric series and noting that $m \leq 2^{k+1}$. Finally, recalling \eqref{the_most_important_bound} yields:
\begin{equation}\label{almost_there}
\Delta^{k,m\frac{T}{2^{k+1}}} \leq \sum_{n = 2, \text { n is even}}^{m-2} C_N 2^{-k} \Delta^{k,(m-n)\frac{T}{2^{k+1}}} + C_N 2^{(1-2\alpha) k}.
\end{equation}
Let $g: [0,1] \to \mathbb{R}^+$ be the nonnegative, measurable function defined with
$$
g(s) = \sum_{l=2, l\text{ is even}}^{2^{k+1}} \Delta^{k,l \frac{T}{2^{k+1}}} \mathbbm{1}_{[\frac{l-2}{2^{k+1}},\frac{l}{2^{k+1}}]},
$$
so that \eqref{almost_there} implies a slightly weaker inequality:
$$
g(s) \leq C_N \int_0^s g(u) du + C_N 2^{(1-2\alpha) k}.
$$
Invoking Gronwall integral inequality concludes the proof.
\end{proof}
In view of Lemma \ref{almost_almost_there}, we can refine Corollary \ref{bounds_but_still_not_complete}:
\begin{cor}\label{bounds_now_complete}
Let $\xi \in C^{1+\alpha}(\mathbb{R}^+)$ with $\norm{\xi}_{C^{1+\alpha}} \leq 1$. Let $t_c \in (l \frac{T}{2^{k+1}}, (l+1) \frac{T}{2^{k+1}}]$. There is a constant $C_N$ such that:
\begin{equation}\label{est_even_and_odd}
\Gamma^{k, t_c,h}_{v+1} \leq e^{C_N2^{-k}} \Gamma^{k, t_c,h}_v + C_N2^{-2\alpha k}.
\end{equation}
\end{cor}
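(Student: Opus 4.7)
The plan is to combine Corollary \ref{bounds_but_still_not_complete} with the even-index bound from Lemma \ref{almost_almost_there}, eliminating the $\Delta^{k,\cdot}$ terms on the right-hand side. The key observation is a parity check on the time indices appearing inside $\Delta^{k,\cdot}$: in the even branch of Corollary \ref{bounds_but_still_not_complete} (i.e. $l-v$ even), the index inside $\Delta^{k,(l-v)\frac{T}{2^{k+1}}}$ is itself an even integer, while in the odd branch ($l-v$ odd), the index inside $\Delta^{k,(l-v-1)\frac{T}{2^{k+1}}}$ is $l-v-1$, which is again even. Thus in both cases Lemma \ref{almost_almost_there} applies and gives $\Delta^{k,\cdot} \leq C_N 2^{-k(2\alpha-1)}$. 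The boundary case $l-v-1 = 0$ (still even) just gives $\Delta^{k,0} = 0$ and causes no issue.

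Next I would substitute this bound into both branches of Corollary \ref{bounds_but_still_not_complete}. The prefactor $C_N 2^{-k}$ in front of $\Delta^{k,\cdot}$ combines with $C_N 2^{-k(2\alpha-1)}$ to yield $C_N 2^{-k} \cdot C_N 2^{-k(2\alpha-1)} = C_N 2^{-2\alpha k}$, which merges with the $C_N 2^{-2\alpha k}$ term already present on the right-hand side after enlarging $C_N$ by a constant factor. This produces the unified inequality \eqref{est_even_and_odd} regardless of the parity of $l-v$.

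There is no genuine obstacle in this step; the substantive work has been carried out in Corollary \ref{bounds_but_still_not_complete} and Lemma \ref{almost_almost_there}. The corollary is essentially bookkeeping that breaks the circular dependence between $\Gamma^{k,t_c,h}_v$ and $\Delta^{k,\cdot}$: \eqref{the_most_important_bound} controls $\Delta$ through $\Gamma$, while the previous iteration inequality for $\Gamma$ still referenced $\Delta$. By using the even-index control of $\Delta$ from Lemma \ref{almost_almost_there} as an input, the iteration for $\Gamma$ becomes self-contained and can subsequently be solved directly via Lemma \ref{discrete_Gronwall}, which is presumably the use made of this corollary in the remainder of the argument.
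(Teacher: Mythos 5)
Your argument is correct and is exactly what the paper intends: the paper states the corollary as an immediate refinement of Corollary \ref{bounds_but_still_not_complete} via Lemma \ref{almost_almost_there}, and your parity check (that both $l-v$ and $l-v-1$ are even in the respective branches, so the even-index bound applies) together with the exponent arithmetic $2^{-k}\cdot 2^{-k(2\alpha-1)} = 2^{-2\alpha k}$ supplies precisely the missing bookkeeping.
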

From Corollary \ref{bounds_now_complete} and estimate \eqref{the_most_important_bound} we easily deduce:
\begin{thm}\label{crucial_theorem_1}
Let $t \in [0,T]$. There is a constant $C_N$ such that $\Delta^{k,t} \leq C_N 2^{(1-2\alpha) k}$.
\end{thm}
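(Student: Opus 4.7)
The plan is to iterate the recursive estimate from Corollary \ref{bounds_now_complete} and combine it with the controlling inequality \eqref{the_most_important_bound}. The case $t=0$ is trivial since $\mu_0^{h,k}=\mu_0$ is independent of $h$ and $k$, so $\Delta^{k,0}=0$. For $t\in(0,T]$ I would fix $t$ and pick the unique $l\in\{0,1,\dots,2^{k+1}-1\}$ with $t\in(lT/2^{k+1},(l+1)T/2^{k+1}]$, so that \eqref{the_most_important_bound} yields $\Delta^{k,t}\le C_N\sup_{\|\xi\|_{C^{1+\alpha}}\le 1}\Gamma^{k,t,h}_{l+1}$. The base case for the iteration is $\Gamma^{k,t,h}_0=0$: at index $v=0$ no semigroup step has been taken yet, so both $\varphi^{h,k+1,k+1}_{\xi,t,0}$ and $\varphi^{h,k,k+1}_{\xi,t,0}$ coincide with the test function $\xi\in C^{1+\alpha}(\mathbb{R}^+)$, which carries no dependence on $h$.

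Starting from this base case, Corollary \ref{bounds_now_complete} provides the recursion
\begin{equation*}
\Gamma^{k,t,h}_{v+1}\le e^{C_N 2^{-k}}\Gamma^{k,t,h}_v+C_N 2^{-2\alpha k},\qquad v=0,1,\dots,l,
\end{equation*}
which iterates (either by direct induction or via Lemma \ref{discrete_Gronwall}) to the geometric sum
\begin{equation*}
\Gamma^{k,t,h}_{l+1}\le C_N 2^{-2\alpha k}\sum_{j=0}^{l}e^{C_N j 2^{-k}}=C_N 2^{-2\alpha k}\cdot\frac{e^{C_N(l+1)2^{-k}}-1}{e^{C_N 2^{-k}}-1}.
\end{equation*}
Since $l+1\le 2^{k+1}$, the numerator is bounded by the fixed constant $e^{2C_N}-1$. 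For the denominator, the elementary inequality $e^x-1\ge x$ on $[0,\infty)$ gives $e^{C_N 2^{-k}}-1\ge C_N 2^{-k}$, so the fraction is at most a constant multiple of $2^k$. Therefore $\Gamma^{k,t,h}_{l+1}\le C_N 2^{-2\alpha k}\cdot 2^k=C_N 2^{(1-2\alpha)k}$.

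Combining this bound with \eqref{the_most_important_bound}, taking the supremum over $\xi$ with $\|\xi\|_{C^{1+\alpha}}\le 1$ on both sides, yields the claim $\Delta^{k,t}\le C_N 2^{(1-2\alpha)k}$ with a constant $C_N$ independent of $k$ and $t$. There is no real obstacle at this final stage: the delicate balance between the multiplicative growth $e^{C_N 2^{-k}}$ (which, summed over at most $2^{k+1}$ steps, produces only a bounded factor) and the per-step error $C_N 2^{-2\alpha k}$ (which accumulates a factor of $2^k$, producing the final $2^{k(1-2\alpha)}$) has already been set up by Corollary \ref{bounds_now_complete}, itself relying on Lemma \ref{almost_almost_there} to close the loop in the $\Delta^{k,\cdot}$ terms that appeared a priori in Corollary \ref{bounds_but_still_not_complete}.
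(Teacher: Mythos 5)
Your proof is correct and takes the same route as the paper: combine the recursion from Corollary \ref{bounds_now_complete} with the base case $\Gamma^{k,t,h}_0=0$ and the controlling bound \eqref{the_most_important_bound}. The paper compresses the geometric-sum step into an appeal to Lemma \ref{discrete_Gronwall} and Remark \ref{practical_iteration_lemma}, whereas you spell out the sum $\sum_{j\le l}e^{C_Nj2^{-k}}$ and the elementary bounds on numerator and denominator explicitly; the content is identical.
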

\begin{proof}
Now, this is just a standard application of Lemma \ref{discrete_Gronwall} in the spirit of Lemmas \ref{iteration_in_h} and \ref{iteration_in_k}. Let $l$ be such that $t \in (l \frac{T}{2^{k+1}}, (l+1) \frac{T}{2^{k+1}}]$. In view of \eqref{the_most_important_bound}, we have to estimate $\norm[1]{{\partial_H} \varphi_{\xi,t_c,l+1}^{H, k+1, k+1} - 
{\partial_H} \varphi_{\xi,t_c,l+1}^{H, k, k+1}}_{\infty}$. However, by Corollary \ref{bounds_now_complete}:
$$
\Gamma^{k, t_c,h}_{v+1} \leq e^{C_N2^{-k}} \Gamma^{k, t_c,h}_v + C_N 2^{-2\alpha k},
$$
for all $v \leq l$. Therefore, by Lemma \ref{discrete_Gronwall} (compare also with Remark \ref{practical_iteration_lemma}) $\norm[1]{{\partial_H} \varphi_{\xi,t_c,l+1}^{H, k+1, k+1} - 
{\partial_H} \varphi_{\xi,t_c,l+1}^{H, k, k+1}}_{\infty} \leq C_N2^{(1-2\alpha)k}$. Invoking \eqref{the_most_important_bound} concludes the proof.
\end{proof}
\section{Summary, discussion and future perspectives}\label{sect5}
In this paper, we proved that the map $[-\frac{1}{2}, \frac{1}{2}] \ni h \mapsto \mu_t^h$ is differentiable in $C([0,T], Z)$. Note that for linear problems \eqref{spm}, proof of differentiability is almost trivial. The only required step is to see that the solutions of \eqref{sol_dual_dep_onh} have bounded and H\"older continuous derivative (which follows from the Implicit Function Theorem) and then use \eqref{Cauchy_seq_analysis} together with Taylor's estimate from Lemma \ref{l1}. Actually, most of Section \ref{sect3} should be considered not as the solution of linear case but rather as a preparation for nonlinear problem. 

Nonlinear equation \eqref{spm_non} is analysed using approximating scheme from \cite{GwMa2010}. This approach has proven extremely successful in the development of well-posedness theory for \eqref{spm_non} where Authors were interested in the limit as $k \to \infty$. In our case, we somehow take limits $k \to \infty$ and $\Delta h \to 0$ simultaneously. To solve the problem, we first note that differentiability may follow from the classical result stating that uniform limit of continuous functions is continuous (Lemma \ref{unif_conv_cont_limit}). Therefore, we focus on two targets: differentiability of approximations $\mu_t^{h,k}$ and uniform (in $\Delta h$) convergence of difference quotients  $\frac{\mu_t^{h+\Delta h,k} - \mu_t^{h,k}}{\Delta h}$ when $k \to \infty$. Here we arrive at two crucial observations. Namely,
\begin{itemize}
\item{differentiability of $\mu_t^{h,k}$ can be deduced by induction. On the first interval it follows by small generalization of linear theory (Theorem \ref{lin_per_1}) while on a general interval of the form $[m\frac{T}{2^k}, (m+1)\frac{T}{2^k}]$ -- due to estimates in Section \ref{regularity_estimates_41} -- can be deduced from differentiability on $[(m-1)\frac{T}{2^k}, m\frac{T}{2^k}]$. Briefly speaking, if one knows that $h \mapsto \mu_t^{h,k}$ is differentiable, then the map $(x,h) \mapsto f(x,\mu_t^{h,k})$ has much more regularity than without this information. For the details, see the proof of Theorem \ref{diff_k_fixed}.}
\item{one can apply semigroup property \eqref{semigroup} sufficiently many times to remove the singular term $\frac{1}{\Delta h}$ from the expression $$\norm[3]{\frac{\mu_t^{h+\Delta h,k+1} - \mu_t^{h,k+1}}{\Delta h}-\frac{\mu_t^{h+\Delta h,k} - \mu_t^{h,k}}{\Delta h}}_{Z},$$ leading to estimate \eqref{the_most_important_bound}. There is a price to be paid for that -- to use this estimate we need to control differences of iterated solutions to integral equations like \eqref{sol_dual_dep_onh} with different flows (computations performed in Section \ref{perturbation_mdlfcns_section}). For the details, see the proof of Theorem \ref{crucial_theorem_1}.}
\end{itemize}
All these ideas could not be implemented without the simple, yet extremely powerful iteration lemma (Lemma \ref{discrete_Gronwall}). In particular, thanks to Remark \ref{practical_iteration_lemma}, application of this lemma basically comes down to obtaining a sufficiently large exponent $\beta$ in $T^{\beta}$ (time) in estimates for the linear model.

One can see some room for improvement in the condition $\alpha > \frac{1}{2}$. Indeed, on the one hand, Example \ref{flat_metric_is_not_enough} suggests that test functions defining space $Z$ should have at least uniformly continuous derivatives. On the other hand, if we assume dyadic approximation method as used in this paper and limiting procedure given by Lemma \ref{unif_conv_cont_limit}, condition $\alpha > \frac{1}{2}$ seems to be optimal. To see this, observe that when we write iteration inequality for the difference:
$$
\Gamma^{k, t_c, h}_v =  \sup_{\xi \in C^{1+\alpha}: \norm{\xi} \leq 1}\norm[2]{{\partial_h} \varphi_{\xi,t_c,v}^{h, k+1, k+1} - {\partial_h} \varphi_{\xi,t_c,v}^{h, k, k+1}}_{\infty}
$$ 
as in Section \ref{uniform_convergence_section}, we obtain as one of the summands:
$$
\norm[2]{{\partial_h} \varphi_{\xi,t_c,v-1}^{h, k, k+1}}_{\alpha,x} \norm[2]{X_b(s,\cdot) - X_{\bar{b}}(s,x)}_{\infty}^{\alpha},
$$ 
where $X_b$ and $X_{\bar{b}}$ corresponds to the flows on $k$--th and $(k+1)$--th approximation level respectively. Now, using Lemma \ref{perturbation_flow} from Appendix \ref{perturbation_mdlfcns_app}, also proven by triangle inequality, this difference can be bounded by $C_N 2^{-2\alpha k}$ so in view of Remark \ref{practical_iteration_lemma},  there is no hope for better estimate than $\Gamma^{k, t_c, h}_v \leq C_N 2^{(1-2\alpha) k}$. Actually, this shows that to prove Theorem \ref{crucial_theorem_1}, one has to demonstrate that all the other summands in iteration inequality for $\Gamma^{k, t_c, h}_v$ decay at least like $C_N 2^{-2\alpha k}$. In particular, one cannot argue that our sloppy computations involving many terms being incorporated to constants $C_L$ or $C_N$ leads to the condition $\alpha > \frac{1}{2}$ as in the end, only term $C_N 2^{-2\alpha k}$ matters. That's why, in our personal view, to improve this condition, one has to replace either dyadic approximation scheme or limiting procedure with a better strategy.

As already suggested in the introductory part of this paper, differentiability of map $h \mapsto \mu_t^h$ is the first step towards application of differential tools in optimization concerned with real--world phenomena governed by structured population models, especially in biology. For instance, consider a given family of distributions $\{\nu_t\}_{t \in [0,T]}$ indexed with time as well as model functions $a^0, a_p, b^0, b_p, c^0, c_p$. Can we choose $h$ such that $\mu_t^h = \nu_t$, i.e. can perturbations of model functions result in observed distribution $\nu_t$? Mathematically, this question corresponds to minimization of appropriate functional but biologically, it concerns completeness of theoretical model for some complex real--world systems. In particular, when it comes to application of differential algorithms like steepest descent, note that the standard assumption is $C^1$ regularity of the minimized function (cf. \cite{hinze2008optimization}, Chapter 2). In view of Theorem \ref{diff_k_fixed}, all approximations of derivatives are H\"older continuous in $h$ with constant $C_N 2^{k(1-\alpha)}$ so regularity condition is satisfied when $\alpha = 1$.

Moreover, our work suggests that space $Z$ is an appropriate setting for analysis of control problems mentioned above. This motivates further studies of properties of $Z$ -- in particular, in connection to our existence result. For instance, it is quite natural to ask whether obtained derivative satisfies appropriate PDE and in what sense. Furthermore, the derivative as an element of the space $Z$ has quite abstract nature. Therefore, it seems desirable to work on its interpretations, for instance in terms of changes in measure of a given set $A \subset \mathbb{R}^+$.

It is also worth mentioning that although our work covers great variety of structured population models, a lot of equations in applications are actually systems. Typical examples may be Kermack--McKendrick model in epidemiology, system for cell cycle in cell biology or Mackey--Rey model for hematopoiesis \cite{Pe2007}. Thus, another potential direction in research would be to extend well--posedness theory and differentiability results for this class of equations.

\section*{Acknowledgements}
Author thanks Piotr Gwiazda for suggesting this topic as a subject of research, constant encouragment and valuable comments while preparing the final text. Author is supported by National Science Center, Poland through project no. 2017/27/B/ST1/01569. 
\bibliographystyle{abbrv}
\bibliography{measure_solutions_to_perturbed_spm} 
\begin{appendix}
\section{Extension of well-posedness theory for structured population models}\label{ewptspm}
\begin{proof}[Proof of Theorem \ref{well_posedness_2_nonlinear}]
Briefly speaking, the idea is that solutions to \eqref{spm} are {\it a priori} bounded in total variation norm so one can modify model functions for measures with big norm without changing the model. More preciesly, consider model functions $a$, $b$ and $c$ satisfying (W1a), (W1b), (W2) and (W3). Fix interval $[0,T]$ and observe that any measure solution $\mu_t: [0,T] \to \mathcal{M}^+(\mathbb{R}^+)$ satisfies weak formulation: for any $\varphi \in C^1([0,T] \times \mathbb{R}^+) \cap W^{1,\infty}([0,T] \times \mathbb{R}^+)$
\begin{multline}
\int_{\mathbb{R}^+} \varphi(T,x) d\mu_T - \int_{\mathbb{R}^+} \varphi(0,x) d\mu_0 = 
\int_0^T  \int_{\mathbb{R}^+} {\partial_t} \varphi(t,x) d\mu_t dt + \\ +
\int_0^T  \int_{\mathbb{R}^+} {\partial_x} \varphi(t,x) b(x,\mu_t) d\mu_t dt +  
\int_0^T  \int_{\mathbb{R}^+} \varphi(t,x) c(x,\mu_t) d\mu_t dt +  
\int_0^T  \int_{\mathbb{R}^+} \varphi(t,0) a(x,\mu_t) d\mu_t dt.
\end{multline}
Applying this with $\varphi(t,x) = 1$ we obtain:
$$
\int_{\mathbb{R}^+} d\mu_T - \int_{\mathbb{R}^+} d\mu_0 = \int_0^T  \int_{\mathbb{R}^+} c(x,\mu_t) d\mu_t dt +  
\int_0^T  \int_{\mathbb{R}^+} a(x,\mu_t) d\mu_t dt.
$$
which shows, after using Gronwall inequality, that any measure solution (in particular, a family of nonnegative measures) satisfies {\it a priori} the bound:
\begin{equation}\label{aprioriforsol}
\norm{\mu_t}_{TV} \leq \norm{\mu_0}_{TV} e^{(\norm{a}_{\infty} + \norm{c}_{\infty}) T} := \bar{C}.
\end{equation}
With this in mind, we define the new model functions:
\begin{equation}\label{example_fcn_a}
\bar{a}(x,\mu) =
\begin{cases} a(x,\mu) &\mbox{if } \norm{\mu}_{TV} \leq \bar{C},  \\ 
a(x,\mu)e^{-(\norm{\mu}_{TV} - C)} & \mbox{if } \norm{\mu}_{TV} > \bar{C} 
\end{cases}
\end{equation}
and similarly we define $\bar{b}(x,\mu)$ and $\bar{c}(x,\mu)$. We have to check that these new model functions satisfy assumptions (W1)--(W3) where (W3) is trivially satisfied and (W1) follows from exponential decay in definition \eqref{example_fcn_a}. 

Actually, only (W2) is not trivial. Let $\mu$, $\nu \in \mathcal{M}^+(\mathbb{R}^+)$. We have three cases. If $\norm{\mu}_{TV} \leq \bar{C}$ and $\norm{\nu}_{TV}\leq \bar{C}$ this follows from assumption (W2) for functions $a$, $b$ and $c$. If $\norm{\mu}_{TV} > \bar{C}$ and $\norm{\nu}_{TV} > \bar{C}$ we compute:
\begin{multline*}
\norm{\bar{a}(x,\mu) - \bar{a}(x,\nu)}_{\infty} = \norm{a(x,\mu)e^{-(\norm{\mu}_{TV} - \bar{C})} -
a(x,\nu)e^{-(\norm{\nu}_{TV} - \bar{C})}}_{\infty}\leq \\ \leq 
\norm{a(x,\mu)e^{-(\norm{\mu}_{TV} - \bar{C})} -
a(x,\mu)e^{-(\norm{\nu}_{TV} - \bar{C})}}_{\infty} + 
\norm{a(x,\mu)e^{-(\norm{\nu}_{TV} - \bar{C})} -
a(x,\nu)e^{-(\norm{\nu}_{TV} - \bar{C})}}_{\infty} \leq \\ \leq
C \norm{a}_{\infty}\Big|\norm{\mu}_{TV} - \norm{\nu}_{TV}\Big| + 
e^{C} L_R p_F(\mu,\nu) \leq (C \norm{a(x,\mu)}_{\infty} + e^{\bar{C}} L_R) p_F(\mu,\nu),
\end{multline*}
where we used that the map $(-\infty, \bar{C}] \ni x \mapsto e^x$ is Lipschitz continuous with constant $C$ and that $a$ satisfies assumption (W2). We finally check case $\norm{\mu}_{TV} > \bar{C}$ and $\norm{\nu}_{TV} \leq \bar{C}$:
\begin{multline*}
\norm{\bar{a}(x,\mu) - \bar{a}(x,\nu)}_{\infty} = \norm{a(x,\mu)e^{-(\norm{\mu}_{TV} - \bar{C})} -
a(x,\nu)}_{\infty}\leq \\ \leq 
\norm{a(x,\mu)e^{-(\norm{\mu}_{TV} - \bar{C})} - a(x,\mu)}_{\infty} +\norm{a(x,\mu) - a(x,\nu)}_{\infty} \leq
\norm{a(x,\mu)}_{\infty}\abs{e^{-(\norm{\mu}_{TV} - \bar{C})} - 1} +  L_R p_F(\mu,\nu) \leq \\
\leq \norm{a(x,\mu)}_{\infty} \abs{e^{-(\norm{\mu}_{TV} - \bar{C})} - e^{-(\norm{\nu}_{TV} - \bar{C})}}
+  L_R p_F(\mu,\nu)  \leq (C + L_R)p_F(\mu,\nu)
\end{multline*}
since $\norm{\nu}_{TV} \leq \bar{C}$ implies $e^{-(\norm{\nu}_{TV} - \bar{C})} \geq 1$. 

Therefore, there exists the unique solution to the problem \eqref{spm_non} with the model functions $\bar{a}$, $\bar{b}$, $\bar{c}$. However, this solution satisfies the bound \eqref{aprioriforsol} so it also solves \eqref{spm_non} (functions $a$, $b$ and $c$ were not changed for measures $\mu$ with $\norm{\mu} \leq \bar{C}$. From this, the uniqueness follows too. 
\end{proof}
\section{Technical computations from Section \ref{taking_particular_function}}\label{taking_particular_function_comp}
We begin with studying each term appearing in the equation \eqref{sol_dual_dep_onh} starting slightly more generally:
\begin{lem}\label{pierwsza_funkcja}
Let $f(h,u,x) = \xi(h,X_{b(h, \cdot)}(u,x))$ with $\xi \in C^{1+\alpha}([-\frac{1}{2}, \frac{1}{2}] \times \mathbb{R})$ and $u \in [0,T]$. Then, map $h \mapsto f(h,u,x)$ is $C^{1+\alpha}$ with:
\begin{itemize}
\item $\norm{f}_{\infty}\leq \norm{\xi}_{\infty}$,
\item $\norm{f_{h}}_{\infty} \leq \norm{\xi_h}_{\infty} + C_L T\norm{\xi_x}_{\infty}$,
\item $\norm{f_{x}}_{\infty}  \leq e^{C_LT} \norm{\xi_x}_{\infty} $,
\item $\norm{f}_{W^{1,\infty}}  \leq e^{C_LT}\norm{\xi}_{W^{1,\infty}} $,
\item $\norm{f_{h}}_{\alpha,h} \leq \norm{\xi_h}_{\alpha,h} + C_{L} T^{\alpha} \norm{\xi_h}_{\alpha,x} + C_{L} T \norm{\xi_x}_{\alpha,h} + C_{L} T^{1+\alpha} \norm{\xi_x}_{\alpha,x} + G_{L}T\norm{\xi}_{W^{1,\infty}}$,
\item $\norm{f_{x}}_{\alpha,x} \leq e^{C_LT} \norm{\xi_x}_{\alpha,x}  + H_L T \norm{\xi}_{W^{1,\infty}}  $,
\item $\norm{f_{h}}_{\alpha,x} \leq e^{C_LT} \norm{\xi_h}_{\alpha,x}  + C_L T \norm{\xi_x}_{\alpha,x}  + H_L T \norm{\xi}_{W^{1,\infty}} $,
\item $\norm{f_{x}}_{\alpha,h} \leq \norm{\xi_x}_{\alpha,h} + C_L T^{\alpha}\norm{\xi_x}_{\alpha,x}  + H_L T \norm{\xi}_{W^{1,\infty}} $.
\end{itemize}
\end{lem}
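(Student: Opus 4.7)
\medskip

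\noindent\textbf{Proof proposal.} The plan is a direct application of the chain rule combined with the flow estimates assembled in Corollary \ref{summary_flow} and the convention from Remark \ref{estimate_convention}. All six $W^{1,\infty}$-type bounds are almost immediate from
\begin{equation*}
f_h(h,u,x) = \xi_h(h, X_{b(h,\cdot)}(u,x)) + \xi_x(h, X_{b(h,\cdot)}(u,x))\,\partial_h X_{b(h,\cdot)}(u,x),
\end{equation*}
\begin{equation*}
f_x(h,u,x) = \xi_x(h, X_{b(h,\cdot)}(u,x))\,\partial_x X_{b(h,\cdot)}(u,x),
\end{equation*}
since Corollary \ref{summary_flow} gives $|\partial_h X| \leq C_L T$ and $|\partial_x X| \leq e^{C_L T}$; the four pointwise estimates for $f$, $f_h$, $f_x$ follow by simple triangle inequality in the first case and a product bound in the second.

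For the four H\"older estimates one differentiates again (in $h$ or $x$) and applies Remark \ref{estimate_convention}: one compares the two expressions term by term, inserting intermediate points so that only one argument changes at a time. For example, to bound $\|f_h\|_{\alpha,h}$ one writes
\begin{multline*}
f_h(h_1,u,x) - f_h(h_2,u,x) = \bigl[\xi_h(h_1, X_{b(h_1,\cdot)}(u,x)) - \xi_h(h_2, X_{b(h_2,\cdot)}(u,x))\bigr] \\
+ \bigl[\xi_x(h_1, X_{b(h_1,\cdot)}(u,x))\,\partial_h X_{b(h,\cdot)}(u,x)|_{h=h_1} - \xi_x(h_2, X_{b(h_2,\cdot)}(u,x))\,\partial_h X_{b(h,\cdot)}(u,x)|_{h=h_2}\bigr].
\end{multline*}
In the first bracket one gets contributions $\|\xi_h\|_{\alpha,h}|h_1-h_2|^\alpha$ and, by Corollary \ref{summary_flow}, $\|\xi_h\|_{\alpha,x}(C_LT|h_1-h_2|)^\alpha$. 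In the second bracket, after three triangle-inequality steps, one gets a sum of $\|\xi_x\|_{\alpha,h}|h_1-h_2|^\alpha \cdot C_LT$, $\|\xi_x\|_{\alpha,x}(C_LT|h_1-h_2|)^\alpha \cdot C_LT$, and the crucial H\"older-in-$h$ estimate for $\partial_h X$, which by Corollary \ref{summary_flow} produces the factor $G_LT|h_1-h_2|^\alpha$ multiplied by $\|\xi_x\|_\infty \leq \|\xi\|_{W^{1,\infty}}$. Collecting yields exactly the claimed bound. The three remaining H\"older bounds are analogous: $\|f_x\|_{\alpha,x}$ uses the H\"older-in-$x$ estimate $\|\partial_y X(\cdot,x_1)-\partial_y X(\cdot,x_2)\|\leq H_LT|x_1-x_2|^\alpha$; $\|f_h\|_{\alpha,x}$ uses both $\|\partial_x X\|_\infty \leq e^{C_LT}$ (for the $\xi_h$ term) and $\|\partial_h X(\cdot,x_1)-\partial_h X(\cdot,x_2)\|\leq H_LT|x_1-x_2|^\alpha$; and $\|f_x\|_{\alpha,h}$ uses $\|\partial_y X(h_1,\cdot)-\partial_y X(h_2,\cdot)\|\leq H_LT|h_1-h_2|^\alpha$.

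The main bookkeeping obstacle is to make sure that each term is absorbed into the correct one of the constants $C_L$, $H_L$, $G_L$ (cf.\ Definition \ref{constants_meaning}), so that the dependence on $\|q_h\|_{\alpha,h}$-type quantities is isolated into the $G_L$ factor, which is precisely what will matter in the nonlinear argument. No single step is deep: the proof is essentially a disciplined application of the chain rule, triangle inequality, and Corollary \ref{summary_flow}, with the appearance of $G_L$ in exactly one place — multiplying $\|\xi\|_{W^{1,\infty}}$ in $\|f_h\|_{\alpha,h}$ — traceable to the sole use of the H\"older continuity in $h$ of $\partial_h X$.
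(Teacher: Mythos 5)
Your proposal is correct and follows essentially the same route as the paper: write out $f_h$ and $f_x$ by the chain rule, bound the pointwise norms by the Lipschitz/boundedness estimates of Corollary \ref{summary_flow}, and obtain the four H\"older bounds by the term-by-term triangle-inequality decomposition of Remark \ref{estimate_convention}, with the $G_L$ constant appearing precisely from the H\"older-in-$h$ bound on $\partial_h X$. The itemized tracking of which flow estimate enters which H\"older bound matches the paper's calculations.
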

\begin{proof}
It is clear that $\norm{f}_{\infty} \leq \norm{\xi}_{\infty} $. Moreover, explicit computation of the derivatives yields:
$$
f_h(h,u,x) = \xi_h(h,X_{b(h, \cdot)}(u,x)) + \xi_x(h,X_{b(h, \cdot)}(u,x)) {\partial_h} X_{b(h, \cdot)}(u,x),
$$
$$
f_x(h,u,x) = \xi_x(h,X_{b(h, \cdot)}(u,x)) {\partial_x} X_{b(h, \cdot)}(u,x).
$$
Therefore, due to Corollary \ref{summary_flow}, $\norm{f_{h}}_{\infty} \leq \norm{\xi_h}_{\infty} + C_LT\norm{\xi_x}_{\infty} $ and $\norm{f_{x}}_{\infty} \leq e^{C_LT} \norm{\xi_x}_{\infty} $. Then, we compute explicitly recalling convention from Remark \ref{estimate_convention} and Corollary \ref{summary_flow}:
\begin{multline*}
\abs{f_h(h_1,u,x) - f_h(h_2,u,x)} \leq \norm{\xi_h}_{\alpha,h}\abs{h_1-h_2}^{\alpha} + \norm{\xi_h}_{\alpha,x} (C_LT)^{\alpha}\abs{h_1-h_2}^{\alpha}
 +\\+
\big(\norm{\xi_x}_{\alpha,h}\abs{h_1-h_2}^{\alpha} + \norm{\xi_x}_{\alpha,x}~ (C_LT)^{\alpha}\abs{h_1-h_2}^{\alpha}) C_LT
 + \norm{\xi_x}_{\infty}G_LT\abs{h_1-h_2}^{\alpha}
\end{multline*}
and
\begin{equation*}
\abs{f_x(h,x_1,u) - f_x(h,x_2,u)} \leq \big(\norm{\xi_x}_{\alpha,x}e^{\alpha C_LT}\abs{x_1-x_2}^{\alpha}\big)e^{C_LT} + \norm{\xi_x}_{\infty}~TH_L\abs{x_1-x_2}^{\alpha}.
\end{equation*}
Similarly,
\begin{multline*}
\abs{f_h(h,x_1,u) - f_h(h,x_2,u)} \leq \norm{\xi_h}_{\alpha,x} e^{\alpha C_LT}\abs{x_1 - x_2}^{\alpha}+ \norm{\xi_x}_{\alpha,x} e^{\alpha C_LT}\abs{x_1 - x_2}^{\alpha}C_LT+\norm{\xi_x}_{\infty}T H_L ,
\end{multline*}
\begin{equation*}
\abs{f_x(h_1,u,x) - f_x(h_2,u,x)} \leq \norm{\xi_x}_{\alpha,h}\abs{h_1 - h_2}^{\alpha}+\norm{\xi_x}_{\alpha,x}\big( C_LT\abs{h_1 - h_2}\big)^{\alpha}+\norm{\xi_x }_{\infty} T H_L\abs{h_1 - h_2}^{\alpha}.
\end{equation*}
\end{proof}
From Lemma \ref{pierwsza_funkcja}, we obtain bunch of Corollaries:
\begin{cor}\label{druga_funkcja}
Let $f(h,u,x) = \xi(X_{b(h, \cdot)}(u,x))$ with $\norm{\xi}_{C^{1+\alpha}(\mathbb{R}^+)} \leq 1$. Then, using Lemma \ref{pierwsza_funkcja}, we conclude that map $h \mapsto f(h,u,x)$ is $C^{1+\alpha}([-\frac{1}{2}, \frac{1}{2}]\times \mathbb{R}^+)$ with $\norm{f}_{\infty}\leq 1$, $\norm{f_{h}}_{\infty} \leq C_LT$ and $\norm{f_{h}}_{\alpha,h} \leq  G_LT$.
\end{cor}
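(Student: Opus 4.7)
The plan is simply to invoke Lemma~\ref{pierwsza_funkcja} in the special case where the function $\xi$ depends only on the spatial variable. First I would observe that, because $\xi = \xi(x)$ in the present setting, we have $\xi_h \equiv 0$ identically, so all seminorms involving a partial derivative of $\xi$ in $h$ vanish: $\norm{\xi_h}_{\infty} = \norm{\xi_h}_{\alpha,h} = \norm{\xi_h}_{\alpha,x} = 0$, and likewise $\norm{\xi_x}_{\alpha,h} = 0$ since $\xi_x$ is also $h$-independent. The remaining three seminorms that appear on the right-hand sides of Lemma~\ref{pierwsza_funkcja} are $\norm{\xi}_{\infty}$, $\norm{\xi_x}_{\infty}$, and $\norm{\xi_x}_{\alpha,x}$, each of which is bounded by $1$ by the standing hypothesis $\norm{\xi}_{C^{1+\alpha}(\mathbb{R}^+)} \leq 1$.

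With these reductions at hand, I would substitute directly into the three estimates from Lemma~\ref{pierwsza_funkcja} singled out in the statement of the corollary. The bound $\norm{f}_{\infty} \leq \norm{\xi}_{\infty} \leq 1$ is immediate. For the derivative in $h$, the Lemma gives $\norm{f_h}_{\infty} \leq \norm{\xi_h}_{\infty} + C_L T \norm{\xi_x}_{\infty} = C_L T$. Finally, the H\"older estimate reads
\begin{equation*}
\norm{f_h}_{\alpha,h} \leq \norm{\xi_h}_{\alpha,h} + C_L T^{\alpha} \norm{\xi_h}_{\alpha,x} + C_L T \norm{\xi_x}_{\alpha,h} + C_L T^{1+\alpha} \norm{\xi_x}_{\alpha,x} + G_L T \norm{\xi}_{W^{1,\infty}},
\end{equation*}
and after killing the first three terms and using $\norm{\xi_x}_{\alpha,x}, \norm{\xi}_{W^{1,\infty}} \leq 1$, what remains is $C_L T^{1+\alpha} + G_L T$. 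Invoking the hierarchy of constants from Definition~\ref{constants_meaning} (namely that $C_L T^{\alpha}$ can be absorbed into $C_L$ on a time interval of bounded length, so $C_L T^{1+\alpha} \leq C_L T \leq G_L T$), this collapses to the claimed $\norm{f_h}_{\alpha,h} \leq G_L T$.

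There is no genuine obstacle here: the corollary is purely a substitution and simplification exercise, and all the analytic content has already been pushed into Lemma~\ref{pierwsza_funkcja}. The only mildly subtle point worth flagging is the convention for absorbing constants, where one must check that $C_L T^{1+\alpha}$ can legitimately be dominated by $G_L T$ in the sense of Definition~\ref{constants_meaning}; since $G_L$ is by construction permitted to contain $C_L$ as a coefficient, this is automatic.
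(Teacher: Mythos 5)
Your proof is correct and follows exactly the route the paper intends: Corollary~\ref{druga_funkcja} is stated in the paper as a direct specialization of Lemma~\ref{pierwsza_funkcja} to $h$-independent $\xi$, and you have carried out precisely that substitution, including the (correct) observation that $C_L T^{1+\alpha}$ absorbs into $G_L T$ under the conventions of Definition~\ref{constants_meaning}.
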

\begin{cor}\label{trzecia_funkcja}
Let $f(h,u,x) = a(h, X_{b(h, \cdot)}(u,x))$ where $a(\cdot,\cdot)$ is a model function. Then, using Lemma \ref{pierwsza_funkcja} we conclude that $\norm{f}_{\infty}, \norm{f_{h}}_{\infty}, \norm{f_{x}}_{\infty} \leq C_L$, $\norm{f_{h}}_{\alpha,h} \leq G_L$ and $\norm{f_{x}}_{\alpha,h}, \norm{f_{h}}_{\alpha,x}, \norm{f_{x}}_{\alpha,x} \leq H_L$.
\end{cor}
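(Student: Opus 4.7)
The plan is to apply Lemma~\ref{pierwsza_funkcja} with the specific choice $\xi(h,y) := a(h,y)$, treating the model function $a$ as a function on $[-\frac12,\frac12]\times \mathbb{R}^+$ rather than on a single copy of $\mathbb{R}^+$. Since $a$ satisfies assumption (B1)--(B2) (it is $C^{1+\alpha}$ in both variables with uniform constants), all of the six norms of $\xi$ appearing on the right-hand sides of Lemma~\ref{pierwsza_funkcja} are available, and each of them is controlled by one of the three bookkeeping constants from Definition~\ref{constants_meaning}.

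The first step is to record the trivial identifications. The bounds $\norm{\xi}_{\infty}=\norm{a}_{\infty}$, $\norm{\xi_h}_{\infty}=\norm{a_h}_{\infty}$ and $\norm{\xi_x}_{\infty}=\norm{a_x}_{\infty}$ are all dominated by $C_L$, directly from the definition of $C_L$ (a $W^{1,\infty}$ norm of a model function). The Hölder norms $\norm{\xi_x}_{\alpha,x}$, $\norm{\xi_x}_{\alpha,h}$, $\norm{\xi_h}_{\alpha,x}$ are, by Definition~\ref{constants_meaning}, precisely the quantities that $H_L$ is allowed to absorb, so each is bounded by $H_L$. Finally, $\norm{\xi_h}_{\alpha,h}=\norm{a_h}_{\alpha,h}$ is, again by definition, estimated by $G_L$.

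The second step is plain substitution. Plugging the above into the seven inequalities of Lemma~\ref{pierwsza_funkcja} and using the basic absorbing rules from Definition~\ref{constants_meaning} (e.g. $C_L\cdot C_L \cdot T \leq C_L$, $e^{C_L T}\leq C_L$, $H_L \cdot C_L \cdot T \leq H_L$, and linear combinations of $H_L$ and $G_L$ stay inside $G_L$) gives
\[
\norm{f}_{\infty},\ \norm{f_h}_{\infty},\ \norm{f_x}_{\infty}\leq C_L,
\qquad
\norm{f_h}_{\alpha,h}\leq G_L,
\]
and
\[
\norm{f_x}_{\alpha,x},\ \norm{f_h}_{\alpha,x},\ \norm{f_x}_{\alpha,h}\leq H_L.
\]
For instance the $\norm{f_h}_{\alpha,h}$ bound unpacks as $G_L + C_L T^\alpha H_L + C_L T H_L + C_L T^{1+\alpha} H_L + G_L T \cdot C_L$, which is of the form $G_L$.

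There is no genuine obstacle; the only thing to be careful about is that the sole place where a norm needs to sit inside $G_L$ (rather than $H_L$) is precisely $\norm{\xi_h}_{\alpha,h}$, which appears in the $\norm{f_h}_{\alpha,h}$ estimate and which, by construction of the three-tier constant hierarchy, is exactly the quantity excluded from $H_L$ and reserved for $G_L$. This is the whole reason Definition~\ref{constants_meaning} was set up the way it was, and the corollary is its first payoff.
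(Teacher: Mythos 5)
Your proposal is correct and is exactly the paper's intended argument: the paper states the corollary as a direct consequence of Lemma~\ref{pierwsza_funkcja} obtained by taking $\xi=a$ and absorbing norms into $C_L$, $H_L$, $G_L$ according to Definition~\ref{constants_meaning}, which is what you do. Your substitution and bookkeeping (including the single $\norm{\xi_h}_{\alpha,h}$ term forcing the $G_L$ bound on $\norm{f_h}_{\alpha,h}$) match the paper's proof.
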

We move to the exponential part of the implicit formula. Recall that the map $[-M, M] \ni x \mapsto \exp(x)$ is Lipschitz continuous with constant $\exp(M)$. 
\begin{lem}\label{czwarta_funkcja}
Let $f(h,u,x) = e^{\int_0^u c(h, X_{b(h, \cdot)}(v,x)) dv}$ with $u\in [0,T]$. Then map $h \mapsto f(h,u,x)$ satisfies $\norm{f}_{\infty}\leq e^{C_LT}$, $\norm{f_{h}}_{\infty}, \norm{f_x}_{\infty} \leq  C_{L}T $, $\norm{f_{h}}_{\alpha,h} \leq G_LT$ and $\norm{f_{x}}_{\alpha,h}, \norm{f_{h}}_{\alpha,x}, \norm{f_{x}}_{\alpha,x} \leq  H_LT$.
\end{lem}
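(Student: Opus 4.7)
The strategy is to write $f = e^g$ where $g(h,u,x) = \int_0^u c(h, X_{b(h,\cdot)}(v,x))\, dv$ is the integrand composed with the flow, and then invoke Corollary \ref{trzecia_funkcja} on the integrand to get pointwise control of $g$ and its partial derivatives before applying the chain rule to $\exp$. The key observation is that Corollary \ref{trzecia_funkcja}, although stated for the model function $a$, applies verbatim to $c$ because the proof uses only that $c$ is a model function satisfying (B2). Hence, setting $\tilde c(h,v,x) = c(h, X_{b(h,\cdot)}(v,x))$, we obtain $\norm{\tilde c}_\infty, \norm{\tilde c_h}_\infty, \norm{\tilde c_x}_\infty \leq C_L$, $\norm{\tilde c_h}_{\alpha,h} \leq G_L$, and $\norm{\tilde c_x}_{\alpha,h}, \norm{\tilde c_h}_{\alpha,x}, \norm{\tilde c_x}_{\alpha,x} \leq H_L$.

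Next, I would differentiate under the integral sign (justified by continuity of the integrand and its derivatives), which gives $g_h(h,u,x) = \int_0^u \tilde c_h(h,v,x)\, dv$ and $g_x(h,u,x) = \int_0^u \tilde c_x(h,v,x)\, dv$. Integrating the sup bounds from the previous step over $v \in [0,u]$ with $u \leq T$ produces the factor of $T$ we need: $\norm{g}_\infty, \norm{g_h}_\infty, \norm{g_x}_\infty \leq C_L T$. Then, using $\abs{g_h(h_1,u,x) - g_h(h_2,u,x)} \leq \int_0^u \abs{\tilde c_h(h_1,v,x) - \tilde c_h(h_2,v,x)}\, dv$, one concludes $\norm{g_h}_{\alpha,h} \leq G_L T$, and analogously $\norm{g_x}_{\alpha,h}, \norm{g_h}_{\alpha,x}, \norm{g_x}_{\alpha,x} \leq H_L T$. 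Since $h \in [-\tfrac{1}{2},\tfrac{1}{2}]$ is bounded, the Lipschitz bound $\norm{g_h}_\infty \leq C_L T$ also implies $\norm{g}_{\alpha,h} \leq C_L T$, and similarly for $x$ on bounded intervals (or one works directly with the Hölder-type splitting).

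Finally, since $g$ takes values in the bounded interval $[-C_LT, C_LT]$, the exponential map is Lipschitz there with constant $e^{C_LT}$, so $\norm{f}_\infty \leq e^{C_LT}$. Applying the chain rule, $f_h = e^g g_h$ and $f_x = e^g g_x$, gives $\norm{f_h}_\infty, \norm{f_x}_\infty \leq e^{C_LT} C_L T \leq C_L T$ after absorbing $e^{C_L T}$ into the constant $C_L$ (it depends continuously on $T$ and $\norm{b}_{W^{1,\infty}}$, as per Definition \ref{constants_meaning}). The Hölder estimates then follow by the standard triangle inequality splitting
\[
\abs{f_h(h_1,\cdot) - f_h(h_2,\cdot)} \leq \abs{e^{g(h_1,\cdot)} - e^{g(h_2,\cdot)}}\, \abs{g_h(h_1,\cdot)} + e^{\norm{g}_\infty} \abs{g_h(h_1,\cdot) - g_h(h_2,\cdot)},
\]
where the first summand is bounded by $e^{C_LT} \cdot C_LT \cdot C_LT\, \abs{h_1-h_2}^\alpha$ (a term of order $T^2$, hence absorbable) and the second by $e^{C_LT} G_L T\, \abs{h_1-h_2}^\alpha$; the same splitting (with $H_L$ in place of $G_L$) handles $\norm{f_x}_{\alpha,h}$, $\norm{f_h}_{\alpha,x}$, $\norm{f_x}_{\alpha,x}$.

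There is no real obstacle here; the exercise is entirely bookkeeping. The only point requiring a small amount of care is ensuring that every constant on the right-hand side is genuinely allowed in its advertised class ($C_L$, $H_L$, or $G_L$): in particular, the estimate on $\norm{f_h}_{\alpha,h}$ must contain a $G_L$ factor (from $\norm{g_h}_{\alpha,h}$), while all other Hölder estimates avoid $G_L$ because they involve only $\norm{g_x}_{\alpha,\cdot}$ or $\norm{g_h}_{\alpha,x}$, each controlled by $H_L T$ via Corollary \ref{trzecia_funkcja}. This is the only place where the distinction between $H_L$ and $G_L$ introduced in Definition \ref{constants_meaning} matters.
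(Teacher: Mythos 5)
Your approach is essentially the same as the paper's: express $f=e^g$, compute the derivatives by the chain rule, bound $g$, $g_h$, $g_x$ by integrating the pointwise bounds from Corollary~\ref{trzecia_funkcja} (equivalently Lemma~\ref{pierwsza_funkcja}), and then use the local Lipschitz constant $e^{C_LT}$ of the exponential together with the triangle-inequality splitting to propagate H\"older bounds. One point you gesture at but do not carry out: for $\norm{f_h}_{\alpha,x}$ and $\norm{f_x}_{\alpha,x}$ the first summand in your splitting produces a factor $\abs{x_1-x_2}$, not $\abs{x_1-x_2}^\alpha$, and since $x$ ranges over the unbounded set $\mathbb{R}^+$ one cannot simply absorb $\abs{x_1-x_2}$ into $\abs{x_1-x_2}^\alpha$; the paper handles this with the explicit case distinction $\abs{x_1-x_2}\leq 1$ versus $\abs{x_1-x_2}>1$, the latter case being settled by the trivial bound $2\norm{f_h}_{\infty}\leq C_LT\leq C_LT\abs{x_1-x_2}^{\alpha}$. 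Making that one step explicit closes the argument.
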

\begin{proof}
Of course, $\norm{f}_{\infty}\leq e^{C_LT}$. By direct computation:
\begin{equation*}
f_h(h,u,x) = e^{\int_0^u c(h, X_{b(h, \cdot)}(v,x)) dv}\int_0^u \Big(c_h(h, X_{b(h, \cdot)}(v,x))+ 
c_x(h, X_{b(h, \cdot)}(v,x)) ~{\partial_h}X_{b(h, \cdot)}(v,x) \Big)dv
,\end{equation*}
so that $\norm{f_{h}}_{\infty}\leq  C_{L}T$. Moreover, using Remark \ref{estimate_convention} and Corollary \ref{summary_flow}:
\begin{equation*}
\abs{f_h(h_1,u,x) - f_h(h_2,u,x)} \leq T^2 C_L\abs{h_1-h_2}  + T(H_L~+~ G_L)\abs{h_1 - h_2}^{\alpha} \leq G_L T  \abs{h_1 - h_2}^{\alpha}.
\end{equation*}
Furthermore, if $\abs{x_1-x_2} \leq 1$:
\begin{equation*}
\abs{f_h(h,x_1,u) - f_h(h,x_2,u)} \leq C_LT~\abs{x_1-x_2} +
 T H_L~\abs{x_1 - x_2}^{\alpha} \leq H_L T  \abs{x_1 - x_2}^{\alpha}
\end{equation*}
and otherwise,
$$
\abs{f_h(h,x_1,u) - f_h(h,x_2,u)} \leq 2 \norm{f_h}_{\infty} \leq 2 \norm{f_h}_{\infty} {\abs{x_1 - x_2}^{\alpha}} \leq  C_L T {\abs{x_1 - x_2}^{\alpha}} \leq H_L T {\abs{x_1 - x_2}^{\alpha}}.
$$
Similarly, using chain rule:
$$
f_x(h,u,x) = e^{\int_0^u c(h, X_{b(h, \cdot)}(v,x)) dv} \int_0^u c_x(h, X_{b(h, \cdot)}(v,x))  ~{\partial_x} X_{b(h, \cdot)}(v,x)  dv, 
$$
so that we compute all bounds exactly like for $f_h$. 
\end{proof}
We move to draw conclusions about functions $P^{a,b,c}(h,s,x)$ given by \eqref{def_of_P} and $Q^{a,b,c}(h,s,x)$ given by \eqref{def_of_Q}. We start more generally:
\begin{lem}\label{product_rule}
Consider function $f(x,h) = k(x,h) l(x,h)$ defined on $\mathbb{R^+}\times [-\frac{1}{2}, \frac{1}{2}]$. Then:
\begin{itemize}
\item $\norm{f}_{\infty} \leq \norm{k}_{\infty} \norm{l}_{\infty}$,
\item $\norm{f_x}_{\infty} \leq \norm{k_x}_{\infty}  \norm{l}_{\infty} +  \norm{k}_{\infty} \norm{l_x}_{\infty}$,
\item $\norm{f_h}_{\infty} \leq \norm{k_h}_{\infty}  \norm{l}_{\infty} +  \norm{k}_{\infty} \norm{l_h}_{\infty}$,
\item $\norm{f_h}_{\alpha,h} \leq  2\norm{k_h}_{\infty} \norm{l_h}_{\infty}+ 
\norm{k}_{\infty}\norm{l_h}_{\alpha,h} + 
\norm{k_h}_{h,\alpha}\norm{l}_{\infty}$,
\item $\norm{f_h}_{\alpha,x} \leq \max{\big(2\norm{f_h}_{\infty},\norm{k_h}_{\alpha,x}\norm{l}_{\infty} + \norm{k_h}_{\infty} \norm{l_x}_{\infty} + \norm{k_x}_{\infty} \norm{l_h}_{\infty}+\norm{l_h}_{\alpha,x} \norm{k}_{\infty}\big)} $,
\item $\norm{f_x}_{\alpha,h} \leq \norm{k_x}_{\alpha,h}\norm{l}_{\infty} + \norm{k_x}_{\infty} \norm{l_h}_{\infty} + \norm{k_h}_{\infty} \norm{l_x}_{\infty}+\norm{l_x}_{\alpha,h} \norm{k}_{\infty}$,
\item $\norm{f_x}_{\alpha,x} \leq \max{\big(2\norm{f_x}_{\infty}, 2\norm{k_x}_{\infty} \norm{l_x}_{\infty}+ 
\norm{k}_{\infty}\norm{l_x}_{\alpha,x} + 
\norm{k_x}_{x,\alpha}\norm{l}_{\infty}\big)}$.
\end{itemize}
\end{lem}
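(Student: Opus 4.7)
The first three bounds are just the pointwise product rule. Concretely, $f_h = k_h l + k l_h$ and $f_x = k_x l + k l_x$, so the triangle inequality together with $\lVert gh\rVert_\infty \leq \lVert g\rVert_\infty \lVert h\rVert_\infty$ yields the three stated $L^\infty$ estimates immediately.

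For the four Hölder seminorm bounds, the main idea is the standard two-term split
\begin{equation*}
a(p_1) b(p_1) - a(p_2) b(p_2) = \bigl(a(p_1) - a(p_2)\bigr) b(p_1) + a(p_2) \bigl(b(p_1) - b(p_2)\bigr),
\end{equation*}
applied to $a = k_h$ or $k_x$ and $b = l_h$ or $l_x$ (and symmetric variants). For $\lVert f_h\rVert_{\alpha,h}$, I would write
\begin{equation*}
f_h(h_1, x) - f_h(h_2, x) = \bigl(k_h(h_1, x) - k_h(h_2, x)\bigr) l(h_1, x) + k_h(h_2, x) \bigl(l(h_1, x) - l(h_2, x)\bigr) + \text{(symmetric in $k \leftrightarrow l$)},
\end{equation*}
and then bound the two mean-value-type factors: the first by $\lVert k_h\rVert_{\alpha,h} \lvert h_1 - h_2\rvert^\alpha$ times $\lVert l\rVert_\infty$, and the second by $\lVert k_h\rVert_\infty$ times $\lVert l_h\rVert_\infty \lvert h_1 - h_2\rvert$, upgrading Lipschitz to Hölder via $\lvert h_1 - h_2\rvert \leq \lvert h_1 - h_2\rvert^\alpha$ (valid since $h_1, h_2 \in [-\tfrac12,\tfrac12]$ so differences are bounded by $1$). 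Writing down all four cross-terms yields the stated constant $2\lVert k_h\rVert_\infty \lVert l_h\rVert_\infty + \lVert k\rVert_\infty \lVert l_h\rVert_{\alpha,h} + \lVert k_h\rVert_{\alpha,h} \lVert l\rVert_\infty$.

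The estimates $\lVert f_h\rVert_{\alpha,x}$ and $\lVert f_x\rVert_{\alpha,h}$ proceed identically, using the same two-term split but now in the variable $x$ (respectively in the mixed pair), and keeping track of which factor receives the $\alpha$ and which the full exponent $1$ (the latter then upgraded to $\alpha$ only when the underlying variable lives in a bounded set, namely $h$). The estimate for $\lVert f_x\rVert_{\alpha,x}$ is analogous to that for $\lVert f_h\rVert_{\alpha,h}$ with $x$ in place of $h$.

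The only non-routine point — which is the source of the $\max$ with $2\lVert f_h\rVert_\infty$ (or $2\lVert f_x\rVert_\infty$) in the bounds for $\lVert f_h\rVert_{\alpha,x}$ and $\lVert f_x\rVert_{\alpha,x}$ — is that differences $\lvert x_1 - x_2\rvert$ are not a priori bounded (whereas $\lvert h_1 - h_2\rvert \leq 1$ always). Thus upgrading a Lipschitz factor $\lvert x_1 - x_2\rvert$ to $\lvert x_1 - x_2\rvert^\alpha$ only works when $\lvert x_1 - x_2\rvert \leq 1$. To handle the regime $\lvert x_1 - x_2\rvert > 1$, I would use the trivial estimate $\lvert f_h(h,x_1) - f_h(h,x_2)\rvert \leq 2\lVert f_h\rVert_\infty \leq 2\lVert f_h\rVert_\infty \lvert x_1 - x_2\rvert^\alpha$, which gives the first branch of the $\max$. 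Combining the two regimes yields exactly the stated bound. Since all the inequalities involved are entirely elementary, the only real bookkeeping obstacle is making sure that no cross-term has been forgotten and that the upgrade from Lipschitz to Hölder is only done when legitimate; no analytic difficulty arises.
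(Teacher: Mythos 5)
Your proposal is correct and follows essentially the same argument as the paper: write out the product rule for each partial derivative, apply the two-term splitting to the difference, upgrade Lipschitz factors to H\"older ones using the boundedness of $h$, and handle the unbounded variable $x$ by the trivial $2\norm{f_\bullet}_\infty$ bound when $|x_1-x_2|>1$, yielding the $\max$. The paper merely presents two of the four seminorm bounds explicitly ($\norm{f_x}_{\alpha,x}$ and $\norm{f_h}_{\alpha,x}$) and invokes symmetry for the rest, whereas you spell out $\norm{f_h}_{\alpha,h}$; this is purely a presentational difference.
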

\begin{proof}
First three bounds are obvious. By chain rule, $f_x = k_x l + k l_x$. Therefore,
\begin{multline*}
\abs{f_x(x_1,h) - f_x(x_2,h)} \leq \abs{k_x(x_1,h) l(x_1,h) - k_x(x_2,h)l(x_2,h)} + 
\abs{k(x_1,h) l_x(x_1,h) - k(x_2,h) l_x(x_2,h)} \leq \\
\norm{k_x}_{x,\alpha}\norm{l}_{\infty}\abs{x_1-x_2}^{\alpha} + \norm{l_x}_{\infty}\norm{k_x}_{\infty}\abs{x_1-x_2} + 
\norm{k_x}_{\infty} \norm{l_x}_{\infty}\abs{x_1-x_2} + 
\norm{k}_{\infty}\norm{l_x}_{\alpha,x} \abs{x_1-x_2}^{\alpha}
\end{multline*}
so that $\norm{f_x}_{\alpha,x} \leq \max{\big(2\norm{f_x}_{\infty}, 2\norm{k_x}_{\infty} \norm{l_x}_{\infty}+ 
\norm{k}_{\infty}\norm{l_x}_{\alpha,x} + 
\norm{k_x}_{x,\alpha}\norm{l}_{\infty}\big)}$ and bound on $\norm{f_h}_{\alpha,h}$ can be established by symmetry (there is no maximum as domain is bounded here). Similarly,
\begin{multline*}
\abs{f_h(x_1,h) - f_h(x_2,h)} \leq \abs{k_h(x_1,h)l(x_1,h) - k_h(x_2,h)l(x_2,h)} 
+ \abs{k(x_1,h)l_h(x_1,h) - k(x_1,h)l_h(x_1,h)} \leq \\
\norm{k_h}_{\alpha,x}\norm{l}_{\infty}\abs{x_1-x_2}^{\alpha} +
\norm{k_h}_{\infty} \norm{l_x}_{\infty}\abs{x_1-x_2} + 
\norm{k_x}_{\infty} \norm{l_h}_{\infty}\abs{x_1-x_2} + 
\norm{l_h}_{\alpha,x} \norm{k}_{\infty} \abs{x_1-x_2}^{\alpha}
\end{multline*}
so that $\norm{f_h}_{\alpha,x} \leq \max{\big(2\norm{f_h}_{\infty},\norm{k_h}_{\alpha,x}\norm{l}_{\infty} + \norm{k_h}_{\infty} \norm{l_x}_{\infty} + \norm{k_x}_{\infty} \norm{l_h}_{\infty}+\norm{l_h}_{\alpha,x} \norm{k}_{\infty}\big)}$ and bound for $\norm{f_x}_{\alpha,h}$ can be established again by symmetry. 
\end{proof}
With these estimates in hand, it is easy to prove Corollary \ref{Q_est}, Corollary \ref{P_est} and Lemma \ref{est_dual_sol_fin_fcns}:
\begin{proof}[Proof of Corollary \ref{Q_est}]
We use directly Lemma \ref{product_rule} together with the estimates provided by Corollary \ref{trzecia_funkcja} and Lemma \ref{czwarta_funkcja}. As form of bounds to be used is quite simple, we proceed quickly. Clearly, $\norm{Q^{a,b,c}}_{\infty}$, $\norm{Q^{a,b,c}_x}_{\infty}$, $\norm{Q^{a,b,c}_h}_{\infty} \leq C_L$ is obvious as estimates for these quantities do not contain H\"older constants. To verify next inequalities, observe, that using notation of Lemma \ref{product_rule}, constant $G_L$ can only appear from expressions containing $\norm{k_h}_{\alpha,h}$ or $\norm{l_h}_{\alpha,h}$; all other H\"older constants are bounded by $H_L$.
\end{proof}
\begin{proof}[Proof of Corollary \ref{P_est}]
We use directly Lemma \ref{product_rule} together with estimates provided by Lemma \ref{pierwsza_funkcja} and Lemma \ref{czwarta_funkcja}. We want to trace dependence on norm of function $\xi$ so we proceed more carefully. Let
$$
k(h,s,x) = \xi(h, X_{b(h, \cdot)}(t-s,x)), \quad \quad \quad l(h,s,x) = e^{\int_0^{t-s }c(h, X_{b(h,\cdot)}(u,x)) du},
$$
(we do not worry about dependence on $s \in [0,T]$ as it is always fixed). Clearly,  $\norm[1]{P^{a,b,c}}_{\infty} \leq e^{C_LT} \norm[1]{\xi}_{\infty} $. Moreover, directly applying Lemma \ref{product_rule}:
\begin{multline*}
\norm[1]{P^{a,b,c}_x}_{\infty} \leq \norm{k_x}_{\infty}  \norm{l}_{\infty} +  \norm{k}_{\infty} \norm{l_x}_{\infty} \leq \underbrace{\norm{\xi}_{W^{1,\infty}} e^{C_LT}}_{\norm{k_x}_{\infty}~\leq}
e^{C_LT} + \norm{\xi}_{\infty}\underbrace{C_LT}_{ \norm{l_x}_{\infty} ~\leq} \leq \\ \leq \norm{\xi}_{W^{1,\infty}} e^{C_LT} + C_LT\norm{\xi}_{\infty} \leq e^{C_LT} \norm{\xi}_{W^{1,\infty}} .
\end{multline*}
Similarly,
\begin{equation*}
\norm[1]{P^{a,b,c}_h}_{\infty}\leq \norm{k_h}_{\infty}  \norm{l}_{\infty} +  \norm{k}_{\infty} \norm{l_h}_{\infty} \leq \underbrace{\norm{\xi_h}_{\infty} + C_LT \norm{\xi_x}_{\infty}}_{\norm{k_h}_{\infty}~\leq} e^{C_LT} + \norm{\xi}_{\infty}\underbrace{C_LT}_{ \norm{l_h}_{\infty}~\leq }\leq e^{C_LT}\norm{\xi}_{W^{1,\infty}} .
\end{equation*} 
Then, we estimate H\"older constants in variable $h$: 
\begin{multline*}
\norm[1]{P^{a,b,c}_h}_{\alpha,h} \leq 2\norm{k_h}_{\infty} \norm{l_h}_{\infty}+ 
\norm{k}_{\infty}\norm{l_h}_{\alpha,h} + 
\norm{k_h}_{h,\alpha}\norm{l}_{\infty} \leq 2\underbrace{\norm{\xi}_{W^{1,\infty}}e^{C_LT}}_{\norm{k_h}_{\infty}~\leq}
 \underbrace{C_LT}_{\norm{l_h}_{\infty}~\leq}
+ \\+ \norm{\xi}_{\infty} \underbrace{G_LT}_{\norm{l_h}_{\alpha,h}~\leq} 
+ \underbrace{\big(\norm{\xi_h}_{\alpha,h} + C_{L}T^{\alpha} \norm{\xi_h}_{\alpha,x} + C_{L}T \norm{\xi_x}_{\alpha,h} + C_{L}T^{1+\alpha}\norm{\xi_x}_{\alpha,x} + G_{L}T\norm{\xi_x}_{\infty}\big)}_{\norm{k_h}_{\alpha,h}~\leq}e^{C_LT} \leq \\ \leq
G_L T\norm{\xi}_{W^{1,\infty}} +e^{C_LT} \norm{\xi_h}_{\alpha,h} +  C_{L} T^{\alpha} \norm{\xi_h}_{\alpha,x} + C_LT \norm{\xi_x}_{\alpha,h} + C_LT \norm{\xi_x}_{\alpha,x}.
\end{multline*}
To estimate $\norm[1]{P^{a,b,c}_h}_{\alpha,x}$, it is sufficient to study the right part of maximum:
\begin{multline*}
\norm{k_h}_{\alpha,x}\norm{l}_{\infty} + \norm{k_h}_{\infty} \norm{l_x}_{\infty} + \norm{k_x}_{\infty} \norm{l_h}_{\infty}+\norm{l_h}_{\alpha,x} \norm{k}_{\infty} \leq \\ \leq
\norm{k_h}_{\alpha,x}e^{C_LT} + \underbrace{\norm{\xi}_{W^{1,\infty}}e^{C_LT}}_{\norm{k_h}_{\infty}~\leq} \underbrace{C_LT}_{\norm{l_x}_{\infty}~\leq} + \underbrace{\norm{\xi}_{W^{1,\infty}}e^{C_LT}}_{\norm{k_x}_{\infty}~\leq} \underbrace{C_LT}_{\norm{l_h}_{\infty}~\leq}+\underbrace{H_LT}_{\norm{l_h}_{\alpha,x}~\leq} \norm{\xi}_{\infty} \leq \\
\leq \underbrace{\big(\norm{\xi_h}_{\alpha,x} e^{C_LT} + C_LT\norm{\xi_x}_{\alpha,x}  +  H_LT\norm{\xi}_{W^{1,\infty}}\big)}_{\norm{k_h}_{\alpha,x}~\leq} e^{C_LT}+ H_L T\norm{\xi}_{W^{1,\infty}} \leq \\
\leq  e^{C_LT} \norm{\xi_h}_{\alpha,x} + C_LT \norm{\xi_x}_{\alpha,x} + H_LT \norm{\xi}_{W^{1,\infty}}.
\end{multline*}
Then, we study H\"older constants in variable $x$:
\begin{multline*}
\norm[1]{P^{a,b,c}_x}_{\alpha,h} \leq \norm{k_x}_{\alpha,h}\norm{l}_{\infty} + \norm{k_x}_{\infty} \norm{l_h}_{\infty} + \norm{k_h}_{\infty} \norm{l_x}_{\infty}+\norm{l_x}_{\alpha,h} \norm{k}_{\infty} \leq \\ \leq \norm{k_x}_{\alpha,h} e^{C_LT} + \underbrace{\norm{\xi}_{W^{1,\infty}} e^{C_LT}}_{\norm{k_x}_{\infty}~\leq}\underbrace{C_LT}_{\norm{l_h}_{\infty}~\leq} +  \underbrace{\norm{\xi}_{W^{1,\infty}} e^{C_LT}}_{\norm{k_h}_{\infty}~\leq}\underbrace{C_LT}_{\norm{l_x}_{\infty}~\leq} + \underbrace{H_LT}_{\norm{l_x}_{\alpha,h}~\leq} \norm{\xi}_{\infty} \leq \\ \leq
\underbrace{\big(\norm{\xi_x}_{\alpha,h} + C_LT^{\alpha}\norm{\xi_x}_{\alpha,x}  + H_LT \norm{\xi}_{W^{1,\infty}} \big)}_{ \norm{k_x}_{\alpha,h}~\leq}e^{C_LT} + H_LT\norm{\xi}_{W^{1,\infty}} \leq \\ \leq
e^{C_LT} \norm{\xi_x}_{\alpha,h} + C_LT^{\alpha}\norm{\xi_x}_{\alpha,x}  + H_LT\norm{\xi}_{W^{1,\infty}}.
\end{multline*}
Finally, similarly as before, to estimate $\norm[1]{P^{a,b,c}_x}_{\alpha,x}$ we first study the right part of maximum:
\begin{multline*}
2\norm{k_x}_{\infty} \norm{l_x}_{\infty}+ 
\norm{k}_{\infty}\norm{l_x}_{\alpha,x} + 
\norm{k_x}_{x,\alpha}\norm{l}_{\infty} \leq \\ \leq
2\underbrace{\norm{\xi}_{W^{1,\infty}}e^{C_LT}}_{\norm{k_x}_{\infty} ~\leq}\underbrace{C_LT}_{\norm{l_x}_{\infty}~\leq} + \norm{\xi}_{\infty}\underbrace{H_LT}_{\norm{l_x}_{\alpha,x}~\leq} + \underbrace{\big(\norm{\xi_x}_{\alpha,x} e^{C_LT} +H_LT \norm{\xi}_{W^{1,\infty}} \big)}_{\norm{k_x}_{x,\alpha}~\leq} e^{C_LT} \leq \\ \leq
e^{C_LT} \norm{\xi_x}_{\alpha,x}  + H_LT \norm{\xi}_{W^{1,\infty}} 
\end{multline*}
concluding the proof. 
\end{proof}
\begin{proof}[Proof of Lemma \ref{est_dual_sol_fin_fcns}]
We apply directly Corollary \ref{estimates_for_solutions} together with estimates for $P^{a,b,c}$ provided by Corollary \ref{P_est} and estimates for $Q^{a,b,c}$ provided by Corollary \ref{Q_est}. First, note that due to these bounds, the costants $C_q, H_q, G_q$ appearing in the estimates in Corollary \ref{estimates_for_solutions} are bounded by $C_L, H_L$ and $G_L$ respectively. Therefore,
\begin{multline*}
\norm{\varphi_{\xi,h}}_{\infty} \leq \norm{P_h}_{\infty}e^{C_LT} + C_LT \norm{P}_{\infty} \leq \underbrace{\norm{\xi_h}_{\infty} + C_LT \big(\norm{\xi_x}_{\infty} + \norm{\xi}_{\infty}\big)}_{\norm{P_h}_{\infty} ~\leq}+ C_LT \underbrace{\norm{\xi}_{\infty}e^{C_LT}}_{\norm{P}_{\infty} ~\leq}\leq \\ \leq \norm{\xi_h}_{\infty} + C_LT \big(\norm{\xi_x}_{\infty} + \norm{\xi}_{\infty}\big) 
\end{multline*}
Clearly $\norm{\varphi_{\xi}}_{W^{1,\infty}} \leq e^{C_LT} \norm{P}_{W^{1,\infty}} \leq e^{C_LT} \norm{\xi}_{W^{1,\infty}} $. Then, we move to study H\"older bounds. To make these messy computations clear, we always start by rewriting estimates from Corollary \ref{estimates_for_solutions}:
\begin{multline*}
\norm{\varphi_{\xi,h}}_{\alpha, h} \leq \norm{P_h}_{\alpha,h}e^{C_LT}
+ G_L T \norm{\xi}_{W^{1,\infty}} \leq \\ \leq 
\underbrace{\big(G_LT\norm{\xi}_{W^{1,\infty}} +e^{C_LT}\norm{\xi_h}_{\alpha,h} +  C_{L}T^{\alpha} \norm{\xi_h}_{\alpha,x} + C_LT\norm{\xi_x}_{\alpha,h}+ C_LT\norm{\xi_x}_{\alpha,x}\big)}_{\norm{P_h}_{\alpha,h}~\leq}e^{C_LT} + G_LT\norm{\xi}_{W^{1,\infty}}  \\
\leq
e^{C_LT} \norm{\xi_h}_{\alpha,h} + G_L T\norm{\xi}_{W^{1,\infty}} + C_{L} T^{\alpha} \norm{\xi_h}_{\alpha,x} + C_L T\norm{\xi_x}_{\alpha,h}+ C_L T\norm{\xi_x}_{\alpha,x},
\end{multline*}
\begin{multline*}
\norm{\varphi_{\xi,x}}_{\alpha,x} \leq \norm{P_x}_{\alpha,x} + H_LT \norm{P}_{W^{1,\infty}} \leq \\ \leq
\underbrace{\max{\big( e^{C_LT}\norm{\xi}_{W^{1,\infty}} , ~ e^{C_LT}\norm{\xi_x}_{\alpha,x} + H_LT \norm{\xi}_{W^{1,\infty}}\big)}}_{\norm{P_x}_{\alpha,x}~\leq} + H_LT  \norm{\xi}_{W^{1,\infty}} \leq \\ \leq
e^{C_LT} \max{\big(\norm{\xi}_{W^{1,\infty}} , ~\norm{\xi_x}_{\alpha,x} \big)}  + H_L T  \norm{\xi}_{W^{1,\infty}},
\end{multline*}
\begin{multline*}
\norm{\varphi_{\xi,x}}_{\alpha,h} \leq \norm{P_x}_{\alpha,h} + H_LT\norm{P}_{W^{1,\infty}} \leq \\ \leq
\underbrace{\big(e^{C_LT}\norm{\xi_x}_{\alpha,h} + C_LT^{\alpha}\norm{\xi_x}_{\alpha,x}  + H_LT\norm{\xi}_{W^{1,\infty}} \big)}_{\norm{P_x}_{\alpha,h} ~\leq} +  H_L T\norm{\xi}_{W^{1,\infty}}  \leq \\ \leq e^{C_LT} \norm{\xi_x}_{\alpha,h} + C_L T^{\alpha}\norm{\xi_x}_{\alpha,x}  + H_L T\norm{\xi}_{W^{1,\infty}},
\end{multline*}
\begin{multline*}
\norm{\varphi_{\xi,h}}_{\alpha,x} \leq \max\Big(2e^{C_L T} \norm{P}_{W^{1,\infty}} , \norm{P_h}_{\alpha,x} + H_LT \norm{P}_{W^{1,\infty}} \Big) \leq \\ \leq
\max\Big(2e^{C_L T} \underbrace{\norm{\xi}_{W^{1,\infty}} e^{C_L T}}_{\norm{P}_{W^{1,\infty}} ~\leq} , \norm{P_h}_{\alpha,x} +H_L T \norm{\xi}_{W^{1,\infty}} \Big) \leq \\ \leq
\max\big(2e^{C_L T} \norm{\xi}_{W^{1,\infty}}  , ~\underbrace{ e^{C_LT}\norm{\xi_h}_{\alpha,x} + C_LT\norm{\xi_x}_{\alpha,x} + H_LT \norm{\xi}_{W^{1,\infty}}}_{\text{right part of maximum in estimate for }\norm{P_h}_{\alpha,x}\text{ in Corollary \ref{P_est}}} + H_LT \norm{\xi}_{W^{1,\infty}} \big) \leq \\ \leq
\max\Big(2e^{C_L T} \norm{\xi}_{W^{1,\infty}} , e^{C_L T}\norm{\xi_h}_{\alpha,x} + H_LT \norm{\xi}_{W^{1,\infty}} \Big) \leq \\ \leq
e^{C_L T} \max\big(2 \norm{\xi}_{W^{1,\infty}} , ~\norm{\xi_h}_{\alpha,x}\big) +  C_L T \norm{\xi_x}_{\alpha,x} + H_L T \norm{\xi}_{W^{1,\infty}}.
\end{multline*}
\end{proof}
Finally, we prove technical Lemma \ref{kicked_continuity}.
\begin{proof}[Proof of Lemma \ref{kicked_continuity}]
Note that integral part of the implicit equation \eqref{sol_dual_dep_onh} is always bounded in $W^{1,\infty}(\mathbb{R}^+)$ by $C_L \abs{t}$ due to presence of integral and boundedness of all components (in particular, implicit term is bounded due to Lemma \ref{est_dual_sol_fin_fcns}). Therefore, it is sufficient to estimate $\norm[1]{\xi(X_{b(h, \cdot)}(t-s,x))e^{\int_0^{t-s }c(h, X_{b(h,\cdot)}(u,x)) du} - \xi(x)}_{W^{1,\infty}(\mathbb{R}^+)}$. We compute using triangle inequality and standard Lipschitz estimates:
\begin{multline*}
\abs{\xi(X_{b(h, \cdot)}(t-s,x))e^{\int_0^{t-s }c(h, X_{b(h,\cdot)}(u,x)) du} - \xi(x)} \leq \\ \leq e^{C_Lt}\abs{\xi(X_{b(h, \cdot)}(t-s,x)) - \xi(x)} + \norm{\xi}_{\infty} \abs{e^{\int_0^{t-s }c(h, X_{b(h,\cdot)}(u,x)) du} - 1} \leq \\ \leq
e^{C_Lt} \norm{\xi_x}_{\infty} \abs{X_{b(h, \cdot)}(t-s,x) - X_{b(h, \cdot)}(0,x)} + e^{C_Lt} \int_0^{t-s }\abs[1]{c(h, X_{b(h,\cdot)}(u,x))} du \leq \\ \leq
e^{C_Lt} \norm{\xi_x}_{\infty}\int_0^{t-s} \abs[1]{b(h, X_{b(h,\cdot)}(u,x))}du +e^{C_Lt} \int_0^{t-s}\abs[1]{c(h, X_{b(h,\cdot)}(u,x))} du \leq C(C_L, \norm[1]{\xi}_{W^{1,\infty}})\abs{t}.
\end{multline*}
In particular, $\abs{X_{b(h, \cdot)}(t-s,x) - x}\leq C_L\abs{t}$ and $\abs[1]{e^{\int_0^{t-s }c(h, X_{b(h,\cdot)}(u,x)) du} - 1} \leq C_L\abs{t}$. Then, we estimate difference of derivatives:
\begin{multline*}
\abs[2]{{\partial_x}\Big(\xi(X_{b(h, \cdot)}(t-s,x))~e^{\int_0^{t-s }c(h, X_{b(h,\cdot)}(u,x)) du}\Big) - \xi_x(x)} \leq \\ \leq
\abs[2]{{\partial_x}\Big(\xi(X_{b(h, \cdot)}(t-s,x))\Big)~e^{\int_0^{t-s }c(h, X_{b(h,\cdot)}(u,x)) du} - \xi_x(x)} + \abs{\xi(X_{b(h, \cdot)}(t-s,x))~{\partial_x}e^{\int_0^{t-s }c(h, X_{b(h,\cdot)}(u,x)) du}}.
\end{multline*}
The second term is trivially bounded by $C(C_L, \norm[1]{\xi}_{\infty})\abs{t}$. We focus on the first one. Since exponential function is Lipschitz on bounded sets, we can add and subtract term $\xi_x(x)e^{\int_0^{t-s }c(h, X_{b(h,\cdot)}(u,x)) du}$. Then, it is sufficient to bound:
\begin{multline*}
\abs[2]{{\partial_x}\big(\xi(X_{b(h, \cdot)}(t-s,x))\big) - \xi_x(x)} = \abs[2]{\xi_x(X_{b(h, \cdot)}(t-s,x))\big({\partial_x}X_{b(h, \cdot)}(t-s,x)\big) - \xi_x(x)} \leq \\ \leq
\underbrace{\abs[2]{\big(\xi_x(X_{b(h, \cdot)}(t-s,x)) - \xi_x(x)\big)}}_{\leq~\norm[1]{\xi_x}_{\alpha,x}~\abs[1]{X_{b(h, \cdot)}(t-s,x) - X_{b(h, \cdot)}(0,x)}^{\alpha}}\underbrace{\big({\partial_x}X_{b(h, \cdot)}(t-s,x)\big)}_{\leq e^{C_L t} \text{ due to Lemma \ref{l0_wx}}} +
\abs[2]{\xi_x(x)\Big({\partial_x}X_{b(h, \cdot)}(t-s,x)-1\Big)} .  
\end{multline*}
Therefore, it is sufficient to estimate $\abs[2]{{\partial_x}X_{b(h, \cdot)}(t-s,x)-1}$. Using standard ODE theory:
\begin{multline*}
\abs[2]{{\partial_x}X_{b(h, \cdot)}(t-s,x)-1} \leq \abs[2]{{\partial_x}X_{b(h, \cdot)}(t-s,x)-{\partial_x}X_{b(h, \cdot)}(0,x)}
\leq \abs[2]{\int_0^{t-s} {\partial_x} b(h,X_{b(h, \cdot)}(u,x)) du} \leq \\ \leq \int_0^{t-s} \abs{b_x(h,X_{b(h, \cdot)}(u,x)) ~{\partial_x}X_{b(h, \cdot)}(u,x)}  du \leq C_L \abs{t},
\end{multline*}
due to Lemma \ref{l0_wx}. The proof is concluded.
\end{proof}
\section{Proof of Theorem \ref{thm_main_diff}}\label{perturbation_mdlfcns_app}
In this Section, we provide the proof of Theorem \ref{thm_main_diff}. We use notation and assume (C1)--(C3) from Section \ref{perturbation_mdlfcns_section}. We start with a general statement concering implicit equations:
\begin{lem}\label{intro_to_estimate}
Let $\xi$ and $\bar{\xi}$ be $C^{1+\alpha}$ functions defined on $[-\frac{1}{2}, \frac{1}{2}] \times \mathbb{R}^+$.  If $\varphi^h_{\xi,t_1}$ solves \eqref{f1_do_szacow} and $\bar{\varphi}^h_{\xi,t_2}$ solves \eqref{f2_do_szacow} then
$$
\sup_{(h,x,w) \in \mathcal{D}}\abs{\varphi^h_{\xi,t_1}(s_1+w,x) - \bar{\varphi}^h_{\bar{\xi},t_2}(s_2+w,x)} \leq e^{2\norm{\bar{q}}_{\infty} \abs{\Delta t}} \Big(\norm{p-\bar{p}}_{\infty} + \abs{\Delta t} \norm{q - \bar{q}}_{\infty} \Big) .
$$
Moreover, we have the bounds for differences of derivatives with respect to variable $h$:
\begin{multline}\label{est_general_for_pdh}
\sup_{(h,x,w) \in \mathcal{D}}\abs[2]{{\partial_h}\varphi^h_{\xi,t_1}(s_1+w,x) - {\partial_h}\bar{\varphi}^h_{\bar{\xi},t_2}(s_2+w,x)} \leq e^{2\norm{\bar{q}}_{\infty} \abs{\Delta t}} \Big(\norm[1]{p_h-\bar{p}_h}_{\infty} + \abs[1]{\Delta t} \norm[1]{\varphi^h_{\xi,t_1}}_{\infty} \norm[1]{q_h - \bar{q}_h}_{\infty}+\\+ \abs[1]{\Delta t} \norm[1]{\bar{q}_h}_{\infty} \sup_{(h,x,w) \in \mathcal{D}}\abs[1]{\varphi^h_{\xi,t_1}(s_1+w,x) - \bar{\varphi}^h_{\bar{\xi},t_2}(s_2+w,x)} +  \abs[1]{\Delta t} \norm[1]{q - \bar{q}}_{\infty} \Big) ,
\end{multline}
and in variable $x$:
\begin{multline}\label{est_general_for_pdx}
\sup_{(h,x,w) \in \mathcal{D}}\abs[2]{{\partial_x}\varphi^h_{\xi,t_1}(s_1+w,x) - {\partial_x}\bar{\varphi}^h_{\bar{\xi},t_2}(s_2+w,x)} \leq \norm[1]{p_x - \bar{p}_x}_{\infty} + \abs[1]{\Delta t} \norm[1]{\varphi^h_{\xi,t_1}}_{\infty} \norm[1]{q_x - \bar{q}_x}_{\infty} + \\ + \abs[1]{\Delta t} \norm[1]{\bar{q_x}}_{\infty} \sup_{(h,x,w) \in \mathcal{D}}\abs[1]{\varphi^h_{\xi,t_1}(s_1+w,x) - \bar{\varphi}^h_{\bar{\xi},t_2}(s_2+w,x)}.
\end{multline}
\begin{proof}
Equation \eqref{f1_do_szacow} is of the type
\begin{equation}\label{type_general}
f(h,x,s_1+w) = p(h,x,w) + \int_0^{\abs{\Delta t} - w}q(h,u,x) f(h,0,u+s_1+w)du
\end{equation}
and similarly, \eqref{f2_do_szacow} is of the same type with $f, p, q, s_1$ replaced with $\bar{f}, \bar{p}, \bar{q}, s_2$ where $f = \varphi^h_{\xi,t_1}$ and $\bar{f} = \bar{\varphi}^h_{\bar{\xi},t_2}$. Therefore, applying Bielecki norm \eqref{bielecki} in variable $w$:
\begin{multline*}
\norm{f(h,x,s_1+w) - \bar{f}(h,x,s_2+w)}_{\lambda} \leq \norm{p - \bar{p}}_{\infty} +  \abs{\Delta t} \norm{q-\bar{q}}_{\infty} \norm{f}_{\infty} + \\ +\sup_{(w,x,h) \in \mathcal{D}} \norm{\bar{q}}_{\infty} \int_0^{\abs{\Delta t} - w}e^{-\lambda (\abs{\Delta t} - w)} e^{\lambda(\abs{\Delta t} - u - w)} e^{-\lambda(\abs{\Delta t} - u - w)} \abs{f(h,0,u+s_1+w) - \bar{f}(h,0,u+s_2+w)}du \leq \\ \leq
\norm{p - \bar{p}}_{\infty} +  \abs{\Delta t} \norm{q-\bar{q}}_{\infty} \norm{f}_{\infty} + \norm{\bar{q}}_{\infty}\norm{f(h,x,s_1+w) - \bar{f}(h,x,s_2+w)}_{\lambda} \sup_{w \in [0, \abs{\Delta t}]}\int_0^{\abs{\Delta t} - w} e^{-\lambda u} du =\\=
\norm{p - \bar{p}}_{\infty} +  \abs{\Delta t} \norm{q-\bar{q}}_{\infty} \norm{f}_{\infty} + \norm{\bar{q}}_{\infty}\norm{f(h,x,s_1+w) - \bar{f}(h,x,s_2+w)}_{\lambda} \frac{1-e^{-\lambda \abs{\Delta t}}}{\lambda}.
\end{multline*}
Choosing, as always, $\lambda = \norm{\bar{q}}_{\infty}$ we obtain the first inequality. To see the second one, as in the proof of Lemma \ref{l6}, we differentiate \eqref{type_general} to obtain:
\begin{equation}\label{type_general_f_h}
f_h(h,x,s_1+w) = p_h(h,x,w) + \int_0^{\abs{\Delta t} - w}\Big(q_h(h,u,x) f(h,0,u+s_1+w) +q(h,u,x) f_h(h,0,u+s_1+w)\Big) du.
\end{equation} 
Notice that \eqref{type_general_f_h} treated as equation for $f_h$ is of the same type as \eqref{type_general}. Since
\begin{multline*}
\norm{\int_0^{\abs{\Delta t} - w} q_h(h,u,x) f(h,0,u+s_1+w) - \bar{q}_h(h,u,x) \bar{f}(h,0,u+s_2+w)}_{\infty} \leq \\ \leq \norm{f}_{\infty}\abs{\Delta t} \norm{q_h - \bar{q}_h}_{\infty} + \norm{\bar{q}_h}_{\infty} \abs{\Delta t} \norm{f - \bar{f}} _{\infty},
\end{multline*}
the conclusion follows. Finally, to establish an estimate for the difference of derivatives in $x$, we differentiate \eqref{type_general} with respect to $x$:
$$
f_x(h,x,s_1+w) = p_x(h,x,w) + \int_0^{\abs{\Delta t} - w}q_x(h,u,x) f(h,0,u+s_1+w)du
$$
and observe that the desired bound follows from triangle inequality.
\end{proof}
\end{lem}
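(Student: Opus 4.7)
The proof treats both \eqref{f1_do_szacow} and \eqref{f2_do_szacow} as instances of one canonical implicit equation
\begin{equation*}
g(h,x,\sigma+w) = r(h,x,w) + \int_0^{|\Delta t|-w} s(h,u,x)\, g(h,0,u+\sigma+w)\,du,
\end{equation*}
and adapts the Bielecki-norm argument used in Lemma \ref{lem_imp_exi} to the difference $f-\bar f$, where $f := \varphi^h_{\xi,t_1}$ and $\bar f := \bar\varphi^h_{\bar\xi,t_2}$. The plan is to equip $\mathcal{C}_{|\Delta t|}$ with $\|g\|_\lambda := \sup_{\mathcal{D}} e^{-\lambda(|\Delta t|-w)}|g|$, equivalent to $\|\cdot\|_\infty$ with $\|g\|_\infty \leq e^{\lambda|\Delta t|}\|g\|_\lambda$, and pick $\lambda=\|\bar q\|_\infty$ at the end.

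For the first (sup-norm) bound I would subtract the two canonical equations and split the integrand via triangle inequality as $q\, f - \bar q\,\bar f = \bar q\,(f-\bar f) + (q-\bar q)\, f$. The term $(q-\bar q)\, f$ contributes at most $|\Delta t|\,\|q-\bar q\|_\infty\, \|f\|_\infty$, while for $\bar q\,(f-\bar f)$ I would insert $e^{\lambda(|\Delta t|-u-w)}e^{-\lambda(|\Delta t|-u-w)}$ inside the integral: the first factor combines with the outer $e^{-\lambda(|\Delta t|-w)}$ to give $\int_0^{|\Delta t|-w} e^{-\lambda u}\,du \leq (1-e^{-\lambda|\Delta t|})/\lambda$, while the second factor bounds $|f-\bar f|$ by $\|f-\bar f\|_\lambda$. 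Choosing $\lambda=\|\bar q\|_\infty$ makes the self-referential coefficient $<1$, so one solves for $\|f-\bar f\|_\lambda$ and converts back to $\|\cdot\|_\infty$, picking up a second $e^{\lambda|\Delta t|}$; the two exponentials combine into the claimed factor $e^{2\|\bar q\|_\infty|\Delta t|}$.

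For the $\partial_h$ bound I would differentiate the canonical equation in $h$ (legitimate by Lemma \ref{l4}), obtaining an equation of the same form as before but for $f_h$, with a modified driving term
\begin{equation*}
\tilde p(h,x,w) := p_h(h,x,w) + \int_0^{|\Delta t|-w} q_h(h,u,x)\, f(h,0,u+s_1+w)\,du,
\end{equation*}
and an analogous $\tilde{\bar p}$ for $\bar f_h$. Applying the first bound with $p \leadsto \tilde p$, $\bar p\leadsto\tilde{\bar p}$ yields \eqref{est_general_for_pdh}, provided $\|\tilde p - \tilde{\bar p}\|_\infty$ is expanded by inserting and subtracting $\bar q_h(h,u,x)\, f(h,0,\cdot)$; the resulting pointwise difference $|f(h,0,u+s_1+w) - \bar f(h,0,u+s_2+w)|$ is then controlled by the first part of the lemma, and the residual piece involving $\|q_h-\bar q_h\|_\infty$ gives the remaining term.

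Finally, for the $\partial_x$ estimate, differentiating the canonical equation in $x$ produces
\begin{equation*}
f_x(h,x,\sigma+w) = p_x(h,x,w) + \int_0^{|\Delta t|-w} q_x(h,u,x)\, f(h,0,u+\sigma+w)\,du,
\end{equation*}
in which $f_x$ does not reappear under the integral: there is no implicit recursion in $x$, so no Bielecki machinery is needed, and a direct triangle inequality applied to $p_x-\bar p_x$ and $q_x f - \bar q_x\bar f$ yields \eqref{est_general_for_pdx}. The main technical obstacle is in the first step: arranging the compensating factors $e^{\pm\lambda(|\Delta t|-u-w)}$ correctly and accounting separately for the two distinct sources of exponential growth (the Bielecki-to-sup conversion and the solution of the resulting inequality for $\|f-\bar f\|_\lambda$) so that the constant $e^{2\|\bar q\|_\infty |\Delta t|}$ comes out exactly as stated.
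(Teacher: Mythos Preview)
Your proposal is correct and follows essentially the same approach as the paper: the Bielecki-norm argument with the splitting $qf-\bar q\bar f=\bar q(f-\bar f)+(q-\bar q)f$ for the first bound, differentiation in $h$ to recast $f_h$ as a solution of the same implicit equation with modified driving term $\tilde p$, and the observation that differentiation in $x$ kills the implicit recursion so a plain triangle inequality suffices. The only cosmetic discrepancy is that both your computation and the paper's pick up a factor $\|f\|_\infty$ in front of $\|q-\bar q\|_\infty$ which the displayed statement silently drops; this is harmless in context since $\|f\|_\infty$ is controlled by Lemma~\ref{lem_imp_exi}.
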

Lemma \ref{intro_to_estimate} tells us that in order to bound the difference of solutions to \eqref{f1_do_szacow} and \eqref{f2_do_szacow} (and their derivatves), we have to estimate $\norm{p - \bar{p}}_{\infty}$, $\norm{q - \bar{q}}_{\infty}$, $\norm{p_x - \bar{p}_x}_{\infty}$, $\norm{q_x - \bar{q}_x}_{\infty}$, $\norm{p_h - \bar{p}_h}_{\infty}$ as well as $\norm{q_h - \bar{q}_h}_{\infty}$. Naturally, we begin with the result showing how flow $X_{b(h, \cdot)}$ is affected by perturbation in model function $b$.
\begin{lem}\label{perturbation_flow}
The flows $X_{b(h, \cdot)}$ and $X_{\bar{b}(h, \cdot)}$ satisfy the following bounds:
$$
\abs{X_{b(h, \cdot)}(u,x) - X_{\bar{b}(h,\cdot)}(u,x)} \leq e^{C_L\abs{\Delta t}} \abs{\Delta t}^2 ,
$$
$$
\abs{{\partial_h}X_{b(h, \cdot)}(u,x) - {\partial_h}X_{\bar{b}(h,\cdot)}(u,x)} \leq  e^{C_L\abs{\Delta t}} \abs{\Delta t}\abs{\Delta f_h} +  H_L \abs{\Delta t}^{1+2\alpha},
$$
$$
\abs{{\partial_x}X_{b(h, \cdot)}(u,x) - {\partial_x}X_{\bar{b}(h,\cdot)}(u,x)} \leq  H_L \abs{\Delta t}^{1+\alpha}.
$$
\end{lem}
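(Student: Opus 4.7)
The plan is to differentiate the defining ODEs (and their derivative-in-$h$, derivative-in-$x$ counterparts) and apply Gr\"onwall in the usual way, carefully tracking the three H\"older effects that arise when we swap $b$ and $\bar{b}$.

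For the first bound, writing both $X_{b(h,\cdot)}$ and $X_{\bar{b}(h,\cdot)}$ in integral form, I would split
\[
b(h,X_{b(h,\cdot)}(v,x)) - \bar{b}(h,X_{\bar{b}(h,\cdot)}(v,x)) = \bigl[b - \bar{b}\bigr](h,X_{b(h,\cdot)}(v,x)) + \bar{b}(h,X_{b(h,\cdot)}(v,x)) - \bar{b}(h,X_{\bar{b}(h,\cdot)}(v,x))
\]
and estimate the first piece by $\lvert \Delta f\rvert \leq C_L\lvert \Delta t\rvert$ (assumption (C2)) and the second by $\norm{\bar{b}_x}_\infty \lvert X_{b(h,\cdot)} - X_{\bar{b}(h,\cdot)}\rvert$. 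Integrating on $[0,u]$ with $u \in [0,\lvert\Delta t\rvert]$ and invoking Gr\"onwall produces the claimed $e^{C_L\lvert\Delta t\rvert}\lvert\Delta t\rvert^2$ bound.

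For the second bound, the derivatives $\partial_h X_{b(h,\cdot)}$ and $\partial_h X_{\bar{b}(h,\cdot)}$ satisfy the ODEs $\partial_s \partial_h X = b_h(h,X) + b_x(h,X)\,\partial_h X$ with vanishing initial data. Writing again the difference as an integral, I would decompose
\[
b_h(h,X_{b(h,\cdot)}) - \bar{b}_h(h,X_{\bar{b}(h,\cdot)}) = [b_h-\bar{b}_h](h,X_{b(h,\cdot)}) + \bar{b}_h(h,X_{b(h,\cdot)}) - \bar{b}_h(h,X_{\bar{b}(h,\cdot)}),
\]
controlling the first term by $\lvert\Delta f_h\rvert$ and the second by $\norm{\bar{b}_h}_{\alpha,x}\lvert X_{b(h,\cdot)}-X_{\bar{b}(h,\cdot)}\rvert^{\alpha} \leq H_L \lvert\Delta t\rvert^{2\alpha}$ using the first bound. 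The term $b_x(h,X_{b(h,\cdot)})\partial_h X_{b(h,\cdot)} - \bar{b}_x(h,X_{\bar{b}(h,\cdot)})\partial_h X_{\bar{b}(h,\cdot)}$ I would split as
\[
\bar{b}_x(h,X_{\bar{b}(h,\cdot)})\bigl[\partial_h X_{b(h,\cdot)} - \partial_h X_{\bar{b}(h,\cdot)}\bigr] + \bigl[b_x(h,X_{b(h,\cdot)}) - \bar{b}_x(h,X_{\bar{b}(h,\cdot)})\bigr]\partial_h X_{b(h,\cdot)};
\]
the first piece feeds into Gr\"onwall, while the second is estimated using $\lvert\Delta f_x\rvert \leq C_L\lvert\Delta t\rvert^{\alpha}$ (assumption (C3)), the H\"older regularity $\norm{\bar{b}_x}_{\alpha,x}$, and the a priori bound $\lvert\partial_h X_{b(h,\cdot)}\rvert \leq C_L\lvert\Delta t\rvert$ from Lemma \ref{l0}, yielding $H_L\lvert\Delta t\rvert^{1+2\alpha}$ after multiplication and integration. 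Gr\"onwall then delivers exactly the claimed bound $e^{C_L\lvert\Delta t\rvert}\lvert\Delta t\rvert\,\lvert\Delta f_h\rvert + H_L\lvert\Delta t\rvert^{1+2\alpha}$.

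The third bound follows from the same scheme applied to the linear ODE $\partial_s \partial_x X = b_x(h,X)\,\partial_x X$ with unit initial datum: the driving inhomogeneity $[b_x(h,X_{b(h,\cdot)}) - \bar{b}_x(h,X_{\bar{b}(h,\cdot)})]\partial_x X_{b(h,\cdot)}$ is bounded by $(C_L\lvert\Delta t\rvert^{\alpha} + H_L\lvert\Delta t\rvert^{2\alpha}) e^{C_L\lvert\Delta t\rvert}$ by Corollary \ref{summary_flow}, and Gr\"onwall absorbs the factor $\bar{b}_x(h,X_{\bar{b}(h,\cdot)})\,[\partial_x X_{b(h,\cdot)} - \partial_x X_{\bar{b}(h,\cdot)}]$ to give $H_L\lvert\Delta t\rvert^{1+\alpha}$. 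The only real bookkeeping concern — and the step I expect to be most error-prone — is to check that every use of H\"older continuity of $\bar{b}_x$ and $\bar{b}_h$ at displaced points incurs only the factor $\lvert X_{b(h,\cdot)} - X_{\bar{b}(h,\cdot)}\rvert^{\alpha}$, which costs an extra $\lvert\Delta t\rvert^{\alpha}$ relative to the naive Lipschitz estimate, and that these powers of $\lvert\Delta t\rvert$ combine correctly with the exponents coming from (C2)--(C3).
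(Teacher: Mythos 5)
Your proof is correct and follows essentially the same route as the paper: differentiate the defining ODE (and its $\partial_h$ and $\partial_x$ counterparts), split each forcing difference into a perturbation-of-$b$ piece and a displaced-argument piece (triangle inequality in the spirit of Remark \ref{estimate_convention}), then integrate and apply Gr\"onwall, using (C2)--(C3) and the first bound to track the resulting powers of $\abs{\Delta t}$. The decomposition of $b_x\,\partial_h X_b - \bar b_x\,\partial_h X_{\bar b}$ you propose, and the exponent bookkeeping you flag as the delicate step, are exactly what the paper does.
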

\begin{proof}
Recall the bounds from Corollary \ref{summary_flow}. As always, we write:
\begin{multline*}
{\partial_u} X_{b(h,\cdot)}(u,x)
- {\partial_u} X_{\bar{b}(h,\cdot)}(u,x) = 
b(h, X_{b(h,\cdot)}(u,x)) - \bar{b}(h, X_{\bar{b}(h,\cdot)}(u,x)) \leq  \\ \leq
\underbrace{\abs{b(h, X_{b(h,\cdot)}(u,x)) - \bar{b}(h, X_{b(h,\cdot)}(u,x))}}_{\leq~\norm{b-\bar{b}}_{\infty}} +\underbrace{ \abs{\bar{b}(h, X_{b(h,\cdot)}(u,x)) - \bar{b}(h, X_{\bar{b}(h,\cdot)}(u,x)) }}_{\leq~\norm{\bar{b}_x}_{\infty} \abs{X_{b(h,\cdot)}(u,x) - X_{\bar{b}(h,\cdot)}(u,x)}}
\end{multline*}
so first assertion follows from Gronwall inequality. Similarly,
\begin{multline*}
{\partial_u} {\partial_h} X_{b(h,\cdot)}(u,x)
- {\partial_u}{\partial_h}  X_{\bar{b}(h,\cdot)}(u,x) = {\partial_h}b(h, X_{b(h,\cdot)}(u,x)) - 
{\partial_h}\bar{b}(h, X_{\bar{b}(h,\cdot)}(u,x)) \leq \\
 \leq \underbrace{\abs{b_h(h, X_{b(h,\cdot)}(u,x)) - \bar{b}_h(h, X_{\bar{b}(h,\cdot)}(u,x))}}_{
 \leq~\norm{b_h - \bar{b}_h}_{\infty} ~+~\norm{\bar{b}_h}_{\alpha,x}~\abs{X_{b(h,\cdot)}(u,x) - X_{\bar{b}(h,\cdot)}(u,x)}^{\alpha}} +
  \\ +
 \underbrace{\abs{b_x(h, X_{b(h,\cdot)}(u,x))~{\partial_h} X_{b(h,\cdot)}(u,x) - \bar{b}_x(h, X_{\bar{b}(h,\cdot)}(u,x))~{\partial_h} X_{\bar{b}(h,\cdot)}(u,x)}}_{
 \leq~\big(\norm{b_x - \bar{b}_x}_{\infty} ~+~\norm{\bar{b}_x}_{\alpha,x}~\abs{X_{b(h,\cdot)}(u,x) - X_{\bar{b}(h,\cdot)}(u,x)}^{\alpha}\big)\abs{\Delta t} C_L~+~ \norm{\bar{b}_x}_{\infty}\abs{ {\partial_h} X_{b(h,\cdot)}(u,x)
- {\partial_h}  X_{\bar{b}(h,\cdot)}(u,x)} }.
\end{multline*}
Finally,
\begin{multline*}
{\partial_u} {\partial_x} X_{b(h,\cdot)}(u,x)
- {\partial_u}{\partial_x}  X_{\bar{b}(h,\cdot)}(u,x) = 
{\partial_x}b(h, X_{b(h,\cdot)}(u,x)) - 
 {\partial_x}\bar{b}(h, X_{\bar{b}(h,\cdot)}(u,x)) \leq \\
 \leq b_x(h, X_{b(h,\cdot)}(u,x)) ~{\partial_x} X_{b(h,\cdot)}(u,x) - 
 \bar{b}_x(h, X_{\bar{b}(h,\cdot)}(u,x)) ~{\partial_x} X_{\bar{b}(h,\cdot)}(u,x) \leq \\ \leq
\Big|\underbrace{\Big(b_x(h, X_{b(h,\cdot)}(u,x)) - \bar{b}_x(h, X_{\bar{b}(h,\cdot)}(u,x)) \Big)}_
 {\leq~\norm{b_x - \bar{b}_x}_{\infty} ~+~ \norm{\bar{b}_x}_{\alpha,x} \abs{\Delta t}^{\alpha}\abs{\Delta f}^{\alpha} C_L}~{\partial_x} X_{b(h,\cdot)}(u,x) \Big| + \\ + \Big|\bar{b}_x(h, X_{\bar{b}(h,\cdot)}(u,x)) \Big({\partial_x} X_{b(h,\cdot)}(u,x) - {\partial_x} X_{\bar{b}(h,\cdot)}(u,x) \Big)\Big|
 \end{multline*}
concluding the proof.
\end{proof}
We move to study how perturbation in $b$ affects the function $\xi(h,X_{b(h,\cdot)}(s,y))$:
\begin{lem}\label{dif_in_xi}
Let $\xi, \bar{\xi} \in C^{1+\alpha}([-\frac{1}{2}, \frac{1}{2}] \times \mathbb{R}^+  )$. Then, 
\begin{equation}\label{dif_in_xi_eq}
\abs{\xi(h,X_{b(h,\cdot)}(s,y)) - \bar{\xi}(h,X_{\bar{b}(h,\cdot)}(s,y))} \leq \norm{\xi - \bar{\xi}}_{\infty} + e^{C_L\abs{\Delta t}} \abs{\Delta t}^2 \norm{\bar{\xi}_x}_{\infty},
\end{equation}
$$
\abs{\xi_h(h,X_{b(h,\cdot)}(s,y)) - \bar{\xi_h}(h,X_{\bar{b}(h,\cdot)}(s,y))} \leq \norm{\xi_h - \bar{\xi}_h}_{\infty} + e^{C_L\abs{\Delta t}} \abs{\Delta t}^{2\alpha}\norm{\bar{\xi}_h}_{\alpha,x} ,
$$
$$
\abs{\xi_x(h,X_{b(h,\cdot)}(s,y)) - \bar{\xi}_x(h,X_{\bar{b}(h,\cdot)}(s,y))} \leq \norm{\xi_x - \bar{\xi}_x}_{\infty} + e^{C_L\abs{\Delta t}}\abs{\Delta t}^{2\alpha} \norm{\bar{\xi}_x}_{\alpha,x}.
$$
\end{lem}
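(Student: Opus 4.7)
The proof amounts to three essentially parallel applications of the triangle inequality (in the spirit of Remark \ref{estimate_convention}) combined with the flow perturbation bound $\abs{X_{b(h, \cdot)}(u,x) - X_{\bar{b}(h,\cdot)}(u,x)} \leq e^{C_L\abs{\Delta t}} \abs{\Delta t}^2$ from Lemma \ref{perturbation_flow}. The only real choice to be made is whether to use a Lipschitz or H\"older estimate on the outer function, and this dictates the exponent on $\abs{\Delta t}$ in each case.

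For the first inequality, insert the intermediate term $\bar{\xi}(h,X_{b(h,\cdot)}(s,y))$. The difference $\xi - \bar{\xi}$ evaluated at the same point is controlled by $\norm{\xi-\bar{\xi}}_\infty$, while the difference $\bar{\xi}(h,X_{b(h,\cdot)}(s,y)) - \bar{\xi}(h,X_{\bar{b}(h,\cdot)}(s,y))$ is controlled by $\norm{\bar{\xi}_x}_\infty \abs{X_{b(h,\cdot)}(s,y) - X_{\bar{b}(h,\cdot)}(s,y)}$. Invoking Lemma \ref{perturbation_flow} directly yields the $e^{C_L\abs{\Delta t}}\abs{\Delta t}^2 \norm{\bar{\xi}_x}_\infty$ term. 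This is the Lipschitz version.

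For the second and third inequalities, the procedure is identical, inserting respectively $\bar{\xi}_h(h,X_{b(h,\cdot)}(s,y))$ and $\bar{\xi}_x(h,X_{b(h,\cdot)}(s,y))$. The only difference is that $\xi_h$ and $\xi_x$ are not Lipschitz but H\"older in $x$, so the outer estimate becomes
\[
\abs{\bar{\xi}_h(h,X_{b(h,\cdot)}(s,y)) - \bar{\xi}_h(h,X_{\bar{b}(h,\cdot)}(s,y))} \leq \norm{\bar{\xi}_h}_{\alpha,x} \abs{X_{b(h,\cdot)}(s,y) - X_{\bar{b}(h,\cdot)}(s,y)}^{\alpha}
\]
and analogously for $\bar{\xi}_x$. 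Applying Lemma \ref{perturbation_flow} to the $\alpha$-th power and absorbing the factor $e^{\alpha C_L \abs{\Delta t}}$ into $e^{C_L \abs{\Delta t}}$ produces the claimed exponent $2\alpha$ on $\abs{\Delta t}$.

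There is no genuine obstacle here; the statement is a direct corollary of the flow perturbation lemma and the regularity assumptions on $\xi,\bar{\xi}$. The only thing worth flagging is that the estimate is meaningful (i.e.\ the $\abs{\Delta t}^{2\alpha}$ factor is genuinely small) precisely because we have the quadratic-in-$\abs{\Delta t}$ bound for the flow difference; a merely linear bound would give $\abs{\Delta t}^\alpha$, which would be too weak for the iteration arguments in Section \ref{uniform_convergence_section} where this lemma will ultimately be used.
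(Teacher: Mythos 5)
Your proposal is correct and takes essentially the same route as the paper: triangle inequality with the intermediate term $\bar{\xi}(h,X_{b(h,\cdot)}(s,y))$ (and analogously for the derivatives), then the quadratic flow-difference bound from Lemma \ref{perturbation_flow}, with the Lipschitz estimate on $\bar{\xi}$ in the first case and the H\"older estimate on $\bar{\xi}_h,\bar{\xi}_x$ in the other two. The paper sketches only the first of the three inequalities, so your explicit treatment of the H\"older cases and the remark on absorbing $e^{\alpha C_L\abs{\Delta t}}$ into $e^{C_L\abs{\Delta t}}$ is a faithful elaboration of the intended argument.
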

\begin{proof}
This Lemma follows from triangle inequality in the spirit of Remark \ref{estimate_convention} and bounds from Lemma \ref{perturbation_flow}. For instance:
\begin{equation*}
\abs{\xi(h,X_{b(h,\cdot)}(s,y)) - \bar{\xi}(h,X_{\bar{b}(h,\cdot)}(s,y))} \leq
\norm{\xi - \bar{\xi}}_{\infty} + \norm{\xi_x}_{\infty} \abs[1]{X_{{b}(h,\cdot)} - X_{\bar{b}(h,\cdot)}}.
\end{equation*}
\end{proof}
\begin{lem}\label{dif_in_c}
Let $b, \bar{b}, c, \bar{\vphantom{b}c} \in C^{1+\alpha}([-\frac{1}{2}, \frac{1}{2}] \times \mathbb{R}^+  )$ be model functions satisfying assumptions (C1) -- (C3). Then:
$$
\abs{e^{\int_0^{\abs{\Delta t} - w}{c}(h, X_{{b}(h,\cdot)}(u,x)) du} - e^{\int_0^{\abs{\Delta t} - w}\bar{c}(h, X_{\bar{b}(h,\cdot)}(u,x)) du}} \leq  e^{C_L\abs{\Delta t}}\abs{\Delta t}^2.
$$
\end{lem}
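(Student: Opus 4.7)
The plan is to reduce the exponential difference to a difference of the integrals in the exponent, using the elementary fact that $x \mapsto e^x$ is Lipschitz on any bounded interval. Specifically, since the two integrals are each bounded by $\norm{c}_\infty \abs{\Delta t}$ and $\norm{\bar c}_\infty \abs{\Delta t}$ respectively, both lying inside $[-M,M]$ with $M = C_L \abs{\Delta t}$, I have the estimate
\begin{equation*}
\abs{e^A - e^B} \le e^{C_L\abs{\Delta t}}\, \abs{A - B}
\end{equation*}
for $A,B$ the two exponents. So the task reduces to bounding
\begin{equation*}
I := \abs[3]{\int_0^{\abs{\Delta t} - w}\bigl(c(h, X_{b(h,\cdot)}(u,x)) - \bar c(h, X_{\bar b(h,\cdot)}(u,x))\bigr)\,du}.
\end{equation*}

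Next I would apply the triangle inequality (in the style of Remark \ref{estimate_convention}) inside the integrand, adding and subtracting $\bar c(h, X_{b(h,\cdot)}(u,x))$, to get
\begin{equation*}
\abs{c(h, X_{b(h,\cdot)}(u,x)) - \bar c(h, X_{\bar b(h,\cdot)}(u,x))} \le \norm{c - \bar c}_\infty + \norm{\bar c_x}_\infty \, \abs[1]{X_{b(h,\cdot)}(u,x) - X_{\bar b(h,\cdot)}(u,x)}.
\end{equation*}
Here assumption (C2) gives $\norm{c - \bar c}_\infty \le C_L \abs{\Delta t}$, and the first bound in Lemma \ref{perturbation_flow} gives $\abs[1]{X_{b(h,\cdot)} - X_{\bar b(h,\cdot)}} \le e^{C_L\abs{\Delta t}}\abs{\Delta t}^2$, while $\norm{\bar c_x}_\infty \le C_L$.

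Integrating over $u \in [0, \abs{\Delta t}-w] \subset [0,\abs{\Delta t}]$ yields
\begin{equation*}
I \le \abs{\Delta t}\bigl(C_L\abs{\Delta t} + C_L\, e^{C_L\abs{\Delta t}}\abs{\Delta t}^2\bigr) \le C_L\, e^{C_L\abs{\Delta t}}\abs{\Delta t}^2,
\end{equation*}
and combining with the Lipschitz bound on the exponential gives the claim (after the usual absorption of multiplicative factors into $C_L$, cf.\ Definition \ref{constants_meaning}). There is no real obstacle here: the only mildly delicate point is making sure the factor $e^{C_L\abs{\Delta t}}$ from the exponential's Lipschitz constant and the one already appearing in the flow estimate are both absorbed into the single $e^{C_L\abs{\Delta t}}$ on the right-hand side, which follows from the convention that $C_L$ depends continuously on $T$ and on the $W^{1,\infty}$ norms of the model functions.
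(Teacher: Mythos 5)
Your proof is correct and takes essentially the same route as the paper's: Lipschitz continuity of $e^x$ on the bounded range followed by a triangle-inequality bound on the integrand using $\norm{c-\bar c}_\infty$ from (C2) and the flow estimate from Lemma \ref{perturbation_flow}. The only cosmetic difference is that the paper cites inequality \eqref{dif_in_xi_eq} (Lemma \ref{dif_in_xi} with $\xi = c$, $\bar\xi = \bar c$) as a black box, which is precisely the estimate you re-derive in your add-and-subtract step.
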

\begin{proof}
Since the map $[-{M}, M] \ni x \mapsto e^x$ is Lipschitz with some constant $e^M$, the result follows immidiately from Lemma \ref{dif_in_xi} (with $\xi$ replaced with $c$ so that $\norm{\xi - \bar{\xi}}_{\infty} \leq \abs{\Delta f} \leq C_L T$ due to (C2)).
\end{proof}
\begin{cor}\label{direct_estimate_on_fcns_dif_cor}
Let $p$, $\bar{p}$, $q$, $\bar{q}$ be as in Section \ref{perturbation_mdlfcns_section}. Then,
$$
\norm{p - \bar{p}}_{\infty} \leq \norm{\xi - \bar{\xi}}_{\infty} + e^{C_L\abs{\Delta t}} \abs{\Delta t}^2\big(1 + \norm{\bar{\xi}}_{\infty}\big),
\quad \quad \quad
\norm{q - \bar{q}}_{\infty} \leq  e^{C_L\abs{\Delta t}}\abs{\Delta t}.
$$
In particular, due to Lemma \ref{intro_to_estimate}, bound \eqref{direct_estimate_on_fcns_dif_eqn} follows.
\end{cor}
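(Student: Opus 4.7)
The statement is essentially a bookkeeping corollary, so my plan is to reduce both norm bounds to direct applications of Lemmas \ref{dif_in_xi} and \ref{dif_in_c}, and then feed the result into Lemma \ref{intro_to_estimate}. First I would write $p$ and $\bar p$ out explicitly as products of the two factors $\xi(h,X_{b(h,\cdot)}(\abs{\Delta t}-u,x))$ and $e^{\int_0^{\abs{\Delta t}-u}c(h,X_{b(h,\cdot)}(u,x))du}$ (and similarly with bars). Then I would insert a mixed term and apply triangle inequality to obtain
\[
\abs{p-\bar p}\leq \bigl(\text{exponential factor}\bigr)\cdot\abs{\xi - \bar\xi\text{ evaluated at flows}}+\norm{\bar\xi}_{\infty}\cdot\abs{e^{\cdots}-e^{\cdots}}.
\]
The first term is controlled by \eqref{dif_in_xi_eq} in Lemma \ref{dif_in_xi}, and the second by Lemma \ref{dif_in_c}. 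The exponential factor is at most $e^{C_L\abs{\Delta t}}$ since $c,\bar c$ are bounded by $C_L$ and the integration interval has length at most $\abs{\Delta t}$. Putting things together and absorbing constants into $C_L$ gives precisely the claimed bound $\norm{\xi-\bar\xi}_{\infty}+e^{C_L\abs{\Delta t}}\abs{\Delta t}^2(1+\norm{\bar\xi}_{\infty})$.

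For the bound on $\norm{q-\bar q}_{\infty}$, the argument is essentially the same but simpler, because the role of $\xi$ is now played by the model function $a$. I would split
\[
\abs{q-\bar q}\leq \bigl(\text{exponential factor}\bigr)\cdot\abs{a(h,X_{b(h,\cdot)}(u,x))-\bar a(h,X_{\bar b(h,\cdot)}(u,x))}+\norm{\bar a}_{\infty}\cdot\abs{e^{\cdots}-e^{\cdots}}.
\]
For the first difference I apply Lemma \ref{dif_in_xi} with $\xi:=a$, $\bar\xi:=\bar a$, and use assumption (C2) which gives $\norm{a-\bar a}_{\infty}\leq C_L\abs{\Delta t}$. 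The second piece is again Lemma \ref{dif_in_c}. The dominant term is the linear one $C_L\abs{\Delta t}$ (the rest are $O(\abs{\Delta t}^2)$), so up to constants we get $e^{C_L\abs{\Delta t}}\abs{\Delta t}$.

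Finally, to obtain \eqref{direct_estimate_on_fcns_dif_eqn}, I substitute the two estimates above into the first conclusion of Lemma \ref{intro_to_estimate}:
\[
\sup_{(h,x,w)\in\mathcal D}\abs{\varphi^h_{\xi,t_1}(s_1+w,x)-\bar\varphi^h_{\bar\xi,t_2}(s_2+w,x)}\leq e^{2\norm{\bar q}_{\infty}\abs{\Delta t}}\bigl(\norm{p-\bar p}_{\infty}+\abs{\Delta t}\norm{q-\bar q}_{\infty}\bigr).
\]
Since $\norm{\bar q}_{\infty}\leq C_L$, the prefactor is $e^{C_L\abs{\Delta t}}$, and the $\abs{\Delta t}\norm{q-\bar q}_{\infty}$ term is absorbed into the $e^{C_L\abs{\Delta t}}\abs{\Delta t}^2$ contribution. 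Because there is no real obstacle here beyond careful bookkeeping, the only mildly delicate point is making sure that the $\abs{\Delta t}^2$ term retains the explicit factor $(2+\norm{\bar\xi}_{\infty})$ required in \eqref{direct_estimate_on_fcns_dif_eqn} (one gets $1+\norm{\bar\xi}_{\infty}$ from the $p$-estimate and an additional $1$ from the $\abs{\Delta t}\norm{q-\bar q}_{\infty}$ contribution, yielding $2+\norm{\bar\xi}_{\infty}$).
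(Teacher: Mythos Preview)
Your proposal is correct and follows essentially the same approach as the paper: splitting $p-\bar p$ via the triangle inequality into a $\xi$-difference handled by Lemma \ref{dif_in_xi} and an exponential difference handled by Lemma \ref{dif_in_c}, then obtaining the $q$-bound by specializing $\xi:=a$, $\bar\xi:=\bar a$ and invoking (C2), and finally feeding both into Lemma \ref{intro_to_estimate}. Your observation about how the constant $2+\norm{\bar\xi}_{\infty}$ arises (one unit from $\norm{p-\bar p}_{\infty}$ and one from $\abs{\Delta t}\norm{q-\bar q}_{\infty}$) is exactly the bookkeeping the paper leaves implicit.
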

\begin{proof}
Using triangle inequality (like discussed in Remark \ref{estimate_convention}) as well as Lemmas \ref{dif_in_xi} and \ref{dif_in_c} we obtain the first assertion:
\begin{equation*}
\norm{p-\bar{p}}_{\infty} \leq  \big(\norm{\xi - \bar{\xi}}_{\infty} + \abs{\Delta t}^2e^{C_L\abs{\Delta t}}\big)e^{\abs{\Delta t} \norm{c}_{\infty}} + \norm{\bar{\xi}}_{\infty} \abs{\Delta t}^2 e^{C_L\abs{\Delta t}}.
\end{equation*}
The second follows by letting $\xi =a$ and $\bar{\xi} = \bar{a}$ in the first one so that $\norm{a - \bar{a}}_{\infty} \leq \abs{\Delta f} \leq C_L T$ due to assumption (C2).
\end{proof}
We then focus on the differences in derivatives.
\begin{lem}\label{xi_der_dif_bds}
Let $\xi, \bar{\xi} \in C^{1+\alpha}([-\frac{1}{2}, \frac{1}{2}] \times \mathbb{R}^+)$. Then,
\begin{multline}\label{d/hxi-xibar}
\abs{{\partial_h} \xi(h,X_{b(h,\cdot)}(u,x)) - {\partial_h}\bar{\xi}(h,X_{\bar{b}(h,\cdot)}(u,x))} \leq
\norm{\xi_h - \bar{\xi}_h}_{\infty} + C_L\abs{\Delta t}\norm{\xi_x - \bar{\xi}_x}_{\infty} +  \\ +
C_L \abs{\Delta t}^{2\alpha}\norm{\xi_h}_{\alpha,x} + C_L \abs{\Delta t}^{1+2\alpha}\norm{\xi_x}_{\alpha,x} +
C_L \abs{\Delta t} \abs{\Delta f_h}  \norm{\bar{\xi}_x}_{\infty}  + H_L\abs{\Delta t}^{1+\alpha}\norm{\bar{\xi}_x}_{\infty} .
\end{multline}
Moreover,
\begin{multline}\label{d/xxi-xibar}
\abs{{\partial_x} \xi(h,X_{b(h,\cdot)}(u,x)) - {\partial_x}\bar{\xi}(h,X_{\bar{b}(h,\cdot)}(u,x))} \leq e^{C_LT}\norm{\xi_x - \bar{\xi}_x}_{\infty} +\\+ C_L \abs{\Delta t}^{2\alpha} \norm{\bar{\xi}_x}_{\alpha,x}    + H_L \abs{\Delta t}^{1+\alpha} \norm{\bar{\xi}_x}_{\infty}. 
\end{multline}
\end{lem}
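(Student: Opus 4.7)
My plan is to prove both inequalities by applying the chain rule, splitting each difference using a telescoping triangle inequality (exactly in the spirit of Remark \ref{estimate_convention}), and then invoking the previously established estimates for flow perturbations (Lemma \ref{perturbation_flow}) and for composition-type differences (Lemma \ref{dif_in_xi}).

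For the first estimate, I would begin by using the chain rule to write
\begin{equation*}
{\partial_h} \xi(h,X_{b(h,\cdot)}(u,x)) = \xi_h(h,X_{b(h,\cdot)}(u,x)) + \xi_x(h,X_{b(h,\cdot)}(u,x))\,{\partial_h} X_{b(h,\cdot)}(u,x),
\end{equation*}
and analogously for the barred quantities. This splits the left-hand side of \eqref{d/hxi-xibar} into two natural pieces. The first piece $\xi_h - \bar{\xi}_h$ evaluated along the two flows is directly controlled by Lemma \ref{dif_in_xi} applied with $\xi_h, \bar{\xi}_h$, producing the $\norm{\xi_h - \bar{\xi}_h}_{\infty}$ and $C_L\abs{\Delta t}^{2\alpha}\norm{\xi_h}_{\alpha,x}$ terms. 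For the second piece, I would telescope through the intermediate value $\bar{\xi}_x(h,X_{\bar{b}(h,\cdot)}(u,x))\,{\partial_h} X_{b(h,\cdot)}(u,x)$, so that one contribution carries $\abs{\partial_h X_{b(h,\cdot)}} \leq C_L\abs{\Delta t}$ from Corollary \ref{summary_flow} together with the bound from Lemma \ref{dif_in_xi} applied to $\xi_x,\bar{\xi}_x$ (yielding the $C_L\abs{\Delta t}\norm{\xi_x - \bar{\xi}_x}_{\infty}$ and $C_L\abs{\Delta t}^{1+2\alpha}\norm{\xi_x}_{\alpha,x}$ terms), while the other contribution is $\norm{\bar{\xi}_x}_{\infty}$ times $\abs{{\partial_h}X_{b(h,\cdot)} - {\partial_h}X_{\bar{b}(h,\cdot)}}$, which by Lemma \ref{perturbation_flow} is bounded by $e^{C_L\abs{\Delta t}}\abs{\Delta t}\abs{\Delta f_h} + H_L\abs{\Delta t}^{1+2\alpha}$ — producing the last two summands.

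For \eqref{d/xxi-xibar} the argument is shorter. The chain rule gives ${\partial_x}\xi(h,X_{b(h,\cdot)}(u,x)) = \xi_x(h,X_{b(h,\cdot)}(u,x))\,{\partial_x}X_{b(h,\cdot)}(u,x)$. Telescoping through $\bar{\xi}_x(h,X_{\bar{b}(h,\cdot)}(u,x))\,{\partial_x}X_{b(h,\cdot)}(u,x)$ yields one term controlled by the bound $\abs{\partial_x X_{b(h,\cdot)}}\leq e^{C_L\abs{\Delta t}}$ from Corollary \ref{summary_flow} combined with Lemma \ref{dif_in_xi} applied to $\xi_x, \bar{\xi}_x$, and a second term controlled by $\norm{\bar{\xi}_x}_{\infty}\abs{{\partial_x}X_{b(h,\cdot)} - {\partial_x}X_{\bar{b}(h,\cdot)}} \leq \norm{\bar{\xi}_x}_{\infty} H_L\abs{\Delta t}^{1+\alpha}$ by the last assertion of Lemma \ref{perturbation_flow}. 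Summing gives exactly the claimed bound.

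The computations here are entirely routine triangle-inequality bookkeeping once Lemmas \ref{perturbation_flow} and \ref{dif_in_xi} are in hand; no genuinely new difficulty arises. The only place requiring mild care is matching powers of $\abs{\Delta t}$: one must recall that the basic flow difference is of order $\abs{\Delta t}^2$, so raising it to the $\alpha$-th power in a H\"older estimate produces $\abs{\Delta t}^{2\alpha}$, and after multiplying by an additional $\abs{\Delta t}$ coming from $\abs{\partial_h X}$ in the $\partial_h$ term one obtains the $\abs{\Delta t}^{1+2\alpha}$ exponents visible in \eqref{d/hxi-xibar}. Keeping track of which constants fall into $C_L$ versus $H_L$ — according to Definition \ref{constants_meaning} and its extension in Section \ref{study_imp_eqn} — is the only bookkeeping subtlety.
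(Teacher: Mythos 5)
Your proposal is correct and follows essentially the same approach as the paper: chain rule on $\partial_h$ and $\partial_x$, splitting the resulting product difference via the same telescoping through $\bar{\xi}_x(h,X_{\bar{b}(h,\cdot)})\partial_h X_{b(h,\cdot)}$ (resp.\ $\bar{\xi}_x(h,X_{\bar{b}(h,\cdot)})\partial_x X_{b(h,\cdot)}$), then invoking Lemma \ref{dif_in_xi} for the composition differences, Corollary \ref{summary_flow} for the size of $\partial_h X$ and $\partial_x X$, and Lemma \ref{perturbation_flow} for the flow-derivative perturbations. This is exactly the (A), (B), (C) decomposition in the paper's proof, so nothing further is needed.
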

\begin{proof}
Clearly, ${\partial_h} \xi(h,X_{b(h,\cdot)}(u,x)) = \xi_h(h,X_{b(h,\cdot)}(u,x)) + \xi_x(h,X_{b(h,\cdot)}(u,x)) ~{\partial_h}X_{b(h,\cdot)}(u,x)$. Therefore, using Lemmas \ref{perturbation_flow} and \ref{dif_in_xi} we compute:
\begin{multline*}
 \abs[2]{{\partial_h} \xi(h,X_{b(h,\cdot)}(u,x)) - {\partial_h}\bar{\xi}(h,X_{\bar{b}(h,\cdot)}(u,x))} \leq 
 \underbrace{\abs[2]{\xi_h(h,X_{b(h,\cdot)}(u,x)) - \bar{\xi}_h(h,X_{\bar{b}(h,\cdot)}(u,x))}}_{:=~(A)} + \\ \abs[2]{{\partial_h}X_{b(h,\cdot)}(u,x) \underbrace{\big(\xi_x(h,X_{b(h,\cdot)}(u,x)) - \bar{\xi}_x(h,X_{\bar{b}(h,\cdot)}(u,x))\big)}_{:=~(B)}} + \norm{\bar{\xi}_x}_{\infty}\underbrace{\abs[2]{{\partial_h}X_{b(h,\cdot)}(u,x) - {\partial_h}X_{\bar{b}(h,\cdot)}(s,y) }}_{:=~(C)} \leq  \\ \leq
\underbrace{\norm{\xi_h - \bar{\xi}_h}_{\infty} + e^{C_L\abs{\Delta t}} \abs{\Delta t}^{2\alpha} \norm{\bar{\xi}_h}_{\alpha,x} }_{(A)~\leq } + \Big(\underbrace{\norm{\xi_x - \bar{\xi}_x}_{\infty} + e^{C_L\abs{\Delta t}} \abs{\Delta t}^{2\alpha} \norm{\bar{\xi}_x}_{\alpha,x} }_{(B)~\leq}\Big) \abs{\Delta t}C_L +\\ +
\norm{\bar{\xi}_x}_{\infty}\Big(\underbrace{ e^{C_L\abs{\Delta t}}\abs{\Delta t}\abs{\Delta f_h} + H_L\abs{\Delta t}^{1+2\alpha} }_{(C)~\leq} \Big).
$$
\end{multline*}
Similarly, using Lemma \ref{perturbation_flow},
\begin{multline*}
\abs[1]{{\partial_x} \xi(h,X_{b(h,\cdot)}(u,x)) - {\partial_x}\bar{\xi}(h,X_{\bar{b}(h,\cdot)}(u,x))} \leq
e^{C_L T}\norm[1]{\xi_x - \bar{\xi}_x}_{\infty} + C_L\abs[1]{\Delta t}^{2\alpha} \norm[1]{\bar{\xi}_x}_{\alpha,x} 
\norm[1]{\partial_x X_{b(h,\cdot)}(u,x)}_{\infty}+ \\+ H_L \abs[1]{\Delta t}^{1+\alpha} \norm[1]{\bar{\xi}_x}_{\infty},
\end{multline*}
so that \eqref{d/xxi-xibar} follows from Corollary \ref{summary_flow}.
\end{proof}
From Lemma \ref{xi_der_dif_bds}, we deduce bounds on terms with model functions:
\begin{cor}\label{a_der_dif_bds} Let $a, \bar{a}\in C^{1+\alpha}([-\frac{1}{2}, \frac{1}{2}] \times \mathbb{R}^+  )$ be two model functions satisfying assumptions (C1) -- (C3). Then:
\begin{equation*}
\abs{{\partial_h} a(h,X_{b(h,\cdot)}(u,x)) - {\partial_h}\bar{a}(h,X_{\bar{b}(h,\cdot)}(u,x))} \leq
\abs{\Delta f_h} + H_L\abs{\Delta t}^{2\alpha}.
\end{equation*}
Moreover,
\begin{equation*}
\abs{{\partial_x} a(h,X_{b(h,\cdot)}(u,x)) - {\partial_x}\bar{a}(h,X_{\bar{b}(h,\cdot)}(u,x))} \leq H_L \abs{\Delta t}^{\alpha}.
\end{equation*}
\end{cor}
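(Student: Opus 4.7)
The plan is to specialize Lemma \ref{xi_der_dif_bds} to the case $\xi = a$, $\bar{\xi} = \bar{a}$, since $a$ and $\bar{a}$ are themselves $C^{1+\alpha}$ functions of $(h,x)$ that fit the hypotheses of Lemma \ref{xi_der_dif_bds}. Under (C1), the quantities $\norm{a_h}_{\alpha,x}$, $\norm{a_x}_{\alpha,x}$, $\norm{\bar{a}_x}_{\infty}$, and $\norm{\bar{a}_h}_{\alpha,x}$ are all controlled by constants of type $C_L$ or $H_L$ in the sense of Definition \ref{constants_meaning}. Moreover, by Definition \ref{def_of_Delta_F} we have $\norm{a_h - \bar{a}_h}_{\infty} \leq \abs{\Delta f_h}$ and $\norm{a_x - \bar{a}_x}_{\infty} \leq \abs{\Delta f_x}$, and by assumption (C3) the latter is bounded by $C_L \abs{\Delta t}^{\alpha}$.

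For the first estimate, I would substitute these identifications into \eqref{d/hxi-xibar}. The term $\norm{a_h - \bar{a}_h}_\infty$ contributes $\abs{\Delta f_h}$. The term $C_L \abs{\Delta t}\, \norm{a_x - \bar{a}_x}_\infty$ becomes $C_L \abs{\Delta t}^{1+\alpha}$ by (C3). The three terms involving $\norm{a_h}_{\alpha,x}$, $\norm{a_x}_{\alpha,x}$ and $\norm{\bar{a}_x}_\infty$ are bounded, respectively, by $H_L \abs{\Delta t}^{2\alpha}$, $H_L \abs{\Delta t}^{1+2\alpha}$ and $H_L \abs{\Delta t}^{1+\alpha}$, while the remaining term $C_L \abs{\Delta t} \abs{\Delta f_h}\, \norm{\bar{a}_x}_\infty$ is of the form $C_L \abs{\Delta t} \abs{\Delta f_h}$ and is absorbed into the main $\abs{\Delta f_h}$ contribution (the extra factor $C_L \abs{\Delta t}$ is of type $C_L$). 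Since $\abs{\Delta t} \leq T$ is bounded, every power $\abs{\Delta t}^{k}$ with $k \geq 2\alpha$ can be recast as $C_L \abs{\Delta t}^{2\alpha}$ by moving the excess $\abs{\Delta t}^{k-2\alpha}$ into the constant, and the resulting sum collapses to the stated bound $\abs{\Delta f_h} + H_L \abs{\Delta t}^{2\alpha}$.

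For the second estimate I would substitute $\xi = a$, $\bar{\xi} = \bar{a}$ directly into \eqref{d/xxi-xibar}. The first term $e^{C_L T} \norm{a_x - \bar{a}_x}_\infty$ is at most $C_L \abs{\Delta t}^{\alpha}$ by (C3); the second is $C_L \abs{\Delta t}^{2\alpha}\, \norm{\bar{a}_x}_{\alpha,x} \leq H_L \abs{\Delta t}^{2\alpha}$, and the third is $H_L \abs{\Delta t}^{1+\alpha}\, \norm{\bar{a}_x}_\infty \leq H_L \abs{\Delta t}^{1+\alpha}$. Since both $2\alpha$ and $1+\alpha$ exceed $\alpha$, the last two contributions are dominated by $H_L \abs{\Delta t}^{\alpha}$ after absorbing the extra $\abs{\Delta t}^{\alpha}$ or $\abs{\Delta t}$ into the constant of type $H_L$. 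This yields the claimed bound $H_L \abs{\Delta t}^{\alpha}$.

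The only real source of difficulty is careful bookkeeping: one has to check that every numeric constant appearing along the way is truly of type $C_L$ or $H_L$ in the sense of Definition \ref{constants_meaning}, and that every exponent of $\abs{\Delta t}$ on the right-hand side is at least $2\alpha$ in the first bound and at least $\alpha$ in the second, so that the excess factors can be absorbed using $\abs{\Delta t} \leq T$. Beyond this routine checking, the proof is a direct application of the already-established Lemma \ref{xi_der_dif_bds}, with no new analytical content.
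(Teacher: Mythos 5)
Your proof is correct and takes exactly the route the paper does: the paper's own argument for Corollary~\ref{a_der_dif_bds} is a one-liner that sets $\xi = a$, $\bar{\xi} = \bar{a}$ in Lemma~\ref{xi_der_dif_bds}, and your write-up simply spells out the resulting bookkeeping (identifying $\norm{a_h-\bar{a}_h}_\infty$ with $\abs{\Delta f_h}$, invoking (C3) for $\abs{\Delta f_x}$, bounding H\"older norms by $H_L$, and absorbing the higher powers of $\abs{\Delta t}$). Note only that, as you yourself observe, the term $C_L\abs{\Delta t}\abs{\Delta f_h}\norm{\bar{a}_x}_\infty$ actually yields a coefficient of type $1 + C_L$ on $\abs{\Delta f_h}$ rather than the bare $1$ written in the corollary; this is a harmless imprecision shared with the paper, since in every downstream use the $\abs{\Delta f_h}$ term appears multiplied by $\abs{\Delta t}$ anyway.
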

\begin{cor}\label{dif_in_der_c_cbar}
Let $c, \bar{c}\in C^{1+\alpha}([-\frac{1}{2}, \frac{1}{2}] \times \mathbb{R}^+  )$ be two model functions satisfying assumptions (C1) -- (C3). Then:
\begin{equation}\label{dhc-cbar}
\Big|{{\partial_h}e^{\int_0^{\abs{\Delta t} - w}{c}(h, X_{{b}(h,\cdot)}(u,x)) du} - {\partial_h}e^{\int_0^{\abs{\Delta t} - w}\bar{c}(h, X_{\bar{b}(h,\cdot)}(u,x)) du}}\Big| \leq  e^{C_L\abs{\Delta t}} \abs{\Delta t} \abs{\Delta f_h}  + H_L \abs{\Delta t}^{1+2\alpha} .
\end{equation}
Moreover,
\begin{equation}\label{dxc-cbar}
\Big|{\partial_x}e^{\int_0^{\abs{\Delta t} - w}{c}(h, X_{{b}(h,\cdot)}(u,x)) du} - {\partial_x} e^{\int_0^{\abs{\Delta t} - w}\bar{c}(h, X_{\bar{b}(h,\cdot)}(u,x)) du}\Big| \leq H_L \abs{\Delta t}^{1+\alpha}  .
\end{equation}
\end{cor}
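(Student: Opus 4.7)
My plan is to prove both bounds \eqref{dhc-cbar} and \eqref{dxc-cbar} by the same template: compute each derivative explicitly via chain rule, expand as a product of an exponential factor and an integral factor, and then apply the triangle inequality in the spirit of Remark \ref{estimate_convention}, invoking Lemma \ref{dif_in_c} for the exponentials and Corollary \ref{a_der_dif_bds} (applied with $a$ replaced by $c$) for the integrands.

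For \eqref{dhc-cbar}, I would first compute
\[
{\partial_h}e^{\int_0^{\abs{\Delta t} - w}{c}(h, X_{{b}(h,\cdot)}(u,x)) du} = E(h,x)\cdot I(h,x),
\]
where $E(h,x) = e^{\int_0^{\abs{\Delta t}-w} c(h, X_{b(h,\cdot)}(u,x))du}$ and $I(h,x) = \int_0^{\abs{\Delta t}-w} {\partial_h} c(h, X_{b(h,\cdot)}(u,x))\, du$, and analogously define $\bar E, \bar I$ with the barred data. By product rule,
\[
|EI - \bar E \bar I| \leq |E-\bar E|\,|I| + |\bar E|\,|I - \bar I|.
\]
Then I would bound each factor: $|E|, |\bar E| \leq e^{C_L \abs{\Delta t}}$, $|I| \leq C_L \abs{\Delta t}$, and $|E - \bar E| \leq e^{C_L \abs{\Delta t}}\abs{\Delta t}^2$ from Lemma \ref{dif_in_c}. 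For the integrand difference, Corollary \ref{a_der_dif_bds} (applied to $c$, $\bar c$) yields
\[
\bigl|{\partial_h} c(h, X_{b(h,\cdot)}(u,x)) - {\partial_h} \bar c(h, X_{\bar b(h,\cdot)}(u,x))\bigr| \leq \abs{\Delta f_h} + H_L \abs{\Delta t}^{2\alpha},
\]
so $|I - \bar I| \leq \abs{\Delta t}\bigl(\abs{\Delta f_h} + H_L \abs{\Delta t}^{2\alpha}\bigr)$. Combining, the resulting bound is
\[
C_L \abs{\Delta t}^{3} e^{C_L\abs{\Delta t}} + e^{C_L\abs{\Delta t}}\abs{\Delta t}\abs{\Delta f_h} + H_L \abs{\Delta t}^{1+2\alpha},
\]
and since $\abs{\Delta t}^{3} \leq T^{2-2\alpha}\abs{\Delta t}^{1+2\alpha}$ with the factor $T^{2-2\alpha}$ absorbed into $H_L$ (cf. Definition \ref{constants_meaning}), the first summand is subsumed into the last, giving \eqref{dhc-cbar}.

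For \eqref{dxc-cbar} the argument is identical in structure, using instead $I(h,x) = \int_0^{\abs{\Delta t}-w} {\partial_x} c(h, X_{b(h,\cdot)}(u,x))\,du$. The second part of Corollary \ref{a_der_dif_bds} gives $|{\partial_x} c - {\partial_x} \bar c| \leq H_L \abs{\Delta t}^\alpha$, so $|I - \bar I| \leq H_L \abs{\Delta t}^{1+\alpha}$, and the product-rule split yields
\[
e^{C_L\abs{\Delta t}}\abs{\Delta t}^2 \cdot C_L \abs{\Delta t} + e^{C_L\abs{\Delta t}}\cdot H_L\abs{\Delta t}^{1+\alpha} \leq H_L \abs{\Delta t}^{1+\alpha},
\]
again absorbing the cubic term into $H_L \abs{\Delta t}^{1+\alpha}$ via $\abs{\Delta t}^{3} \leq T^{2-\alpha}\abs{\Delta t}^{1+\alpha}$. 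Since the corollary is a direct assembly of previously established estimates, I do not anticipate a substantive obstacle—the only mild subtlety is verifying that no negative powers of $T$ sneak in when we absorb the higher-order $\abs{\Delta t}$ terms, but this is guaranteed by the convention that $C_L, H_L$ may depend continuously on $T$.
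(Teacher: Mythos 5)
Your proof is correct and follows essentially the same route as the paper's: apply the chain rule to decompose the derivative as (exponential)$\times$(integral), use the product-rule/triangle inequality split, bound the exponential difference via Lemma \ref{dif_in_c}, and bound the integrand difference via Corollary \ref{a_der_dif_bds} applied to $c$ in place of $a$, then absorb the resulting higher-order $|\Delta t|$ term into $H_L|\Delta t|^{1+2\alpha}$ (resp.\ $H_L|\Delta t|^{1+\alpha}$). The paper's proof is just a more terse version of the same computation.
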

\begin{proof}[Proof of Corollaries \ref{a_der_dif_bds} and \ref{dif_in_der_c_cbar}]
Corollary \ref{a_der_dif_bds} follows from the direct application of Lemma \ref{xi_der_dif_bds} (set $\xi = a$ and $\bar{\xi} = \bar{a}$). To prove Corollary \ref{dif_in_der_c_cbar}, note that there are two differences to be bounded: difference in exponential function and difference in exponents (arising from computing derivative). The first one is easily bounded by a term with much higher power of $\abs{\Delta t}$ than required due to Lemma \ref{dif_in_c}. For the second one, we just have to multiply bounds from Corollary \ref{a_der_dif_bds} by $\abs{\Delta t}$ (i.e. the length of integration interval). Bound for the difference of derivatives in $x$ is obtained similarly.
\end{proof}
Finally, we conclude the proof of Theorem \ref{thm_main_diff}:
\begin{proof}[Proof of estimate \eqref{precise_estimate_diff_dx_eq}]
Clearly, we want to apply the bound \eqref{est_general_for_pdx} from Lemma \ref{intro_to_estimate}. We start from estimating term $\norm[1]{p_x - \bar{p}_x}_{\infty}$. Note that:
\begin{multline*}
p_x(h,u,x) = \partial_x\Big(\xi(h,X_{b(h, \cdot)}(\abs{\Delta t}-u,x))\Big)~e^{\int_0^{\abs{\Delta t}-u }c(h, X_{b(h,\cdot)}(u,x)) du} + \\ + 
\xi(h,X_{b(h, \cdot)}(\abs{\Delta t}-u,x))~\partial_x\Big(e^{\int_0^{\abs{\Delta t}-u }c(h, X_{b(h,\cdot)}(u,x)) du}\Big),
\end{multline*}
and similarly for $\bar{p}_x$. Therefore, to bound $\norm[1]{p_x - \bar{p}_x}_{\infty}$ with triangle inequality, we have to sum up the following estimates: 
\begin{enumerate}[itemsep=+1ex, label=(\Alph*):]
\item{$\norm[1]{\partial_x \xi(h,X_{b(h, \cdot)}(\cdot,\cdot))-\partial_x \bar{\xi}(h,X_{\bar{b}(h, \cdot)}(\cdot, \cdot))}_{\infty} e^{C_L\abs{\Delta t}}$ using bound \eqref{d/xxi-xibar},}
\item{$\norm[1]{\bar{\xi}_x}_{\infty} \norm[1]{\partial_x X_{\bar{b}(h,\cdot)}(\cdot,x)}_{\infty} \norm[1]{e^{\int_0^{\abs{\Delta t}-u}c(h, X_{b(h,\cdot)}(u,x)) du}-e^{\int_0^{\abs{\Delta t}-u}\bar{c}(h, X_{\bar{b}(h,\cdot)}(u,x)) du}}_{\infty}$ using Lemma \ref{dif_in_c} and Corollary \ref{summary_flow},}
\item{$\norm[1]{\xi(h,X_{b(h, \cdot)}(\cdot,\cdot))-\bar{\xi}(h,X_{\bar{b}(h, \cdot)}(\cdot, \cdot))}_{\infty} e^{C_L\abs{\Delta t}} \abs{\Delta t} \norm[1]{c_x}_{\infty}\norm[1]{\partial_x X_{b(h,\cdot)}(\cdot,x)}_{\infty}$ using  inequality \eqref{dif_in_xi_eq} and bound for the flow from Corollary \ref{summary_flow},}
\item{$\norm[1]{\bar{\xi}}_{\infty}\norm[1]{
\partial_xe^{\int_0^{\abs{\Delta t}-u }c(h, X_{b(h,\cdot)}(u,x)) du}- \partial_xe^{\int_0^{\abs{\Delta t}-u }\bar{c}(h, X_{\bar{b}(h,\cdot)}(u,x)) du}\big)\big)}_{\infty}$ using inequality \eqref{dxc-cbar}.}
\end{enumerate}
Therefore,
\begin{multline*}
\norm[1]{p_x - \bar{p}_x}_{\infty} \leq 
\underbrace{e^{C_LT}\norm{\xi_x - \bar{\xi}_x}_{\infty} +   C_L\abs{\Delta t}^{2\alpha}\norm{\bar{\xi}_x}_{\alpha,x}  + H_L\abs{\Delta t}^{1+\alpha}\norm{\xi_x}_{\infty} }_{(A) \leq } + 
 \underbrace{e^{C_L\abs{\Delta t}}\abs{\Delta t}^2\norm[1]{\bar{\xi}_x}_{\infty}  }_{(B) \leq} +
\\ +
 \underbrace{\big(\norm[1]{\xi - \bar{\xi}}_{\infty} + e^{C_L\abs{\Delta t}}\norm[1]{\xi_x}_{\infty}\abs{\Delta t}^2\big) C_L\abs{\Delta t} }_{(C) \leq } +
\underbrace{H_L \abs{\Delta t}^{1+\alpha}\norm[1]{\bar{\xi}}_{\infty}  }_{(D) \leq }.
\end{multline*}
Now, we study the second term $ \abs[1]{\Delta t} \norm[1]{\varphi^h_{\xi,t_1}}_{\infty} \norm[1]{q_x - \bar{q}_x}_{\infty}$ in \eqref{est_general_for_pdx}. From above, by setting $\xi = a$ and $\bar{\xi} = \bar{a}$, we easily obtain $\norm[1]{q_x - \bar{q}_x}_{\infty} \leq  H_L \abs{\Delta t}^{\alpha}$ (which is not as good as possible but sufficient for our future targets). Moreover, recall from Lemma \ref{lem_imp_exi} that $\norm[1]{\varphi^h_{\xi,t_1}}_{\infty} \leq \norm[1]{\xi}_{\infty}e^{C_L\abs{\Delta t}}$ so that we arrive at
\begin{equation*}
\abs[1]{\Delta t} \norm[1]{\varphi^h_{\xi,t_1}}_{\infty} \norm[1]{q_x - \bar{q}_x}_{\infty} \leq \norm{\xi}_{\infty}\abs{\Delta t}^{1+\alpha} H_L,
\end{equation*}
a term which is already included in the bound for $(D)$ after replacing $\norm{\bar{\xi}}_{\infty}$ with $\max{\big(\norm{{\xi}}_{\infty}, \norm{\bar{\xi}}_{\infty} \big)}$. Finally, from \eqref{direct_estimate_on_fcns_dif_eqn}, we easily obtain bound for $$\abs[1]{\Delta t} \norm[1]{q_x}_{\infty} \sup_{(h,x,w) \in \mathcal{D}}\abs[1]{\varphi^h_{\xi,t_1}(s_1+w,x) - \bar{\varphi}^h_{\bar{\xi},t_2}(s_2+w,x)} \leq
\abs[1]{\Delta t} C_L \norm{\xi - \bar{\xi}}_{\infty} + \abs{\Delta t}^3 C_L\big(2 + \norm{\bar{\xi}}_{\infty}\big).
$$
Summation of the three bounds above concludes the proof.
\end{proof}
\begin{proof}[Proof of estimate \eqref{precise_estimate_diff_dh_eq}]
Again, we want to apply the bound \eqref{est_general_for_pdh} from Lemma \ref{intro_to_estimate}. We start from estimating term $\norm[1]{p_h - \bar{p}_h}_{\infty}$. Note that:
\begin{multline*}
p_h(h,u,x) = \partial_h\Big(\xi(h,X_{b(h, \cdot)}(\abs{\Delta t}-u,x))\Big)~e^{\int_0^{\abs{\Delta t}-u }c(h, X_{b(h,\cdot)}(u,x)) du} + \\ + 
\xi(h,X_{b(h, \cdot)}(\abs{\Delta t}-u,x))~\partial_h\Big(e^{\int_0^{\abs{\Delta t}-u }c(h, X_{b(h,\cdot)}(u,x)) du}\Big),
\end{multline*}
and similarly for $\bar{p}_h$. Therefore, to bound $\norm[1]{p_h - \bar{p}_h}_{\infty}$ with triangle inequality, we have to sum up the following estimates: 
\begin{enumerate}[itemsep=+1ex, label=(\Alph*):]
\item{$\norm[1]{\partial_h \xi(h,X_{b(h, \cdot)}(\cdot,\cdot))-\partial_h \bar{\xi}(h,X_{\bar{b}(h, \cdot)}(\cdot, \cdot))}_{\infty} e^{C_L\abs{\Delta t}}$ using bound \eqref{d/hxi-xibar},}
\item{$\norm[1]{\partial_h \bar{\xi}(h,X_{\bar{b}(h, \cdot)}(\cdot, \cdot))}_{\infty} \norm[1]{e^{\int_0^{\abs{\Delta t}-u}c(h, X_{b(h,\cdot)}(u,x)) du}-e^{\int_0^{\abs{\Delta t}-u}\bar{c}(h, X_{\bar{b}(h,\cdot)}(u,x)) du}}_{\infty}$ using Lemma \ref{pierwsza_funkcja} and Lemma \ref{dif_in_c},}
\item{$\norm[1]{\xi(h,X_{b(h, \cdot)}(\cdot,\cdot))-\bar{\xi}(h,X_{\bar{b}(h, \cdot)}(\cdot, \cdot))}_{\infty} e^{C_L\abs{\Delta t}} \norm[1]{\int_0^{\abs{\Delta t}-u }\partial_h c(h, X_{b(h,\cdot)}(u,x)) du}_{\infty}$ using  inequality \eqref{dif_in_xi_eq} and the bound for the flow from Corollary \ref{summary_flow},}
\item{$\norm[1]{\bar{\xi}}_{\infty}\norm[1]{
\partial_he^{\int_0^{\abs{\Delta t}-u }c(h, X_{b(h,\cdot)}(u,x)) du}- \partial_he^{\int_0^{\abs{\Delta t}-u }\bar{c}(h, X_{\bar{b}(h,\cdot)}(u,x)) du}\big)\big)}_{\infty}$ using inequality \eqref{dhc-cbar}.}
\end{enumerate}
Therefore,
\begin{multline*}
\norm[1]{p_h - \bar{p}_h}_{\infty} \leq 
\underbrace{e^{C_L\abs{\Delta t}} \norm{\xi_h - \bar{\xi}_h}_{\infty} + C_L\abs{\Delta t}\norm{\xi_x - \bar{\xi}_x}_{\infty} +
C_L\abs{\Delta t}^{2\alpha}\norm{\xi_h}_{\alpha,x} + C_L\abs{\Delta t}^{1+2\alpha}\norm{\xi_x}_{\alpha,x}}_{(A) \leq } +\\+ \underbrace{
C_L \abs{\Delta t} \abs{\Delta f_h} \norm{\bar{\xi}_x}_{\infty}  + H_L\abs{\Delta t}^{1+\alpha} \norm{\bar{\xi}_x}_{\infty}}_{(A) \leq {\text{ continued}}} + 
 \underbrace{\big(\norm[1]{\bar{\xi}_h}_{\infty} + C_L  \abs{\Delta t}\norm[1]{\bar{\xi}_x}_{\infty} \big) e^{C_L\abs{\Delta t}}\abs{\Delta t}^2}_{(B) \leq} +\\+
 \underbrace{\big(\norm[1]{\xi - \bar{\xi}}_{\infty} +e^{C_L\abs{\Delta t}}\abs{\Delta t}^2 \norm[1]{\xi_x}_{\infty}\big) C_L\abs{\Delta t} }_{(C) \leq } + 
\underbrace{\norm[1]{\bar{\xi}}_{\infty}\big(e^{C_L\abs{\Delta t}} \abs{\Delta t}\abs{\Delta f_h}   +  H_L\abs{\Delta t}^{1+2\alpha} \big)}_{(D) \leq } \leq \\
\leq 
e^{C_L\abs{\Delta t}} \norm{\xi_h - \bar{\xi}_h}_{\infty} +
C_L\abs{\Delta t}\norm{\xi_x - \bar{\xi}_x}_{\infty} +
C_L \abs{\Delta t} \norm[1]{\xi - \bar{\xi}}_{\infty}  + 
C_L\abs{\Delta t}^{2\alpha} \norm{\xi_h}_{\alpha,x} + \\+ 
C_L \abs{\Delta t}^{1+2\alpha}\norm{\xi_x}_{\alpha,x} +
C_L \abs{\Delta t} \abs{\Delta f_h}  \big(\norm{\bar{\xi}_x}_{\infty}+\norm{\bar{\xi}}_{\infty} \big) +  H_L\abs{\Delta t}^{1+2\alpha}\max\big(\norm{\xi}_{W^{1,\infty}}, \norm{\bar{\xi}}_{W^{1,\infty}} \big).
\end{multline*}
Now, we study the second term $ \abs[1]{\Delta t} \norm[1]{\varphi^h_{\xi,t_1}}_{\infty} \norm[1]{q_h - \bar{q}_h}_{\infty}$ in \eqref{est_general_for_pdh}. From above, by setting $\xi = a$ and $\bar{\xi} = \bar{a}$ we easily obtain $\norm[1]{q_h - \bar{q}_h}_{\infty} \leq \abs{\Delta f_h} e^{C_L\abs{\Delta t}} +  H_L \abs{\Delta t}^{2\alpha}$ (which is again not as good as possible but sufficient for our future targets). Moreover, recall from Lemma \ref{lem_imp_exi} that $\norm[1]{\varphi^h_{\xi,t_1}}_{\infty} \leq \norm[1]{\xi}_{\infty}e^{C_L\abs{\Delta t}}$ so that we arrive at
\begin{equation*}
\abs[1]{\Delta t} \norm[1]{\varphi^h_{\xi,t_1}}_{\infty} \norm[1]{q_h - \bar{q}_h}_{\infty} \leq \norm[1]{\xi}_{\infty} \abs{\Delta f_h}\abs{\Delta t} e^{C_L\abs{\Delta t}} +  \norm[1]{\xi}_{\infty} H_L \abs{\Delta t}^{1+2\alpha},
\end{equation*}
term which, after increasing constants $C_L$ and $H_L$, is already involved in bound for $\norm[1]{p_h - \bar{p}_h}_{\infty}$. Next, from \eqref{direct_estimate_on_fcns_dif_eqn}, we easily obtain bound for $\abs[1]{\Delta t} \norm[1]{q_h}_{\infty} \sup_{(h,x,w) \in \mathcal{D}}\abs[1]{\varphi^h_{\xi,t_1}(s_1+w,x) - \bar{\varphi}^h_{\bar{\xi},t_2}(s_2+w,x)}$:
$$
\abs[1]{\Delta t} C_L \norm{\xi - \bar{\xi}}_{\infty} + \abs{\Delta t}^3 C_L\big(2 + \norm{\bar{\xi}}_{\infty}\big).
$$
Finally, from Corollary \ref{direct_estimate_on_fcns_dif_cor} we obtain that $\abs[1]{\Delta t} \norm[1]{q - \bar{q}}_{\infty} e^{C_L \abs{\Delta t}} \leq \abs[1]{\Delta t}^2 C_L$. Summation of the four bounds above concludes the proof.
\end{proof}
\end{appendix}





\end{document}